\documentclass[11pt]{amsart}
\usepackage[margin=2.5cm]{geometry}                
\geometry{letterpaper}                   
\usepackage{amssymb,amsmath, mathabx, mathtools}  
\usepackage[colorlinks,
             linkcolor=blue,
             citecolor=green!70!black,
             pdfproducer={pdfLaTeX},
             pdfpagemode=None,
             bookmarksopen=true
             bookmarksnumbered=true]{hyperref}
\usepackage{cleveref}


\newcommand\arXiv[1]{\href{http://arxiv.org/abs/#1}{\nolinkurl{arXiv:#1}}}
\newcommand\MRnumber[1]{\href{http://www.ams.org/mathscinet-getitem?mr=#1}{\nolinkurl{MR#1}}}
\newcommand\ourDOI[1]{\href{http://dx.doi.org/#1}{\nolinkurl{DOI:#1}}}
\newcommand\MAILTO[1]{\href{mailto:#1}{\nolinkurl{#1}}}


\usepackage{tikz}
\usetikzlibrary{matrix,arrows,decorations.pathmorphing,decorations.pathreplacing,decorations.markings}
\tikzset{
    dot/.style={circle,draw,fill,inner sep=1pt},
    arrow/.style={->,thick,shorten <=2pt,shorten >=2pt},
    twoarrow/.style={double,double distance=1.5pt,shorten <=9pt,shorten >=10pt,decoration={markings,mark=at position -8pt with {\arrow[scale=1.75]{>}}},preaction={decorate}},
    twoarrowlonger/.style={double,double distance=1.5pt,shorten <=5pt,shorten >=6pt,decoration={markings,mark=at position -4pt with {\arrow[scale=1.75]{>}}},preaction={decorate}},
    twoarrowshorter/.style={double,double distance=1.5pt,shorten <=13pt,shorten >=14pt,decoration={markings,mark=at position -12pt with {\arrow[scale=1.75]{>}}},preaction={decorate}},
    twoarrowshorthead/.style={double,double distance=1.5pt,shorten <=9pt,shorten >=20pt,decoration={markings,mark=at position -18pt with {\arrow[scale=1.75]{>}}},preaction={decorate}},
    threearrowpart1/.style={ thick,double,double distance=3pt,shorten <=9pt,shorten >=11pt},
    threearrowpart2/.style={ thick,shorten <=9pt,shorten >=10pt},
    threearrowpart3/.style={ shorten <=9pt,shorten >=10pt,decoration={markings,mark=at position -8pt with {\arrow[scale=3]{>}}},preaction={decorate}},
fourarrowpart1/.style={thick, double,double distance=4pt,shorten <=1pt,shorten >=2.75pt},
fourarrowpart2/.style={thick,  double,double distance=1pt,shorten <=1pt,shorten >=1.25pt,decoration={markings,mark=at position -.05pt with {\arrow[scale=3,ultra thin]{>}}},preaction={decorate} }
}
\usepackage{tikz-cd}

\newcommand\threearrow{%
\begin{tikzpicture}
  \path coordinate (l) ++(3ex,0) coordinate (r);
  \draw[,double,double distance=3pt,shorten <=1pt,shorten >=2.5pt] (l) -- (r);
  \draw[shorten <=1pt,shorten >=.5pt,decoration={markings,mark=at position -.05pt with {\arrow[scale=3,ultra thin]{>}}},preaction={decorate}] (l) -- (r);
\end{tikzpicture}%
}

\newcommand\fourarrow{%
\begin{tikzpicture}
  \path coordinate (l) ++(3ex,0) coordinate (r);
  \draw[double,double distance=4pt,shorten <=1pt,shorten >=2.75pt] (l) -- (r);
  \draw[double,double distance=1pt,shorten <=1pt,shorten >=1.25pt,decoration={markings,mark=at position -.05pt with {\arrow[scale=3,ultra thin]{>}}},preaction={decorate}] (l) -- (r);
\end{tikzpicture}%
}

\newcommand\threearrowlong{%
\begin{tikzpicture}
  \path coordinate (l) ++(5ex,0) coordinate (r);
  \draw[,double,double distance=3pt,shorten <=1pt,shorten >=2.5pt] (l) -- (r);
  \draw[shorten <=1pt,shorten >=.5pt,decoration={markings,mark=at position -.05pt with {\arrow[scale=3,ultra thin]{>}}},preaction={decorate}] (l) -- (r);
\end{tikzpicture}%
}

\newcommand\fourarrowlong{%
\begin{tikzpicture}
  \path coordinate (l) ++(5ex,0) coordinate (r);
  \draw[double,double distance=4pt,shorten <=1pt,shorten >=2.75pt] (l) -- (r);
  \draw[double,double distance=1pt,shorten <=1pt,shorten >=1.25pt,decoration={markings,mark=at position -.05pt with {\arrow[scale=3,ultra thin]{>}}},preaction={decorate}] (l) -- (r);
\end{tikzpicture}%
}

\newcommand\ourunderbrace[2]{%
\begin{tikzpicture}[baseline=(mainnode.base)]
  \node[anchor=base] (mainnode) {$#1$};
  \path (mainnode.south east) ++(0,3pt) coordinate (se); \path (mainnode.south west) ++(0,3pt) coordinate (sw);
  \draw[decorate,decoration={brace,amplitude=6pt},thick] (se) -- (sw);
  \path (mainnode.south) ++(0,-3pt) node[anchor=north] {$\scriptstyle #2$};
\end{tikzpicture}%
}


\newtheorem{proposition}[subsection]{Proposition}
\newtheorem{theorem}[subsection]{Theorem}
\newtheorem{definition}[subsection]{Definition}
\newtheorem{theoremdefinition}[subsection]{Theorem/Definition}
\newtheorem{lemma}[subsection]{Lemma}
\newtheorem{corollary}[subsection]{Corollary}

\renewcommand\qedhere{\hfill\ensuremath\Box}

\theoremstyle{definition}
\newtheorem{remarknodiamond}[subsection]{Remark}
\newenvironment{remark}{\begin{remarknodiamond}}{\hfill\ensuremath\Diamond\end{remarknodiamond}}
\newtheorem{examplenodiamond}[subsection]{Example}
\newenvironment{example}{\begin{examplenodiamond}}{\hfill\ensuremath\Diamond\end{examplenodiamond}}

\crefname{section}{Section}{Section}
\crefformat{section}{#2Section~#1#3} 
\Crefformat{section}{#2Section~#1#3} 

\crefname{definition}{Definition}{Definitions}
\crefformat{definition}{#2Definition~#1#3} 
\Crefformat{definition}{#2Definition~#1#3} 

\crefname{lemma}{Lemma}{Lemmas}
\crefformat{lemma}{#2Lemma~#1#3} 
\Crefformat{lemma}{#2Lemma~#1#3} 

\crefname{proposition}{Proposition}{Propositions}
\crefformat{proposition}{#2Proposition~#1#3} 
\Crefformat{proposition}{#2Proposition~#1#3} 

\crefname{corollary}{Corollary}{Corollaries}
\crefformat{corollary}{#2Corollary~#1#3} 
\Crefformat{corollary}{#2Corollary~#1#3} 

\crefname{theorem}{Theorem}{Theorems}
\crefformat{theorem}{#2Theorem~#1#3} 
\Crefformat{theorem}{#2Theorem~#1#3}

\crefname{remark}{Remark}{Remarks}
\crefformat{remark}{#2Remark~#1#3} 
\Crefformat{remark}{#2Remark~#1#3} 

\crefname{remarknodiamond}{Remark}{Remarks}
\crefformat{remarknodiamond}{#2Remark~#1#3} 
\Crefformat{remarknodiamond}{#2Remark~#1#3}

\crefname{example}{Example}{Examples}
\crefformat{example}{#2Example~#1#3} 
\Crefformat{example}{#2Example~#1#3} 

\crefname{examplenodiamond}{Example}{Examples}
\crefformat{examplenodiamond}{#2Example~#1#3} 
\Crefformat{examplenodiamond}{#2Example~#1#3}


\usepackage[utf8]{inputenc}
\DeclareFontFamily{U}{min}{}
\DeclareFontShape{U}{min}{m}{n}{<-> udmj30}{}
\newcommand\yo{\!\text{\usefont{U}{min}{m}{n}\symbol{'207}}\!}

\usepackage{bbm}

\newcommand\Aa{\mathcal A} \newcommand\cA\Aa
\newcommand\Bb{\mathcal B} \newcommand\cB\Bb
\newcommand\Cc{\mathcal C} \newcommand\cC\Cc
\newcommand\Dd{\mathcal D} \newcommand\cD\Dd
\newcommand\Ee{\mathcal E} \newcommand\cE{\Ee}
\newcommand\cF{\mathcal F}
\newcommand\cG{\mathcal G}
\newcommand\cM{\mathcal M}

\newcommand\Ss{\mathcal S}  \newcommand\cS\Ss

\newcommand\cV{\mathcal V}
\newcommand\cX{\mathcal X}
\newcommand\cY{\mathcal Y}
\newcommand\cZ{\mathcal Z}

\newcommand\rB{\mathrm B}

\newcommand\bK{\mathbb K}
\newcommand\rL{\mathrm L}

\newcommand\NN{\mathbb N} \newcommand\bN\NN
\newcommand\bR{\mathbb R}
\newcommand\bZ{\mathbb Z}

\newcommand\h{\mathrm{h}}

\newcommand\inv{\mathrm{inv}}
\DeclareMathOperator\Gr{Gr}
\newcommand\id{\mathrm{id}}
\DeclareMathOperator\maps{maps}
\newcommand\inj{\mathrm{inj}}
\newcommand\proj{\mathrm{proj}}
\newcommand\op{\mathrm{op}}
\newcommand\Fin{\cat{Fin}}

\newcommand\fr{{\mathrm{fr}}}
\newcommand\fd{{\mathrm{fd}}}
\DeclareMathOperator\ev{ev}
\DeclareMathOperator\coev{coev}

\newcommand{\unit}{\mathbbm{1}}

\newcommand\mono\hookrightarrow
\newcommand\longto\longrightarrow
\newcommand\from\leftarrow

\newcommand\larunat{{\reflectbox{\ensuremath{\scriptstyle\natural}}}}

\newcommand\stratified{{\tikz[baseline=(bottom)]{\draw (-2.5pt,-2.5pt) coordinate (bottom) rectangle (2.5pt,2.5pt); \draw (0,-4pt) -- (0,4pt); \draw (0,0) node[fill=black, inner sep=1pt,circle] {};}}}

\DeclareMathOperator\Fact{Fact}

\newcommand\smallbox[1]{\tikz[baseline=(box.base)]\node[draw,rectangle,inner sep=1pt] (box) {\tiny\rm #1};}


\newcommand\define[1]{\emph{#1}}
\newcommand\cat[1]{\textsc{#1}}

\DeclareMathOperator\Alg{\cat{Alg}}
\DeclareMathOperator\Covers{\cat{Covers}}



\newcounter{notused}

\newcommand{\Subsection}[1]{

\mbox{}

\noindent\refstepcounter{notused}\textbf{#1.}\addcontentsline{toc}{subsection}{\mbox{}\quad\quad\quad#1}
\\
\indent}


\usepackage{lipsum}



\title[(Op)lax transfors, twisted QFTs, and even higher Morita categories]{(Op)lax natural transformations, twisted quantum field theories, and ``even higher'' Morita categories}
\author{Theo Johnson-Freyd and Claudia Scheimbauer}
\date{\today}
\email{\MAILTO{theojf@math.northwestern.edu} \\ \MAILTO{scheimbauer@mpim-bonn.mpg.de}}

\begin{document}
\begin{abstract}
Motivated by the challenge of defining twisted quantum field theories in the context of higher categories, we develop a general framework for lax and oplax transformations and their higher analogs between strong $(\infty, n)$-functors. We construct a double $(\infty,n)$-category built out of the target $(\infty, n)$-category governing the desired diagrammatics. We define (op)lax transformations as functors into parts thereof, and an (op)lax twisted field theory to be a symmetric monoidal (op)lax natural transformation between field theories. We verify that lax trivially-twisted relative field theories are the same as absolute field theories. As a second application, we extend the higher Morita category of $E_d$-algebras in a symmetric monoidal $(\infty, n)$-category $\cC$ to an $(\infty, n+d)$-category using the higher morphisms in $\cC$.
\end{abstract}
\maketitle

\tableofcontents

\section{Introduction}

This paper provides a universal answer to the following question.  Suppose that one is interested in objects equipped with some structure in a fixed higher category.  What are the \define{lax} and \define{oplax} (higher) transformations between such objects?  For example, what are the \emph{lax} and \emph{oplax} (higher) transformations between (strong) functors?

Although the answer to this question is interesting in the abstract, our motivation came from two particular applications in quantum field theory.    Our first motivation was to make precise the notion of ``twisted quantum field theory'' proposed in \cite{MR2742432} (and the closely related notion of ``relative quantum field theory'' from \cite{FreedTeleman2012}), which requires a lax version of ``symmetric monoidal natural transformation between symmetric monoidal $(\infty,n)$-functors.'' In the fully local case, the notion is closely related to that of a boundary field theory. Our second motivation was to extend the construction of ``higher'' Morita categories due to Calaque and the second author in \cite{ClaudiaDamien2} and to Haugseng \cite{HaugsengEn} to build ``even higher'' Morita categories in which higher-categorical intertwiners between bimodules are allowed --- in particular, the $(\infty,4)$-category of braided monoidal categories, monoidal categories, bimodule categories, bimodule functors, and bimodule intertwiners proposed by 
\cite[Section 9]{WalkerTQFT}, \cite{DSPS},  and \cite[Conjecture 6.5]{BZBJ}.
(Op)lax natural transformations and their higher cousins should have other applications we have not touched upon; for example, the paper \cite{HSS2015} has already applied our definition of ``symmetric monoidal oplax transfors'' to develop a notion of ``higher trace'' suitable for categorifying the Chern character.

In the remainder of the introduction we give a rather detailed account as an end-user guide avoiding the technicalities of higher categories.

\Subsection{Twisted quantum field theories and (op)lax natural transformations}

Atiyah \cite{MR1001453}, building on work of Segal \cite{MR2079383}, and extended by \cite{MR1256993,BaeDol95} and others, famously proposed that the mathematical definition of \define{quantum field theory} should be in terms of symmetric monoidal functors out of a suitable category of space-times.  More precisely, suppose that one wants to study $d$-dimensional quantum field theories defined in terms of some background geometry $\cG$ such as framings, orientations, but also conformal or Euclidean structures. To package these ``spacetimes'' together to obtain a symmetric monoidal $(\infty,n)$-category $\cat{Bord}^\cG_{d-n,\dots,d}$ for $n\leq d$, this geometry should make sense on manifolds of dimension $d-n+k$ which serve as $k$-morphisms for $k\leq n$; such field theories are called \define{$n$-extended}, and \define{fully extended, or fully local} when $n=d$. (See \cite{KolyaLong,MR2742432} for discussion of categories of cobordisms equipped with geometry and their role in quantum field theory.)  As target, one should consider a symmetric monoidal $(\infty,n)$-category $\cat{Vect}_n$ whose $n$-morphisms are numbers or matrices and whose $(n-1)$-morphisms are Hilbert spaces.  Then a \define{quantum field theory built on $\cat{Bord}^\cG_{d-n,\dots,d}$ with values in $\cat{Vect}_n$} is defined to be a symmetric monoidal functor $Z : \cat{Bord}^\cG_{d-n,\dots,d} \to \cat{Vect}_n$.

Functors $Z : \cat{Bord}^\cG_{d-n,\dots,d} \to \cat{Vect}_n$ capture many important examples of quantum field theories, but by no means all of them.  For example, they miss already conformal field theories in the sense of \cite{MR2079383} in which the space of conformal blocks (i.e.\ the modular functor) is nontrivial.  
 A generalization of the above definition was proposed in \cite{MR2742432} under the name \define{twisted quantum field theory}\footnote{The choice of terminology is unrelated to the ``topological twisting'' used for example in  \cite{MR958805}; the name is based instead on ``twisted K-theory''.}.  Suppose that one wants to model quantum field theories on a symmetric monoidal $(\infty,n)$-category $\cat{Bord}^\cG_{d-n,\dots,d}$.  A \define{twist} for such theories is a symmetric monoidal functor $T: \cat{Bord}^\cG_{d-n,\dots,d} \to \cat{Vect}_{n+1}$, meaning in particular that it takes top-dimensional ``spacetimes'' to vector or Hilbert spaces rather than numbers.  There is of course a trivial such functor $\unit : \cat{Bord}^\cG_{d-n,\dots,d} \to \cat{Vect}_{n+1}$, which takes every $k$-morphism in $\cat{Bord}^\cG_{d-n,\dots,d}$ to the identity $k$-morphism on the unit object $\unit \in \cat{Vect}_{n+1}$.  A \define{twisted quantum field theory} is supposed to be a symmetric monoidal natural transformation $Z : \unit \Rightarrow T$ of symmetric monoidal functors.

$$\begin{tikzpicture}
  \path node (B) {} node[anchor=east] {$\cat{Bord}^\cG_{d-n,\dots,d}$} +(2,0) node (C) {} node[anchor=west] {$\cC$};
  \draw[arrow] (B) .. controls +(1,-.5) and +(-1,-.5) .. coordinate (t)  (C) ;
  \draw[arrow] (B) .. controls +(1,.5) and +(-1,.5) .. coordinate (s)  (C);
  \draw[twoarrowlonger] (s) node[above] {$\scriptstyle \unit$} --   (t) node[below] {$\scriptstyle T$};
\end{tikzpicture}$$

\begin{remark} \label{remark.relative qft}
  A closely related notion, called \define{relative quantum field theory}, is proposed in \cite{FreedTeleman2012}.  In all their examples, the ``twist'' data is not just a functor $T: \cat{Bord}^\cG_{d-n,\dots,d} \to \cat{Vect}_{n+1}$, but in fact is the restriction of a functor $\tilde T : \cat{Bord}_{d-n,\dots,d+1} \to \cat{Vect}_{n+1}$, where $\cat{Bord}_{d-n,\dots,d+1}$ is the symmetric monoidal $(\infty,n+1)$-category of topological cobordisms up to dimension $d+1$.  Like twisted quantum field theories, \define{relative quantum field theories} with twist $T$ are thought of as transformations $Z : \unit \Rightarrow T$, but in \cite{FreedTeleman2012} it is suggested that the {definition} of such transformations should be in terms of $\cG$-geometric ``boundary conditions'' for the quantum field theory $\tilde T$. We will discuss a comparison in the fully local case in \cref{thm.comparison bdry and twisted}, see also \cite{FV2014}.
\end{remark}

Recall that given functors between bicategories $F,G: \cB \rightrightarrows \cC$, there are three natural notions of ``natural transformation'' $\eta : F \Rightarrow G$.  In all three cases, the data of $\eta$ includes, for each object $B \in \cB$, a morphism $\eta(B) : F(B) \to G(B)$ in $\cC$.  The difference is in how the three types of transformations handle ``naturality.''  If $\eta$ is \define{strong} (also called ``pseudo,'' to distinguish it from ``strict natural transformation between strict $2$-categories'') and $B_1 \overset b \to B_2$ is a 1-morphism in $B$, then the data of $\eta$ includes a 2-isomorphism $\eta(b) : G(b) \circ \eta(B_1) \cong \eta(B_2) \circ F(b)$, which is in turn compatible with compositions and natural for 2-morphisms in $\cB$.  But if $\eta$ is \define{lax}, then the data of $\eta$ only includes a 2-morphism $\eta(b) : G(b) \circ \eta(B_1) \Rightarrow \eta(B_2) \circ F(b)$ which may not be invertible, and if $\eta$ is \define{oplax} then we only have a 2-morphism $\eta(b) : \eta(B_2) \circ F(b) \Rightarrow G(b) \circ \eta(B_1)  $.

$$
\begin{tikzpicture}[baseline=(r.base)]
  \path node[dot] (sl) {} +(2,0) node[dot] (sr) {} +(0,-2) node[dot] (tl) {} +(2,-2) node[dot] (tr) {} +(1,-3) node {lax};
  \draw[arrow] (sl) node[anchor = south east] {$F(B_{1})$} -- node[auto] {$F(b)$} (sr) node[anchor = south west] {$F(B_{2})$};
  \draw[arrow] (sl)  -- node[auto,swap] {$\eta(B_{1})$} (tl);
  \draw[arrow] (tl) node[anchor = north east] {$G(B_{1})$} -- node[auto,swap] {$G(b)$}  (tr) node[anchor = north west] {$G(B_{2})$};
  \draw[arrow] (sr) -- node[auto] (r) {$\eta(B_{2})$} (tr);
  \draw[twoarrow] (tl) -- node[auto] {$\eta(b)$} (sr);
\end{tikzpicture}
\hspace{1in}
\begin{tikzpicture}[baseline=(r.base)]
  \path node[dot] (sl) {} +(2,0) node[dot] (sr) {} +(0,-2) node[dot] (tl) {} +(2,-2) node[dot] (tr) {} +(1,-3) node {oplax};
  \draw[arrow] (sl) node[anchor = south east] {$F(B_{1})$} -- node[auto] {$F(b)$} (sr) node[anchor = south west] {$F(B_{2})$};
  \draw[arrow] (sl)  -- node[auto,swap] {$\eta(B_{1})$} (tl);
  \draw[arrow] (tl) node[anchor = north east] {$G(B_{1})$} -- node[auto,swap] {$G(b)$}  (tr) node[anchor = north west] {$G(B_{2})$};
  \draw[arrow] (sr) -- node[auto] (r) {$\eta(B_{2})$} (tr);
  \draw[twoarrow] (sr) -- node[auto] {$\eta(b)$} (tl);
\end{tikzpicture}
$$

\begin{remark}
  The literature provides a majority opinion, but by no means a consensus, on the question of which natural transformations should be called ``lax'' and which ``oplax.''  Our use agrees with the majority, including the original paper~\cite{MR0220789}; \cite{MR1953060,MR2063092} is a notable example 
  that reverses which natural transformations are called ``lax'' and which ``oplax.''   
  Discussion of the costs and benefits of the standard terminology can be found at~\cite{lax-versus-oplax}.
\end{remark}

Abstract nonsense of  $(\infty,n)$-categories provides a straightforward generalization of ``strong'' natural transformations.  Indeed, the $(\infty,1)$-category of  $(\infty,n)$-categories is Cartesian closed, so given  $(\infty,n)$-categories $\cB$ and $\cC$, there is naturally an  $(\infty,n)$-category $[\cB,\cC]$ whose objects can be identified with  functors $\cB \to \cC$ and whose 1-morphisms are the  strong natural transformations.   But this is not good enough for quantum field theory: strong natural transformations lead to very few examples of twisted quantum field theories $Z : \unit \to T$; both \cite{MR2742432,FV2014} call instead for lax natural transformations.
 The failure of strong transformations  can be seen most clearly in topological quantum field theory where all objects $B \in \cB=\cat{Bord}_{d-n,\dots,d}$ have duals.  Suppose that $B \in \cB$ has a dual object $B^*$ and that $\eta : F \Rightarrow G$ is a symmetric monoidal strong natural transformation.  Then the data of $\eta$ includes 1-morphisms $\eta(B) : F(B) \to G(B)$ and $\eta(B^*) : F(B)^* \simeq F(B^*) \to G(B^*) \simeq G(B)^*$ and also naturality squares coming from the pairing and copairing for $B$ which, since $\eta$ is strong, are filled in by equivalences.  A straightfoward diagram chase then shows that $\eta(B)$ and $\eta(B^*)^* : G(B) \simeq G(B)^{**} \to F(B)^{**} \simeq F(B)$ are each others' inverses, and so $\eta$ is invertible.  We are therefore faced with our first motivating question: How to define lax and oplax natural transformations between strong functors of symmetric monoidal $(\infty,n)$-categories?

To explain our proposed solution, we mention one more way of thinking about strong natural transformations.  Let $\Theta^{(1)} = \{\tikz{ \path node[dot] (l) {} +(.5,0) node[dot](r) {} +(0,-3pt) coordinate (base); \draw[arrow] (l) -- (r);}\}$ denote the ``walking $1$-morphism'' (compare \cref{defn.walkingimorphism}).  It has the property that for any $(\infty,n)$-category $\cC$, functors $\Theta^{(1)} \to \cC$ are the same as 1-morphisms in $\cC$.  Cartesian closedness of the $(\infty,1)$-category of $(\infty,n)$-categories provides an $(\infty,n)$-category $[\Theta^{(1)},\cC]$ of functors $\Theta^{(1)} \to \cC$ along with ``source'' and ``target'' functors $s,t: [\Theta^{(1)},\cC] \to \cC$ corresponding to the two inclusions $\Theta^{(0)} = \{\tikz \path node[dot] (l) {} +(0,-3pt) coordinate (base); \} \rightrightarrows \Theta^{(1)}$ (note that $\cC \simeq [\Theta^{(0)},\cC]$).  Given functors $F,G : \cB \rightrightarrows \cC$, a \define{strong natural transformation} $F \Rightarrow G$ is nothing but a functor $\eta : \cB \to [\Theta^{(1)},\cC]$ satisfying $s \circ \eta = F$ and $t \circ \eta = G$.

Similarly, in \cref{defn.(op)lax arrow category} we will construct, for any $(\infty,n)$-category $\cC$, two new $(\infty,n)$-categories which we will call $\cC^\downarrow$ and $\cC^\rightarrow$ with functors $s,t: \cC^\downarrow \rightrightarrows \cC$ and $s,t : \cC^\to \rightrightarrows \cC$.  All three $(\infty,n)$-categories $[\Theta^{(1)},\cC]$, $\cC^\downarrow$, and $\cC^\rightarrow$ have the same objects --- the one-morphisms in $\cC$ --- although we think of them slightly differently: in $\cC^\downarrow$ we think of the objects as ``vertical'' arrows 
$$ \begin{tikzpicture}[baseline=(c.base)]
  \path node[dot] (sl) {} +(0,-1) node[dot] (sr) {};
  \draw[arrow] (sl) node[anchor = east] (C) {$C_{1}$} -- node[auto] (c) {$c$} (sr) node[anchor = east] {$C_{2}$};
\end{tikzpicture} \quad \in \cC^\downarrow, $$
whereas we think of the objects of $\cC^\rightarrow$ as ``horizontal'' arrows
$$ \begin{tikzpicture}[baseline=(C.base)]
  \path node[dot] (sl) {} +(1,0) node[dot] (sr) {};
  \draw[arrow] (sl) node[anchor = east] (C) {$C_{1}$} -- node[auto] (c) {$c$} (sr) node[anchor = west] {$C_{2}$};
\end{tikzpicture} \quad \in \cC^{\rightarrow}.$$
The differences between $[\Theta^{(1)},\cC]$, $\cC^\downarrow$, and $\cC^\rightarrow$ are in the higher morphisms.  A 1-morphism in $[\Theta^{(1)},\cC]$ is by definition a commuting-up-to-isomorphism square in $\cC$, i.e.\ a diagram in $\cC$ of shape $\Theta^{(1)} \times \Theta^{(1)}$.  The 1-morphisms on $\cC^\downarrow$ and $\cC^\rightarrow$, for comparison, are each squares that commute only up to non-invertible 2-morphism.  A typical 1-morphism $c \to d$ in $\cC^{\downarrow}$ is a diagram in $\cC$ of shape
$$
\begin{tikzpicture}[baseline=(r.base)]
  \path node[dot] (sl) {} +(2,0) node[dot] (sr) {} +(0,-2) node[dot] (tl) {} +(2,-2) node[dot] (tr) {};
  \draw[arrow] (sl) node[anchor = south east] {$C_1$} -- node[auto] {} (sr) node[anchor = south west] {$D_{1}$};
  \draw[arrow] (sl)  -- node[auto,swap] {$c$} (tl);
  \draw[arrow] (tl) node[anchor = north east] {$C_{2}$} -- node[auto,swap] {}  (tr) node[anchor = north west] {$D_2$};
  \draw[arrow] (sr) -- node[auto] (r) {$d$} (tr);
  \draw[twoarrow] (tl) -- node[auto] {$f$} (sr);
\end{tikzpicture}
$$
and a typical 1-morphism $c \to d$ in $\cC^{\rightarrow}$ looks similar, but with reversed direction of the 2-morphism filling the square 
$$
\begin{tikzpicture}[baseline=(middle)]
  \path node[dot] (sl) {} +(2,0) node[dot] (sr) {} +(0,-2) node[dot] (tl) {} +(2,-2) node[dot] (tr) {};
  \draw[arrow] (sl) node[anchor = south east] {$C_1$} -- node[auto] {$c$} (sr) node[anchor = south west] {$C_2$};
  \draw[arrow] (sl)  -- node[auto,swap] {} (tl);
  \draw[arrow] (tl) node[anchor = north east] {$D_1$} -- node[auto,swap] {$d$}  (tr) node[anchor = north west] {$D_2$};
  \draw[arrow] (sr) -- node[auto] (r) {} (tr);
  \draw[twoarrow] (tl) -- node (middle) {} node[auto] {$f$} (sr);
\end{tikzpicture}
\quad \text{ or, flipping the diagram, } \quad
\begin{tikzpicture}[baseline=(middle)]
  \path node[dot] (sl) {} +(2,0) node[dot] (sr) {} +(0,-2) node[dot] (tl) {} +(2,-2) node[dot] (tr) {};
  \draw[arrow] (sl) node[anchor = south east] {$C_1$} -- node[auto] {} (sr) node[anchor = south west] {$D_{1}$};
  \draw[arrow] (sl)  -- node[auto,swap] {$c$} (tl);
  \draw[arrow] (tl) node[anchor = north east] {$C_{2}$} -- node[auto,swap] {}  (tr) node[anchor = north west] {$D_2$};
  \draw[arrow] (sr) -- node[auto] (r) {$d$} (tr);
  \draw[twoarrow] (sr) --node (middle) {} node[auto] {$f$} (tl);
\end{tikzpicture}
.
$$
In general, the $k$-morphisms of $\cC^\downarrow$ and $\cC^\rightarrow$ will be defined in terms of diagrams of certain shapes (which we will call $\Theta^{(k);(1)}$ and $\Theta^{(1);(k)}$ respectively) in $\cC$.  For example, the 2-morphisms $f \Rightarrow g$ in $\cC^\downarrow$ and $\cC^\rightarrow$ look, respectively, like
$$
\begin{tikzpicture}[baseline=(middle)]
  \path node[dot] (a) {} node[anchor= south east] {$C_{1}$} +(3,0) node[dot] (b) {} node[anchor= south west] {$D_{1}$} +(0,-3) node[dot] (c) {} node[anchor= north east] {$C_{2}$} +(3,-3) node[dot] (d) {} node[anchor= north west] {$D_{2}$}+(0,-1.5) coordinate (middle);
  \path (b) +(-.35,.25) coordinate (alpha1);
  \path (c) +(.35,-.25) coordinate (alpha2);
  \draw[arrow] (a) -- coordinate (l1) node[anchor=east] {$c$} coordinate[very near end] (r1) (c);
  \draw[arrow] (b) -- coordinate[very near start] (l2) coordinate (r2) node[anchor=west] (rightlable) {$d$} (d);
  \draw[twoarrow,thin] (l1) -- node[auto,pos=.4,inner sep=1pt] {$f_{1}$} (l2);
  \draw[arrow] (a) .. controls +(1.5,.75) and +(-1.5,.75) .. coordinate (ss)  (b);
  \draw[arrow,thin] (c) .. controls +(1.5,.75) and +(-1.5,.75) .. coordinate (ts)  (d);
  \draw[arrow] (c) .. controls +(1.5,-.75) and +(-1.5,-.75) .. coordinate (tt) (d);
  \draw[twoarrowlonger,thin] (ts) --  (tt);
  \draw[threearrowpart1] (alpha1) --  (alpha2);
  \draw[threearrowpart2] (alpha1) --  (alpha2);
  \draw[threearrowpart3] (alpha1) --  (alpha2);
  \draw[arrow,thick] (a) .. controls +(1.5,-.75) and +(-1.5,-.75) .. coordinate (st) (b);
  \draw[twoarrowlonger,thick] (ss) --  (st);
  \draw[twoarrow,thick] (r1) -- node[auto,swap,pos=.6,inner sep=1pt] {$f_{2}$} (r2);
\end{tikzpicture} 
\quad \text{ and } \quad
\begin{tikzpicture}[baseline=(middle)]
  \path node[dot] (a) {} +(3,0) node[dot] (b) {} +(0,-3) node[dot] (c) {} +(3,-3) node[dot] (d) {} +(0,-1.5) coordinate (middle);
  \draw[arrow] (a) .. controls +(-.75,-1.5) and +(-.75,1.5) .. coordinate[near end] (lt) coordinate[very near end] (lt2) (c);
  \draw[arrow,thin] (a) .. controls +(.75,-1.5) and +(.75,1.5) .. coordinate[near start] (ls) coordinate (ls2) (c);
  \draw[twoarrowlonger] (ls) -- (lt);
  \draw[arrow] (b) .. controls +(.75,-1.5) and +(.75,1.5) .. coordinate[near start] (rs) coordinate[very near start] (rs0) (d);
  \draw[twoarrow] (ls2) -- coordinate[near end] (s) node[auto] {$f_1$} (rs0);
  \draw[arrow] (b) .. controls +(-.75,-1.5) and +(-.75,1.5) .. coordinate[near end] (rt) coordinate (rt0) (d);
  \draw[twoarrowlonger,thick] (rs) -- (rt);
  \draw[arrow] (a) node[anchor = south east] {$C_1$} -- node[auto] {$c$} (b) node[anchor = south west] {$C_2$};
  \draw[arrow] (c) node[anchor = north east] {$D_1$} -- node[below] {$d$} (d) node[anchor = north west] {$D_2$};
  \path (lt2) -- coordinate[near start] (t) (rt0);
  \draw[threearrowpart1] (t) --  (s); 
  \draw[threearrowpart2] (t) -- (s); 
  \draw[threearrowpart3] (t) -- (s); 
  \draw[twoarrow,thick] (lt2) -- node[auto,swap] {$f_2$} (rt0);
\end{tikzpicture}.$$

Our diagrammatics are an analog of the construction from \cite{MR1667313}.  The $(\infty,n)$-categories $\cC^\downarrow$ and $\cC^\rightarrow$ are built so that the following generalizes the usual notions from bicategories.

\begin{definition}\label{defn.oplax-transformation}
  Let $\cB$ and $\cC$ be $(\infty,n)$-categories and $F,G: \cB \rightrightarrows \cC$ functors.  
  A \define{lax natural transformation} $\eta : F \Rightarrow G$ is a  functor $\eta : \cB \to \cC^{\downarrow}$ such that $s\circ\eta = F$ and $t\circ \eta = G$.
  An \define{oplax natural transformation} $\eta: F \Rightarrow G$ is a  functor $\eta: \cB \to \cC^{\rightarrow}$ such that $s\circ\eta = F$ and $t\circ \eta = G$.
\end{definition}

When $\cC$ is symmetric monoidal, so too will be $\cC^\downarrow$ and $\cC^\rightarrow$, and the obvious symmetric monoidal variant of \cref{defn.oplax-transformation} is given in \cref{defn.sym mon lax transfor}.
Specializing to the case when $\cB = \cat{Bord}^\cG_{d-n,\dots,d}$ and $\cC = \cat{Vect}_{n+1}$, we have:
\begin{definition}\label{defn.T-twisted theory introduction}
  Let $T : \cat{Bord}^\cG_{d-n,\dots,d} \to \cat{Vect}_{n+1}$ be a symmetric monoidal functor.  A \define{lax $T$-twisted quantum field theory} is a symmetric monoidal lax natural transformation $Z : \unit \Rightarrow T$, i.e.\ a functor $Z : \cat{Bord}^\cG_{d-n,\dots,d} \to (\cat{Vect}_{n+1})^{\downarrow}$ such that $s\circ Z = \unit$ is the trivial field theory and $t \circ Z = T$ is the given twist $T$.  An \define{oplax $T$-twisted relative field theory} is a symmetric monoidal oplax natural transformation $Z : \unit \Rightarrow T$, i.e.\ a functor $Z : \cat{Bord} \to \cC^{\to}$ such that $s\circ Z = \unit$ and $t\circ Z = T$.
\end{definition}

Inspection of the literature reveals that $T$-twisted quantum field theories are sometimes taken to be lax and sometimes oplax.  \emph{Oplax} $T$-twisted  field theories enjoy the property that for any closed cobordism $b\in \cat{Bord}$, the value $Z(b)$ of the field theory on $b$ is an ``element'' of the twist $T(b)$ (see \cref{eg.lax versus oplax T-twisted qfts}); this is often taken as the defining property of ``$T$-twisted quantum field theories.''  On the other hand, we would be able to recover the usual notion of functorial QFT when twisting by the trivial twist (c.f.\ \cite[Lemma 5.7]{MR2742432}). This suggests that \emph{lax} $T$-twisted quantum field theories better deserve the name, because of the following fact which we will prove in \cref{trivially twisted versus untwisted}: 
\begin{theorem}\label{intro lax is better}
  Lax \define{trivially-twisted}  field theories in a symmetric monoidal $(\infty,n+1)$-category $\cC$ --- i.e.\ $T$-twisted theories for $T = \unit$ the trivial field theory --- are the same as ``absolute'' (also called ``untwisted'') field theories valued in the ``looping'' $\Omega\cC$ of $\cC$ --- i.e.\ the symmetric monoidal $(\infty,n)$-category of endomorphisms of the unit object in $\cC$.
\end{theorem}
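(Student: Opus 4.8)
The plan is to reduce the whole statement to a single computation: that the fiber of the combined source--target functor $(s,t)\colon \cC^\downarrow \to \cC\times\cC$ over the pair $(\unit,\unit)$ is equivalent, as a symmetric monoidal $(\infty,n)$-category, to $\Omega\cC$. Granting this, the theorem is formal. By \cref{defn.oplax-transformation} a lax natural transformation $Z\colon \unit \Rightarrow \unit$ is exactly a functor $Z\colon \cat{Bord}^\cG_{d-n,\dots,d} \to \cC^\downarrow$ with $s\circ Z = \unit$ and $t\circ Z = \unit$, where $\unit$ now denotes the constant functor at the monoidal unit of $\cC$. These two conditions say precisely that $(s,t)\circ Z$ is the constant functor at $(\unit,\unit)$, hence factors through the point; by the universal property of the fiber such $Z$ are the same as functors $\cat{Bord}^\cG_{d-n,\dots,d} \to \Omega\cC$, i.e.\ absolute field theories valued in $\Omega\cC$. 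In the symmetric monoidal setting of \cref{defn.sym mon lax transfor} the identical argument applies once the fiber equivalence is known to be symmetric monoidal.

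So the content is the fiber computation, and here is how I would carry it out. Recall that the $k$-morphisms of $\cC^\downarrow$ are by construction diagrams in $\cC$ of a fixed shape $\Theta^{(k);(1)}$, and that $s$ and $t$ extract the two boundary faces singled out by the ``$(1)$'' direction. Imposing $s=t=\unit$ forces both of these boundary faces to be trivial: every object on them becomes the unit object $\unit$ and every $1$-cell becomes an identity $\id_\unit$, and likewise in higher degrees. I would show, by inspecting the shapes $\Theta^{(k);(1)}$, that a $\Theta^{(k);(1)}$-diagram whose $(1)$-direction has been collapsed in this way is precisely a $\Theta^{(k)}$-diagram valued in the endomorphisms of $\unit$ --- the ``$(1)$''-factor having been contracted to a point, the residual combinatorics is exactly that governing the $k$-morphisms of $\Omega\cC$. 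The low-degree cases anchor the correspondence and fix all orientations: an object of the fiber is an endomorphism $c\colon\unit\to\unit$, i.e.\ an object of $\Omega\cC$; a $1$-morphism of the fiber is a square both of whose $s$- and $t$-edges are $\id_\unit$, so after cancelling units it is simply a $2$-morphism of $\cC$ between two endomorphisms of $\unit$, i.e.\ a $1$-morphism of $\Omega\cC = \mathrm{End}_\cC(\unit)$; and the higher cells match in the same fashion.

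To make this rigorous rather than heuristic I would phrase the collapse as a levelwise statement in the (complete Segal, or $\Theta_n$-space) presheaf model underlying the construction of \cref{defn.(op)lax arrow category}, check that it is compatible with the Segal and completeness conditions so that the resulting object really is the $(\infty,n)$-category $\Omega\cC$, and verify that the square defining the fiber is a homotopy pullback --- using that $(s,t)$ is a fibration in the relevant model, which should already be part of the construction of $\cC^\downarrow$. Finally I would trace through the monoidal structures: the tensor product on $\cC^\downarrow$ is induced levelwise from that on $\cC$, it is strictly compatible with $(s,t)$, and its restriction to the fiber is the standard symmetric monoidal structure on $\mathrm{End}_\cC(\unit)$; hence the fiber equivalence upgrades to a symmetric monoidal equivalence and the reduction of the first paragraph goes through.

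I expect the main obstacle to be exactly the combinatorial identification of the collapsed shape obtained from $\Theta^{(k);(1)}$ with $\Theta^{(k)}$, \emph{uniformly in} $k$, together with the bookkeeping needed to see that this identification is homotopy-coherent --- that it assembles into an equivalence of $(\infty,n)$-categories, not merely a dimensionwise bijection of cells, and respects the completeness condition. Once the shapes are pinned down the unit-cancellation is routine, and the homotopy-pullback and symmetric monoidal formalities are standard.
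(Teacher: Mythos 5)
Your proposal is correct and is essentially the paper's own proof: the collapse identification you single out as the crux is the paper's \cref{lemma.untwisted} (note the collapsed shape is $\Theta^{(k+1)} = \Theta^{1,(k)}$ --- the shape whose $\cC$-diagrams with endpoints at $\unit$ are exactly the $k$-morphisms of $\Omega\cC$ --- rather than $\Theta^{(k)}$ itself, as your own low-degree examples already show), and your symmetric monoidal fiber computation is \cref{thm.qft}, from which the theorem follows formally just as you describe. The only cosmetic difference is that the paper works with homotopy fiber products throughout (defining $\cC^{\unit\downarrow\unit}$ and $\mathrm{Lax}_\otimes(\unit,\unit)$ as such), so it never needs $(s,t)$ to be a fibration, and it obtains symmetric monoidality from functoriality of the equivalence in the target category applied levelwise to $\cD = \cC[m]$.
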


A good definition of $\cC=\cat{Vect}_{n+1}$ as desired above should satisfy that $\Omega\cat{Vect}_{n+1} \simeq \cat{Vect}_n$, so \cref{intro lax is better} indeed recovers (untwisted) quantum field theories $Z:\cat{Bord}^\cG_{d-n,\dots,d} \to \cat{Vect}_n$ as defined earlier.

With our definition of twisted quantum field theories at hand, we turn to the fully local topological case.  Let $\cat{Bord}_n^\fr$ denote the fully extended framed topological bordism category from \cite{Lur09,ClaudiaDamien1}; the \define{Cobordism Hypothesis} of \cite{Lur09} identifies field theories based on $\cat{Bord} = \cat{Bord}_n^\fr$ with {$n$-dualizable objects} in the target category.   Using the cobordism hypothesis, we can classify twisted field theories based on $\cat{Bord}_n^\fr$.  The following theorem is a special case of \cref{cor.lax twisted framed tfts}, which also covers ``higher twists'' between twisted field theories.

\begin{theorem}
  Let $\cC$ be a symmetric monoidal $(\infty,n+1)$-category and $T : \cat{Bord}_n^\fr \to \cC$ a fully extended framed twist.  A lax $T$-twisted fully extended framed field theory is classified by a 1-morphism $f: \unit \to T(\mathrm{pt})$ in $\cC$ which is \define{$n$-times left-adjunctible} in that it has a left-adjoint $f^L$, the unit and counit 2-morphisms witnessing the adjunction between $f$ and $f^L$ both have left adjoints, the unit and counit 3-morphisms of those two adjunctions all have left adjoints, and so on $n$ times.  An oplax $T$-twisted fully extended framed field theory is classified by a 1-morphism $f: \unit \to T(\mathrm{pt})$ in $\cC$ which is \define{$n$-times right-adjunctible} in a similar sense.
\end{theorem}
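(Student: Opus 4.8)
The plan is to deduce the statement from the framed Cobordism Hypothesis of \cite{Lur09} applied to the target $\cC^{\downarrow}$ (respectively $\cC^{\to}$), combined with an analysis of what full dualizability means inside the lax (respectively oplax) arrow category. First recall that since $\cC$ is an $(\infty,n+1)$-category, both $\cC^{\downarrow}$ and $\cC^{\to}$ are symmetric monoidal $(\infty,n)$-categories and $s,t\colon \cC^{\downarrow}\rightrightarrows\cC$ are symmetric monoidal functors. By \cref{defn.T-twisted theory introduction} a lax $T$-twisted fully extended framed field theory is a symmetric monoidal functor $Z\colon \cat{Bord}_n^\fr\to\cC^{\downarrow}$ with $s\circ Z=\unit$ and $t\circ Z=T$; in particular its value $Z(\mathrm{pt})$ is an object of $\cC^{\downarrow}$ whose source is $\unit$ and whose target is $T(\mathrm{pt})$, i.e. exactly a $1$-morphism $f\colon\unit\to T(\mathrm{pt})$ in $\cC$. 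It is this $f$ that will classify $Z$.

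Next I would invoke the Cobordism Hypothesis in the form $\mathrm{Fun}^{\otimes}(\cat{Bord}_n^\fr,\cD)\simeq (\cD^{\fd})^{\sim}$, the maximal $\infty$-groupoid of fully dualizable objects of a symmetric monoidal $(\infty,n)$-category $\cD$, which is natural in $\cD$. Naturality applied to the symmetric monoidal functors $s,t\colon\cC^{\downarrow}\to\cC$ yields a commuting square whose horizontal maps are these equivalences and whose vertical maps are $(s_*,t_*)$ on functors and $(s,t)$ on fully dualizable objects. Taking the fiber of $(s_*,t_*)$ over $(\unit,T)$ and the fiber of $(s,t)$ over $(\unit,T(\mathrm{pt}))$ --- base points that correspond under the equivalences, since $T(\mathrm{pt})$ is the fully dualizable object classifying $T$ --- identifies the space of lax $T$-twisted framed field theories with the space of fully dualizable objects $f$ of $\cC^{\downarrow}$ satisfying $s(f)=\unit$ and $t(f)=T(\mathrm{pt})$, i.e. with fully dualizable $1$-morphisms $f\colon\unit\to T(\mathrm{pt})$. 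Note that $\unit$ and $T(\mathrm{pt})$ are automatically fully dualizable in $\cC$ (the unit always is, and $T(\mathrm{pt})$ because $T$ is itself a framed field theory), so these base points genuinely lie in the fully dualizable loci.

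The main step is then to show that a $1$-morphism $f\colon\unit\to T(\mathrm{pt})$ is fully dualizable as an object of $\cC^{\downarrow}$ if and only if it is $n$-times left-adjunctible. To see the first layer, unfold the duality data of $f$ in $\cC^{\downarrow}$: a dual $f^{\vee}$ has source $\unit$ and target $T(\mathrm{pt})^{\vee}$ (since $s,t$ preserve duals), and the evaluation and coevaluation are $1$-morphisms of $\cC^{\downarrow}$, i.e. lax squares whose filling $2$-cells by construction point in the lax direction $u_2\circ c\Rightarrow d\circ u_1$. Choosing the evident horizontal legs built from the duality data of $X:=T(\mathrm{pt})$ in $\cC$, these $2$-cells take the form $\mathrm{coev}_X\Rightarrow f\otimes f^{\vee}$ and $f^{L}\circ f\Rightarrow\id_{\unit}$ with $f^{L}:=\mathrm{ev}_X\circ(f^{\vee}\otimes\id_X)\colon X\to\unit$, and the triangle identities for duality in $\cC^{\downarrow}$ become precisely the triangle identities exhibiting $f^{L}$ as a left adjoint of $f$. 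Thus dualizability of $f$ in $\cC^{\downarrow}$ is equivalent to $f$ admitting a left adjoint. One then recurses: full dualizability of $f$ further requires the evaluation and coevaluation of $\cC^{\downarrow}$, then the units and counits of their adjunctions, and so on up to level $n$, to admit adjoints; via the same dictionary --- now using the higher diagram shapes $\Theta^{(k);(1)}$ defining the $k$-morphisms of $\cC^{\downarrow}$ --- these successive conditions translate into the requirement that the unit and counit $2$-morphisms of $f^{L}\dashv f$ have left adjoints, that the resulting unit and counit $3$-morphisms have left adjoints, and so on $n$ times. This is exactly the definition of $n$-times left-adjunctible, and combined with the fiber description of the previous step it classifies lax $T$-twisted framed field theories.

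The oplax case is formally identical, carried out with $\cC^{\to}$ in place of $\cC^{\downarrow}$; the only change is that the filling $2$-cells of $\cC^{\to}$ point in the opposite direction $d\circ u_1\Rightarrow u_2\circ c$, so the same unfolding produces the $2$-cells of an adjunction $f\dashv f^{L}$ and, recursively, right adjoints at every higher level, yielding $n$-times right-adjunctibility. I expect the main obstacle to be the recursive bookkeeping in the heart of the argument: one must match the $\infty$-categorical recursive criterion for full dualizability (dualizability together with finitely many successive layers of adjoints for the evaluation and coevaluation, in the sense of \cite{Lur09}) against the recursive description of the higher morphisms of $\cC^{\downarrow}$ and $\cC^{\to}$, verifying coherently at each of the $n$ levels --- not merely the first --- both that the relevant $\cC^{\downarrow}$-adjoints exist exactly when the corresponding cells of $\cC$ admit one-sided adjoints and that the handedness (left for lax, right for oplax) and the triangle identities propagate correctly up the tower.
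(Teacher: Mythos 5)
Your overall route is the same as the paper's: invoke the framed Cobordism Hypothesis for the target $\cC^{\downarrow}$ (resp.\ $\cC^{\to}$) and then characterize full dualizability there. The reduction to fully dualizable objects of $\cC^{\downarrow}$ with prescribed source $\unit$ and target $T(\mathrm{pt})$, and your unfolding of the first layer (duality data of $f$ in $\cC^{\downarrow}$ versus a left adjoint of $f$ in $\cC$), are correct and correspond exactly to \cref{cor.lax twisted framed tfts} and \cref{prop.1dualizability in C-down}. The problem is the part you defer as ``recursive bookkeeping'': it is not bookkeeping, it is the bulk of the proof, and the dictionary you propose to recurse with is not available as stated.

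Concretely, $n$-dualizability requires the evaluation and coevaluation witnesses to be \emph{adjunctible} in the two-sided, iterated sense, while the theorem's conclusion is purely left-handed. Your recursion rests on the claim that adjoints in $\cC^{\downarrow}$ exist exactly when the corresponding cells of $\cC$ admit one-sided adjoints; this is true for \emph{left} adjoints but fails for right adjoints: a right adjoint of a morphism in $\cC^{\smallbox{lax}}_{(j)}$ corresponds to a right adjoint not of its bulk cell in $\cC$ but of a \emph{mate} of that cell (\cref{oplax handedness}). The paper therefore never translates the two-sided condition at all: it first proves that $n$-dualizability can be tested by $n$-times left-adjunctibility alone (\cref{lemma simplifying adjunctibility} and \cref{corollary.left is enough} --- a genuinely nontrivial equivalence, proved by an automorphism-and-mates argument), and only then runs the induction, whose step is \cref{lax left adjunctibility}: left-adjunctibility of an $i$-morphism of $\cC^{\smallbox{lax}}_{(j)}$ is equivalent to left-adjunctibility of its vertical source and target together with left-adjunctibility of its bulk $(i+j)$-cell in $\cC$, the two adjoints being related by mates and whiskerings. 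You flag this handedness issue yourself at the end, but flagging is not resolving, and the resolution is not a verification of your dictionary but a different reduction that makes the right-handed half unnecessary. Until those two statements (or substitutes) are supplied, the passage from the first layer to all $n$ layers --- and with it the lax/left versus oplax/right dichotomy in the statement --- is a genuine gap.
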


Again, using the Cobordism Hypothesis, this time the version for manifolds with singularities, we obtain in \cref{thm.comparison bdry and twisted} a comparison result to fully extended boundary field theories, proposed in \cite{FreedTeleman2012} as the definition of ``relative field theory''.
\begin{theorem}\label{cor.intro comparison to boundary field theory}
Let $\cat{Bord} = \cat{Bord}_n^{\fr}$ denote the fully extended framed topological bordism category from \cite{Lur09,ClaudiaDamien1} and $\cat{Bord}_{n+1}^{\fr,\partial}$ the fully extended framed bordism category with ``free boundaries'' described in Section 4.3 in \cite{Lur09}.  Let $\cC$ be a symmetric monoidal $(\infty,n+1)$-category with duals.  The following are equivalent:
\begin{enumerate}
  \item 1-morphisms in $\cC$ with source $\unit$,
  \item fully extended $n$-dimensional {boundary} field theories $\cat{Bord}_{n+1}^\partial \to \cC$,
  \item lax twisted field theories with trivial source,
  \item oplax twisted field theories with trivial source.
\end{enumerate}
\end{theorem}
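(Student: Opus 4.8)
The plan is to treat item (1) as a hub and show that each of (2), (3), and (4) is equivalent to it; since all three equivalences will be natural in the relevant data, the four resulting spaces of data get identified with one another. The essential simplification is that $\cC$ has duals, so that every $k$-morphism (for $1 \le k \le n$) admits both a left and a right adjoint. Consequently every $1$-morphism with source $\unit$ is automatically $n$-times left-adjunctible \emph{and} $n$-times right-adjunctible, and every object of $\cC$ is fully dualizable. This is the observation that makes the lax and oplax notions collapse onto the same classifying datum.

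For the equivalences $(1)\Leftrightarrow(3)$ and $(1)\Leftrightarrow(4)$ I would invoke the classification of twisted framed field theories from \cref{cor.lax twisted framed tfts}. That result says that, for a fixed framed twist $T:\cat{Bord}_n^{\fr} \to \cC$, a lax $T$-twisted theory with trivial source is the same datum as an $n$-times left-adjunctible $1$-morphism $f:\unit \to T(\mathrm{pt})$, while an oplax one is the same datum as an $n$-times right-adjunctible $1$-morphism $f:\unit \to T(\mathrm{pt})$. By the previous paragraph both adjunctibility conditions are vacuous in $\cC$, so in each case the data is simply a $1$-morphism $f:\unit \to T(\mathrm{pt})$. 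It then remains to absorb the choice of twist: by the (ordinary) cobordism hypothesis applied to $\cat{Bord}_n^{\fr}$, framed twists $T$ are classified by their value $T(\mathrm{pt})$, which ranges over the $n$-dualizable objects of $\cC$ --- that is, over \emph{all} objects, since $\cC$ has duals. Thus the combined datum consisting of a twist $T$ together with $Z:\unit \Rightarrow T$ is equivalent to a single $1$-morphism $f:\unit \to X$ with $X = T(\mathrm{pt})$ arbitrary, which is exactly item (1).

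For $(1)\Leftrightarrow(2)$ I would appeal to the cobordism hypothesis for manifolds with singularities, in the ``free boundary'' form described in Section 4.3 of \cite{Lur09}. A fully extended boundary field theory $\cat{Bord}_{n+1}^{\fr,\partial} \to \cC$ restricts on the bulk to an $(n+1)$-dimensional theory classified by an object $X \in \cC$, and the additional boundary datum is classified by a $1$-morphism $\unit \to X$ subject to iterated dualizability constraints. As before, these constraints hold automatically because $\cC$ has duals, so a boundary theory is determined by, and can be freely built from, a $1$-morphism $\unit \to X$ --- again item (1). The framing convention at the boundary is what forces the source (rather than the target) to be $\unit$.

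The hard part will be two-fold. First, one must check carefully that ``having duals'' really forces \emph{all} the iterated left- and right-adjunctibility conditions appearing in \cref{cor.lax twisted framed tfts}; this is the technical heart linking both the lax and the oplax notions to the same object (1), and it is where the hypothesis on $\cC$ is used essentially. Second, and more delicate, is making the free-boundary cobordism hypothesis precise and verifying that its classifying datum is literally the same $1$-morphism $\unit \to X$: this rests on Lurie's (only sketched) singular cobordism hypothesis and on correctly identifying the stratified bordism category $\cat{Bord}_{n+1}^{\fr,\partial}$. Once these are in hand, naturality of the three equivalences in $X$ --- and hence the mutual compatibility of the identifications of (2), (3), and (4) with (1) --- is routine.
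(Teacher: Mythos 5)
Your proposal is correct and follows essentially the same route as the paper: item (1) is used as the hub, the equivalences with (3) and (4) come from \cref{cor.lax twisted framed tfts} together with the observation that ``has duals'' makes $n$-times left-/right-adjunctibility (and $n$-dualizability of objects, hence the classification of twists) automatic, and the equivalence with (2) is Lurie's Cobordism Hypothesis for Manifolds with Singularities in its free-boundary form. The only point you flag as delicate that the paper treats as immediate is the adjunctibility check: it holds by the very definition of ``with duals'' (every $1$-morphism in an $(\infty,n+1)$-category with duals is $n$-times adjunctible), so no further argument is needed there.
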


\Subsection{Higher (op)lax transfors and even higher Morita categories}

Our second motivation was to provide a higher-categorical generalization of Morita theory.  Classical Morita theory concerns the \define{Morita bicategory} $\cat{Alg}(\cat{Vect}_\bK)$, first introduced in \cite{MR0220789}, whose objects are algebras over $\bK$, 1-morphisms are bimodules, and 2-morphisms are homomorphisms of bimodules; equivalence in this bicategory is the \define{Morita equivalence} introduced in \cite{MR0096700}.
Generalizing this construction, given a symmetric monoidal category $\cC$ we can assign its Morita bicategory $\cat{Alg}(\cC) = \cat{Alg}_1(\cC)$, which, when instead considering symmetric monoidal $(\infty,1)$-categories $\cC$, is merely the first in an infinite sequence of ``higher Morita categories'' $\cat{Alg}_d(\cC)$. By definition, the objects of $\cat{Alg}_d(\cC)$ are $E_d$-algebras in $\cC$, i.e.\ algebras for the little $d$-disks operad, which objects equipped with $d$ compatible associative unital multiplications.  The 1-morphisms are bimodules between $E_d$-algebras which are themselves $E_{d-1}$-algebras.  The 2-morphisms are $E_{d-2}$-bimodules between the 1-morphisms, with compatibility conditions.  One continues in this way until one gets to the $d$-morphisms, where a choice must be made: to generalize Morita's original work, one should take merely bimodules between $E_1$-algebras; for many applications it is better to take \emph{pointed} bimodules (also called $E_0$-algebras) between the $E_1$-algebras.  In either case the construction produces from $\cC$ a symmetric monoidal $(\infty,d)$-category $\cat{Alg}_d(\cC)$.

Under certain mild technical conditions guaranteeing that bimodule tensor product is associative (see \cref{defn.sifted}), the papers \cite{HaugsengEn,ClaudiaDamien2}, using different techniques, construct versions of the higher Morita $(\infty,d)$-categories $\Alg_d(\cC)$, the former without, the latter with pointings.  Both constructions in fact turn the $(\infty,1)$-category $\cC$ into an $(\infty,d+1)$-categorical extension of $\Alg_d(\cC)$ whose $0$- through $d$-morphisms are algebras and bimodules and whose $(d+1)$-morphisms are homomorphisms of bimodules.  For applications in quantum field theory, however, this does not suffice: the higher morphisms in $\cC$ should be used as well. For example, \cite[Conjecture 6.5]{BZBJ} requires an $(\infty,4)$-categorical version of $\Alg_2(\cat{Cat})$ whose objects are braided monoidal categories, 1-morphisms are monoidal categories, 2-morphisms are bimodule categories, 3-morphisms are bimodule functors, and 4-morphisms are bimodule natural transformations. Similarly, the category $\cat{TC}$ from \cite{DSPS1,DSPS2} should be a subcategory of an $(\infty,3)$-categorical version of $\Alg_1(\cat{Cat})$.

There are, however, three different types of ``bimodule functors'' between bimodule categories \cite{MR1007911}: in a \define{strong bimodule functor}, compatibility with the actions can be imposed up to isomorphism; in \define{lax} and \define{oplax} bimodule functors, compatibility is imposed just by possibly-non-invertible natural transformation, the direction of which depends on the choice of ``lax'' or ``oplax.''  Our categories $\cC^\downarrow$ and $\cC^\to$ clarify and extend this trichotomy, cf.~\cref{ex.Morita category}:

\begin{definition}\label{defn.oplax-homomorphism}
  Let $\cC$ be a symmetric monoidal $(\infty,n)$-category. Given bimodule objects $A$ and $B$ in $\cC$, a \define{lax morphism} $f: A \to B$ is a bimodule object $f$ in $\cC^{\downarrow}$ such that   $s(f)=A$ and $t(f)=B$.  An \define{oplax morphism} $f: A \to B$ is a bimodule object $f$ in $\cC^{\rightarrow}$ such that   $s(f)=A$ and $t(f)=B$.  A \define{strong morphism} is a bimodule object in $[\Theta^{(1)},\cC]$.
\end{definition}

More generally, the framework allows to define lax/oplax/strong morphisms of $P$-algebras, for ``$P$-algebra'' a type of algebraic object such as a colored operad such as the bimodule operad above or the associative operad, or even an $(\infty,n)$-category such as $\cat{Bord}_n$.

\begin{definition}\label{defn.oplax-homomorphism}
  Let $\cC$ be a symmetric monoidal $(\infty,n)$-category.  Given $P$-algebras $A$ and $B$ in $\cC$, a \define{lax morphism} $f: A \to B$ is a $P$-algebra $f$ in $\cC^{\downarrow}$ such that   $s(f)=A$ and $t(f)=B$.  An \define{oplax morphism} $f: A \to B$ is a $P$-algebra $f$ in $\cC^{\rightarrow}$ such that   $s(f)=A$ and $t(f)=B$.  A \define{strong morphism} is a $P$-algebra in $[\Theta^{(1)},\cC]$.  
\end{definition}

\begin{example}
  When $P$ is the associative operad and $\cC$ is the bicategory of categories, $P$-algebras in $\cC$ are precisely monoidal categories.  A lax morphism $A \to B$ between monoidal categories is nothing but a lax monoidal functor --- a functor that takes algebra objects to algebra objects.  An oplax morphism is an oplax monoidal functor, taking coalgebras to coalgebras.
\end{example}

In the higher Morita category, we want not just (op)lax bimodule morphisms between bimodules in an $(\infty,n)$-category $\cC$, but (op)lax bimodule 2-morphisms between (op)lax morphisms using the 2-morphisms in $\cC$, and so on.  Similarly, when considering (op)lax natural transformations between functors, one should moreover consider higher transformations between natural transformations, called \define{$k$-transfors} in \cite{MR1667313} (numbered so that \define{$0$-transfors} are functors and \define{$1$-transfors} are natural transformations).  We will define \define{lax} and \define{oplax $k$-transfors} between functors 
 analagously to our definitions of lax and oplax natural transformations and morphisms.  
Namely, given an $(\infty,n)$-category $\cC$ and $k \leq n$, we construct  $(\infty,n)$-categories $\cC^{\smallbox{lax}}_{(k)}$ and $\cC^{\smallbox{oplax}}_{(k)}$ in terms of diagrams in $\cC$ of certain shapes generalizing the construction above, the objects of which are the $k$-morphisms of $\cC$: when $k=1$, $\cC^{\smallbox{lax}}_{(k)} = \cC^\downarrow$ and $\cC^{\smallbox{oplax}}_{(k)} = \cC^\rightarrow$.  Completing the picture, let $\Theta^{(k)}$ denote the ``walking $k$-morphism'' (see \cref{defn.walkingimorphism}; for example $\Theta^{(2)} = \bigl\{
\begin{tikzpicture}[baseline=(base),scale=.35]
  \path node[dot] (a) {} +(2,0) node[dot] (c) {} +(0,-8pt) coordinate (base);
  \draw[arrow] (a) .. controls +(1,-1) and +(-1,-1) .. coordinate (t)   (c);
  \draw[arrow] (a) .. controls +(1,1) and +(-1,1) .. coordinate (s)  (c);
  \draw[twoarrowlonger] (s) --   (t);
\end{tikzpicture} \bigr\}$), and set $\cC^{\smallbox{strong}}_{(k)} = [\Theta^{(k)},\cC]$. For all three, denoting $\ast = $ ``lax'', ``oplax'', or ``strong'', there are functors $s,t : \cC^{\boxasterisk}_{(k)} \rightrightarrows \cC^{\boxasterisk}_{(k-1)}$.

\begin{definition}\label{intro lax transfor lax morphism}
  Choose $\ast = $ ``lax'', ``oplax'', or ``strong''.
  
  Given $(\infty,n)$-categories $\cB$ and $\cC$, a \define{$\ast$-$k$-transfor}
is a functor $\eta : \cB \to \cC^{\boxasterisk}_{(k)}$.  The \define{source} and \define{target} of the $\ast$-$k$-transfor $\eta$ are the $\ast$-$(k-1)$-transfors $s\circ \eta,t\circ\eta : \cB \rightrightarrows \cC^{\boxasterisk}_{(k-1)}$.

  Given a symmetric monoidal $(\infty,n)$-category $\cC$ 
a \define{$\ast$-$k$-morphism} of $P$-algebras in $\cC$ is a $P$-algebra $\alpha$ in $\cC^{\boxasterisk}_{(k)}$.  The \define{source} and \define{target} of the $\ast$-$k$-morphism $\alpha$ $\ast$-$(k-1)$-morphisms $s(\alpha), t(\alpha) \in \cC^{\boxasterisk}_{(k-1)}$.
\end{definition}

With \cref{intro lax transfor lax morphism} in place, given a symmetric monoidal $(\infty,n)$-category $\cC$, we can extend the higher Morita $(\infty,d)$-categories $\cat{Alg}_d(\cC)$ from \cite{HaugsengEn,ClaudiaDamien2} to three different \define{even higher} Morita $(\infty,d+n)$-categories $\cat{Alg}_d^{\mathrm{strong}}(\cC)$, $\cat{Alg}_d^{\mathrm{lax}}(\cC)$, and $\cat{Alg}_d^{\mathrm{oplax}}(\cC)$.  In all three cases, the $0$- through $d$-morphisms of $\cat{Alg}_d^\ast(\cC)$ are those of $\cat{Alg}_d(\cC)$ --- the $E_d$-algebras, $E_{d-1}$-bimodules, and so on --- but for $k=0,\dots,n$, the $(d+k)$-morphisms of $\cat{Alg}_d^\ast(\cC)$ are defined to be $\ast$-$k$-morphisms of bimodules.

More precisely, because the $\cat{Alg}_d(-)$ construction requires certain technical conditions on $\cC$, so too we need $\cC$ to satisfy certain technical conditions, depending on the choice of ``lax'', ``oplax'', or ``strong'', in order to have any of $\cat{Alg}_d^{\mathrm{strong}}(\cC)$, $\cat{Alg}_d^{\mathrm{lax}}(\cC)$, or $\cat{Alg}_d^{\mathrm{oplax}}(\cC)$.  In \cref{thm.existence of even higher morita category} we prove that our technical conditions suffice. This allows, in \cref{defn.even higher Morita} to define the even higher Morita category:
\begin{theoremdefinition}
  Let $\ast = $ ``lax,'' ``oplax,'' or ``strong'', and let $\cC$ satisfy the required conditions specified in \cref{thm.existence of even higher morita category}.  
  Using either construction from \cite{HaugsengEn,ClaudiaDamien2}, then there an $(\infty,d)$-by-$(\infty,n)$ double category $ (\vec k;\vec l) \mapsto \Alg_d(\cC^\boxasterisk_{\vec l})_{\vec k} $.    
  The \define{even higher Morita category}  $\Alg_d^\ast(\cC)$ of $E_d$-algebras and $\ast$-morphisms in $\cC$ is its underlying $(\infty,d+n)$-category.
\end{theoremdefinition}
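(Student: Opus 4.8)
The plan is to realize the asserted assignment $(\vec k;\vec l)\mapsto \Alg_d(\cC^\boxasterisk_{\vec l})_{\vec k}$ as a $(d+n)$-fold simplicial space and then to verify, in the two groups of directions separately, the Segal, completeness, and essential-constancy conditions that define an $(\infty,d)$-by-$(\infty,n)$ double category. The two ingredients are functorial constructions. First, the iterated transfor construction of \cref{intro lax transfor lax morphism} organizes the $\cC^\boxasterisk_{(k)}$ into an $n$-fold complete Segal object $\cC^\boxasterisk_\bullet\colon \vec l\mapsto \cC^\boxasterisk_{\vec l}$ valued in symmetric monoidal $(\infty,n)$-categories, whose face maps are the source and target maps $s,t$ and with $\cC^\boxasterisk_{\vec 0}=\cC$. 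Second, for each fixed $\vec l$ the Morita construction $\Alg_d(-)$ of \cite{HaugsengEn,ClaudiaDamien2} sends a symmetric monoidal $(\infty,n)$-category to a $d$-fold complete Segal space $\vec k\mapsto \Alg_d(-)_{\vec k}$. Composing these and letting both indices run produces the candidate bifunctor; the work is entirely in checking the two axiom-families.

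The $\vec k$-directions are immediate once the hypotheses are in force. Fixing $\vec l$, I would appeal to \cref{thm.existence of even higher morita category}: the conditions imposed on $\cC$ (the sifted-colimit compatibility of \cref{defn.sifted} together with the $\ast$-dependent conditions) are arranged precisely so that each $\cC^\boxasterisk_{\vec l}$ again satisfies the technical hypotheses needed to run either construction. Hence for every $\vec l$ the slice $\vec k\mapsto \Alg_d(\cC^\boxasterisk_{\vec l})_{\vec k}$ is a $d$-fold complete Segal space, including the constancy conditions making it an $(\infty,d)$-category, by the relevant theorem of \cite{HaugsengEn} or \cite{ClaudiaDamien2}.

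The $\vec l$-directions are the crux. Fixing $\vec k$, I must show that $\vec l\mapsto \Alg_d(\cC^\boxasterisk_{\vec l})_{\vec k}$ is again an $n$-fold complete Segal space, i.e.\ that the level functor $\Alg_d(-)_{\vec k}$ transports the $n$-fold complete Segal object $\cC^\boxasterisk_\bullet$ to one valued in spaces. Since the Segal, completeness, and essential-constancy conditions are all (homotopy) limit conditions — the Segal maps being iterated pullbacks along the spine over the source/target maps, and completeness and constancy being further limit comparisons — it suffices to know that $\Alg_d(-)_{\vec k}$ preserves the relevant limits. Preservation of products is routine: an $E_d$-algebra, bimodule, or higher bimodule in a product of symmetric monoidal $(\infty,n)$-categories is a pair of such, so the space of $\vec k$-shaped Morita data in a product is the product of the two spaces. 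The delicate point is preservation of the Segal pullbacks $\cC^\boxasterisk_{\vec l}\simeq \cC^\boxasterisk_{(1)}\times_{\cC}\cdots\times_{\cC}\cC^\boxasterisk_{(1)}$ taken over the source/target maps; here I would exploit that these maps are the structurally simple projections built into the transfor construction, so that the Segal pullbacks are computed levelwise in the underlying $(\infty,n)$-category and are therefore carried to the corresponding pullbacks of mapping spaces by the levelwise, limit-compatible description of $\Alg_d(-)_{\vec k}$. I expect this interchange of the Morita construction in the $\vec k$-directions with the Segal-object structure in the $\vec l$-directions to be the main obstacle, and it is exactly what \cref{thm.existence of even higher morita category} is designed to certify.

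With both axiom-families established, the assignment $(\vec k;\vec l)\mapsto \Alg_d(\cC^\boxasterisk_{\vec l})_{\vec k}$ is an $(\infty,d)$-by-$(\infty,n)$ double category, and I would then define $\Alg_d^\ast(\cC)$ to be its underlying $(\infty,d+n)$-category via the standard passage from such a double category to a genuine $(d+n)$-fold complete Segal space. Because the argument used only functoriality and the product- and limit-preservation of $\Alg_d(-)$ — properties shared by both models — it is insensitive to which construction of \cite{HaugsengEn,ClaudiaDamien2} is chosen, which is what yields the stated ``using either construction.''
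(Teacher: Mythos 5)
Your skeleton coincides with the paper's: fix $\vec l$ and invoke \cref{CS and Haugseng theorem} for the $\vec k$-directions, then fix $\vec k$ and reduce the $\vec l$-directions to showing that the level functor $\Alg_d(-)_{\vec k}$ preserves the homotopy fibered products defining the Segal, completeness, and constancy conditions, using that $\cC^\boxasterisk_{\vec\bullet}$ is a complete $n$-fold Segal object in $\otimes$-cocomplete categories and cocontinuous functors (\cref{complete n-fold segality among sifted-cocomplete functors}). However, at exactly the step you yourself identify as ``the crux,'' your argument is circular rather than a proof: you justify the preservation of the Segal pullbacks by appealing to a ``levelwise, limit-compatible description of $\Alg_d(-)_{\vec k}$'' and then state that this interchange ``is exactly what \cref{thm.existence of even higher morita category} is designed to certify'' --- but that theorem is the statement being proved. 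Limit-preservation of $\Alg_d(-)_{\vec k}$ is the substantive mathematical content here, and nothing in your proposal establishes it; the remark that algebras in a product are pairs of algebras does not extend to homotopy fibered products, precisely because $\Alg_d$ is not defined by purely algebraic data but by data together with \emph{colimit} conditions (composite algebras in \cite{HaugsengEn}; gluing and local constancy of factorization algebras in \cite{ClaudiaDamien2}).

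The paper supplies this missing content in \cref{end of the proof Haugseng version} and \cref{end of the proof CS version}, and both arguments use structure you never invoke. In the \cite{HaugsengEn} case one writes $\Alg_d(\cC)_{(i)} = \maps^h_{\text{$\Delta^{\times d}$-$\infty$-\cat{Op}}}(\Delta^{\times d,\op}_{/(i)},\operatorname{Und}(\cC))$, observes that $\operatorname{Und}$ is a right adjoint (to the symmetric monoidal envelope) and hence preserves limits, that representable functors preserve limits, and that $\Alg_d(\cC)_{\vec k}$ for general $\vec k$ is itself a limit of the $(i)$-level pieces, so that limits commute with limits. In the \cite{ClaudiaDamien2} case one reduces to $\Fact^{lc}(M,-)$, notes that prefactorization algebras are algebras over a strict colored operad (so the same right-adjoint/representability argument applies), and then --- this is where the hypothesis that the face and degeneracy maps of $\cC^\boxasterisk$ are $\otimes$-sifted-cocontinuous is actually consumed --- uses \cite[Lemma 5.4.5.5]{LurieHTT} and \cite{RV2014} to see that a diagram in a fibered product is a colimit if and only if its projections are, so that the gluing and local-constancy conditions are detected levelwise. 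Your proposal never engages with this mechanism. A secondary inaccuracy: you assert completeness in the $\vec k$-directions for both constructions, but for the \cite{HaugsengEn} version the $d$-fold Segal space $\Alg_d(\cC^\boxasterisk_{\vec l})_{\vec\bullet}$ is \emph{not} complete, and the paper notes that \cref{defn.even higher Morita} requires completing it before passing to the underlying $(\infty,d+n)$-category.
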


\begin{remark}
The technical conditions required are the natural conditions for defining the relative tensor product of bimodule objects, namely the existence and compatibility of certain colimits.
\end{remark}

Our main examples are the following:
\begin{examplenodiamond} \mbox{}
\begin{itemize}
\item  In \cref{eg.Alg2Rex} we observe that the bicategory $\cat{Rex}_\bK$ of finitely-cocomplete $\bK$-linear categories, finitely-cocontinuous $\bK$-linear functors, and natural transformations satisfies the required conditions for $\cat{Alg}_d^{\mathrm{strong}}(\cat{Rex}_\bK)$ to exist.
\begin{itemize}
\item For $d=2$, since $\cat{Rex}_\bK$ is a bicategory, the $(\infty,4)$-category $\cat{Alg}_2^{\mathrm{strong}}(\cat{Rex}_\bK)$ is the Morita category of braided monoidal categories predicted in \cite{WalkerTQFT,BZBJ}.
\item For $d=1$, the 3-category $\cat{TC}$ of \cite{DSPS1, DSPS2} is expected to be a subcategory of $\Alg_1^{\mathrm{strong}}(\cat{Rex})$; see \cref{eg.Alg1Rex}.
\end{itemize}
\item  \Cref{remark.strong cocomplete} suggests that many more $(\infty,n)$-categories $\cC$ of interest satisfy sufficient conditions to assure the existence of $\cat{Alg}_d^{\mathrm{strong}}(\cC)$.
\item For comparison, as a non-example, \cref{multiple examples} shows that the bicategory $\cat{Pres}_\bZ$ of locally presentable additive categories does not satisfy the conditions required for the oplax version of our construction. Since the conditions were the natural ones for the existence of relative tensor products of bimodules, we don't expect that a different construction would fix this problem. \hfill\ensuremath\Diamond
\end{itemize}
\end{examplenodiamond}

To conclude, we note that our framework for (op)lax transfors is of interest independently. In particular, we prove in \cref{cor.oplax-transformation} that (op)lax transfors assemble into an $(\infty,n)$-category:

\begin{theorem}\label{thm.intro lax transfor categories}
  Given $(\infty,n)$-categories $\cB$ and $\cC$ there are $(\infty,n)$-categories $\mathrm{Fun}^{\mathrm{lax}}(\cB,\cC)$ and $\mathrm{Fun}^{\mathrm{oplax}}(\cB,\cC)$ 
 whose objects are the functors from $\cB$ to $\cC$, and whose $k$-morphisms are lax or oplax $k$-transfors between functors $\cB \to \cC$.  They depend naturally on the inputs $\cB$ and $\cC$.  In particular, given equivalences of $(\infty,n)$-categories $\cB' \overset\sim\to \cB$ and $\cC \overset\sim\to \cC'$, the corresponding functors $\mathrm{Fun}^{\mathrm{lax}}(\cB,\cC) \to \mathrm{Fun}^{\mathrm{lax}}(\cB',\cC')$ and $\mathrm{Fun}^{\mathrm{oplax}}(\cB,\cC) \to \mathrm{Fun}^{\mathrm{oplax}}(\cB',\cC')$  
are equivalences.
\end{theorem}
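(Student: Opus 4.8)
The plan is to realize both $\mathrm{Fun}^{\mathrm{lax}}(\cB,\cC)$ and $\mathrm{Fun}^{\mathrm{oplax}}(\cB,\cC)$ by applying the mapping-space functor $\maps(\cB,-)$, levelwise, to the double $(\infty,n)$-category assembled from the $\cC^{\smallbox{lax}}_{(k)}$ (resp.\ $\cC^{\smallbox{oplax}}_{(k)}$). Fix a model of $(\infty,n)$-categories as complete Segal presheaves on $\Theta_n$. The construction $k \mapsto \cC^{\smallbox{lax}}_{(k)}$ of \cref{intro lax transfor lax morphism}, together with its source, target, degeneracy, and composition maps, is the ``transfor direction'' of the double $(\infty,n)$-category; equivalently it is a functor $\theta \mapsto \cC^{\smallbox{lax}}_\theta$ from $\Theta_n^{\op}$ to $(\infty,n)$-categories, valued on the walking $k$-morphism at $\cC^{\smallbox{lax}}_{(k)}$, which is a complete Segal object along $\Theta_n$. (That this assembly exists and is complete is the content of the earlier construction, and the same holds verbatim for ``oplax''.) I then \emph{define} $\mathrm{Fun}^{\mathrm{lax}}(\cB,\cC)$ to be the $\Theta_n^{\op}$-presheaf of spaces $\theta \mapsto \maps(\cB,\cC^{\smallbox{lax}}_\theta)$, and dually for oplax. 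Morally this just generalizes the Cartesian-closedness fact recalled in the introduction --- that $[\cB,\cC]$ is again an $(\infty,n)$-category --- from a single $(\infty,n)$-category $\cC$ to the complete Segal object $\cC^{\smallbox{lax}}_\bullet$.

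The first task is to verify that this presheaf is itself a complete Segal object, i.e.\ an $(\infty,n)$-category. The Segal condition is immediate: for a cell $\theta$ exhibited as an iterated pushout of lower cells, $\cC^{\smallbox{lax}}_\theta$ is the corresponding iterated pullback of the $\cC^{\smallbox{lax}}_{\theta'}$ (Segal in the transfor direction), and $\maps(\cB,-)$ preserves these pullbacks, being the underlying space of the internal hom $[\cB,-]$, which is a right adjoint. Completeness is the delicate point: one must show that the degeneracy $\maps(\cB,\cC^{\smallbox{lax}}_{(0)}) \to \maps(\cB,\cC^{\smallbox{lax}}_{(1)})$ identifies the space of objects with the subspace of invertible $1$-cells in the transfor direction, and likewise in each higher degree. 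Here I would use that the analogous statement holds for the double $(\infty,n)$-category itself, that ``space of invertible cells'' is computed as a homotopy limit built from underlying-groupoid and pullback functors, and that $\maps(\cB,-)$ commutes with these --- so completeness descends. Reading off the cells then gives the claim: the objects are $\maps(\cB,\cC^{\smallbox{lax}}_{(0)}) = \maps(\cB,\cC)$, the functors $\cB \to \cC$; the $k$-cells are $\maps(\cB,\cC^{\smallbox{lax}}_{(k)})$, which are exactly the lax $k$-transfors of \cref{intro lax transfor lax morphism}, with source and target induced by $s,t$. The oplax case is identical with $\cC^{\smallbox{oplax}}_{(k)}$.

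For naturality and the equivalence-invariance claim I would argue entirely formally. The assignment $\cC \mapsto \cC^{\smallbox{lax}}_\theta$ is functorial in $\cC$ and homotopy invariant, because each $\cC^{\smallbox{lax}}_\theta$ is built functorially from $\cC$ by mapping out of a fixed diagram shape; thus an equivalence $\cC \xrightarrow{\sim} \cC'$ induces a levelwise equivalence $\cC^{\smallbox{lax}}_\theta \xrightarrow{\sim} (\cC')^{\smallbox{lax}}_\theta$. Likewise $\maps(-,Y)$ carries equivalences of $(\infty,n)$-categories to equivalences of spaces, for every $Y$. Combining these and applying them levelwise in $\theta$ shows that an equivalence $\cB' \xrightarrow{\sim} \cB$ induces a levelwise equivalence of presheaves $\mathrm{Fun}^{\mathrm{lax}}(\cB,\cC) \to \mathrm{Fun}^{\mathrm{lax}}(\cB',\cC)$, and an equivalence $\cC \xrightarrow{\sim} \cC'$ induces $\mathrm{Fun}^{\mathrm{lax}}(\cB,\cC) \to \mathrm{Fun}^{\mathrm{lax}}(\cB,\cC')$; since a levelwise equivalence of complete Segal objects is an equivalence of $(\infty,n)$-categories, this is the asserted naturality. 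The same applies to $\mathrm{Fun}^{\mathrm{oplax}}$.

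The main obstacle is the completeness step. Preservation of the Segal condition and of equivalences are soft consequences of $\maps(\cB,-)$ being limit-preserving and homotopy invariant, whereas completeness in the transfor direction is not a plain limit condition and must be traced through the ``space of invertible cells'' construction. As is standard when showing that internal homs of complete Segal spaces are complete, I expect this to reduce to the completeness of the double $(\infty,n)$-category established in the main construction, together with the fact that $\maps(\cB,-)$ commutes with the relevant homotopy limits; once that is in place, everything else is formal.
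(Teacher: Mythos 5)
Your proposal is correct and follows essentially the same route as the paper: the paper defines $\mathrm{Fun}^{\mathrm{lax}}(\cB,\cC)_{\vec k} := \maps^{h}(\cB,\cC^{\smallbox{lax}}_{\vec k})$ and deduces \cref{cor.oplax-transformation} from \cref{mainthm} by observing that $\maps^{h}(\cB,-)$ preserves weak equivalences and homotopy fiber products, with naturality in $\cC$ coming from $\cC^{\Box}_{\vec\bullet;\vec\bullet} = \maps^{h}(\Theta^{\vec\bullet;\vec\bullet},\cC)$ being functorial in $\cC$. The completeness step you flag as delicate is resolved exactly as you anticipate, via \cref{lemma.alternate description of completeness}, which presents the space of invertible cells as the homotopy fiber product $\cC_{3} \times^{h}_{\cC_{1}\times\cC_{1}} (\cC_{0}\times\cC_{0})$ --- a pure limit, so no underlying-groupoid functor enters and $\maps^{h}(\cB,-)$ commutes with the whole completeness condition.
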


  The ``strong'' version of \cref{thm.intro lax transfor categories} is automatic: $\mathrm{Fun}^{\mathrm{strong}}(\cB,\cC) = [\cB,\cC]$ is the functor category determined by Cartesian closedness. If $\cB$ and $\cC$ are symmetric monoidal, we can ask for symmetric monoidal $\ast$-$k$-transfors and $(\infty,n)$-categories thereof, described in \cref{defn.symm mon oplax-transfor,cor.symm mon oplax-transformation}; this variant can be used to show that for many notions of ``$P$-algebra,'' including any described in terms of algebras for an operad, the (op)lax $k$-morphisms of $P$-algebras in $\cC$ also package into an $(\infty,n)$-category.

\begin{remark}
  In spite of the names, it is worth emphasizing that our categories $\mathrm{Fun}^{\mathrm{lax}}(\cB,\cC)$ and $\mathrm{Fun}^{\mathrm{oplax}}(\cB,\cC)$ have as their objects the ``strong'' functors from $\cB$ to $\cC$, in the sense that functoriality (and symmetric monoidality in the case of $\mathrm{Fun}^{\mathrm{lax}}_{\otimes}(\cB,\cC)$ and $\mathrm{Fun}^{\mathrm{oplax}}_{\otimes}(\cB,\cC)$) are enforced up to equivalence: the words ``lax'' and ``oplax'' apply only to the transformations between functors and higher transfors thereof.  The case of bicategories illustrates the problem of allowing functoriality itself to be (op)lax.  Indeed, given bicategories $\cB$ and $\cC$, there is a bicategory whose objects are lax functors from $\cB$ to $\cC$, 1-morphisms are lax natural transformations, and 2-morphisms are modifications, but this category does not depend well on the choice of $\cB$: equivalent input bicategories can nevertheless produce \emph{inequivalent} bicategories of lax functors~\cite{MO7666}.  This particular bad behavior does not occur for the bicategory of strong (also called ``pseudo'') functors and lax transformations, and our results show that that pattern continues to the $(\infty,n)$-world.
\end{remark}

\Subsection{Organization of the paper}

In this note we model $(\infty,n)$-categories as complete $n$-fold Segal spaces; we provide a short review in \cref{section.CSS}.   
  We have tried to keep the paper reasonably self-contained and accessible, and so to supplement \cref{section.CSS} we include an Appendix on model categories as a crimp sheet for non-experts. 

 An important technical tool is the notion of ``computad,'' which we recall in \cref{section.walkingmorphisms}, where we also introduce the main players in the construction of the $(\infty,n)$-categories $\cC^{\smallbox{lax}}_{(k)}$ and $\cC^{\smallbox{oplax}}_{(k)}$, computads $\Theta^{(i);(j)}$ called the \define{walking  $i$-by-$j$-morphisms}.
Examples of walking $i$-by-$j$-morphisms for small $i$ and $j$ are computed in \cref{section.examples}.
  In \cref{section.thedefinition} we prove that for any $(\infty,n)$-category $\cC$, the spaces of ``$i$-by-$j$-morphisms in $\cC$,'' whose diagrammatics are controlled by $\Theta^{(i);(j)}$, package into a ``double $(\infty,n)$-category'' $\Cc^{\Box}$ (i.e.\ a complete $n$-fold Segal space internal to complete $n$-fold Segal spaces) which we call the \define{(op)lax square of $\cC$}.  We define $\cC^{\smallbox{lax}}_{(k)}$ and $\cC^{\smallbox{oplax}}_{(k)}$ as the $(\infty,n)$-categories of ``vertical $k$-morphisms'' and ``horizontal $k$-morphisms'' in $\cC^\Box$ respectively.
    
    Symmetric monoidal structures are discussed in \cref{section.symmetricmonoidal}, where we show that if $\cC$ is symmetric monoidal, so is $\cC^\Box$.  In \cref{section.QFT} we discuss twisted quantum field theories in some detail. In \cref{section.evenhigher} we construct various ``even higher'' $(\infty, d+n)$-Morita categories of $E_d$-algebras in a symmetric monoidal $(\infty,n)$-category.  
  Unfortunately, in order to keep \cref{section.evenhigher} from growing too long, 
    we will not be able to review all details of the constructions in \cite{HaugsengEn,ClaudiaDamien2}; we recall only the parts  necessary for our purposes, relying on the reader to consult the references for more background information.

\Subsection{Acknowledgments}

This article is based on the authors' conversations at the program  ``Modern Trends in Topological Quantum Field Theory,'' at the Erwin Schr\"odinger Institute in Vienna, Austria; 
at the program ``Interactions of Homotopy Theory and Algebraic Topology with Physics through Algebra and Geometry,'' at the Simons Center for Geometry and Physics in Stony Brook, New York;
 while CS was a visitor at the Institut des Hautes \'Etudes Scientifiques in Bures-sur-Yvette, France; 
 while TJF was a visitor at  the IBS Center for Geometry and Physics in Pohang, Korea; 
 and while TJF visited CS at the Max Planck Institut f\"ur Mathematik in Bonn, Germany. 
We thank all our host institutions for their generous hospitality.  

TJF is supported by the grant DMS-1304054 from the American National Science Foundation.  CS is supported by the grants 200021\textunderscore 137778 and P2EZP2\textunderscore 159113 from the Swiss National Science Foundation. 

We would also like to thank Rune Haugseng, Mauro Porta, Stephan Stolz, Peter Teichner, Constantin Teleman, and Alessandro Valentino for many clarifying conversations.  Our thanks go in particular to Chris Schommer-Pries, who provided guidance throughout this project.

\section{\texorpdfstring{\lowercase{$n$}}{n}-uple and complete \texorpdfstring{\lowercase{$n$}}{n}-fold Segal spaces}\label{section.CSS}

We will model $(\infty,1)$-categories as complete Segal spaces following \cite{Rezk} (with one modification: as is done in \cite{LurieGoodwillie,Horel2014}, we leave out the Reedy fibrancy condition included in the \cite{Rezk} definition).   
Their $n$-fold iteration models $n$-fold categories, the higher analogs of double categories, rather than $(\infty,n)$-categories. We follow~\cite{LurieGoodwillie,Haugseng2014} and use the term ``complete $n$-uple Segal space'' for them. To instead obtain a model for $(\infty,n)$-categories, following \cite{BSP2011} we have to add an extra condition, and obtain ``complete $n$-fold Segal spaces''.  We recall the ``user interface'' definitions in this section. In the Appendix we review some of the ``machine code'' model category theory that runs in the background.

Throughout we use the word \define{space} to mean Kan simplicial set.  We will write $\cat{Spaces}$ for the (strict) category of spaces; it is a full subcategory of the category $\cat{sSet}$ of simplicial sets, which will play a starring role in the Appendix.

\begin{definition}\label{defn.segalspace}
  A \define{(1-fold) Segal space} $\Cc$ is a simplicial object $\Cc_{\bullet} : \Delta^{\mathrm{op}} \to \cat{Spaces}$ satisfying the \define{Segal condition}, which says that for every $k\geq 1$ the natural map
  $$\Cc_{k} \to \ourunderbrace{\Cc_{1}\overset{h}{\underset{\Cc_{0}}\times} \Cc_{1} \overset{h}{\underset{\Cc_{0}}\times}  \dots \overset{h}{\underset{\Cc_{0}}\times}  \Cc_{1}}{k\text{ times}}, $$
induced by the maps $[0]\to[k]$ sending 0 to $i$ and $[1]\to[k]$ sending 0 to $i-1$ and $1$ to $i$, is an equivalence. 
\end{definition}

The space $\Cc_{0}$ is thought of as the space of objects of an $(\infty,1)$-category $\Cc$; the Segal condition says that $\Cc_{k}$ should be thought of as the space of $k$-tuples of composable $1$-morphisms. Moreover, 2-morphisms should be invertible and can be thought of as paths in the space $\Cc_1$ of 1-morphisms, 3-morphisms as homotopies between them, etc.

To any higher category one can  associate an ordinary category having the same objects, 
with morphisms being $2$-equivalence classes of $1$-morphisms:

\begin{definition}\label{defn.homotopy category}
The \define{homotopy category} $\h_1(\Cc)$ of a Segal space $\Cc=\Cc_\bullet$ is defined as follows.  Its set of objects is the underlying set (i.e.\ the set of zero-simplices) $\breve\Cc_{0}$ of the space $\Cc_{0}$.  For $x,y\in \breve\Cc_{0}$, the set of morphisms from $x$ to $y$ is
$$
\hom_{\h_1(\Cc)}(x,y):=\pi_0\left(\hom_\Cc(x,y)\right)
=\pi_0\left(\{x\}\underset{\Cc_0}{\overset{h}{\times}} \Cc_1 \underset{\Cc_0}{\overset{h}{\times}} \{y\}\right)\,.
$$
The composition of morphisms is defined by applying $\pi_{0}$ to the following zig-zag of spaces: 
\begin{eqnarray*}
\left(\{x\}\underset{\Cc_0}{\overset{h}{\times}} \Cc_1 \underset{\Cc_0}{\overset{h}{\times}} \{y\}\right)\times
\left(\{y\}\underset{\Cc_0}{\overset{h}{\times}} \Cc_1 \underset{\Cc_0}{\overset{h}{\times}} \{z\}\right)
& \longrightarrow & \{x\}\underset{\Cc_0}{\overset{h}{\times}} \Cc_1 \underset{\Cc_0}{\overset{h}{\times}}\Cc_1 \underset{\Cc_0}{\overset{h}{\times}} \{z\} \\
& \overset\sim\longleftarrow & \{x\}\underset{\Cc_0}{\overset{h}{\times}} \Cc_2 \underset{\Cc_0}{\overset{h}{\times}} \{z\} \\
& \longrightarrow & \{x\}\underset{\Cc_0}{\overset{h}{\times}} \Cc_1 \underset{\Cc_0}{\overset{h}{\times}} \{z\}\,.
\end{eqnarray*}
The second arrow, being a weak equivalence, induces a bijection on $\pi_0$. 
\end{definition}

\begin{definition} \label{defn.1fold complete}
Let $\Cc$ be a Segal space and $\Cc_{\inv}$  the space of elements in $\Cc_1$ which become invertible in the homotopy category of $\Cc$. Then $\Cc$ is said to be \define{complete} if the map $\Cc_0\to\Cc_{\inv}$ induced by the degeneracy map is an equivalence.
\end{definition}

The space $\cC_\inv$ is usually called $\cC_1^\inv$ in the literature.  The former notation will be convenient in the proofs of \cref{lemma.weaker completeness condition} and \cref{mainthm}.

\begin{remark} 
Completeness is a version of skeletality: it says that all isomorphisms in the category $\Cc$ are already paths in the space of objects $\Cc_{0}$.
\end{remark}

\begin{lemma}  \label{lemma.alternate description of completeness}
The space $\cC_{\inv}$ can be presented as the homotopy fiber product
$$ \cC_{\inv} = \cC_{3} \underset{\cC_{1}\times \cC_{1}}{\overset h \times} (\cC_{0} \times \cC_{0}) $$
where the two maps $\cC_{3} \rightrightarrows \cC_{1}$ correspond to the inclusions $[1] \to [3]$ as either the $(0,2)$-edge of the 3-simplex or as the $(1,3)$-edge.
\end{lemma}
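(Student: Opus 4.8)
The plan is to build a natural map from the proposed homotopy fiber product to $\cC_1$, check that it lands in $\cC_\inv$, and then prove it is an equivalence by analyzing its fibers. Write $Z$ for the homotopy fiber product $\cC_3 \times^h_{\cC_1\times\cC_1}(\cC_0\times\cC_0)$, where the map $\cC_0\times\cC_0\to\cC_1\times\cC_1$ is $s_0\times s_0$ (objects included as identity morphisms). First I would unwind $Z$ using the Segal condition: since $\cC_3 \simeq \cC_1\times^h_{\cC_0}\cC_1\times^h_{\cC_0}\cC_1$, a point of $\cC_3$ is a composable triple of $1$-morphisms $x_0\xrightarrow{f_1}x_1\xrightarrow{f_2}x_2\xrightarrow{f_3}x_3$, and the two edge maps $\partial_{02},\partial_{13}$ send it to the composites $f_2\circ f_1$ and $f_3\circ f_2$. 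Thus a point of $Z$ is such a triple together with homotopies $f_2\circ f_1\sim\id$ and $f_3\circ f_2\sim\id$ in $\cC_1$. I then define the comparison map $q\colon Z\to\cC_1$ to record the middle edge, $q=\partial_{12}$.

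Next I would show that $q$ factors through $\cC_\inv$ and identify exactly where its fibers are nonempty. In $\h_1(\cC)$ the class $[f_2]$ has $[f_1]$ as a right inverse and $[f_3]$ as a left inverse, so $[f_2]$ is an isomorphism and $q$ lands in $\cC_\inv$. Conversely, because the hom-sets of $\h_1(\cC)$ are by definition $\pi_0$ of the mapping spaces, a lift of a given $f\in\cC_1$ to $Z$ exists precisely when $[f]$ admits both a left and a right inverse in $\h_1(\cC)$, i.e.\ precisely when $[f]$ is invertible, i.e.\ when $f\in\cC_\inv$. Hence $q$ has empty fibers over $\cC_1\setminus\cC_\inv$.

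Finally I would show that the fibers over $\cC_\inv$ are contractible, which identifies $q$ with the inclusion of the union of components $\cC_\inv$ and so proves $Z\simeq\cC_\inv$. Here I would re-bracket the Segal decomposition as $\cC_3\simeq\cC_2\times^h_{\cC_1}\cC_2$ (gluing two $2$-simplices along the shared $(1,2)$-edge), under which the $\partial_{02}$-condition depends only on the first $2$-simplex and the $\partial_{13}$-condition only on the second. This exhibits $Z\simeq R\times^h_{\cC_1}L$, where $R$ and $L$ are the spaces of right- and left-inverse data and $q$ is the shared-edge projection; consequently the fiber of $q$ over $f\colon x\to y$ is $\mathrm{RInv}(f)\times\mathrm{LInv}(f)$, the homotopy fibers over $\id$ of $f\circ(-)\colon\hom(y,x)\to\hom(y,y)$ and of $(-)\circ f\colon\hom(y,x)\to\hom(x,x)$. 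The crux is then the standard fact that composition with an equivalence is an equivalence of mapping spaces: for $f\in\cC_\inv$ with a chosen inverse $g$, the maps $g\circ(-)$ and $(-)\circ g$ are homotopy inverses to $f\circ(-)$ and $(-)\circ f$, with coherences supplied by the $2$-simplices witnessing $fg\sim\id$ and $gf\sim\id$, so both factors are contractible. Establishing this composition-is-an-equivalence statement — rather than the essentially bookkeeping Segal manipulations — is the main obstacle; it can be proved directly or cited from Rezk.
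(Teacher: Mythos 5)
Your proof is correct, but it takes a genuinely different route from the paper's. The paper gives no direct argument at all: it quotes \cite[Proposition 10.1]{MR2578310}, which establishes the statement under the extra hypothesis that $\cC_\bullet$ is injective-fibrant, and then removes that hypothesis by an invariance argument --- the homotopy fiber product $\cC_3\times^h_{\cC_1\times\cC_1}(\cC_0\times\cC_0)$ is manifestly invariant under equivalences of Segal spaces, and $\cC_\inv$ is as well because it is a union of connected components of $\cC_1$ (those carried to invertible morphisms of $\h_1(\cC)$) and an equivalence of Segal spaces induces an equivalence of homotopy categories; one then concludes by injective-fibrant replacement. You instead prove the statement from scratch, in effect inlining the content of Rezk's proposition in a homotopy-invariant formulation: the comparison map $q=\partial_{12}$, emptiness of homotopy fibers away from $\cC_\inv$, the re-bracketing $\cC_3\simeq\cC_2\times^h_{\cC_1}\cC_2$ along the shared $(1,2)$-edge (valid by two-out-of-three against the spine inclusion), the splitting of the fiber as $\mathrm{RInv}(f)\times\mathrm{LInv}(f)$, and contractibility of each factor because composition with a morphism invertible in $\h_1(\cC)$ is an equivalence of hom-spaces. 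Your route buys self-containedness: every construction you use is derived, so fibrancy never enters --- which is exactly the issue the paper has to work around, since its Segal spaces are not assumed fibrant while Rezk's are. The paper's route buys brevity, and its proof isolates the component-wise description of $\cC_\inv$ that is reused verbatim later (in the proof of \cref{mainthm}, and the lemma itself is invoked in the proof of \cref{lemma.weaker completeness condition}). Two places in your sketch need care in a full write-up, though neither is a gap in the logic: identifying the fibers of $R,L\to\cC_1$ with the homotopy fibers of $f\circ(-)$ and $(-)\circ f$ over $\id$ requires trading ``a path in $\cC_1$ to some degenerate edge $s_0(z)$'' for ``a path to $\id_y$ inside $\hom(y,y)$'' (routine, using that $s_0$ is a section of both the source and target maps); and the crux fact you flag --- composition with an equivalence is an equivalence of mapping spaces --- is genuine content, since proving it ``directly'' requires first making composition a well-defined, homotopy-associative and unital operation at the level of the homotopy category of spaces, exactly as Rezk does.
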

This realizes the idea that a homotopy-invertible morphism is one that has both left and right up-to-homotopy inverses; the homotopy-invertible morphism itself corresponds to the $(1,2)$-edge.  We defer the proof to the end of this section.

We can iterate \cref{defn.segalspace} and inductively define the higher versions of Segal spaces, see e.g.~\cite{LurieGoodwillie,Haugseng2014}. A priori, we obtain higher versions of double categories and, by further induction steps, ``$n$-uple categories.'' Unfolding this iterative definition, we get the following.

\begin{definition}\label{defn.complete}
An \define{$n$-uple Segal space} is an $n$-fold simplicial space $\Cc=\Cc_{\bullet,\dots,\bullet}:(\Delta^{\mathrm{op}})^n\to \cat{Spaces}$ such that for every $1\leq i\leq n$, and every $k_1,\ldots, k_{i-1},k_{i+1},\ldots, k_n\geq0$,
$$\Cc_{k_1,\ldots, k_{i-1},\bullet, k_{i+1},\ldots, k_n}$$
is a Segal space.

An $n$-uple Segal space $\cC$ is \define{complete} if for every $1\leq i\leq n$ and every $k_1,\ldots, k_{i-1},k_{i+1},\ldots, k_n\geq0$
$$\Cc_{k_1,\ldots, k_{i-1},\bullet, k_{i+1},\ldots, k_n}$$
is a complete Segal space, i.e.~$\cC$ is complete in ``every direction''.
\end{definition}
Thus a (complete) $n$-uple Segal space $\Cc$ consists of a space $\Cc_{\vec k}$ for each $\vec k \in \NN^{n}$, thought of as the space of morphisms filling in a $n$-dimensional rectangle of size $k_{1}\times \dots \times k_{n}$, which satisfies the (complete) Segal condition in each direction.

To model $(\infty,n)$-categories, $n$-uple Segal spaces do not suffice, just as double categories are not equivalent to bicategories. One must instead impose one extra condition:

\begin{definition}\label{defn.nSS}
A \define{(complete) $n$-fold Segal space} is a (complete) $n$-uple Segal space $\Cc$ such that for every $1\leq i\leq n-1$, $\Cc_{k_{1},\dots,k_{i-1},0,\bullet,\dots,\bullet}$ is \define{essentially constant}, i.e.~the degeneracy maps
  $$ \Cc_{k_{1},\dots,k_{i-1},0,0,\dots,0} \to \Cc_{k_{1},\dots,k_{i-1},0,k_{i+1},\dots,k_n} $$
are equivalences.
\end{definition}

One may always treat a complete $n$-fold Segal space as a constant complete $(n+1)$-fold  Segal space by declaring $\Cc_{\vec k,k_{n+1}} = \Cc_{\vec k}$.  Conversely, to simplify notation, we will often drop trailing $0$s, and write $\Cc_{k_{1},\dots,k_{i}}$ for $\Cc_{k_{1},\dots,k_{i},0,\dots,0}$.  The essential constancy condition 
says that we may always assume (by slight abuse of notation) that $k_{1},k_{2},\dots,k_{i}$ is a sequence of strictly-positive integers, with the convention that the empty sequence ($i=0$) is allowed, so that $\Cc_{\emptyset} = \Cc_{0} = \Cc_{0,0,\dots,0}$ is the space of objects. 
 Finally, the Segal condition says that a complete $n$-fold Segal space is (essentially) determined by (face and degeneracy maps and) the spaces $\Cc_{1,\dots,1,0,\dots}$; when there are $i$ $1$s, we abbreviate $\Cc_{1,\dots,1}$ by $\Cc_{(i)}$, and think of it as ``the space of $i$-morphisms in $\Cc$.'' 
 More generally, we abbreviate any $i$ consecutive $1$s by $(i)$, e.g.~$\Cc_{2,(i),2}:=\Cc_{2,1,\ldots,1,2,0,\ldots}$

\begin{lemma} \label{lemma.weaker completeness condition}
  Suppose that $\cC_{\vec\bullet}$ is an $n$-fold Segal space which is not known to be complete, but which satisfies the a priori weaker condition that each Segal space $\Cc_{k_{1},\dots,k_{i},\bullet,0,0,\dots}$ is complete for all $0\leq i \leq n$ and $\vec k\in \bN^{i}$.  Then $\cC$ is in fact complete.
\end{lemma}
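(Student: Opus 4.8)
\textbf{The plan} is to induct on $n$, isolating the first simplicial direction. Fix $1\le i\le n$; I must show that each $\Cc_{k_1,\dots,k_{i-1},\bullet,k_{i+1},\dots,k_n}$ is a complete Segal space, and by hypothesis this is known once $k_{i+1}=\dots=k_n=0$. First I would dispatch the directions $i\ge 2$ for free: for each fixed $m\in\bN$ the restriction $\Cc_{m,\bullet,\dots,\bullet}$ is an $(n-1)$-fold Segal space (the Segal conditions in directions $2,\dots,n$ and the essential-constancy condition are all inherited), and the hypothesis of the lemma restricted to $k_1=m$ is precisely the hypothesis of the lemma for this $(n-1)$-fold Segal space. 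By the inductive hypothesis $\Cc_{m,\bullet,\dots,\bullet}$ is complete, and letting $m$ vary gives completeness of $\Cc$ in every direction $2,\dots,n$ with arbitrary surrounding indices. (The base case $n=1$ is the tautology that the $i=0$ hypothesis \emph{is} the conclusion.)

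It remains to prove completeness in direction $1$, i.e.\ that $\Cc_{\bullet,\vec k}$ is complete for every $\vec k=(k_2,\dots,k_n)$, knowing this for $\vec k=\vec 0$ and knowing full completeness in the later directions. Write $I(\vec k)$ for the space $\cC_\inv$ of direction-$1$-invertibles of the Segal space $\Cc_{\bullet,\vec k}$. By \cref{lemma.alternate description of completeness}, $I(\vec k)$ is the finite homotopy limit $\Cc_{3,\vec k}\times_{\Cc_{1,\vec k}\times\Cc_{1,\vec k}}(\Cc_{0,\vec k}\times\Cc_{0,\vec k})$; in particular $\vec k\mapsto I(\vec k)$ is a homotopy limit of the $(n-1)$-fold Segal spaces $\Cc_{3,\vec\bullet},\Cc_{1,\vec\bullet},\Cc_{0,\vec\bullet}$, which are complete by the first step, and is therefore itself a complete $(n-1)$-fold Segal space. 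Direction-$1$ completeness at $\vec k$ is exactly the assertion that the degeneracy $\Cc_{0,\vec k}\to I(\vec k)$ is an equivalence. Since $\Cc_{0,\vec\bullet}$ is essentially constant (essential constancy of $\Cc$ in direction $1$) and the map is an equivalence at $\vec 0$, it suffices to prove that $I(\vec\bullet)$ is essentially constant: then both $(n-1)$-fold Segal spaces are determined by their value at $\vec 0$ and the comparison is a levelwise equivalence.

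To see that $I(\vec\bullet)$ is essentially constant, since it is a Segal space in each direction it is enough to check that each direction-$j$ degeneracy $I(\dots,0,\dots)\to I(\dots,1,\dots)$ (all other indices $0$) is an equivalence. For $j\ge 3$ the direction-$2$ index is $0$, so ambient essential constancy collapses $\Cc_{a,0,\dots,k_j,\dots}\simeq\Cc_{a,0,\dots,0}$ and hence $I(\dots,k_j,\dots)\simeq I(\vec 0)$; thus only $j=2$ is genuine. This reduces the entire lemma to one claim: a $1$-by-$1$ cell $f\in\Cc_{1,1,0,\dots,0}$ which is direction-$1$-invertible is direction-$2$-invertible. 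Indeed, the homotopy limit presenting $I(e_2)$ forces the $(0,2)$- and $(1,3)$-edges of each of its points to be direction-$1$-degenerate, hence its three direction-$1$-edges to be direction-$1$-invertible, so the claim makes the direction-$2$ degeneracy identify $I(\vec 0)$ with all of $I(e_2)$ (invertibility being componentwise along the Segal decomposition).

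Finally I would prove the claim by Eckmann--Hilton. The two direction-$2$ faces of $f$ lie in $\Cc_{1,0,\dots,0}$ and are images of $f$ under maps of direction-$1$ Segal spaces, hence are themselves direction-$1$-invertible; by direction-$1$ completeness of $\Cc_{\bullet,0,\dots,0}$ they are therefore direction-$1$-degenerate, so up to equivalence we may take $f$ to be a direction-$2$ endomorphism of a direction-$1$ identity $\id_a$. On the space of such endomorphisms the direction-$1$ (``horizontal'') and direction-$2$ (``vertical'') compositions are both defined, share the unit $\id_{\id_a}$, and satisfy the interchange law; Eckmann--Hilton then shows the two compositions agree up to homotopy, so $f$ is invertible for one precisely when it is invertible for the other, and in particular direction-$1$-invertibility yields direction-$2$-invertibility. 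The main obstacle is exactly this last step: everything preceding it is bookkeeping with the Segal and essential-constancy conditions, while the coincidence of horizontal and vertical invertibility for a cell over an identity is the one genuinely $2$-categorical input — and the reductions above are arranged precisely so that both compositions become available on the same cell.
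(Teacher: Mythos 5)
Your overall skeleton matches the paper's: reduce to completeness in the first direction, recognize the remaining obstruction as a statement comparing invertibility in two different simplicial directions, and resolve it by Eckmann--Hilton. Your induction on $n$ to dispatch directions $2,\dots,n$, the presentation of $I(\vec k)$ via \cref{lemma.alternate description of completeness}, and the Eckmann--Hilton argument for cells in $\cC_{1,1,0,\dots,0}$ are all sound. The genuine gap is the step where you claim that, since $I(\vec\bullet)$ is a Segal space in each direction, it suffices to check the degeneracies $I(\vec 0)\to I(e_j)$ with a \emph{single} index equal to $1$. The Segal conditions only decompose simplicial degrees $\geq 2$ into fiber products of degrees $0$ and $1$, and essential constancy (in the sense of \cref{defn.nSS}) only collapses indices occurring \emph{after} a $0$; neither controls the levels $I(1,1,0,\dots)$, $I(1,1,1,0,\dots)$, etc. These are genuinely not determined by the single-$1$ levels: consider $X_{k_1,k_2}=\bN^{k_1k_2}$, the double nerve of the one-object strict double category whose horizontal and vertical $1$-morphisms are all identities and whose squares form the commutative monoid $\bN$. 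This satisfies the Segal conditions in each direction, is essentially constant, and is even complete in every row and every column, yet $X_{0,0}=X_{1,0}=X_{0,1}=\ast$ while $X_{1,1}=\bN$. So nothing you have established about $I(\vec\bullet)$ --- Segal conditions, essential constancy, or completeness --- justifies the reduction. For $n\geq 3$ your argument proves completeness of $\cC_{\bullet,1,0,\dots}$ but leaves $\cC_{\bullet,1,1,0,\dots}$ and all higher consecutive-ones levels $\cC_{\bullet,(j)}$ untouched.

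Closing this gap requires handling each level $(j)$ separately, and that is exactly the induction the paper runs: assuming $\cC_{\bullet,(j-1)}$ is complete, one forms the $2$-fold Segal space $\cD_{\bullet,\bullet}=\cC_{\bullet,(j-1),\bullet}$ and applies the Eckmann--Hilton argument in its homotopy bicategory $\h_2(\cD)$, together with the lemma's hypothesis that each $\cD_{k,\bullet}=\cC_{k,(j-1),\bullet}$ is complete, to conclude $\cD_{\inv,1}=\cD_{\inv,\inv}\simeq\cD_{\inv,0}\simeq\cD_{0,0}$, i.e.\ that $\cC_{\bullet,(j)}$ is complete. Your claim about $1$-by-$1$ cells is precisely the first step $j=1$ of this induction, performed in $\h_2(\cC_{\bullet,\bullet,0,\dots})$; the higher steps need the same argument in the bicategories $\h_2(\cC_{\bullet,(j-1),\bullet})$, which your single claim does not reach. (For $n=2$ there are no multi-ones levels, so your proof is complete in that case and coincides with the paper's.)
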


In~\cite{BSP2011}, this equivalent but seemingly weaker condition is taken as the definition of ``completeness'' of $n$-fold Segal spaces, whereas the seemingly  stronger version that we give matches the one used in \cite{LurieGoodwillie,Haugseng2014}.  \cref{lemma.weaker completeness condition} verifies that the two definitions of complete $n$-fold Segal spaces agree.  The result clearly depends on the essential constancy condition.  We defer the proof to the end of this section.

We turn now to the functors between complete $n$-fold Segal spaces.

\begin{definition}\label{defn.homomorphism of nCSS}
A \define{homomorphism} of (complete) $n$-fold Segal spaces $F: \cB_{\bullet,\ldots, \bullet} \to \cC_{\bullet,\ldots, \bullet}$ is a map of $n$-fold simplicial spaces, i.e.\ it consists of continuous maps
$$F_{k_1,\ldots, k_n}:\cB_{k_1,\ldots, k_n} \longrightarrow \cC_{k_1,\ldots, k_n}$$
for every $\vec k=(k_1,\ldots, k_n) \in \NN^n$ which commute with the face and degeneracy maps.

  An \define{equivalence} of (complete) $n$-fold Segal spaces $F: \cB_{\bullet,\ldots, \bullet} \to \cC_{\bullet,\ldots, \bullet}$ is a homomorphism such that each map $F_{k_1,\ldots, k_n}$ is a homotopy equivalence of spaces.  Two complete $n$-fold Segal spaces are \define{equivalent} if they are connected by a zig-zag of equivalences.
\end{definition}

This notion of homomorphism is, of course, a strict one; we would rather a notion which is invariant under equivalences.  Any such attempt will undoubtedly result in a space of maps $\cB \to \cC$, rather than just a set of them.  As a first attempt, note that the set of homomorphisms $\hom(\cB_{\vec\bullet},\cC_{\vec\bullet})$ between complete $n$-fold Segal spaces  is naturally the set of $0$-simplices in a simplicial set $\maps(\cB_{\vec\bullet},\cC_{\vec\bullet})$.  Indeed,
any complete $n$-fold Segal space is among other things a functor $(\Delta^{\times n})^{\mathrm{op}} \to \cat{sSet}$ (which happens to take values in Kan complexes).  Since $\cat{sSet}$ is locally presentable, the category of functors from any small category to $\cat{sSet}$  is naturally enriched in $\cat{sSet}$.

\begin{lemma}\label{lemma.mappingspace}
  Let $\cB_{\bullet}$ and $\cC_{\bullet}$ be (complete) $n$-fold Segal spaces.  Suppose that at least one of the following holds:
  \begin{enumerate}
    \item When considered as a functor $(\Delta^{\times n})^{\mathrm{op}} \to \cat{sSet}$, $\cB_{\bullet}$ is cofibrant for the projective model structure.
    \item When considered as a functor $(\Delta^{\times n})^{\mathrm{op}} \to \cat{sSet}$, $\cC_{\bullet}$ is fibrant for the injective model structure.
  \end{enumerate}
  Then the simplicial set $\maps(\cB_{\vec\bullet},\cC_{\vec\bullet})$ is a Kan complex.  Equivalences in either variable (preserving the chosen (co)fibrancy) induce homotopy equivalences of Kan complexes.
\end{lemma}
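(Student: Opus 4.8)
The plan is to recognize the lemma as a purely formal consequence of the theory of simplicial model categories, applied to the functor category $\mathrm{Fun}((\Delta^{\times n})^{\op},\cat{sSet})$ in which both $\cB_{\vec\bullet}$ and $\cC_{\vec\bullet}$ live. The simplicial set $\maps(\cB_{\vec\bullet},\cC_{\vec\bullet})$ whose $m$-simplices are the homomorphisms $\cB_{\vec\bullet}\times\Delta^m \to \cC_{\vec\bullet}$ (with $\Delta^m$ regarded as a constant $n$-fold simplicial space) is exactly the hom-object of the canonical $\cat{sSet}$-enrichment discussed just above the statement. This single enrichment underlies both the projective and the injective model structures on $\mathrm{Fun}((\Delta^{\times n})^{\op},\cat{sSet})$, each of which is a simplicial model category, since $\cat{sSet}$ is a combinatorial simplicial model category and the indexing category is small. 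The one input from abstract homotopy theory I would invoke is the pushout-product axiom SM7: in any simplicial model category, $\maps(X,Y)$ is a Kan complex whenever $X$ is cofibrant and $Y$ is fibrant, and a weak equivalence in either variable between suitably (co)fibrant objects induces a weak — hence, for Kan complexes, a homotopy — equivalence of mapping spaces.

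The content of the lemma then reduces to two elementary observations about the two model structures. First, in the projective structure the fibrations and weak equivalences are objectwise; since every complete $n$-fold Segal space takes values in $\cat{Spaces}$, i.e.\ is objectwise a Kan complex, every such object is automatically projectively fibrant. Second, in the injective structure the cofibrations are objectwise; since every object of $\cat{sSet}$ is cofibrant, every $n$-fold simplicial space is automatically injectively cofibrant. Thus under hypothesis (1) I would work in the projective structure: $\cB_{\vec\bullet}$ is cofibrant by assumption and $\cC_{\vec\bullet}$ is fibrant for free, so SM7 yields that $\maps(\cB_{\vec\bullet},\cC_{\vec\bullet})$ is a Kan complex. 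Under hypothesis (2) I would instead work in the injective structure: $\cC_{\vec\bullet}$ is fibrant by assumption and $\cB_{\vec\bullet}$ is cofibrant for free, giving the same conclusion. It is precisely because the two enrichments agree that either model structure may be invoked to describe the \emph{same} simplicial set $\maps$.

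For the final sentence, recall from \cref{defn.homomorphism of nCSS} that an equivalence of complete $n$-fold Segal spaces is an objectwise homotopy equivalence, hence an objectwise weak equivalence, hence a weak equivalence in both model structures. Given such an equivalence in the $\cB$-variable preserving projective cofibrancy (respectively in the $\cC$-variable preserving injective fibrancy), the homotopy-invariance half of SM7 at once gives that the induced map of Kan complexes is a homotopy equivalence; the hypothesis that the equivalence preserves the chosen (co)fibrancy is exactly what keeps us inside the cofibrant–fibrant world, where the strict mapping space already computes the derived one.

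There is no serious obstacle here: the lemma is essentially bookkeeping. The one point that genuinely must be checked, rather than asserted, is that the projective and injective model structures on $\mathrm{Fun}((\Delta^{\times n})^{\op},\cat{sSet})$ share the \emph{same} underlying simplicial enrichment — the one already fixed above the statement — so that a single Kan complex $\maps(\cB_{\vec\bullet},\cC_{\vec\bullet})$ simultaneously serves as the mapping object for both structures and lets us select whichever of (1) or (2) is in force. Verifying that both structures are simplicial model categories compatible with this enrichment is standard, reducing to the corresponding fact for $\cat{sSet}$ itself together with the objectwise description of the generating (co)fibrations, and I would relegate it, along with the precise statement of SM7, to the model-category Appendix.
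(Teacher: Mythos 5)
Your proposal is correct and takes essentially the same approach as the paper: the paper's proof consists precisely of citing (i) the existence of both projective and injective model structures on functors $(\Delta^{\times n})^{\mathrm{op}} \to \cat{sSet}$, (ii) the fact that both are simplicial model categories, and (iii) the observations that every object is injectively cofibrant and every $\cat{Spaces}$-valued functor is projectively fibrant. Your explicit invocation of SM7, the case split between hypotheses (1) and (2), and the remark that both structures share the same underlying enrichment simply spell out what the paper leaves implicit in those three ingredients.
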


\begin{proof}
  The statement has nothing to do with (complete) $n$-fold Segal spaces, and relies only on: (i)~the existence of both injective and projective model structures (see \cref{defn.global model structure}) on the category of functors $(\Delta^{\times n})^{\mathrm{op}} \to \cat{sSet}$;  (ii)~the fact that both such model categories are simplicial; (iii)~the observation that every functor $(\Delta^{\times n})^{\mathrm{op}} \to \cat{sSet}$ is cofibrant for the injective model structure, and every  functor valued in the subcategory $\cat{Spaces}$, i.e.\ the full subcategory of Kan complexes, is fibrant for the projective model structure.
\end{proof}

Suppose now that $\cB_{\vec\bullet}$ and $\cC_{\vec\bullet}$ are arbitrary complete $n$-fold Segal space, and $\widetilde{\cB}_{\vec\bullet} \to \cB_{\vec\bullet}$ and $\Cc_{\vec\bullet} \to \widehat{\Cc}_{\vec\bullet}$ are equivalences with $\widetilde{\Bb}_{\vec\bullet}$ cofibrant for the projective model structure and $\widehat{\Cc}_{\vec\bullet}$ fibrant for the injective model structure.  Note that any $n$-uple simplicial space which is equivalent to a complete $n$-fold Segal space  is necessarily also complete $n$-fold Segal.    \cref{lemma.mappingspace} implies that  the induced maps $\maps(\widetilde{X}_{\vec\bullet},Y_{\vec\bullet}) \to \maps(\widetilde{X}_{\vec\bullet},\widehat{Y}_{\vec\bullet}) \leftarrow \maps(X_{\vec\bullet},\widehat{Y}_{\vec\bullet})$ are homotopy equivalences.

\begin{definition}
  Let $\cB_{\vec\bullet}$ and $\cC_{\vec\bullet}$ be (complete) $n$-fold Segal spaces.  With notation as above,  we refer to any of the homotopy equivalent spaces $\maps(\widetilde{\cB}_{\vec\bullet},\cC_{\vec\bullet})$, $\maps(\widetilde{\cB}_{\vec\bullet},\widehat{\cC}_{\vec\bullet})$, and $\maps(\cB_{\vec\bullet},\widehat{\cC}_{\vec\bullet})$ as the \define{derived mapping space} $\maps^{h}(\cB_{\vec\bullet},\cC_{\vec\bullet})$
\end{definition}

Note that it follows from \cref{lemma.mappingspace} that the derived mapping space is independent (up to homotopy) of the choice of cofibrant or fibrant resolution used.  These choices can be made functorially.  We henceforth choose fibrant and/or cofibrant resolution functors, thereby defining $\maps^{h}(-,-)$ so as to be strictly functorial in each variable.

The derived mapping spaces  $\maps^{h}(-,-)$ can be packaged together into an $(\infty,1)$-category known as the ``homotopy theory of $(\infty,n)$-categories.''  More details are in the Appendix.  In particular, we recall the model category theoretic definition of said homotopy theory in \cref{eg.Rezk}.  That that model category unpacks to the description presented herein follows from \cref{lemma.basic facts about localizations}.  In particular, complete $n$-fold Segal spaces are nothing but the fibrant objects of that model category, and weak equivalences in that model category between fibrant objects are precisely the equivalences in \cref{defn.homomorphism of nCSS}.

With \cref{defn.homomorphism of nCSS} in hand, we can prove \cref{lemma.alternate description of completeness}:
\begin{proof}[Proof of \cref{lemma.alternate description of completeness}]
  This is proved in \cite[Proposition 10.1]{MR2578310} under the assumption that the simplicial space $\cC_\bullet$ is fibrant for the injective model structure on simplicial spaces.  The space $\cC_{3} \times^{h}_{\cC_{1}\times \cC_{1}} (\cC_{0} \times \cC_{0})$ is obviously invariant up to homotopy under equivalences of Segal spaces.  It therefore suffices to see that an equivalence of Segal spaces $\cC_\bullet \overset\sim\to \cD_\bullet$ induces an equivalence $\cC_\inv \overset\sim\to \cD_\inv$, since we may then functorially replace $\cC$ by some injective-fibrant replacement.  (As in \cref{defn.1fold complete}, $\cC_\inv$ denotes the space usually called $\cC_1^\inv$.)
  
  Let $s,t: \cC_1 \rightrightarrows \cC_0$ denote the source and target maps.  Given elements $x,y\in \breve\cC_0$, by definition a $1$-morphism in $\hom_{\h_1\cC}(x,y)$ consists of a connected component $f\in \pi_0(\cC_1)$ along with homotopy classes of paths in $\cC_0$ connecting $s(f)$ to $x$ and connecting $t(f)$ to $y$.  Thus the sets $\hom_{\h_1\cC}(x,y)$ are locally constant in $x,y\in \cC_0$, and any data about a morphism in $\h_1\cC$ depends only on its connected component.  It follows that $\cC_\inv$ is a union of connected components of $\cC_1$: precisely those components that map to invertible morphisms in $\h_1\cC$.
  
  An equivalence of Segal spaces $\cC \overset\sim\to \cD$ induces a functor $\h_1(\Cc) \overset\sim\to \h_1(\cD)$ which is an isomorphism on $\hom$ sets and an essential surjection, hence an equivalence of categories.  It follows in particular that the connected components of $\cC_1$ appearing in $\cC_\inv$ correspond bijectively to the connected components of $\cD_1$ appearing in $\cD_\inv$.  Thus the 
   equivalence $\cC_1 \overset\sim\to\cD_1$ induces a 
    homotopy equivalence $\cC_\inv \overset\sim\to\cD_\inv$.
\end{proof}

To prove \cref{lemma.weaker completeness condition}
we will need the analog of the homotopy category of a Segal space for 2-fold Segal spaces.
\begin{definition}\label{defn.homotopy bicategory}
The \define{homotopy bicategory} $\h_2(\cC)$ of a $2$-fold Segal space $\cC=\cC_{\bullet,\bullet}$ is defined as follows.
Its set of objects is the underlying set (i.e.~the set of zero-simplices) $\breve\cC_{0,0}$ of the space $\cC_{0,0}$ and
$$
\mathrm{Hom}_{\h_2(\cC)}(x,y)=\h_1\big(\mathrm{Hom}_{\cC}(x,y)\big)
=\h_1\left(\{x\} \underset{\cC_{0,\bullet}}{\overset{h}{\times}} \cC_{1,\bullet} \underset{\cC_{0,\bullet}}{\overset{h}{\times}} \{y\}\right)
$$
as Hom categories. Horizontal composition is defined as follows:  
\begin{eqnarray*}
\left(\{x\}\underset{\cC_{0,\bullet}}{\overset{h}{\times}} \cC_{1,\bullet} \underset{\cC_{0,\bullet}}{\overset{h}{\times}} \{y\}\right)\times
\left(\{y\}\underset{\cC_{0,\bullet}}{\overset{h}{\times}} \cC_{1,\bullet} \underset{\cC_{0,\bullet}}{\overset{h}{\times}} \{z\}\right)
& \longrightarrow & \{x\}\underset{\cC_{0,\bullet}}{\overset{h}{\times}} \cC_{1,\bullet} \underset{\cC_{0,\bullet}}{\overset{h}{\times}}\cC_{1,\bullet} \underset{\cC_{0,\bullet}}{\overset{h}{\times}} \{z\} \\
& \tilde\longleftarrow & \{x\}\underset{\cC_{0,\bullet}}{\overset{h}{\times}} \cC_{2,\bullet} \underset{\cC_{0,\bullet}}{\overset{h}{\times}} \{z\} \\
& \longrightarrow & \{x\}\underset{\cC_{0,\bullet}}{\overset{h}{\times}} \cC_{1,\bullet} \underset{\cC_{0,\bullet}}{\overset{h}{\times}} \{z\}\,.
\end{eqnarray*}
The second arrow happens to go in the wrong way but it is a weak equivalence. Therefore after taking $\h_1$ it turns out to be an equivalence of categories, 
and thus to have an inverse (assuming the axiom of choice). 
\end{definition}

\begin{proof}[Proof of \cref{lemma.weaker completeness condition}]

By fixing the first few indices, it suffices to prove that if $\cC_{\vec \bullet}$ is an $n$-fold Segal space such that for each $i$ and $k_1,\dots,k_i$, the Segal space $\cC_{k_1,\dots,k_i,\bullet}$ is complete, then the map $\cC_{(0)} \simeq \cC_{0,\vec\bullet} \to \cC_{\inv,\vec\bullet}$ is an equivalence of $(n-1)$-fold Segal spaces, i.e.~for every $l_2,\ldots, l_n$, the Segal space $\Cc_{\bullet, l_2,\ldots, l_n}$ is complete.  (We continue the convention of dropping trailing $0$s.  As in \cref{defn.1fold complete}, for a Segal space $\cD_\bullet$, we denote by $\cD_\inv$ the space usually called $\cD_1^\inv$.)  Essential constancy and the Segal condition together imply that a homomorphism of $(n-1)$-fold Segal spaces is an equivalence as soon as it is an equivalence on all spaces of $(j)$-morphisms, i.e.~it is enough to show that for every $j$ the Segal space $\Cc_{\bullet, (j)}$ is complete.  We therefore suppose by induction that we know $\cC_{0,(i)} \to \cC_{\inv,(i)}$ is an equivalence for some $i$ and wish to show that $\cC_{0,(i+1)} \to \cC_{\inv,(i+1)}$ is an equivalence.  Note that the base case $j=0$ is part of the initial assumption on $\cC$ in the lemma.

Our goal is to move completeness from one index to another.
Thus, consider the $2$-fold Segal space $\cD_{\bullet,\bullet} = \cC_{\bullet,(i),\bullet}$.  Our inductive assumption asserts that $\cD_{\bullet,0}=\cC_{\bullet, (i), 0}=\cC_{\bullet, (i)}$ is complete, i.e.~that $\cD_{0,0} \to \cD_{\inv,0}$ is an equivalence. Furthermore, the assumption in the lemma asserts that for every $k$ the Segal space $\cD_{k,\bullet}=\cC_{k,(i),\bullet}$ is complete, i.e.~that $\cD_{\bullet,0} \to \cD_{\bullet,\inv}$ is an equivalence of Segal spaces.  It remains to show that for any $k$ the Segal space $\cD_{\bullet,k}=\cC_{\bullet,(i),k}$ is complete, which in turn by the Segal condition is reduced to showing the statement for $k=1$, namely that $\cD_{0,0} \simeq \cD_{0,1} \to \cD_{\inv,1}$ is an equivalence.

Consider the homotopy bicategory $\h_2\cD$ of $\cD$. Then $\cD_{\inv,1}$ consists of those elements in $\cD_{(2)}$ that represent 2-morphisms in $\h_2\cD$ which are invertible for the \emph{horizontal} (i.e.\ first) composition in $\h_2\cD$.  But an easy application of the Eckmann--Hilton argument shows that a 2-morphism in a bicategory is invertible for the horizontal composition if and only if its domain and codomain are invertible for the horizontal composition and the $2$-morphism is invertible for the vertical composition.  We find therefore
$$ \cD_{\inv,1} = \cD_{\inv,\inv} \simeq \cD_{\inv,0} \simeq \cD_{0,0}. $$
Here $\cD_{\inv,\inv}$ is the subspace of $\cD_{1,1}$ consisting of those $2$-morphisms that are invertible for both horizontal and vertical composition.  \cref{lemma.alternate description of completeness} implies that homotopy fiber products of Segal spaces restrict to homotopy fiber products of invertible pieces (since homotopy fiber products commute), and so the fact that $\cD_{\bullet,0} \to \cD_{\bullet,\inv}$ is an equivalence for each $\bullet\in \bN$ implies that $\cD_{\inv,0} \to \cD_{\inv,\inv}$ is an equivalence.
\end{proof}

\section{Walking  \texorpdfstring{\lowercase{$i$}}{i}-by-\texorpdfstring{\lowercase{$j$}}{j}-morphisms}\label{section.walkingmorphisms}

This section defines a collection of strict higher categories $\{\Theta^{(i);(j)}\}_{i,j\in \NN}$, called the \define{walking  $i$-by-$j$-morphisms}.  We will use these  in \cref{section.thedefinition} to define the (op)lax square $\Cc^{\Box}$ of an $(\infty,n)$-category~$\Cc$: we will set $\Cc^{\Box}_{(i);(j)} = \maps^{h}(\Theta^{(i);(j)},\Cc)$.  The motivation for the construction is the following fact about higher categories (which has been turned into an almost-definition of ``higher category'' in~\cite{BSP2011}): for each $i\in \NN$ there is a ``globe'' $\Theta^{(i)}$, called the \define{walking $i$-morphism}, such that for any $(\infty,n)$-category $\Cc$ there is a canonical homotopy equivalence $\Cc_{(i)} \cong \maps^{h}(\Theta^{(i)};\Cc)$; moreover, the source and target maps $\Cc_{(i)} \rightrightarrows \Cc_{(i-1)}$ are nothing but the pullbacks along canonical inclusions $s,t : \Theta^{(i-1)} \rightrightarrows \Theta^{(i)}$.   The use of ``walking'' in this sense comes from~\cite{Lauda:2005}.

The higher categories $\Theta^{(i);(j)}$ are in an appropriate sense ``free  $(i+j)$-categories.''  The term ``{$n$-computad}'' for ``presentation of free $n$-category'' was introduced in~\cite{MR0401868}, but has only worked its way into some corners of category theory land, and so we recall the definition.
Recall first that
  a \define{strict $n$-category} is a strict category strictly enriched in the strict category of strict $(n-1)$-categories. 
  The nerve of a strict category is a simplicial set that we may regard as a simplicial space which is discrete at every level;  by induction, the nerve of a strict $n$-category is an $n$-fold simplicial set, which again we will regard as an $n$-fold simplicial space which is discrete at every level.  Nerves of strict $n$-categories automatically satisfy the Segal and essential constancy conditions, and so are $n$-fold Segal spaces.  They are rarely complete: completeness of the nerve is equivalent to the category being \define{gaunt}, which requires every isomorphism to be an identity.

\begin{definition}
  An \define{$n$-computad} $\Theta$ is a presentation of a strict $n$-category freely generated by sets of $k$-morphisms for $0\leq k \leq n$.  It consists of: 
  \begin{itemize}
    \item an $(n-1)$-computad $\partial \Theta$, called the \define{$(n-1)$-skeleton of $\Theta$}; 
    \item a set of \define{generating $n$-morphisms}; 
    \item if $n\geq 1$, then for each generating $n$-morphism there is a pair of parallel $(n-1)$-morphisms, its \define{source} and \define{target}, in the strict $(n-1)$-category generated by $\partial\Theta$.  
    \end{itemize}
    For $k\leq n$, the $k$-skeleton of an $n$-computad $\Theta$ is the $k$-computad $\partial^{n-k}\Theta$.
  Any $n$-computad may be considered an $(n+1)$-computad with no generating $(n+1)$-morphisms; a \define{computad} is an $n$-computad for some $n$.
  By convention, the $(-1)$-skeleton of any computad is empty.
\end{definition}

For example, a $0$-computad is a set and a $1$-computad is a quiver.  Every $n$-computad $\Theta$ determines in particular an $n$-dimensional CW complex whose $k$-cells are the $k$-dimensional generators.  The extra data in a computad are the ``directions'' of the cells.  This CW complex is the ``groupoidification'' of $\Theta$, in which every generating morphism is forced to be invertible; we therefore call it~$\Gr(\Theta)$.

\begin{remarknodiamond}\label{remark.failure of cofibrancy}
  We will abuse notation and conflate an $n$-computad with the free strict $n$-category it generates and the nerve thereof regarded as an $n$-fold Segal space.  Note that computads are automatically gaunt and so the corresponding $n$-fold Segal spaces are complete.  Computads generally are not projectively cofibrant (compare \cref{lemma.mappingspace}).  This can be seen already for the quiver~$\{\tikz[baseline=(base)]{ \path node[dot] (l) {} +(0,-2.5pt) coordinate (base) +(.5,-4pt) node[dot](r) {} +(.5,4pt) node[dot](t) {}; \draw[arrow] (l) -- (r); \draw[arrow] (l) -- (t);}\}$ thought of as a $1$-computad.
  Indeed, suppose that $\cC_\bullet$ is a complete Segal space and $s,t: \cC_1 \rightrightarrows \cC_0$ are the source and target maps.  Then the underived mapping space from this computad to $\cC$ is a strict fibered product of spaces:
  $$ \maps\left( \tikz[baseline=(base)]{ \path node[dot] (l) {} +(0,-2.5pt) coordinate (base) +(.5,-4pt) node[dot](r) {} +(.5,4pt) node[dot](t) {}; \draw[arrow] (l) -- (r); \draw[arrow] (l) -- (t);} \,,\cC\right) = \cC_1 \underset{\cC_0}\times \cC_1 $$
  where both maps $\cC_1 \to \cC_0$ are the source map $s$.  This strict fibered product is not invariant under replacing $\cC$ by an equivalent complete Segal space, and so the (nerve of the) quiver $\{\tikz[baseline=(base)]{ \path node[dot] (l) {} +(0,-2.5pt) coordinate (base) +(.5,-4pt) node[dot](r) {} +(.5,4pt) node[dot](t) {}; \draw[arrow] (l) -- (r); \draw[arrow] (l) -- (t);}\}$ must not be cofibrant.
  
  One can eyeball a cofibrant replacement of $\{\tikz[baseline=(base)]{ \path node[dot] (l) {} +(0,-2.5pt) coordinate (base) +(.5,-4pt) node[dot](r) {} +(.5,4pt) node[dot](t) {}; \draw[arrow] (l) -- (r); \draw[arrow] (l) -- (t);}\}$: that left vertex should be replaced by an interval worth of vertices.  A more robust computation works as follows.  The nerve of $\{\tikz[baseline=(base)]{ \path node[dot] (l) {} +(0,-2.5pt) coordinate (base) +(.5,-4pt) node[dot](r) {} +(.5,4pt) node[dot](t) {}; \draw[arrow] (l) -- (r); \draw[arrow] (l) -- (t);}\}$ arrises as a pushout of simplicial sets:
  $$ 
  \{\tikz[baseline=(base)]{ \path node[dot] (l) {} +(0,-2.5pt) coordinate (base) +(.5,-4pt) node[dot](r) {} +(.5,4pt) node[dot](t) {}; \draw[arrow] (l) -- (r); \draw[arrow] (l) -- (t);}\} = 
  \{\tikz[baseline=(base)]{ \path node[dot] (l) {} +(0,-2.5pt) coordinate (base) +(.5,-4pt) node[dot](r) {} ; \draw[arrow] (l) -- (r);} \} 
  \underset{\{\tikz[baseline=(base)]{ \path node[dot] (l) {} +(0,-2.5pt) coordinate (base) ; }\} } \cup 
  \{\tikz[baseline=(base)]{ \path node[dot] (l) {} +(0,-2.5pt) coordinate (base)  +(.5,4pt) node[dot](t) {};  \draw[arrow] (l) -- (t);}\}.
  $$
  The simplicial sets in question are all projectively cofibrant as simplicial spaces: they are just the standard $0$- and $1$-dimensional simplices arising from the Yoneda embedding.  The maps $\{\tikz[baseline=(base)]{ \path node[dot] (l) {} +(0,-2.5pt) coordinate (base) ; }\} \hookrightarrow  \{\tikz[baseline=(base)]{ \path node[dot] (l) {} +(0,-2.5pt) coordinate (base) +(.5,0pt) node[dot](r) {} ; \draw[arrow] (l) -- (r);} \} $ are levelwise inclusions, and hence cofibrations for the \emph{injective} model structure on simplicial spaces.  Thus the above pushout is a homotopy pushout for the injective model structure.  On the other hand, homotopy pushouts in the injective and projective models are homotopy equivalent, and so a projective-cofibrant replacement of the quiver in question can be computed by using the homotopy colimit in the projective model:
   \\ \mbox{}\hfill$\displaystyle 
  \widetilde{\{\tikz[baseline=(base)]{ \path node[dot] (l) {} +(0,-2.5pt) coordinate (base) +(.5,-4pt) node[dot](r) {} +(.5,4pt) node[dot](t) {}; \draw[arrow] (l) -- (r); \draw[arrow] (l) -- (t);}\}} = 
  \{\tikz[baseline=(base)]{ \path node[dot] (l) {} +(0,-2.5pt) coordinate (base) +(.5,-4pt) node[dot](r) {} ; \draw[arrow] (l) -- (r);} \} 
  \underset{\{\tikz[baseline=(base)]{ \path node[dot] (l) {} +(0,-2.5pt) coordinate (base) ; }\} } {\overset h \cup}
  \{\tikz[baseline=(base)]{ \path node[dot] (l) {} +(0,-2.5pt) coordinate (base)  +(.5,4pt) node[dot](t) {};  \draw[arrow] (l) -- (t);}\}.
  $\hfill\ensuremath\Diamond
\end{remarknodiamond}

We now define the walking $i$-morphism as an $i$-computad:

\begin{definition} \label{defn.walkingimorphism}
  The \define{walking $i$-morphism} is the $i$-computad $\Theta^{(i)}$
   with $(i-1)$-skeleton $\partial\Theta^{(i)} = \Theta^{(i-1)} \cup_{\partial \Theta^{(i-1)}}\Theta^{(i-1)}$, and a unique generating $i$-morphism with source the unique generating $(i-1)$-morphism in the first copy of $\Theta^{(i-1)}$ and target the unique generating $(i-1)$-morphism in the second copy of $\Theta^{(i-1)}$.  We denote the inclusions $\Theta^{(i-1)}\mono \Theta^{(i)}$ on the first and second copies by $s^{(i)}$ and~$t^{(i)}$.
\end{definition}

For example, the walking $0$-morphism is the one-element set $\Theta^{(0)} = \{\tikz \path node[dot] (l) {}  +(0,-3pt) coordinate (base);\}$, with $(-1)$-skeleton $\partial \Theta^{(0)} = \emptyset$.  The walking $1$-morphism is the ``$A_{2}$ quiver'' $\Theta^{(1)} = \{\tikz{ \path node[dot] (l) {} +(.5,0) node[dot](r) {} +(0,-3pt) coordinate (base); \draw[arrow] (l) -- (r);}\}$.  The walking $2$-morphism is the standard bigon:
$$
\Theta^{(2)} = \left\{
\begin{tikzpicture}[baseline=(base)]
  \path node[dot] (a) {} +(2,0) node[dot] (c) {} +(0,-3pt) coordinate (base);
  \draw[arrow] (a) .. controls +(1,-.5) and +(-1,-.5) .. coordinate (t)   (c);
  \draw[arrow] (a) .. controls +(1,.5) and +(-1,.5) .. coordinate (s)  (c);
  \draw[twoarrowlonger] (s) --   (t);
\end{tikzpicture} \right\}
$$

\begin{remark} \label{remark.theta2}
  The walking bigon $\Theta^{(2)}$, or rather its nerve, is not projectively cofibrant as a $2$-uple simplicial space.  
  Consider the $2$-uple simplicial set
  $$
  \begin{tikzpicture}[baseline=(base)]
    \path (0,0) node[dot] (sw) {} (0,2) node[dot] (nw) {} (2,0) node[dot] (se) {} (2,2) node[dot] (ne) {};
    \path (.2,.2) coordinate (sw1) (.2,1.8) coordinate (nw1) (1.8,.2) coordinate (se1) (1.8,1.8) coordinate (ne1);
    \draw[ultra thin,fill=black!25] (sw1) -- (nw1) -- (ne1) -- (se1) -- (sw1);
    \draw[arrow] (nw) -- (ne); \draw[arrow] (sw) -- (se);
    \draw[arrow,dashed] (nw) -- (sw); \draw[arrow,dashed] (ne) -- (se); 
    \path (0,1) coordinate (base);
  \end{tikzpicture}
  $$
  where dashed edges denote $1$-simplices in the second direction, and solid edges denote $1$-simplices in the first direction.  Thus it has four $(0\times 0)$-simplices, two nondegenerate $(1\times 0)$-simplices, two nondegenerate $(0\times 1)$-simplices, and one nondegenerate $(1\times 1)$-simplex.  All other uple-simplices are degenerate.  It is a complete $2$-uple simplicial space, but it is not essentially constant.  It corresponds via the Yoneda embedding to the object $([1],[1]) \in \Delta^{\times 2}$, and so is projective-cofibrant.
  
  The bigon $\Theta^{(2)}$ arrises as a pushout of simplicial sets:
  $$ \begin{tikzpicture}[baseline=(base)]
  \path node[dot] (a) {} +(2,0) node[dot] (c) {} +(0,-3pt) coordinate (base);
  \draw[arrow] (a) .. controls +(1,-.5) and +(-1,-.5) .. coordinate (t)   (c);
  \draw[arrow] (a) .. controls +(1,.5) and +(-1,.5) .. coordinate (s)  (c);
  \draw[twoarrowlonger] (s) --   (t);
\end{tikzpicture}
  \quad = \quad
  \tikz[baseline=(base)] \path node[dot] {} ++(0,-3pt) coordinate(base);
 \quad
 \underset{
   \begin{tikzpicture}[baseline=(base)]
    \path (0,0) node[dot] (sw) {} (0,.5) node[dot] (nw) {} (0,.6) coordinate(top);
    \draw[arrow,dashed] (nw) -- (sw);  
    \path (0,.3) coordinate (base);
  \end{tikzpicture}
 }
 \cup
 \quad
  \begin{tikzpicture}[baseline=(base)]
    \path (0,0) node[dot] (sw) {} (0,2) node[dot] (nw) {} (2,0) node[dot] (se) {} (2,2) node[dot] (ne) {};
    \path (.2,.2) coordinate (sw1) (.2,1.8) coordinate (nw1) (1.8,.2) coordinate (se1) (1.8,1.8) coordinate (ne1);
    \draw[ultra thin,fill=black!25] (sw1) -- (nw1) -- (ne1) -- (se1) -- (sw1);
    \draw[arrow] (nw) -- (ne); \draw[arrow] (sw) -- (se);
    \draw[arrow,dashed] (nw) -- (sw); \draw[arrow,dashed] (ne) -- (se); 
    \path (0,.8) coordinate (base);
  \end{tikzpicture}
 \quad
 \underset{
   \begin{tikzpicture}[baseline=(base)]
    \path (0,0) node[dot] (sw) {} (0,.5) node[dot] (nw) {} (0,.6) coordinate(top);
    \draw[arrow,dashed] (nw) -- (sw);  
    \path (0,.3) coordinate (base);
  \end{tikzpicture}
 }
 \cup
 \quad
  \tikz[baseline=(base)] \path node[dot] {} ++(0,-3pt) coordinate(base);
  $$
  Since the inclusion 
  $\begin{tikzpicture}[baseline=(base)]
    \path (0,0) node[dot] (sw) {} (0,.5) node[dot] (nw) {} ;
    \draw[arrow,dashed] (nw) -- (sw);  
    \path (0,.1) coordinate (base);
  \end{tikzpicture} \cup \begin{tikzpicture}[baseline=(base)]
    \path (0,0) node[dot] (sw) {} (0,.5) node[dot] (nw) {} ;
    \draw[arrow,dashed] (nw) -- (sw);  
    \path (0,.1) coordinate (base);
  \end{tikzpicture} \hookrightarrow 
  \begin{tikzpicture}[baseline=(base)]
    \path (0,0) node[dot] (sw) {} (0,.5) node[dot] (nw) {} (.5,0) node[dot] (se) {} (.5,.5) node[dot] (ne) {};
    \path (.1,.1) coordinate (sw1) (.1,.4) coordinate (nw1) (.4,.1) coordinate (se1) (.4,.4) coordinate (ne1);
    \path[fill=black!25] (sw1) -- (nw1) -- (ne1) -- (se1) -- (sw1);
    \draw[arrow] (nw) -- (ne); \draw[arrow] (sw) -- (se);
    \draw[arrow,dashed] (nw) -- (sw); \draw[arrow,dashed] (ne) -- (se); 
    \path (0,.1) coordinate (base);
  \end{tikzpicture}
  $
  is an injective-fibration, this pushout is also a homotopy pushout in the injective model on $2$-uple simplicial spaces.  Just as in \cref{remark.failure of cofibrancy}, it follows that a projective-cofibrant replacement of $\Theta^{(2)}$ can be computed by replacing the above pushouts instead as homotopy pushouts in the projective model structure on $2$-uple simplicial spaces:
  $$ \widetilde{\Theta^{(2)}} \quad = \quad
  \tikz[baseline=(base)] \path node[dot] {} ++(0,-3pt) coordinate(base);
 \quad
 \underset{
   \begin{tikzpicture}[baseline=(base)]
    \path (0,0) node[dot] (sw) {} (0,.5) node[dot] (nw) {} (0,.6) coordinate(top);
    \draw[arrow,dashed] (nw) -- (sw);  
    \path (0,.3) coordinate (base);
  \end{tikzpicture}
 }
 {\overset h\cup}
 \quad
  \begin{tikzpicture}[baseline=(base)]
    \path (0,0) node[dot] (sw) {} (0,2) node[dot] (nw) {} (2,0) node[dot] (se) {} (2,2) node[dot] (ne) {};
    \path (.2,.2) coordinate (sw1) (.2,1.8) coordinate (nw1) (1.8,.2) coordinate (se1) (1.8,1.8) coordinate (ne1);
    \draw[ultra thin,fill=black!25] (sw1) -- (nw1) -- (ne1) -- (se1) -- (sw1);
    \draw[arrow] (nw) -- (ne); \draw[arrow] (sw) -- (se);
    \draw[arrow,dashed] (nw) -- (sw); \draw[arrow,dashed] (ne) -- (se); 
    \path (0,.8) coordinate (base);
  \end{tikzpicture}
 \quad
 \underset{
   \begin{tikzpicture}[baseline=(base)]
    \path (0,0) node[dot] (sw) {} (0,.5) node[dot] (nw) {} (0,.6) coordinate(top);
    \draw[arrow,dashed] (nw) -- (sw);  
    \path (0,.3) coordinate (base);
  \end{tikzpicture}
 }
 {\overset h\cup}
 \quad
  \tikz[baseline=(base)] \path node[dot] {} ++(0,-3pt) coordinate(base);$$
  
   Given a complete 2-uple Segal space $\cC$, we conclude:
  $$ \maps^{h}(\Theta^{(2)},\cC) = \cC_{0,0} \times^{h}_{\cC_{0,1}} \cC_{1,1} \times^{h}_{\cC_{0,1}} \cC_{0,0}. $$
  This is equivalent to $\cC_{(2)} = \cC_{1,1}$ provided $\cC$ is essentially constant, as then the maps $\cC_{0,0} \to \cC_{0,1}$ are homotopy equivalences. 
\end{remark}

We turn now to defining $\Theta^{(i);(j)}$ as an $(i+j)$-computad.  
We construct $\Theta^{(i);(j)}$ by gluing two parallel copies of $\Theta^{(i-1);(j)}$ and two parallel copies of $\Theta^{(i);(j-1)}$ together along the inclusions of lower $\Theta$'s and filling it with an $(i+j)$-dimensional morphism in a suitable way.  
The end result is a higher-categorical version of the Crans--Gray tensor product of~$\Theta^{(i)}$ with~$\Theta^{(j)}$~\cite{MR0371990,MR1667313}. 
We phrase the definition as the following result:

\begin{proposition}\label{prop.i-by-j-morphism}
  There is a unique system of computads $\{\Theta^{(i);(j)}\}_{i,j\in \NN}$, called the \define{walking  $i$-by-$j$-morphisms}, satisfying the following:
  \begin{enumerate}
    \item When either $i$ or $j$ vanishes, we have $\Theta^{(i);(0)} \cong \Theta^{(i)}$ and $\Theta^{(0);(j)} \cong \Theta^{(j)}$.
    \item Suppose $i$ and $j$ and both positive.
    
    The finite CW complex $\Gr(\Theta^{(i);(j)})$ is isomorphic to $\Gr(\Theta^{(i)})\times \Gr(\Theta^{(j)})$.  In particular, $\Theta^{(i);(j)}$ is an $(i+j)$-computad with a unique $(i+j)$-dimensional generator, which we call~$\theta_{i;j}$.
    
    The inclusions 
\begin{align*}
\Gr(s^{(i)})\times \id,\Gr(t^{(i)})\times \id : \Gr(\Theta^{(i-1)}) \times \Gr(\Theta^{(j)}) &\mono \Gr(\Theta^{(i)}) \times \Gr(\Theta^{(j)})\\
\id \times \Gr(s^{(j)}),\id \times \Gr(t^{(j)}) : \Gr(\Theta^{(i)})\times\Gr(\Theta^{(j-1)}) &\mono \Gr(\Theta^{(i)})\times\Gr(\Theta^{(j)})
\end{align*}
lift to inclusions of computads:
    \begin{gather*}
    s_{h}^{(i)}, t_{h}^{(i)} : \Theta^{(i-1);(j)} \mono \Theta^{(i);(j)} \\
    s_{v}^{(j)}, t_{v}^{(j)} : \Theta^{(i);(j-1)} \mono \Theta^{(i);(j)} 
    \end{gather*}
    In particular,  the combined map $s_{h}^{(i)} \sqcup t_{h}^{(i)} \sqcup s_{v}^{(j)}\sqcup t_{v}^{(j)} : \Theta^{(i-1);(j)} \sqcup \Theta^{(i-1);(j)} \sqcup \Theta^{(i);(j-1)} \sqcup \Theta^{(i);(j-1)} \to \Theta^{(i);(j)}$ is a surjection on $(i+j-1)$-skeleta and a bijection on the set of $(i+j-1)$-dimensional generators, of which there are four: $s_{h}^{(i)}\theta_{i-1;j}$, $t_{h}^{(i)}\theta_{i-1;j}$, $s_{v}^{(j)}\theta_{i,j-1}$, and $t_{v}^{(j)}\theta_{i,j-1}$.
    
    The subscripts ``$h$'' and ``$v$'' stand for ``horizontal'' and ``vertical.''
    \item Suppose $i$ and $j$ are both positive.  The $(i+j)$-dimensional generator $\theta_{i;j}$ of $\Theta^{(i);(j)}$ is glued onto the $(i+j-1)$-skeleton as follows:
    \begin{description}
      \item[$i$ is odd]  If $i$ is odd, each of the pairs $\{t_{v}^{(j)}\theta_{i,j-1},s_{h}^{(i)}\theta_{i-1;j}\}$ and $\{t_{h}^{(i)}\theta_{i-1;j},s_{v}^{(j)}\theta_{i,j-1}\}$ is composable up to whiskering by lower-dimensional generators, and their compositions are parallel.  The generator $\theta_{i;j}$ of $\Theta^{(i);(j)}$ has source $t_{v}^{(j)}\theta_{i;j-1} \circ s_{h}^{(i)}\theta_{i-1;j}$ and target $t_{h}^{(i)}\theta_{i-1;j} \circ s_{v}^{(j)}\theta_{i;j-1}$.
      \item[$i$ is even]  If $i$ is even, each of the pairs $\{s_{v}^{(j)}\theta_{i,j-1},s_{h}^{(i)}\theta_{i-1;j}\}$ and $\{t_{h}^{(i)}\theta_{i-1;j},t_{v}^{(j)}\theta_{i,j-1}\}$ is composable up to whiskering by lower-dimensional generators, and their compositions are parallel. The generator $\theta_{i;j}$ has source $s_{v}^{(j)}\theta_{i;j-1} \circ s_{h}^{(i)}\theta_{i-1;j}$ and target $t_{h}^{(i)}\theta_{i-1;j} \circ t_{v}^{(j)}\theta_{i;j-1}$.
    \end{description}
  \end{enumerate}
\end{proposition}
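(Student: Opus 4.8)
The plan is to construct the system $\{\Theta^{(i);(j)}\}$ and establish its uniqueness simultaneously, by induction on $i+j$, treating conditions (1)--(3) as a recursion and verifying at each stage that this recursion is well-posed. The base cases are those with $ij=0$: here condition~(1) forces $\Theta^{(i);(0)} = \Theta^{(i)}$ and $\Theta^{(0);(j)} = \Theta^{(j)}$, which are already defined in \cref{defn.walkingimorphism}, so there is nothing to build and nothing to choose. For the inductive step I fix $i,j\geq 1$ and assume all computads $\Theta^{(i');(j')}$ with $i'+j' < i+j$ have been built, together with their structure inclusions, satisfying (1)--(3). Uniqueness will then be essentially automatic: the number of generators in each dimension is pinned down by the requirement $\Gr(\Theta^{(i);(j)}) \cong \Gr(\Theta^{(i)})\times\Gr(\Theta^{(j)}) \cong D^{i}\times D^{j}$ with its product CW structure; the orientations of the codimension-one cells are forced by demanding that $s_h^{(i)},t_h^{(i)},s_v^{(j)},t_v^{(j)}$ be computad inclusions lifting the four product inclusions in~(2); and the orientation of the unique top cell is prescribed outright by~(3). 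The real content is therefore existence.

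To build the $(i+j-1)$-skeleton I glue the four faces of~(2) --- two copies of $\Theta^{(i-1);(j)}$ and two of $\Theta^{(i);(j-1)}$ --- along their common lower skeleta, following the pattern dictated by the target complex $\partial(D^{i}\times D^{j})$. Each pairwise overlap is a copy of a previously constructed computad embedded through the inductively available inclusions, and the only thing to verify is that these identifications are mutually consistent; this is a batch of ``cubical'' identities among the lower structure maps --- for instance that the two composite inclusions of $\Theta^{(i-1);(j-1)}$ into the skeleton, one routed through $\Theta^{(i-1);(j)}$ and one through $\Theta^{(i);(j-1)}$, coincide --- each of which I would check by comparison on $\Gr$, where it reduces to the obvious statement about how the faces of $D^{i}\times D^{j}$ meet. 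A cell count then confirms exactly two $(i+j-1)$-generators of type $(i-1;j)$ and two of type $(i;j-1)$, matching~(2), with groupoidification $\partial(D^{i}\times D^{j}) = S^{i+j-1}$; the four inclusions are the legs of this colimit.

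The crux, and the step I expect to be the main obstacle, is verifying the two assertions hidden in~(3): that each prescribed pair of faces is \emph{composable up to whiskering}, and that the resulting source composite and target composite are \emph{parallel}, leaving room to attach a single top generator $\theta_{i;j}$ with these as source and target. I would verify this by computing the $(i+j-2)$-dimensional source and target of each composite from the boundary formulas for $\theta_{i-1;j}$ and $\theta_{i;j-1}$ --- themselves instances of~(3) one step down --- together with the strict globular identities $ss=st$ and $ts=tt$ in the sub-computads; parallelism is then a matching of these iterated boundaries. This is exactly where the parity of $i$ is forced: passing from $\Theta^{(i-1);(j)}$ to $\Theta^{(i);(j)}$ interchanges the roles of the source and target faces in the $i$-direction, so the pairing of an $(i-1;j)$-face with an $(i;j-1)$-face that yields a coherent source alternates between $\{t_v^{(j)},s_h^{(i)}\}$ and $\{s_v^{(j)},s_h^{(i)}\}$ according as $i$ is odd or even, precisely as in~(3). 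Carrying out the two parity cases is the combinatorial heart of the argument; it is the strict $\omega$-categorical shadow of the Koszul sign rule $\partial(x\otimes y)=\partial x\otimes y\pm x\otimes\partial y$ for the Crans--Gray tensor product, and once it is in hand the top generator attaches uniquely and the induction closes.
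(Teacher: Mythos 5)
Your proposal is correct and follows essentially the same route as the paper's proof: uniqueness is dismissed as forced by conditions (2) and (3), and existence is established by induction, checking exactly the three points you identify --- consistency of the four face inclusions on their (eight) double overlaps, composability of the prescribed pairs up to whiskering by lower-dimensional generators, and parallelism of the two composites, all derived from the boundary formulas of condition (3) one step down together with the overlap identities. The paper simply carries out explicitly (in the odd-$i$ case, leaving even $i$ to the reader) the whiskering and boundary-matching computations that you correctly flag as the combinatorial heart.
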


The rule unifying the two cases of condition~(3) is ``$\theta_{i;j}$ always goes from $s_{h}(\theta_{i-1;j})$ to $t_{h}(\theta_{i-1;j})$, with whichever compositions are necessary'' --- the ``horizontal'' direction is always given preference.  By employing also the rule ``omit things with negative sub- and superscripts,'' comparison with \cref{defn.walkingimorphism} shows that condition~(1) may be folded into condition~(3).

\begin{proof}
  Uniqueness is obvious, as condition~(2) determines the $(i+j-1)$-skeleton of $\Theta^{(i);(j)}$ and condition~(3) determines the $(i+j)$-dimensional data.  
  For existence, we must check three things: first, that the inclusions asserted in condition~(2) are compatible on double overlaps, so that they do indeed define the $(i+j-1)$-skeleton of $\Theta^{(i);(j)}$; second, that the pairs asserted to be composable in condition~(3) are indeed composable; and, third, that their compositions are parallel, so that the $(i+j)$-dimensional generator $\theta_{i;j}$ may be attached.  
  We work by induction in $i$ and $j$.
  In the remainder of the proof, we adopt the following convention: for sufficiently low $i$ and $j$, some sub- and superscripts might be negative, in which case the corresponding item is simply omitted.
  
  \textbf{Double overlaps.}
From the geometry of $\Gr(\Theta^{(i)})\times \Gr(\Theta^{(j)}) \cong \Gr(\Theta^{(i);(j)})$, we see that there are eight double overlaps, each with the geometry of $\Gr(\Theta^{(i')})\times \Gr(\Theta^{(j')}) \cong \Gr(\Theta^{(i');(j')})$ for $i'+j' = i+j-2$.  Indeed, in each case the double overlap asserts an equality of two {a priori} different inclusions of $\Theta^{(i');(j')}$, namely:
    \begin{align*}
     s^{(i)}_{h}s^{(i-1)}_{h}\Theta^{(i-2);(j)} & \equiv t^{(i)}_{h}s^{(i-1)}_{h}\Theta^{(i-2);(j)} ,  &
      s^{(i)}_{h}t^{(i-1)}_{h}\Theta^{(i-2);(j)} & \equiv t^{(i)}_{h}t^{(i-1)}_{h}\Theta^{(i-2);(j)} , \\
      s^{(j)}_{v}s^{(j-1)}_{v}\Theta^{(i);(j-2)} & \equiv t^{(j)}_{v}s^{(j-1)}_{v}\Theta^{(i);(j-2)},  &
       t^{(j)}_{v}s^{(j-1)}_{v}\Theta^{(i);(j-2)} & \equiv t^{(j)}_{v}t^{(j-1)}_{v}\Theta^{(i);(j-2)} , \\
       s^{(i)}_{h}s^{(j)}_{v}\Theta^{(i-1);(j-1)} & \equiv s^{(j)}_{v}s^{(i)}_{h}\Theta^{(i-1);(j-1)} ,  &
       s^{(i)}_{h}t^{(j)}_{v}\Theta^{(i-1);(j-1)} & \equiv t^{(j)}_{v}s^{(i)}_{h}\Theta^{(i-1);(j-1)} , \\
       t^{(i)}_{h}s^{(j)}_{v}\Theta^{(i-1);(j-1)} & \equiv s^{(j)}_{v}t^{(i)}_{h}\Theta^{(i-1);(j-1)} ,   &
       t^{(i)}_{h}t^{(j)}_{v}\Theta^{(i-1);(j-1)} & \equiv t^{(j)}_{v}t^{(i)}_{h}\Theta^{(i-1);(j-1)} .
  \end{align*}
  Inspection guarantees that these inclusions do match.

  \textbf{Composability.}
   We will describe the case when $i$ is odd; we leave the completely analogous even case to the reader.  Then $i-1$ is even, and by induction $s_{h}^{(i)}\theta_{i-1;j}$ has as source and target, respectively,
\begin{align*}
s_{h}^{(i)}\left( s_{v}^{(j)}(\theta_{i-1,j-1}) \circ s_{h}^{(i-1)}(\theta_{i-2;j}) \right) &= s_{h}^{(i)}s_{v}^{(j)}\theta_{i-1,j-1} \circ s_{h}^{(i)}s_{h}^{(i-1)}\theta_{i-2;j},\\
s_{h}^{(i)}\left( t_h^{(i-1)}(\theta_{i-2;j})\circ t_v^{(j)}(\theta_{i-1;j-1}) \right) &= s_{h}^{(i)}t_{h}^{(i-1)}\theta_{i-2;j} \circ s_{h}^{(i)}t_{v}^{(j)}\theta_{i-1;j-1}.
\end{align*}
  Similarly,
  \begin{alignat*}{2}
    t_{h}^{(i)}\theta_{i-1;j} :&\quad& t_{h}^{(i)}s_{v}^{(j)}\theta_{i-1;j-1} \circ t_{h}^{(i)}s_{h}^{(i-1)}\theta_{i-2;j} &\Rightarrow t_{h}^{(i)}t_{h}^{(i-1)}\theta_{i-2;j} \circ t_{h}^{(i)}t_{v}^{(j)}\theta_{i-1;j-1} \\
    s_{v}^{(j)}\theta_{i;j-1} :&& s_{v}^{(j)}t_{v}^{(j-1)}\theta_{i;j-2} \circ s_{v}^{(j)}s_{h}^{(i)}\theta_{i-1;j-1} & \Rightarrow s_{v}^{(j)}t_{h}^{(i)}\theta_{i-1;j-1} \circ s_{v}^{(j)}s_{v}^{(j-1)}\theta_{i;j-2} \\
    t_{v}^{(j)}\theta_{i;j-1} :&& t_{v}^{(j)}t_{v}^{(j-1)}\theta_{i;j-2} \circ t_{v}^{(j)}s_{h}^{(i)}\theta_{i-1;j-1} &\Rightarrow t_{v}^{(j)}t_{h}^{(i)}\theta_{i-1;j-1} \circ t_{v}^{(j)}s_{v}^{(j-1)}\theta_{i;j-2} 
  \end{alignat*}
Compare the target of $s_h^{(i)}\theta_{i-1;j}$ with the source of $t_v^{(j)}\theta_{i;j-1}$.  The double overlaps imply that $s_{h}^{(i)}t_{v}^{(j)}\theta_{i-1;j-1} = t_{v}^{(j)}s_{h}^{(i)}\theta_{i-1;j-1}$, and the cells $s_{h}^{(i)}t_{h}^{(i-1)}\theta_{i-2;j}$ and $t_{v}^{(j)}t_{v}^{(j-1)}\theta_{i;j-2}$ meet in dimension $i+j-4$, as follows:
  $$ \begin{tikzpicture}
    \path (0,0) node[dot] (n) {} (-2,-2) node[dot] (w) {} (2,-2) node[dot] (e) {} (0,-4) node[dot] (s) {};
    \draw[arrow] (w) -- (n); \draw[arrow] (n) -- (e); 
    \draw[arrow] (w) -- coordinate (sw1) (s);
    \draw[arrow] (w) .. controls +(-2,-2) and +(-2,-2) .. coordinate (sw2) (s);
    \draw[arrow] (s) -- coordinate (se1) (e);
    \draw[arrow] (s) .. controls +(2,-2) and +(2,-2) .. coordinate (se2) (e);
    \draw[twoarrowlonger] (sw1) -- node[fill=white,inner sep=1pt] {$\scriptstyle s_{h}t_{h}\theta_{i-2;j}$} (sw2);
    \draw[twoarrowlonger] (se1) -- node[fill=white,inner sep=1pt] {$\scriptstyle t_{v}t_{v}\theta_{i;j-2}$} (se2);
    \draw[twoarrowshorter] (n) -- node[fill=white,inner sep=1pt,text width=2cm,text centered] {$\scriptstyle s_{h}t_{v}\theta_{i-1;j-1}  =  t_{v}s_{h}\theta_{i-1;j-1}$}(s);
    \path (9,-.5) node[dot] (upper L) {} +(2,0) node[dot] (upper R) {} +(-.5,0) node[anchor=east] {target of $s_h^{(i)}\theta_{i-1;j} =$};
    \path (9,-3.5) node[dot] (lower L) {} +(2,0) node[dot] (lower R) {} ++(-.5,0) node[anchor=east] {source of $t_v^{(j)}\theta_{i;j-1} =$};
    \draw[arrow] (upper L) -- +(1,1) coordinate (upper T) -- (upper R);
    \draw[arrow] (upper L) .. controls +(-1,-1) and +(-1,-1) .. coordinate[near end] (upper B) +(1,-1) -- (upper R);
    \draw[twoarrow] (upper T) -- (upper B);
    \draw[arrow] (lower L) -- +(1,1) coordinate (lower T) -- (lower R);
    \draw[arrow] (lower L) -- ++(1,-1) .. controls +(1,-1) and +(1,-1) .. coordinate[near start] (lower B) (lower R);
    \draw[twoarrow] (lower T) -- (lower B);
  \end{tikzpicture}$$
  It follows that $s_{h}^{(i)}\theta_{i-1;j}$ may be whiskered by $t_{v}^{(j)}t_{v}^{(j-1)}\theta_{i;j-2}$ to produce an $(i+j-2)$-cell with target the above triple composition.  Similarly, the whiskering of $t_{v}^{(j)}\theta_{i;j-1}$ by $s_{h}^{(i)}t_{h}^{(i-1)}\theta_{i-2;j}$ has the above composition as its source.  Thus, up to these whiskerings, $s_{h}^{(i)}\theta_{i-1;j}$ and $t_{v}^{(j)}\theta_{i;j-1}$ are a composable:
  $$  \hspace*{-1cm}
  \tikz \path (0,0) node[inner sep=0pt] (a) {$\scriptstyle
       t_{v}^{(j)}t_{v}^{(j-1)}\theta_{i;j-2} \circ s_{h}^{(i)}s_{v}^{(j)}\theta_{i-1;j-1} \circ s_{h}^{(i)}s_{h}^{(i-1)}\theta_{i-2;j}
  $} (10,0) node (b) {$\scriptstyle 
       s_{h}^{(i)}t_{h}^{(i-1)}\theta_{i-2;j} \circ t_{v}^{(j)}t_{h}^{(i)}\theta_{i-1;j-1} \circ t_{v}^{(j)}s_{v}^{(j-1)}\theta_{i;j-2}
  $} [draw,twoarrowlonger] (a) -- node[auto] {$\scriptstyle 
       t_{v}^{(j)}\theta_{i,j-1} \circ s_{h}^{(i)}\theta_{i-1;j}
  $} (b); 
  \hspace*{-1cm} $$

  A similar analysis verifies that one may whisker $s_{v}^{(j)}\theta_{i;j-1}$ by $t_{h}^{(i)}s_{h}^{(i-1)}\theta_{i-2;j}$
  to get an $(i+j-1)$-morphism which may be composed with the whiskering of $t_{h}^{(i)}\theta_{i-1;j}$ by $s_{v}^{(j)}s_{v}^{j-1)}\theta_{i;j-2}$:
  $$  \hspace*{-1cm}
  \tikz \path (0,0) node[inner sep=0pt] (a) {$\scriptstyle
       s_{v}^{(j)}t_{v}^{(j-1)}\theta_{i;j-2} \circ s_{v}^{(j)}s_{h}^{(i)}\theta_{i-1;j-1} \circ t_{h}^{(i)}s_{h}^{(i-1)}\theta_{i-2;j}
  $} (10,0) node (b) {$\scriptstyle 
       t_{h}^{(i)}t_{h}^{(i-1)}\theta_{i-2;j} \circ t_{h}^{(i)}t_{v}^{(j)}\theta_{i-1;j-1} \circ s_{v}^{(j)}s_{v}^{j-1)}\theta_{i;j-2}
  $} [draw,twoarrowlonger] (a) -- node[auto] {$\scriptstyle 
       t_{h}^{(i)}\theta_{i-1;j} \circ s_{v}^{(j)}\theta_{i;j-1}
  $} (b); 
  \hspace*{-1cm} $$
  
  \textbf{Parallelism.}
  Finally, we must check that these two compositions 
  $t_{v}^{(j)}\theta_{i,j-1} \circ s_{h}^{(i)}\theta_{i-1;j}$ and $t_{h}^{(i)}\theta_{i-1;j} \circ s_{v}^{(j)}\theta_{i;j-1}$  
   are parallel.  This will follow from the double overlap identitifications.  For the sources of the two morphisms, we have
\begin{align*}
t_{v}^{(j)}t_{v}^{(j-1)}\theta_{i;j-2} &= s_{v}^{(j)}t_{v}^{(j-1)}\theta_{i;j-2},\\
s_{h}^{(i)}s_{v}^{(j)}\theta_{i-1;j-1} &= s_{v}^{(j)}s_{h}^{(i)}\theta_{i-1;j-1},\\
s_{h}^{(i)}s_{h}^{(i-1)}\theta_{i-2;j} &= t_{h}^{(i)}s_{h}^{(i-1)}\theta_{i-2;j};
\end{align*}
hence the sources of the two compositions agree.  For the targets we have \begin{align*}
s_{h}^{(i)}t_{h}^{(i-1)}\theta_{i-2;j} &= t_{h}^{(i)}t_{h}^{(i-1)}\theta_{i-2;j},\\
t_{v}^{(j)}t_{h}^{(i)}\theta_{i-1;j-1} &= t_{h}^{(i)}t_{v}^{(j)}\theta_{i-1;j-1},\\
t_{v}^{(j)}s_{v}^{(j-1)}\theta_{i;j-2} &= s_{v}^{(j)}s_{v}^{j-1)}\theta_{i;j-2}.
\end{align*}
  Thus $t_{v}^{(j)}\theta_{i,j-1} \circ s_{h}^{(i)}\theta_{i-1;j}$ and $t_{v}^{(j)}\theta_{i,j-1} \circ s_{h}^{(i)}\theta_{i-1;j}$ are parallel $(i+j-1)$-morphisms in $\partial\Theta^{(i);(j)}$, and $\theta_{i;j} : t_{v}^{(j)}\theta_{i,j-1} \circ s_{h}^{(i)}\theta_{i-1;j} \Rightarrow t_{v}^{(j)}\theta_{i,j-1} \circ s_{h}^{(i)}\theta_{i-1;j}$ may be defined, and the induction may be continued.
\end{proof}

\begin{remark}
  \cref{prop.i-by-j-morphism} can be rephrased as saying that (for $i,j > 1$) the skeleton $\partial \Theta^{(i);(j)}$ arises as a strict colimit:
  $$ \partial \Theta^{(i);(j)} = \operatorname{colim} \left\{
    \begin{tikzpicture}[baseline=(base)]
      \path (0,3.35) coordinate (base)
        (0,0) node (i0j2A) {} node[anchor=east] {$\Theta^{(i);(j-2)}$}
        ++(0,1) node (i0j2B) {} node[anchor=east] {$\Theta^{(i);(j-2)}$}
        ++(0,1) node (i1j1A) {} node[anchor=east] {$\Theta^{(i-1);(j-1)}$}
        ++(0,1) node (i1j1B) {} node[anchor=east] {$\Theta^{(i-1);(j-1)}$}
        ++(0,1) node (i1j1C) {} node[anchor=east] {$\Theta^{(i-1);(j-1)}$}
        ++(0,1) node (i1j1D) {} node[anchor=east] {$\Theta^{(i-1);(j-1)}$}
        ++(0,1) node (i2j0A) {} node[anchor=east] {$\Theta^{(i-2);(j)}$}
        ++(0,1) node (i2j0B) {} node[anchor=east] {$\Theta^{(i-2);(j)}$};
      \path
        (5,.5) node (i0j1A) {} node[anchor=west] {$\Theta^{(i);(j-1)}$}
        ++(0,2) node (i0j1B) {} node[anchor=west] {$\Theta^{(i);(j-1)}$}
        ++(0,2) node (i1j0A) {} node[anchor=west] {$\Theta^{(i-1);(j)}$}
        ++(0,2) node (i1j0B) {} node[anchor=west] {$\Theta^{(i-1);(j)}$};
      \draw[right hook->] (i0j2A) -- (i0j1A);
      \draw[right hook->] (i0j2A) -- (i0j1B);
      \draw[right hook->] (i0j2B) -- (i0j1A);
      \draw[right hook->] (i0j2B) -- (i0j1B);
      \draw[right hook->] (i2j0A) -- (i1j0A);
      \draw[right hook->] (i2j0B) -- (i1j0A);
      \draw[right hook->] (i2j0A) -- (i1j0B);
      \draw[right hook->] (i2j0B) -- (i1j0B);
      \draw[right hook->] (i1j1A) -- (i0j1A);
      \draw[right hook->] (i1j1A) -- (i1j0A);
      \draw[right hook->] (i1j1B) -- (i0j1A);
      \draw[right hook->] (i1j1B) -- (i1j0B);
      \draw[right hook->] (i1j1C) -- (i0j1B);
      \draw[right hook->] (i1j1C) -- (i1j0A);
      \draw[right hook->] (i1j1D) -- (i0j1B);
      \draw[right hook->] (i1j1D) -- (i1j0B);
    \end{tikzpicture}
  \right\} $$
  Although the individual maps are inclusions, this is not a pushout along inclusions, because the $(i+j-2)$-computads in question overlap too much in dimension $(i+j-3)$.  So the above colimit cannot be upgraded to a homotopy colimit without including skeleta at all dimensions.
\end{remark}
  
  \begin{definition}\label{defn.horizontal skeleton}
  In the proof  of \cref{mainthm}, we will use the following two subcomputads of $\partial\Theta^{(i);(j)}$.  The \define{horizontal skeleton} $\partial_{h}\Theta^{(i);(j)}$ of $\Theta^{(i);(j)}$ consists of two copies of $\Theta^{(i-1);(j)}$ glued along two copies of $\Theta^{(i-2);(j)}$; its underlying CW complex is $\Gr(\partial_{h}\Theta^{(i);(j)}) = \Gr(\partial \Theta^{(i)})\times \Gr(\Theta^{(j)})$. The \define{vertical skeleton} $\partial_{v}\Theta^{(i);(j)}$ consists of two copies of $\Theta^{(i);(j-1)}$ glued along two copies of $\Theta^{(i);(j-2)}$.  \label{remark.horizontal skeleton}
\end{definition}

\section{Examples for small \texorpdfstring{\lowercase{$i$}}{i} and \texorpdfstring{\lowercase{$j$}}{j}} \label{section.examples}

We will henceforth drop the superscripts $(i)$ and $(j)$ from the maps $s_{h},\dots,t_{v}$.  In diagrams we will sometimes drop $\theta_{i;j}$s; they can be filled in from the geometry of the diagram.  It is worth working out explicitly the walking  $i$-by-$j$ morphisms for low $i$ and $j$. 

\begin{example} When $j$ vanishes we get the walking $i$-morphism and when $i$ vanishes we get the walking $j$-morphism:
\\[6pt]
 \mbox{} \hfill
$ \displaystyle \Theta^{(i);(0)} = \Theta^{(i)},\quad \Theta^{(0);(j)} = \Theta^{(j)}. $ \end{example}

\begin{example}
Perhaps the most important example is $\Theta^{(1);(1)}$:
\\
\mbox{}\\ \mbox{} \hfill
$ \displaystyle \Theta^{(1);(1)} = \left\{ \begin{tikzpicture}[baseline=(r.base)]
  \path node[dot] (sl) {} +(2,0) node[dot] (sr) {} +(0,-2) node[dot] (tl) {} +(2,-2) node[dot] (tr) {};
  \draw[arrow] (sl) -- node[auto] {$\scriptstyle s_{v}\theta_{1;0}$} (sr);
  \draw[arrow] (sl) -- node[auto,swap] {$\scriptstyle s_{h}\theta_{0;1}$} (tl);
  \draw[arrow] (tl) -- node[auto,swap] {$\scriptstyle t_{v}\theta_{1;0}$}  (tr);
  \draw[arrow] (sr) -- node[auto] (r) {$\scriptstyle t_{h}\theta_{0;1}$} (tr);
  \draw[twoarrow] (tl) -- node[fill=white,inner sep=1pt] {$ \theta_{1;1}$} (sr);
\end{tikzpicture} \right\}$ \end{example}

\begin{example}
Raising $j$ to $2$ gives:
$$\Theta^{(1);(2)} = \left\{ \begin{tikzpicture}[baseline=(middle)]
  \path node[dot] (a) {} +(3,0) node[dot] (b) {} +(0,-3) node[dot] (c) {} +(3,-3) node[dot] (d) {} +(0,-1.5) coordinate (middle);
  \draw[arrow] (a) .. controls +(-.75,-1.5) and +(-.75,1.5) .. coordinate[near end] (lt) coordinate[very near end] (lt2) (c);
  \draw[arrow,thin] (a) .. controls +(.75,-1.5) and +(.75,1.5) .. coordinate[near start] (ls) coordinate (ls2) (c);
  \draw[twoarrowlonger] (ls) -- node[fill=white,inner sep=1pt] {$\scriptstyle s_{h}\theta_{0;2}$} (lt);
  \draw[arrow] (b) .. controls +(.75,-1.5) and +(.75,1.5) .. coordinate[near start] (rs) coordinate[very near start] (rs0) (d);
  \draw[twoarrow] (ls2) -- coordinate[near end] (s) node[near end,auto] {$\scriptstyle s_{v}\theta_{1;1}$} (rs0);
  \draw[arrow] (b) .. controls +(-.75,-1.5) and +(-.75,1.5) .. coordinate[near end] (rt) coordinate (rt0) (d);
  \draw[twoarrowlonger,thick] (rs) -- node[fill=white,inner sep=1pt] {$\scriptstyle t_{h}\theta_{0;2}$} (rt);
  \draw[arrow] (a) -- (b);
  \draw[arrow] (c) -- (d);
  \path (lt2) -- coordinate[near start] (t) (rt0);
  \draw[threearrowpart1] (t) --  (s); 
  \draw[threearrowpart2] (t) -- (s); 
  \path[threearrowpart3] (t) -- node[fill=white,inner sep=1pt] {$\theta_{1;2}$} (s); 
  \draw[twoarrow,thick] (lt2) -- node[auto,swap] {$\scriptstyle t_{v}\theta_{1;1}$} (rt0);
\end{tikzpicture}
\right\}$$
The source of the 3-morphism $\theta_{1;2}$ is the left and front sides, and the target is the back and right. Unfolding the left and right sides,
$$ 
\begin{tikzpicture}[baseline=(ls1.base)]
  \path node[dot] (a) {} +(2,0) node[dot] (b) {} +(0,-2) node[dot] (c) {} +(2,-2) node[dot] (d) {};
  \draw[arrow] (a) .. controls +(-.5,-1) and +(-.5,1) .. coordinate (ls)  node (ls1) {\phantom{1}} (c);
  \draw[arrow] (a) .. controls +(.5,-1) and +(.5,1) .. coordinate (lt)  (c);
  \draw[twoarrowlonger] (ls) --  node[auto] {$\scriptstyle s_{h}$} (lt);
  \draw[arrow] (a) -- (b);
  \draw[arrow] (c) -- (d);
  \draw[arrow] (b) .. controls +(.5,-1) and +(.5,1) .. coordinate (rt)  (d);
  \draw[twoarrowshorter, thick] (c) --  node[auto,swap] {$\scriptstyle t_{v}$} (b);
\end{tikzpicture} 
\quad\overset{\textstyle\theta_{1;2}}\threearrow\quad
\begin{tikzpicture}[baseline=(ls1.base)]
  \path node[dot] (a) {} +(2,0) node[dot] (b) {} +(0,-2) node[dot] (c) {} +(2,-2) node[dot] (d) {};
  \draw[arrow] (a) .. controls +(-.5,-1) and +(-.5,1) .. coordinate (ls)  node (ls1) {\phantom{1}} (c);
  \draw[arrow] (a) -- (b);
  \draw[arrow] (c) -- (d);
  \draw[arrow] (b) .. controls +(-.5,-1) and +(-.5,1) .. coordinate (rs)  (d);
  \draw[arrow] (b) .. controls +(.5,-1) and +(.5,1) .. coordinate (rt)  (d);
  \draw[twoarrowshorter, thin] (c) --  node[auto] {$\scriptstyle s_{v}$} (b);
  \draw[twoarrowlonger] (rs) --  node[auto] {$\scriptstyle t_{h}$} (rt);
\end{tikzpicture} 
$$
From above, the diagram looks like:
$$
\begin{tikzpicture}
  \path node (sl) {} +(2,0) node (sr) {} +(0,-2) node (tl) {} +(2,-2) node (tr) {};
  \draw[twoarrowlonger] (sl) -- node[auto] {$\scriptstyle s_{v}$} (sr);
  \draw[twoarrowlonger] (sl) -- coordinate (l) node[fill=white,inner sep=1pt] {$\scriptstyle s_{h}$} (tl);
  \draw[twoarrowlonger] (tl) -- node[auto,swap] {$\scriptstyle t_{v}$}  (tr);
  \path (sr) -- coordinate (r)  (tr);
  \path (l) +(-.4,.2) node[dot] (a) {};
  \path (l) +(.4,-.2) node[dot] (b) {};
  \path (r) +(-.4,.2) node[dot] (c) {};
  \path (r) +(.4,-.2) node[dot] (d) {};
  \draw[arrow,thin] (a) .. controls +(.25,1.25) and +(-.25,1.25) .. (b);
  \draw[arrow,thin] (a) .. controls +(.25,-1.25) and +(-.25,-1.25) .. (b);
  \draw[arrow,thin] (c) .. controls +(.25,1.25) and +(-.25,1.25) .. (d);
  \draw[arrow,thin] (c) .. controls +(.25,-1.25) and +(-.25,-1.25) .. (d);
  \draw[arrow,very thin] (b) -- (d);
  \draw[threearrowpart1] (tl) -- node[auto,pos=.6] {$ \theta_{1;2}$} (sr);
  \draw[threearrowpart2] (tl) --  (sr);
  \draw[threearrowpart3] (tl) --  (sr);
  \draw[arrow, thick] (a) -- (c);
  \draw[twoarrowlonger] (sr) -- node[fill=white,inner sep=1pt] {$\scriptstyle t_{h}$} (tr);
\end{tikzpicture}$$
Looking at the 2-morphisms and 3-morphism, this perspective makes clear that $\Theta^{(1);(2)}$ has combinatorics similar to $\Theta^{(1);(1)}$.  This is because $i=1$ is odd. \end{example} \begin{example} Fixing $i=1$ but letting $j$ increase gives a sequence of analogous diagrams.  For example, $\Theta^{(1);(3)}$ has the following four faces:
$$
  \begin{tikzpicture}[baseline=(l.base)]
    \path node[dot] (a) {} +(0,-3) node[dot] (b) {};
    \draw[arrow] (a) .. controls +(-1,-1.5) and +(-1,1.5) .. coordinate (ab1) (b);
    \draw[arrow] (a) .. controls +(1,-1.5) and +(1,1.5) .. coordinate (ab2) (b);
    \draw[twoarrowlonger] (ab1) .. controls +(.75,.5) and +(-.75,.5) .. node[auto] (sl) {$\scriptstyle s_{h}s_{v}$} (ab2);
    \draw[twoarrowlonger] (ab1) .. controls +(.75,-.5) and +(-.75,-.5) .. node[auto,swap] (tl) {$\scriptstyle s_{h}t_{v}$} (ab2);
    \path (sl) -- node (l) {$\raisebox{-3pt}{\rotatebox{90}{\ensuremath\Lleftarrow}} s_{h}$} (tl);
  \end{tikzpicture}
  , \quad
  \begin{tikzpicture}[baseline=(l.base)]
    \path node[dot] (a) {} +(0,-3) node[dot] (b) {};
    \draw[arrow] (a) .. controls +(-1,-1.5) and +(-1,1.5) .. coordinate (ab1) (b);
    \draw[arrow] (a) .. controls +(1,-1.5) and +(1,1.5) .. coordinate (ab2) (b);
    \draw[twoarrowlonger] (ab1) .. controls +(.75,.5) and +(-.75,.5) .. node[auto] (sl) {$\scriptstyle t_{h}s_{v}$} (ab2);
    \draw[twoarrowlonger] (ab1) .. controls +(.75,-.5) and +(-.75,-.5) .. node[auto,swap] (tl) {$\scriptstyle t_{h}t_{v}$} (ab2);
    \path (sl) -- node (l) {$\raisebox{-3pt}{\rotatebox{90}{\ensuremath\Lleftarrow}} t_{h}$} (tl);
  \end{tikzpicture}
  ,\quad
\begin{tikzpicture}[baseline=(alpha.base),inner sep=1pt]
  \path node[dot] (a) {} +(3,0) node[dot] (b) {} +(0,-3) node[dot] (c) {} +(3,-3) node[dot] (d) {};
  \path (a) -- node (alpha) {\phantom{\raisebox{-3pt}{\rotatebox{90}{\ensuremath\Lleftarrow}}}} (c);
  \draw[arrow] (a) .. controls +(-1,-1.5) and +(-1,1.5) .. coordinate[near end] (lt) coordinate[very near end] (lt2) (c);
  \draw[arrow,thin] (a) .. controls +(1,-1.5) and +(1,1.5) .. coordinate[near start] (ls) coordinate (ls2) (c);
  \draw[twoarrowlonger] (ls) -- node[auto,swap] {$\scriptstyle s_{v}s_{h}$} (lt);
  \draw[arrow] (b) .. controls +(1,-1.5) and +(1,1.5) .. coordinate[near start] (rs) coordinate[very near start] (rs0) (d);
  \draw[twoarrow] (ls2) -- coordinate[near end] (s) node[auto] {$\scriptstyle s_{v}s_{v}$} (rs0);
  \draw[arrow] (b) .. controls +(-1,-1.5) and +(-1,1.5) .. coordinate[near end] (rt) coordinate (rt0) (d);
  \draw[twoarrowlonger,thick] (rs) -- node[auto] {$\scriptstyle s_{v}t_{h}$} (rt);
  \draw[arrow] (a) -- (b);
  \draw[arrow] (c) -- (d);
  \path (lt2) -- coordinate[near start] (t) (rt0);
  \draw[threearrowpart1] (t) --   (s); 
  \draw[threearrowpart2] (t) -- (s); 
  \path[threearrowpart3] (t) -- node  [fill=white,inner sep=1pt] {$s_{v}$} (s); 
  \draw[twoarrow,thick] (lt2) -- node[auto,swap] {$\scriptstyle s_{v}t_{v}$} (rt0);
\end{tikzpicture}
  ,\quad
\begin{tikzpicture}[baseline=(alpha.base),inner sep=1pt]
  \path node[dot] (a) {} +(3,0) node[dot] (b) {} +(0,-3) node[dot] (c) {} +(3,-3) node[dot] (d) {};
  \path (a) -- node (alpha) {\phantom{\raisebox{-3pt}{\rotatebox{90}{\ensuremath\Lleftarrow}}}} (c);
  \draw[arrow] (a) .. controls +(-1,-1.5) and +(-1,1.5) .. coordinate[near end] (lt) coordinate[very near end] (lt2) (c);
  \draw[arrow,thin] (a) .. controls +(1,-1.5) and +(1,1.5) .. coordinate[near start] (ls) coordinate (ls2) (c);
  \draw[twoarrowlonger] (ls) -- node[auto,swap] {$\scriptstyle t_{v}s_{h}$} (lt);
  \draw[arrow] (b) .. controls +(1,-1.5) and +(1,1.5) .. coordinate[near start] (rs) coordinate[very near start] (rs0) (d);
  \draw[twoarrow] (ls2) -- coordinate[near end] (s) node[auto] {$\scriptstyle t_{v}s_{v}$} (rs0);
  \draw[arrow] (b) .. controls +(-1,-1.5) and +(-1,1.5) .. coordinate[near end] (rt) coordinate (rt0) (d);
  \draw[twoarrowlonger,thick] (rs) -- node[auto] {$\scriptstyle t_{v}t_{h}$} (rt);
  \draw[arrow] (a) -- (b);
  \draw[arrow] (c) -- (d);
  \path (lt2) -- coordinate[near start] (t) (rt0);
  \draw[threearrowpart1] (t) --   (s); 
  \draw[threearrowpart2] (t) -- (s); 
  \path[threearrowpart3] (t) -- node  [fill=white,inner sep=1pt] {$t_{v}$} (s); 
  \draw[twoarrow,thick] (lt2) -- node[auto,swap] {$\scriptstyle t_{v}t_{v}$} (rt0);
\end{tikzpicture}
$$
These are glued together along $t_{v}t_{v} = s_{v}t_{v}$, $t_{v}s_{v} = s_{v}s_{v}$, $s_{v}s_{h} = s_{h}s_{v}$, $t_{v}s_{h} = s_{h}t_{v}$, $s_{v}t_{h} = t_{h}s_{v}$, and $t_{v}t_{h} = t_{h}t_{v}$.  With these gluings, the compositions $t_{v}\circ s_{h}$ and $t_{h}\circ s_{v}$ make sense, and to complete $\Theta^{(1);(3)}$ we attach a generating $4$-morphism $\theta_{1;3}:t_{v}\circ s_{h} \fourarrow t_{h}\circ s_{v}$. Note that this shows that again, $\Theta^{(1);(3)}$ has combinatorics similar to $\Theta^{(1);(1)}$:
$$
\begin{tikzpicture}
  \path node (sl) {} +(2,0) node (sr) {} +(0,-2) node (tl) {} +(2,-2) node (tr) {};
  \draw[threearrowpart1] (sl) -- node[auto] {$\scriptstyle s_{v}$} (sr);
  \draw[threearrowpart2] (sl) -- (sr);
  \draw[threearrowpart3] (sl) -- (sr);
  \draw[threearrowpart1] (sl) -- node[auto] {$\scriptstyle s_{h}$} (tl);
  \draw[threearrowpart2] (sl) -- (tl);
  \draw[threearrowpart3] (sl) -- (tl);
  \draw[threearrowpart1] (tl) -- node[auto] {$\scriptstyle t_{v}$} (tr);
  \draw[threearrowpart2] (tl) -- (tr);
  \draw[threearrowpart3] (tl) -- (tr);
  \draw[threearrowpart1] (sr) -- node[auto] {$\scriptstyle t_{h}$} (tr);
  \draw[threearrowpart2] (sr) -- (tr);
  \draw[threearrowpart3] (sr) -- (tr);
\draw (tl) node[anchor=south west] (TL) {};
\draw (sr) node[anchor= north east] (SR) {};
  \draw[fourarrowpart1] (TL) --  (SR);
  \draw[fourarrowpart2] (TL) -- node[fill=white,inner sep=1pt] {$ \theta_{1;3}$} (SR);
\end{tikzpicture}$$
\end{example}

\begin{example}
When $i=2$ the pattern changes, since $2$ is even.  The picture for $\Theta^{(2);(1)}$ is:
$$\begin{tikzpicture}[baseline=(middle)]
  \path node[dot] (a) {} +(3,0) node[dot] (b) {} +(0,-3) node[dot] (c) {} +(3,-3) node[dot] (d) {} +(0,-1.5) coordinate (middle);
  \path (b) +(-.35,.25) coordinate (alpha1);
  \path (c) +(.35,-.25) coordinate (alpha2);
  \draw[arrow] (a) -- coordinate (l1) coordinate[very near end] (r1) (c);
  \draw[arrow] (b) -- coordinate[very near start] (l2) coordinate (r2) (d);
  \draw[twoarrow,thin] (l1) -- node[auto,pos=.4,inner sep=1pt] {$\scriptstyle s_{h}\theta_{1;1}$} (l2);
  \draw[arrow] (a) .. controls +(1.5,.75) and +(-1.5,.75) .. coordinate (ss)  (b);
  \draw[arrow,thin] (c) .. controls +(1.5,.75) and +(-1.5,.75) .. coordinate (ts)  (d);
  \draw[arrow] (c) .. controls +(1.5,-.75) and +(-1.5,-.75) .. coordinate (tt) (d);
  \draw[twoarrowlonger,thin] (ts) -- node[fill=white,inner sep=1pt] {$\scriptstyle t_{v}\theta_{2;0}$} (tt);
  \draw[threearrowpart1] (alpha1) --  (alpha2);
  \draw[threearrowpart2] (alpha1) --  (alpha2);
  \draw[threearrowpart3] (alpha1) -- node[fill=white,inner sep=1pt] {$ \theta_{2;1}$} (alpha2);
  \draw[arrow,thick] (a) .. controls +(1.5,-.75) and +(-1.5,-.75) .. coordinate (st) (b);
  \draw[twoarrowlonger] (ss) -- node[fill=white,inner sep=1pt] {$\scriptstyle s_{v}\theta_{2;0}$} (st);
  \draw[twoarrow,thick] (r1) -- node[auto,swap,pos=.6,inner sep=1pt] {$\scriptstyle t_{h}\theta_{1;1}$} (r2);
\end{tikzpicture}$$
The 3-morphism $\theta_{2;1}$ goes from the composition of back and top to the composition of front and bottom (the picture above has ``folds'' along the back/top edge and along the front/bottom edge, obscuring slightly the composition; we unfold them below):
$$
\begin{tikzpicture}[baseline=(ls1.base)]
  \path node[dot] (a) {} +(2,0) node[dot] (c) {} +(0,-2) node[dot] (b) {} +(2,-2) node[dot] (d) {};
  \draw[arrow] (a) .. controls +(1,-.5) and +(-1,-.5) .. coordinate (sl)   (c);
  \draw[arrow] (a) .. controls +(1,.5) and +(-1,.5) .. coordinate (sr)  (c);
  \draw[twoarrowlonger] (sl) --  node[auto] {$\scriptstyle s_{v}$} (sr);
  \draw[arrow] (a) -- node (ls1) {\phantom{1}} (b);
  \draw[arrow] (c) -- (d);
  \draw[arrow] (b) .. controls +(1,-.5) and +(-1,-.5) .. coordinate (tl)  (d);
  \draw[twoarrowshorter, thin] (b) --  node[auto] {$\scriptstyle s_{h}$} (c);
\end{tikzpicture} 
\quad\overset{\textstyle\theta_{2;1}}\threearrowlong\quad
\begin{tikzpicture}[baseline=(ls1.base)]
  \path node[dot] (a) {} +(2,0) node[dot] (c) {} +(0,-2) node[dot] (b) {} +(2,-2) node[dot] (d) {};
  \draw[arrow] (a) .. controls +(1,.5) and +(-1,.5) .. coordinate (sr)  (c);
  \draw[arrow] (a) -- node (ls1) {\phantom{1}} (b);
  \draw[arrow] (c) -- (d);
  \draw[arrow] (b) .. controls +(1,-.5) and +(-1,-.5) .. coordinate (tl)  (d);
  \draw[arrow] (b) .. controls +(1,.5) and +(-1,.5) .. coordinate (tr)  (d);
  \draw[twoarrowshorter, thick] (b) --  node[auto] {$\scriptstyle t_{h}$} (c);
  \draw[twoarrowlonger] (tl) --  node[auto] {$\scriptstyle t_{v}$} (tr);
\end{tikzpicture} 
$$
Here is a side view that puts $s_{h}$ on the left and $t_{h}$ on the right: note that this shows the combinatorics now have changed.

\mbox{} \\ \mbox{} \hfill $\displaystyle
\begin{tikzpicture}[baseline=(theta.base)]
  \path node (sl) {} +(2,0) node (sr) {} +(0,-2) node (tl) {} +(2,-2) node (tr) {};
  \path (sl) -- coordinate (s) (sr);
  \draw[twoarrowlonger] (tl) -- coordinate (l) node[auto] {$\scriptstyle s_{h}$} (sl);
  \draw[twoarrowlonger] (tl) -- coordinate (t) node[fill=white,inner sep=1pt,pos=.55] {$\scriptstyle t_{v}$}  (tr);
  \path (sr) -- coordinate (r)  (tr);
  \path (s) +(-.2,-.4) node[dot] (a) {};
  \path (s) +(.2,.4) node[dot] (b) {};
  \path (t) +(-.2,-.4) node[dot] (c) {};
  \path (t) +(.2,.4) node[dot] (d) {};
  \draw[arrow,thin] (a) .. controls +(1.25,.25) and +(1.25,-.25) .. (b);
  \draw[arrow,thin] (a) .. controls +(-1.25,.25) and +(-1.25,-.25) .. (b);
  \draw[arrow,thin] (c) .. controls +(1.25,.25) and +(1.25,-.25) .. (d);
  \draw[arrow,thin] (c) .. controls +(-1.25,.25) and +(-1.25,-.25) .. (d);
  \draw[arrow,very thin] (b) -- (d);
  \draw[twoarrowlonger] (sl) -- node[fill=white,inner sep=1pt,pos=.45]  {$\scriptstyle s_{v}$} (sr);
  \draw[threearrowpart1] (sl) -- node[auto,pos=.6,inner sep=1pt] (theta) {$ \theta_{2;1}$} (tr);
  \draw[threearrowpart2] (sl) --  (tr);
  \draw[threearrowpart3] (sl) --  (tr);
  \draw[arrow, thick] (a) -- (c);
  \draw[twoarrowlonger] (tr) -- node[auto,swap] {$\scriptstyle t_{h}$} (sr);
\end{tikzpicture}
\hspace{2cm}
\begin{tikzpicture}[baseline=(theta.base)]
  \path node (sl) {} +(2,0) node (sr) {} +(0,-2) node (tl) {} +(2,-2) node (tr) {};
  \path (sl) -- coordinate (s) (sr);
  \draw[twoarrowlonger] (sl) -- coordinate (l) node[auto] {$\scriptstyle s_{h}$} (tl);
  \draw[twoarrowlonger] (tl) -- coordinate (t) node[auto] {$\scriptstyle s_{v}$}  (tr);
  \path (sr) -- coordinate (r)  (tr);
  \draw[twoarrowlonger] (sl) -- node[auto]  {$\scriptstyle t_{v}$} (sr);
  \draw[threearrowpart1] (tl) --  (sr);
  \draw[threearrowpart2] (tl) --  (sr);
  \draw[threearrowpart3] (tl) -- node[fill=white,inner sep=1pt] (theta) {$ \theta_{2;1}$} (sr);
  \draw[twoarrowlonger] (sr) -- node[auto] {$\scriptstyle t_{h}$} (tr);
\end{tikzpicture}
$
\end{example}

\begin{example}
As a final example, the walking  $2$-by-$2$-morphism $\Theta^{(2);(2)}$ illustrates all the required gluings of the general case.  The $3$-skeleton $\partial\Theta^{(2);(2)}$ is covered by two copies of $\Theta^{(1);(2)}$,
$$
\begin{tikzpicture}[baseline=(alpha.base),inner sep=1pt]
  \path node[dot] (a) {} +(3,0) node[dot] (b) {} +(0,-3) node[dot] (c) {} +(3,-3) node[dot] (d) {};
  \path (a) -- node (alpha) {\phantom{\raisebox{-3pt}{\rotatebox{90}{\ensuremath\Lleftarrow}}}} (c);
  \draw[arrow] (a) .. controls +(-1,-1.5) and +(-1,1.5) .. coordinate[near end] (lt) coordinate[very near end] (lt2) (c);
  \draw[arrow,thin] (a) .. controls +(1,-1.5) and +(1,1.5) .. coordinate[near start] (ls) coordinate (ls2) (c);
  \draw[twoarrowlonger] (ls) -- node[auto,swap] {$\scriptstyle s_{h}s_{h}$} (lt);
  \draw[arrow] (b) .. controls +(1,-1.5) and +(1,1.5) .. coordinate[near start] (rs) coordinate[very near start] (rs0) (d);
  \draw[twoarrow] (ls2) -- coordinate[near end] (s) node[auto] {$\scriptstyle s_{h}s_{v}$} (rs0);
  \draw[arrow] (b) .. controls +(-1,-1.5) and +(-1,1.5) .. coordinate[near end] (rt) coordinate (rt0) (d);
  \draw[twoarrowlonger,thick] (rs) -- node[auto] {$\scriptstyle s_{h}t_{h}$} (rt);
  \draw[arrow] (a) -- (b);
  \draw[arrow] (c) -- (d);
  \path (lt2) -- coordinate[near start] (t) (rt0);
  \draw[threearrowpart1] (t) --   (s); 
  \draw[threearrowpart2] (t) -- (s); 
  \path[threearrowpart3] (t) -- node  [fill=white,inner sep=1pt] {$s_{h}$} (s); 
  \draw[twoarrow,thick] (lt2) -- node[auto,swap] {$\scriptstyle s_{h}t_{v}$} (rt0);
\end{tikzpicture}
  ,\quad
\begin{tikzpicture}[baseline=(alpha.base),inner sep=1pt]
  \path node[dot] (a) {} +(3,0) node[dot] (b) {} +(0,-3) node[dot] (c) {} +(3,-3) node[dot] (d) {};
  \path (a) -- node (alpha) {\phantom{\raisebox{-3pt}{\rotatebox{90}{\ensuremath\Lleftarrow}}}} (c);
  \draw[arrow] (a) .. controls +(-1,-1.5) and +(-1,1.5) .. coordinate[near end] (lt) coordinate[very near end] (lt2) (c);
  \draw[arrow,thin] (a) .. controls +(1,-1.5) and +(1,1.5) .. coordinate[near start] (ls) coordinate (ls2) (c);
  \draw[twoarrowlonger] (ls) -- node[auto,swap] {$\scriptstyle t_{h}s_{h}$} (lt);
  \draw[arrow] (b) .. controls +(1,-1.5) and +(1,1.5) .. coordinate[near start] (rs) coordinate[very near start] (rs0) (d);
  \draw[twoarrow] (ls2) -- coordinate[near end] (s) node[auto] {$\scriptstyle t_{h}s_{v}$} (rs0);
  \draw[arrow] (b) .. controls +(-1,-1.5) and +(-1,1.5) .. coordinate[near end] (rt) coordinate (rt0) (d);
  \draw[twoarrowlonger,thick] (rs) -- node[auto] {$\scriptstyle t_{h}t_{h}$} (rt);
  \draw[arrow] (a) -- (b);
  \draw[arrow] (c) -- (d);
  \path (lt2) -- coordinate[near start] (t) (rt0);
  \draw[threearrowpart1] (t) --   (s); 
  \draw[threearrowpart2] (t) -- (s); 
  \path[threearrowpart3] (t) -- node  [fill=white,inner sep=1pt] {$t_{h}$} (s); 
  \draw[twoarrow,thick] (lt2) -- node[auto,swap] {$\scriptstyle t_{h}t_{v}$} (rt0);
\end{tikzpicture},
$$
which are glued along $s_{h}s_{h} = t_{h}s_{h}$ and $s_{h}t_{h} = t_{h}t_{h}$ to make a solid torus, 
 and two copies of~$\Theta^{(2);(1)}$,
$$
\begin{tikzpicture}[baseline=(beta.base)]
  \path node[dot] (a) {} +(3,0) node[dot] (b) {} +(0,-3) node[dot] (c) {} +(3,-3) node[dot] (d) {};
  \path (a) -- node (beta) {\phantom{\raisebox{-3pt}{\rotatebox{90}{\ensuremath\Lleftarrow}}}} (c);
  \path (b) +(-.35,.25) coordinate (alpha1);
  \path (c) +(.35,-.25) coordinate (alpha2);
  \draw[arrow] (a) -- coordinate (l1) coordinate[very near end] (r1) (c);
  \draw[arrow] (b) -- coordinate[very near start] (l2) coordinate (r2) (d);
  \draw[twoarrow,thin] (l1) -- node[auto,pos=.4,inner sep=0pt] {$\scriptstyle s_{v}s_{h}$} (l2);
  \draw[arrow] (a) .. controls +(1.5,1) and +(-1.5,1) .. coordinate (ss)  (b);
  \draw[arrow,thin] (c) .. controls +(1.5,1) and +(-1.5,1) .. coordinate (ts)  (d);
  \draw[arrow] (c) .. controls +(1.5,-1) and +(-1.5,-1) .. coordinate (tt) (d);
  \draw[twoarrow,thin] (ts) -- node[auto] {$\scriptstyle s_{v}t_{v}$} (tt);
  \draw[threearrowpart1] (alpha1) --  (alpha2);
  \draw[threearrowpart2] (alpha1) --  (alpha2);
  \draw[threearrowpart3] (alpha1) -- node  [fill=white,inner sep=1pt] {$s_{v}$} (alpha2);
  \draw[arrow,thick] (a) .. controls +(1.5,-1) and +(-1.5,-1) .. coordinate (st) (b);
  \draw[twoarrow] (ss) -- node[auto] {$\scriptstyle s_{v}s_{v}$} (st);
  \draw[twoarrow,thick] (r1) -- node[auto,swap,pos=.6,inner sep=0pt] {$\scriptstyle s_{v}t_{h}$} (r2);
\end{tikzpicture}
\;,\quad
\begin{tikzpicture}[baseline=(beta.base)]
  \path node[dot] (a) {} +(3,0) node[dot] (b) {} +(0,-3) node[dot] (c) {} +(3,-3) node[dot] (d) {};
  \path (a) -- node (beta) {\phantom{\raisebox{-3pt}{\rotatebox{90}{\ensuremath\Lleftarrow}}}} (c);
  \path (b) +(-.35,.25) coordinate (alpha1);
  \path (c) +(.35,-.25) coordinate (alpha2);
  \draw[arrow] (a) -- coordinate (l1) coordinate[very near end] (r1) (c);
  \draw[arrow] (b) -- coordinate[very near start] (l2) coordinate (r2) (d);
  \draw[twoarrow,thin] (l1) -- node[auto,pos=.4,inner sep=0pt] {$\scriptstyle t_{v}s_{h}$} (l2);
  \draw[arrow] (a) .. controls +(1.5,1) and +(-1.5,1) .. coordinate (ss)  (b);
  \draw[arrow,thin] (c) .. controls +(1.5,1) and +(-1.5,1) .. coordinate (ts)  (d);
  \draw[arrow] (c) .. controls +(1.5,-1) and +(-1.5,-1) .. coordinate (tt) (d);
  \draw[twoarrow,thin] (ts) -- node[auto] {$\scriptstyle t_{v}t_{v}$} (tt);
  \draw[threearrowpart1] (alpha1) --  (alpha2);
  \draw[threearrowpart2] (alpha1) --  (alpha2);
  \draw[threearrowpart3] (alpha1) -- node  [fill=white,inner sep=1pt] {$t_{v}$} (alpha2);
  \draw[arrow,thick] (a) .. controls +(1.5,-1) and +(-1.5,-1) .. coordinate (st) (b);
  \draw[twoarrow] (ss) -- node[auto] {$\scriptstyle t_{v}s_{v}$} (st);
  \draw[twoarrow,thick] (r1) -- node[auto,swap,pos=.6,inner sep=0pt] {$\scriptstyle t_{v}t_{h}$} (r2);
\end{tikzpicture}
\;,
$$
which are likewise glued into a solid torus along $s_{v}s_{v} = t_{v}s_{v}$ and $s_{v}t_{v} = t_{v}t_{v}$.  Finally, the two tori are glued together to form a 3-sphere, by identifying the following copies of $\theta_{1;1}$: $s_{h}s_{v} = s_{v}s_{h}$, $s_{h}t_{v}=t_{v}s_{h}$, $t_{h}s_{v} = s_{v}t_{h}$, and $t_{h}t_{v} = t_{v}t_{h}$.  This 3-sphere decomposes into two composable 3-balls, namely $s_{v}\circ s_{h}$ and $t_{h}\circ t_{v}$.  Completing $\Theta^{(2);(2)}$ is a 4-morphism $\theta_{2;2} : s_{v}\circ s_{h} \fourarrow t_{h}\circ t_{v}$ filling in the 4-ball with boundary this glued-up 3-sphere:
$$
\begin{tikzpicture}[baseline=(middle)]
  \path node[dot] (a) {} +(0,-3) node[dot] (c) {} +(1.5,-1) node[dot] (b) {} +(1.5,-4) node[dot] (d) {};
  \path (a) -- coordinate(middle) (d); \path (middle) ++(2.25,.6) coordinate (sv1); \path (middle) +(.5,.75) coordinate (sh2) ++(-2.25,0) coordinate (sh1);
  \draw[arrow,thin] (a) -- coordinate [pos=.3] (ac) (c);
  \draw[arrow,thin] (c) -- coordinate (cd) (d);
  \draw[threearrowpart1] (middle) --  (sv1);
  \draw[threearrowpart2] (middle) --  (sv1);
  \draw[threearrowpart3] (middle) -- node  [fill=white,inner sep=1pt] {$s_{v}$} (sv1);
  \draw[threearrowpart1] (sh1) --  (sh2);
  \draw[threearrowpart2] (sh1) --  (sh2);
  \draw[threearrowpart3] (sh1) -- node  [fill=white,inner sep=1pt,pos=.3] {$s_{h}$} (sh2);
  \draw[arrow] (a) -- coordinate (ab) (b);
  \draw[arrow] (a) .. controls +(-3,-1) and +(-3,1) .. coordinate (ac1) (c);
  \draw[twoarrow] (ac) -- (ac1);
  \draw[arrow] (c) .. controls +(3,1) and +(3,1) .. coordinate (cd1) (d);
  \draw[twoarrow] (cd) -- (cd1);
  \draw[twoarrow] (c) -- (b);
  \draw[twoarrow,thick] (c) .. controls +(-3,0) and +(-3,0) .. (b);
  \draw[twoarrow,thick] (c) .. controls +(3,1.5) and +(3,-1) .. (b);
  \draw[arrow] (b) -- coordinate [pos=.3] (bd) (d);
  \draw[arrow] (b) .. controls +(-3,-1) and +(-3,1) .. coordinate (bd1) (d);
  \draw[twoarrow,thick] (bd) -- (bd1);
  \draw[arrow] (a) .. controls +(3,1) and +(3,1) .. coordinate (ab1) (b);
  \draw[twoarrow,thick] (ab) -- (ab1);
\end{tikzpicture}
\overset{\textstyle \theta_{2;2}} \fourarrowlong
\begin{tikzpicture}[baseline=(middle)]
  \path node[dot] (a) {} +(0,-3) node[dot] (c) {} +(1.5,-1) node[dot] (b) {} +(1.5,-4) node[dot] (d) {};
  \path (a) -- coordinate(middle) (d); \path (middle) ++(2.25,.6) coordinate (sv1); \path (middle) +(0,0) coordinate (sh2) ++(-2.25,-.6) coordinate (sh1);
  \draw[arrow,thin] (a) -- coordinate [pos=.7] (ac) (c);
  \draw[arrow,thin] (c) -- coordinate (cd) (d);
  \draw[threearrowpart1] (sh1) --  (sh2);
  \draw[threearrowpart2] (sh1) --  (sh2);
  \draw[threearrowpart3] (sh1) -- node  [fill=white,inner sep=1pt,pos=.6] {$t_{v}$} (sh2);
  \draw[arrow] (a) -- coordinate (ab) (b);
  \draw[arrow] (a) .. controls +(3,-1) and +(3,1) .. coordinate [pos=.4] (ac1) (c);
  \draw[twoarrow] (ac1) -- (ac);
  \draw[arrow] (c) .. controls +(-3,-1) and +(-3,-1) .. coordinate (cd1) (d);
  \draw[twoarrow] (cd1) -- (cd);
  \draw[threearrowpart1] (middle) --  (sv1);
  \draw[threearrowpart2] (middle) --  (sv1);
  \draw[threearrowpart3] (middle) -- node  [fill=white,inner sep=1pt] {$t_{h}$} (sv1);
  \draw[twoarrow] (c) -- (b);
  \draw[twoarrow,thick] (c) .. controls +(-3,0) and +(-3,-2) .. (b);
  \draw[twoarrow,thick] (c) .. controls +(3,0) and +(3,0) .. (b);
  \draw[arrow] (b) -- coordinate [pos=.7] (bd) (d);
  \draw[arrow] (b) .. controls +(3,-1) and +(3,1) .. coordinate [pos=.4] (bd1) (d);
  \draw[twoarrowlonger,thick] (bd1) -- (bd);
  \draw[arrow] (a) .. controls +(-3,-1) and +(-3,-1) .. coordinate (ab1) (b);
  \draw[twoarrow,thick] (ab1) -- (ab);
\end{tikzpicture}
$$
In spite of the distortions above, both $s_{v}\circ s_{h}$ and $t_{h}\circ t_{v}$ are 3-morphisms with the same source and target 2-morphisms:

$$
\begin{tikzpicture}[baseline=(middle)]
  \path node[dot] (a) {} +(0,-2) node[dot] (c) {} +(1.5,-1) node[dot] (b) {} +(1.5,-3) node[dot] (d) {} +(.75,-1.6) coordinate (middle);
  \draw[arrow,thin] (a) .. controls +(.75,-1) and +(.75,1) .. coordinate [pos=.4] (ac) (c);
  \draw[arrow] (a) .. controls +(-.75,-1) and +(-.75,1) .. coordinate [pos=.6] (ac1) (c);
  \draw[twoarrowlonger] (ac) -- (ac1);
  \draw[arrow,thin] (c) .. controls +(1,0) and +(0,1) .. coordinate[pos=.7] (cd1) (d);
  \draw[arrow] (c) .. controls +(0,-1) and +(-1,0) .. coordinate[pos=.3] (cd) (d);
  \draw[twoarrowlonger] (cd) -- (cd1);
  \draw[twoarrowlonger] (c) .. controls +(2,1) and +(1,0) .. (b);
  \path (middle) node[inner sep=0pt,fill=white,anchor=base] {$\Rrightarrow$};
  \draw[arrow] (b) .. controls +(.75,-1) and +(.75,1) .. coordinate [pos=.4] (bd) (d);
  \draw[arrow] (b) .. controls +(-.75,-1) and +(-.75,1) .. coordinate [pos=.6] (bd1) (d);
  \draw[twoarrowlonger,thick] (bd) -- (bd1);
  \draw[arrow] (a) .. controls +(1,0) and +(0,1) .. coordinate[pos=.7] (ab1) (b);
  \draw[arrow] (a) .. controls +(0,-1) and +(-1,0) .. coordinate[pos=.3] (ab) (b);
  \draw[twoarrowlonger,thick] (ab) -- (ab1);
  \draw[twoarrowlonger,thick] (c) .. controls +(-1,0) and +(-2,-1) .. (b);
\end{tikzpicture}
= \quad\left\lbrace
\begin{tikzpicture}[baseline=(middle)]
  \path node[dot] (a) {} +(0,-2) node[dot] (c) {} +(1.5,-1) node[dot] (b) {} +(1.5,-3) node[dot] (d) {} +(.75,-1.6) coordinate (middle);
  \draw[arrow,thin] (a) .. controls +(.75,-1) and +(.75,1) .. coordinate [pos=.4] (ac) (c);
  \draw[arrow] (a) .. controls +(-.75,-1) and +(-.75,1) .. coordinate [pos=.6] (ac1) (c);
  \draw[twoarrowlonger] (ac) -- (ac1);
  \draw[arrow] (c) .. controls +(0,-1) and +(-1,0) .. coordinate[pos=.3] (cd) (d);
  \draw[arrow] (b) .. controls +(-.75,-1) and +(-.75,1) .. coordinate [pos=.6] (bd1) (d);
  \draw[arrow] (a) .. controls +(1,0) and +(0,1) .. coordinate[pos=.7] (ab1) (b);
  \draw[arrow] (a) .. controls +(0,-1) and +(-1,0) .. coordinate[pos=.3] (ab) (b);
  \draw[twoarrowlonger,thick] (ab) -- (ab1);
  \draw[twoarrowlonger,thick] (c) .. controls +(-1,0) and +(-2,-1) .. (b);
\end{tikzpicture}
\threearrow
\begin{tikzpicture}[baseline=(middle)]
  \path node[dot] (a) {} +(0,-2) node[dot] (c) {} +(1.5,-1) node[dot] (b) {} +(1.5,-3) node[dot] (d) {} +(.75,-1.6) coordinate (middle);
  \draw[arrow] (a) .. controls +(.75,-1) and +(.75,1) .. coordinate [pos=.4] (ac) (c);
  \draw[arrow] (c) .. controls +(1,0) and +(0,1) .. coordinate[pos=.7] (cd1) (d);
  \draw[arrow] (c) .. controls +(0,-1) and +(-1,0) .. coordinate[pos=.3] (cd) (d);
  \draw[twoarrowlonger] (cd) -- (cd1);
  \draw[twoarrowlonger] (c) .. controls +(2,1) and +(1,0) .. (b);
  \draw[arrow] (b) .. controls +(.75,-1) and +(.75,1) .. coordinate [pos=.4] (bd) (d);
  \draw[arrow] (b) .. controls +(-.75,-1) and +(-.75,1) .. coordinate [pos=.6] (bd1) (d);
  \draw[twoarrowlonger,thick] (bd) -- (bd1);
  \draw[arrow] (a) .. controls +(1,0) and +(0,1) .. coordinate[pos=.7] (ab1) (b);
\end{tikzpicture}
\right\rbrace
$$
Note that the combinatorics again are similar to the previous example:
 \mbox{} \\ \mbox{} \hfill
$\displaystyle
\begin{tikzpicture}
  \path node (sl) {} +(2,0) node (sr) {} +(0,-2) node (tl) {} +(2,-2) node (tr) {};
  \draw[threearrowpart1] (sl) -- node[auto] {$\scriptstyle t_{v}$} (sr);
  \draw[threearrowpart2] (sl) -- (sr);
  \draw[threearrowpart3] (sl) -- (sr);
  \draw[threearrowpart1] (sl) -- node[auto] {$\scriptstyle s_{h}$} (tl);
  \draw[threearrowpart2] (sl) -- (tl);
  \draw[threearrowpart3] (sl) -- (tl);
  \draw[threearrowpart1] (tl) -- node[auto] {$\scriptstyle s_{v}$} (tr);
  \draw[threearrowpart2] (tl) -- (tr);
  \draw[threearrowpart3] (tl) -- (tr);
  \draw[threearrowpart1] (sr) -- node[auto] {$\scriptstyle t_{h}$} (tr);
  \draw[threearrowpart2] (sr) -- (tr);
  \draw[threearrowpart3] (sr) -- (tr);
\draw (tl) node[anchor=south west] (TL) {};
\draw (sr) node[anchor= north east] (SR) {};
  \draw[fourarrowpart1] (TL) --  (SR);
  \draw[fourarrowpart2] (TL) -- node[fill=white,inner sep=1pt] {$ \theta_{2;2}$} (SR);
\end{tikzpicture}$$
$ 
\end{example}

We end this section by recording a useful fact relating the computads $\Theta^{1,\vec \bullet}$ and $\Theta^{\vec\bullet;(1)}$ defined in \cref{defn.veck} and \cref{defn.veck-by-vecl}.  We will use this fact in the proof of \cref{thm.qft}.

\begin{lemma}\label{lemma.untwisted}
  For each $i$, there is a canonical equivalence
  $$ \Theta^{(0);(0)} \underset{\Theta^{(i);(0)}}{\overset h \cup} \Theta^{(i);(1)} \underset{\Theta^{(i);(0)}}{\overset h \cup} \Theta^{(0);(0)} \simeq \Theta^{(i+1)} = \Theta^{1,(i)}$$
  compatible with (horizontal) source and target maps, where the two maps $\Theta^{(i);(0)} \rightrightarrows \Theta^{(i);(1)}$ are the inclusions $s_v,t_v$ as the vertical source and target.
\end{lemma}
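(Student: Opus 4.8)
The plan is to realize the homotopy pushout on the left as a \emph{strict} pushout of $n$-fold simplicial sets and then to identify that pushout with (the nerve of) $\Theta^{(i+1)}$ by induction on $i$.

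First I would justify replacing $\overset h\cup$ by an ordinary pushout. The two maps $s_v,t_v\colon \Theta^{(i)} = \Theta^{(i);(0)} \rightrightarrows \Theta^{(i);(1)}$ are inclusions of computads, hence levelwise monomorphisms of $n$-fold simplicial sets and therefore cofibrations for the injective model structure. Their images are the disjoint subcomputads $s_v\Theta^{(i)}$ and $t_v\Theta^{(i)}$ --- geometrically the two ends $\Gr(\Theta^{(i)})\times\{0\}$ and $\Gr(\Theta^{(i)})\times\{1\}$ of the prism $\Gr(\Theta^{(i);(1)})\cong\Gr(\Theta^{(i)})\times\Gr(\Theta^{(1)})$ --- so collapsing one keeps the other an embedded subcomputad, and each of the two successive gluings of $\Theta^{(0)}$ is a pushout along a cofibration. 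Because every object is cofibrant for the injective model structure, each such pushout is a homotopy pushout; and since left Bousfield localization changes neither the cofibrations nor the cofibrant objects, these squares remain homotopy pushouts after localizing to complete $n$-fold Segal spaces. (This is the mechanism used in \cref{remark.failure of cofibrancy,remark.theta2}.) It therefore suffices to identify the pushout of $n$-fold simplicial sets
$$ P_i := \Theta^{(0)} \underset{\Theta^{(i)}}{\cup} \Theta^{(i);(1)} \underset{\Theta^{(i)}}{\cup} \Theta^{(0)} $$
with $\Theta^{(i+1)}$.

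I would prove $P_i\cong\Theta^{(i+1)}$ by induction on $i$; the base case $i=0$ is immediate because $\Theta^{(0);(0)}=\Theta^{(0)}$ makes both gluings trivial, so $P_0 = \Theta^{(0);(1)} = \Theta^{(1)}$. For the inductive step, recall from \cref{prop.i-by-j-morphism} and \cref{defn.horizontal skeleton} that $\partial\Theta^{(i);(1)}$ is covered by its vertical skeleton $s_v\Theta^{(i)}\sqcup t_v\Theta^{(i)}$ and its horizontal skeleton, the two copies $s_h\Theta^{(i-1);(1)}$ and $t_h\Theta^{(i-1);(1)}$, with a single top generator $\theta_{i;1}$. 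Forming $P_i$ collapses $s_v\Theta^{(i)}$ and $t_v\Theta^{(i)}$ each to a point. The double-overlap identities of \cref{prop.i-by-j-morphism} (in the form $s_h s_v = s_v s_h$, $s_h t_v = t_v s_h$, and so on) show that the vertical boundary of each horizontal copy lies inside $s_v\Theta^{(i)}\cup t_v\Theta^{(i)}$, so restricted to $s_h\Theta^{(i-1);(1)}$ and $t_h\Theta^{(i-1);(1)}$ the collapse is exactly the collapse of the vertical boundary of $\Theta^{(i-1);(1)}$. By the inductive hypothesis each horizontal copy therefore becomes a copy of $\Theta^{(i)}$, and the remaining overlap identifications glue these two copies along their common $\partial\Theta^{(i)}$. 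Hence the $i$-skeleton of $P_i$ is $\Theta^{(i)}\cup_{\partial\Theta^{(i)}}\Theta^{(i)} = \partial\Theta^{(i+1)}$, with top $i$-cells the images of $s_h\theta_{i-1;1}$ and $t_h\theta_{i-1;1}$.

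Finally I would match the top generator. In $P_i$ the cells $s_v\theta_{i;0}$ and $t_v\theta_{i;0}$ have been collapsed to identities, so whiskering by them is trivial; feeding this into both parity cases of condition~(3) of \cref{prop.i-by-j-morphism} shows that the source and target of $\theta_{i;1}$ simplify, in either case, to $s_h\theta_{i-1;1}$ and $t_h\theta_{i-1;1}$. Thus $\theta_{i;1}$ becomes a generating $(i+1)$-morphism between the source and target $i$-cells of $\partial\Theta^{(i+1)}$ and is identified with $\theta_{i+1}$, completing $P_i\cong\Theta^{(i+1)}$. Compatibility with horizontal source and target is built into this identification: the source and target copies of $\Theta^{(i)}$ in $\partial\Theta^{(i+1)}$ arise precisely as the collapses of $s_h\Theta^{(i-1);(1)}$ and $t_h\Theta^{(i-1);(1)}$, so the induced inclusions are by construction $s^{(i+1)}$ and $t^{(i+1)}$. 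I expect the main obstacle to be the bookkeeping of this last step --- checking uniformly across the two parities that collapsing the vertical top cells to identities lands $\theta_{i;1}$ on $\theta_{i+1}$ with exactly the right source and target --- together with the routine but necessary verification that forming the pushout at the level of $n$-fold simplicial sets, where it is computed as a levelwise quotient collapsing the two vertical copies, reproduces exactly the cells of the computad $\Theta^{(i+1)}$.
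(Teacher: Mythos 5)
Your proposal is correct and follows essentially the same route as the paper's proof: reduce the homotopy pushout to a strict pushout of computads using that the two inclusions $s_v, t_v$ are injective cofibrations, then identify the resulting quotient with $\Theta^{(i+1)}$ by induction on $i$ via \cref{prop.i-by-j-morphism}, observing that collapsing $s_v\theta_{i;0}$ and $t_v\theta_{i;0}$ trivializes the whiskerings so that $\theta_{i;1}$ becomes a generating $(i+1)$-cell from $s_h\theta_{i-1;1}$ to $t_h\theta_{i-1;1}$, compatibly with $s_h \leftrightarrow s$ and $t_h \leftrightarrow t$. The paper leaves the induction largely to a picture; your write-up simply makes the skeleton bookkeeping, the double-overlap identifications, and the two parity cases explicit.
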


\begin{proof}
  The combined map $s_v \cup t_v : \Theta^{(i);(0)} \cup \Theta^{(i);(0)} \to \Theta^{(i);(1)}$ is an inclusion, hence an injective-fibration, so the homotopy pushout is equivalent to the corresponding strict pushout.  We claim, therefore, that there is an isomorphism of computads
  $$ \Theta^{(0);(0)} \underset{\Theta^{(i);(0)}}{\cup} \Theta^{(i);(1)} \underset{\Theta^{(i);(0)}}{\cup} \Theta^{(0);(0)} \cong \Theta^{(i+1)}$$
  intertwining $s_h$ with $s : \Theta^{(i)} \to \Theta^{(i+1)}$ and $t_h$ with $t : \Theta^{(i)} \to \Theta^{(i+1)}$.  This follows from induction in $i$ and the description of $\Theta^{(i);(1)}$ in \cref{prop.i-by-j-morphism}:
  $$
 \left( \begin{tikzpicture}[baseline=(base)]
  \path node[dot] (a) {} +(2,0) node[dot] (c) {} +(0,-3pt) coordinate (base);
  \draw[arrow] (a) .. controls +(1,-.75) and +(-1,-.75) .. coordinate (t)   (c);
  \draw[arrow] (a) .. controls +(1,.75) and +(-1,.75) .. coordinate (s)  (c);
  \draw[twoarrowlonger] (s) --  node[fill=white,inner sep=1pt] {$\scriptstyle s_{v}\theta_{i;0}$} (t);
\end{tikzpicture} = \tikz[baseline=(base)]\path node[dot]{}  +(0,-3pt) coordinate (base);\right)  {\Bigg \backslash} \quad
\begin{tikzpicture}[baseline=(middle)]
  \path node[dot] (a) {} +(3,0) node[dot] (b) {} +(0,-3) node[dot] (c) {} +(3,-3) node[dot] (d) {} +(0,-1.5) coordinate (middle);
  \path (b) +(-.35,.25) coordinate (alpha1);
  \path (c) +(.35,-.25) coordinate (alpha2);
  \draw[arrow] (a) -- coordinate (l1) coordinate[very near end] (r1) (c);
  \draw[arrow] (b) -- coordinate[very near start] (l2) coordinate (r2) (d);
  \draw[twoarrow,thin] (l1) -- node[auto,pos=.4,inner sep=1pt] {$\scriptstyle s_{h}\theta_{i-1;1}$} (l2);
  \draw[arrow] (a) .. controls +(1.5,.75) and +(-1.5,.75) .. coordinate (ss)  (b);
  \draw[arrow,thin] (c) .. controls +(1.5,.75) and +(-1.5,.75) .. coordinate (ts)  (d);
  \draw[arrow] (c) .. controls +(1.5,-.75) and +(-1.5,-.75) .. coordinate (tt) (d);
  \draw[twoarrowlonger,thin] (ts) -- node[fill=white,inner sep=1pt] {$\scriptstyle t_{v}\theta_{i;0}$} (tt);
  \draw[threearrowpart1] (alpha1) --  (alpha2);
  \draw[threearrowpart2] (alpha1) --  (alpha2);
  \draw[threearrowpart3] (alpha1) -- node[fill=white,inner sep=1pt] {$\scriptstyle \theta_{i;1}$} (alpha2);
  \draw[arrow,thick] (a) .. controls +(1.5,-.75) and +(-1.5,-.75) .. coordinate (st) (b);
  \draw[twoarrowlonger] (ss) -- node[fill=white,inner sep=1pt] {$\scriptstyle s_{v}\theta_{i;0}$} (st);
  \draw[twoarrow,thick] (r1) -- node[auto,swap,pos=.6,inner sep=1pt] {$\scriptstyle t_{h}\theta_{i-1;1}$} (r2);
\end{tikzpicture}
\quad {\Bigg /} 
\left( \begin{tikzpicture}[baseline=(base)]
  \path node[dot] (a) {} +(2,0) node[dot] (c) {} +(0,-3pt) coordinate (base);
  \draw[arrow] (a) .. controls +(1,-.75) and +(-1,-.75) .. coordinate (t)   (c);
  \draw[arrow] (a) .. controls +(1,.75) and +(-1,.75) .. coordinate (s)  (c);
  \draw[twoarrowlonger] (s) --  node[fill=white,inner sep=1pt] {$\scriptstyle t_{v}\theta_{i;0}$} (t);
\end{tikzpicture} = \tikz[baseline=(base)]\path node[dot]{}  +(0,-3pt) coordinate (base);\right) 
=
\begin{tikzpicture}[baseline=(middle)]
  \path node[dot] (a) {}  +(0,-4) node[dot] (c) {} +(0,-2) coordinate (middle);
  \draw[arrow] (a) .. controls +(-1.5,-2) and +(-1.5,2) .. coordinate (l) (c);
  \draw[arrow] (a) .. controls +(1.5,-2) and +(1.5,2) .. coordinate (r) (c);
  \draw[twoarrowlonger,thin] (l) .. controls +(1,1) and +(-1,1) .. node[auto] (s) {$\scriptstyle s_{h}\theta_{i-1;1}$} (r);
  \draw[twoarrowlonger,thick] (l) .. controls +(1,-1) and +(-1,-1) .. node[auto,swap] (t) {$\scriptstyle t_{h}\theta_{i-1;1}$} (r);
  \draw[threearrowpart1] (s) --  (t);
  \draw[threearrowpart2] (s) --  (t);
  \draw[threearrowpart3] (s) -- node[fill=white,inner sep=1pt] {$\scriptstyle \theta_{i;1}$} (t);
\end{tikzpicture}
$$
    Note in particular that the $(i+1)$-dimensional cell $\theta_{i;1} \in \Theta^{(i);(1)}$ goes from $s_h\theta_{i-1;1}$ to $t_h\theta_{i-1;1}$.
\end{proof}

\begin{remark}\label{remark.computad for oplax trivially twisted case}
  There is also an equivalence $\Theta^{(0);(0)} \cup^h_{\Theta^{(0);(j)}} \Theta^{(1);(j)} \cup^h_{\Theta^{(0);(j)}} \Theta^{(0);(0)} \simeq \Theta^{(j+1)}$, but it is not compatible with source and target maps.  More precisely, it intertwines $s_v$ with $s$ and $t_v$ with $t$ when $j$ is even, but exchanges them when $j$ is odd.
\end{remark}

\section{The (op)lax square construction}\label{section.thedefinition}

The walking  $i$-by-$j$-morphisms $\Theta^{(i);(j)}$ have an immediate generalization to walking $\vec k$-by-$\vec l$-tuples $\Theta^{\vec k;\vec l}$ thereof, which we will describe in \cref{defn.veck-by-vecl}.  These computads will package into a $2n$-fold cosimplicial category $\Theta^{\vec \bullet;\vec \bullet}$.  Given a complete $n$-fold Segal space $\Cc$, we will define the $2n$-fold simplicial space $\Cc^{\Box}_{\vec \bullet;\vec \bullet} = \maps^{h}(\Theta^{\vec \bullet;\vec \bullet},\Cc)$.  Thus defined, we will prove in \cref{mainthm} that $\Cc^{\Box}$ satisfies the axioms of a complete $n$-fold Segal objects internal to complete $n$-fold Segal spaces.  

We first generalize the walking $i$-morphisms to walking tuples:

\begin{definition}\label{defn.veck}
Let $\vec k  \in \bN^{i}$ be an $i$-tuple of strictly positive integers.  
    The \define{walking $\vec k$-tuple of composable $i$-morphisms} $\Theta^{\vec k}$ is the $i$-computad defined  as follows.  
\begin{itemize}
\item First, when $\vec k = (i) $ consists entirely of $1$s, we set $\Theta^{\vec k} = \Theta^{(i)}$ as defined in \cref{defn.walkingimorphism}. 
\item Suppose now that only the last entry $k_i$ in $\vec k$ is not $1$. Recall that there are two inclusions $s^{(i)},t^{(i)} : \Theta^{(i-1)} \rightrightarrows \Theta^{(i)}$.  We set
    $$ \Theta^{\vec k}=\Theta^{(i-1),k_{i}} = \ourunderbrace{\Theta^{(i)} \underset{\Theta^{(i-1)}}{ \cup} \Theta^{(i)} \underset{\Theta^{(i-1)}}{ \cup} \dots \underset{\Theta^{(i-1)}}{ \cup} \Theta^{(i)}}{k_{i}\text{ times}} $$
    where all leftward inclusions are along $t^{(i)}$ and all rightward inclusions are along $s^{(i)}$, and the pushouts are of computads, or equivalently of strict higher categories.
  
\item  Let $\vec k  \in \bN_{>0}^{i}$. For each $i' \in \{1,\dots,i\}$, there are two inclusions $s^{(i')},t^{(i')} : \Theta^{(i'-1)} \rightrightarrows \Theta^{(i)} = \Theta^{(i'),1,\dots,1}$.     
  By induction, assume that we have defined $\Theta^{(i'),k_{i'+1},\dots,k_{i}}$ and two inclusions, which by abuse of notation we also denote by $s^{(i')},t^{(i')} : \Theta^{(i'-1)} \rightrightarrows \Theta^{(i'),k_{i'+1},\dots,k_{i}}$.  Then we set
    $$ \Theta^{(i'-1),k_{i'},k_{i'+1},\dots,k_{i}} = \ourunderbrace{\Theta^{(i'),k_{i'+1},\dots,k_{i}} \underset{\Theta^{(i'-1)}}{ \cup} \Theta^{(i'),k_{i'+1},\dots,k_{i}} \underset{\Theta^{(i'-1)}}{ \cup} \dots \underset{\Theta^{(i'-1)}}{ \cup} \Theta^{(i'),k_{i'+1},\dots,k_{i}}}{k_{i'}\text{ times}} $$
    where all leftward inclusions are along $t^{(i')}$ and all rightward inclusions are along $s^{(i')}$.
    
\item    For $\vec k \in \bN^{i}$ with some vanishing entries, we set $\Theta^{k_{1},\dots,k_{i'-1},0,k_{i'+1},\dots} = \Theta^{k_{1},\dots,k_{i'-1}}$. 
\end{itemize}
Note that for arbitrary $\vec k \in \bN^i$, the $i$-cells in $\Theta^{\vec k}$ naturally form a $k_1\times \dots \times k_i$ grid.
\end{definition}

\begin{lemma}\label{walking tuples are strict}
  The  pushouts in \cref{defn.veck} are homotopy pushouts.
\end{lemma}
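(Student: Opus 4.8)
The plan is to mirror the argument used in the proof of \cref{lemma.untwisted}. We regard every computad as an $n$-fold simplicial set, discrete at each level, and work in the injective model structure on functors $(\Delta^{\times n})^{\op} \to \cat{sSet}$, for which the cofibrations are exactly the levelwise monomorphisms and for which every object is cofibrant (compare \cref{remark.failure of cofibrancy,lemma.mappingspace}). In such a model category a pushout square is automatically a homotopy pushout as soon as one of the two maps out of the common corner is a cofibration, i.e.\ a levelwise monomorphism. Since an inclusion of computads induces a levelwise monomorphism of nerves, it therefore suffices to check that each gluing in \cref{defn.veck} is performed along such an inclusion, and to arrange the $k_{i'}$-fold gluings as iterated single pushouts so that the intermediate gluing maps stay monomorphisms.

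First I would record that the structure maps $s^{(i')},t^{(i')}$ appearing in \cref{defn.veck} are monomorphisms. This is clear when $\vec k = (i)$, where $s^{(i)},t^{(i)}:\Theta^{(i-1)}\mono\Theta^{(i)}$ are the defining subcomputad inclusions of \cref{defn.walkingimorphism}. For the inductive step one uses that the category of $n$-fold simplicial sets is a presheaf topos and hence adhesive, so that the cobase change of a monomorphism along any map is again a monomorphism. Consequently each object $\Theta^{(i'),k_{i'+1},\dots,k_i}$, being built from copies of $\Theta^{(i')}$ by gluing along the monomorphisms $s^{(i')},t^{(i')}:\Theta^{(i'-1)}\rightrightarrows\Theta^{(i')}$, receives the monomorphisms $s^{(i')},t^{(i')}:\Theta^{(i'-1)}\rightrightarrows\Theta^{(i'),k_{i'+1},\dots,k_i}$ asserted in the definition.

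Next I would reduce the $k_{i'}$-fold gluing to a sequence of ordinary pushouts. Writing $X=\Theta^{(i'),k_{i'+1},\dots,k_i}$ and $D=\Theta^{(i'-1)}$, the computad $\Theta^{(i'-1),k_{i'},\dots,k_i}$ is the colimit of the zig-zag $X\xleftarrow{t^{(i')}}D\xrightarrow{s^{(i')}}X\xleftarrow{t^{(i')}}\cdots\xrightarrow{s^{(i')}}X$ with $k_{i'}$ copies of $X$. Set $Y_1=X$ and $Y_{m+1}=Y_m\cup_D X$, where $D\to Y_m$ is $t^{(i')}$ into the last copy of $X$ and $D\to X$ is $s^{(i')}$. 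At each stage the leg $s^{(i')}:D\to X$ is a monomorphism, hence a cofibration, so every such square is a homotopy pushout; moreover the pushout of a cofibration is a cofibration, so the inclusion $Y_m\hookrightarrow Y_{m+1}$ is again a levelwise monomorphism and the induction continues. By the pasting property of homotopy pushouts, the strict colimit $Y_{k_{i'}}=\Theta^{(i'-1),k_{i'},\dots,k_i}$ computes the homotopy colimit of the zig-zag, so every pushout in \cref{defn.veck} is a homotopy pushout. Since the left Bousfield localization of this model category that models $(\infty,n)$-categories shares its cofibrations and cofibrant objects, and the identity is left Quillen into it, these squares remain homotopy pushouts after localization as well.

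The only point requiring genuine care—and thus the main obstacle—is the bookkeeping that keeps all the gluing maps monomorphisms throughout the iteration, so that each single pushout really occurs along a cofibration; this is supplied precisely by the adhesiveness of the presheaf topos together with the stability of cofibrations under pushout.
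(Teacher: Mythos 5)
Your proposal is correct and follows essentially the same route as the paper: decompose the $k_{i'}$-fold gluing into iterated single pushouts, observe that the gluing leg $s^{(i')}$ is an inclusion (hence an injective cofibration, along which strict pushouts compute homotopy pushouts), and induct. The extra details you supply—verifying monomorphy of the structure maps via adhesiveness, the stability of cofibrations under pushout, and the compatibility with the localized model structure—are all consistent with, and merely flesh out, the paper's terser argument.
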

\begin{proof}
  We have $\Theta^{(i'-1),k_{i'}+1,k_{i'+1},\dots,k_{i}} \simeq \Theta^{(i'-1),k_{i'},k_{i'+1},\dots,k_{i}} \cup_{\Theta^{(i'-1)}} \Theta^{(i'),k_{i'+1},\dots,k_{i}}$.  But by the induction step above, the map $s^{(i')} :\Theta^{(i'-1)} \to \Theta^{(i'),k_{i'+1},\dots,k_{i}}$ is an inclusion, and so this last pushout is equivalent to the corresponding homotopy pushout.  By induction, $\Theta^{(i'-1),k_{i'}+1,k_{i'+1},\dots,k_{i}} \simeq \Theta^{(i'),k_{i'+1},\dots,k_{i}} \cup^h_{\Theta^{(i'-1)}} \dots \cup^h_{\Theta^{(i')}} \Theta^{(i'),k_{i'+1},\dots,k_{i}}$.
\end{proof}

\begin{remark}
  The nerve construction does not take pushouts of strict higher categories to pushouts of simplicial sets.  Rather, the nerve of the pushout is built from the pushout of the nerve by attaching higher-dimensional uple-simplices as necessary to ensure the Segal condition.  In particular, the difference between the nerve of the pushout and the pushout of the nerve is invisible to the functor $\maps^h(-,\cC)$ for any complete $n$-fold Segal space $\cC$.  In the language of model categories, the nerve of the pushout and the pushout of the nerve are equivalent, but only the former is fibrant.
\end{remark}

 Some early examples of walking tuples of $2$-morphisms are:
$$
\Theta^{2,1} = \left\{
\begin{tikzpicture}[baseline=(base)]
  \path node[dot] (a) {}  +(0,-3pt) coordinate (base) ++(2,0) node[dot] (b) {} ++(2,0) node[dot] (c) {};
  \draw[arrow] (a) .. controls +(1,-.5) and +(-1,-.5) .. coordinate (t1)   (b);
  \draw[arrow] (a) .. controls +(1,.5) and +(-1,.5) .. coordinate (s1)  (b);
  \draw[twoarrowlonger] (s1) --   (t1);
  \draw[arrow] (b) .. controls +(1,-.5) and +(-1,-.5) .. coordinate (t2)   (c);
  \draw[arrow] (b) .. controls +(1,.5) and +(-1,.5) .. coordinate (s2)  (c);
  \draw[twoarrowlonger] (s2) --   (t2);
\end{tikzpicture} 
\right\}, \quad \Theta^{1,2} = 
\left\{
\begin{tikzpicture}[baseline=(base)]
  \path node[dot] (a) {} +(2,0) node[dot] (c) {} +(0,-3pt) coordinate (base);
  \draw[arrow] (a) .. controls +(1,1) and +(-1,1) .. coordinate (s)  (c);
  \draw[arrow] (a) -- coordinate (t)  (c);
  \draw[arrow] (a) .. controls +(1,-1) and +(-1,-1) .. coordinate (u)   (c);
  \draw[twoarrowlonger] (s) --   (t); \draw[twoarrowlonger] (t) --   (u);
\end{tikzpicture} \right\}
$$

\begin{remark}\label{underlying n-fold segal space}
Set $i=n$ and let $\Cc$ be a complete $n$-fold Segal space.  The walking tuples of composable $n$-morphisms are designed to know everything about $\Cc$: there are canonical homotopy equivalences
$$ \Cc_{\vec k} \cong \maps^{h}(\Theta^{\vec k},\Cc) $$
and the face and degeneracy maps are pullbacks along the canonical functors between strict $n$-categories $\Theta^{\vec k}$.

Furthermore, if $\Cc$ is a (complete) $n$-uple Segal space, then
$$\vec k \mapsto \maps^{h}(\Theta^{\vec k},\Cc)$$
is a (complete) $n$-fold Segal space, the \define{underlying $n$-fold Segal space of $\cC$}.
\end{remark}

\begin{remark}
  It is worth emphasizing that (the nerves of) the computads $\Theta^{\vec k}$ in general are not projectively cofibrant as $n$-fold simplicial spaces, and so to define the derived mapping space $\maps^{h}(\Theta^{\vec k},\Cc)$ one must either apply a projective-cofibrant resolution functor to $\Theta$ or an injective-fibrant resolution functor to $\Cc$.  See  \cref{lemma.mappingspace}, \cref{remark.theta2}, and \cref{eg.theta11}.
\end{remark}

\begin{remark}
  The computads $\Theta^{\vec k}$ are among the objects of Joyal's category $\Theta_n$ \cite{JoyalTheta,MR2578310}, which contains more generally the computads that can be built by gluing walking morphisms $\Theta^{(i)}$ together end-to-end.  For example, the following $3$-computad is in Joyal's $\Theta_3$, but is not one of our~$\Theta^{\vec k}$s:
  $$
\begin{tikzpicture}[baseline=(base)]
  \path node[dot] (a) {}  +(0,-3pt) coordinate (base) ++(2,0) node[dot] (b) {} ++(2,0) node[dot] (c) {} ++(2,0) node[dot] (d) {};
  \draw[arrow] (a) .. controls +(1,-.75) and +(-1,-.75) .. coordinate (t1)   (b);
  \draw[arrow] (a) .. controls +(1,.75) and +(-1,.75) .. coordinate (s1)  (b);
  \draw[twoarrowlonger] (s1) --   (t1);
  \draw[arrow] (b) --  (c);
  \draw[arrow] (c) .. controls +(1,1.5) and +(-1,1.5) .. coordinate (s)  (d);
  \draw[arrow] (c) -- coordinate (t)  (d);
  \draw[arrow] (c) .. controls +(1,-1.5) and +(-1,-1.5) .. coordinate (u)   (d);
    \draw[twoarrowlonger] (s) .. controls +(-.5,-.5) and +(-.5,.5) .. node[auto] (x) {} (t);
    \draw[twoarrowlonger] (s) .. controls +(.5,-.5) and +(.5,.5) .. node[auto,swap] (y) {} (t);
    \path (x) -- node {$\Rrightarrow$} (y);
  \draw[twoarrowlonger] (t) --   (u);
\end{tikzpicture} 
$$
Our story applies just as well to these more complicated computads, but the notation would be more involved.  We would use them if we were modeling $(\infty,n)$-categories by a version of Rezk's complete Segal $\Theta_n$-spaces  rather than a version of Barwick's complete $n$-fold Segal spaces.
\end{remark}

A generalization of \cref{defn.veck} defines tuples of walking $i$-by-$j$ morphisms by similarly gluing the walking $i$-by-$j$ morphisms together in a grid.

Recall that for $i,j > 0$ and $i'\leq i$, $j'\leq j$, there are inclusions $s_{h}^{(i')},t_{h}^{(i')} : \Theta^{(i'-1);(j)} \rightrightarrows \Theta^{(i);(j)}$ and $s_{v}^{(j')},t_{v}^{(j')} : \Theta^{(i);(j'-1)} \rightrightarrows \Theta^{(i);(j)}$.
\begin{definition} \label{defn.veck-by-vecl} Let $\vec k, \vec l \in \bN^{i}$.  The \define{walking $\vec k$-by-$\vec l$-tuple of composable oplax $i$-by-$j$-morphisms} is the computad $\Theta^{\vec k;\vec l}$ defined as follows:
\begin{itemize}
\item If $\vec k=0$, we define $\Theta^{\vec k;\vec l}=\Theta^{\vec l}$. If $\vec l=0$, we define $\Theta^{\vec k;\vec l}=\Theta^{\vec k}$.
\item If $\vec k =(i)$ and $\vec l =(j)$, set $\Theta^{\vec k;\vec l}=\Theta^{(i);(j)}$.
\item  For $\vec k, \vec l \in \bN^{i}_{>0}$, define by induction over $i'$, respectively $j'$, from top to bottom with base case $i'=i$, respectively $j=j'$,
  \begin{align*}
     \Theta^{(i'-1),k_{i'},\dots,k_{i}; (j)} & = \ourunderbrace{\Theta^{(i'),k_{i'+1},\dots,k_{i}; (j)} \underset{\Theta^{(i'-1); (j)}}{\cup} \dots \underset{\Theta^{(i'-1); (j)}}{\cup} \Theta^{(i'),k_{i'+1},\dots,k_{i}; (j)}}{k_{i'}\text{ times}} ,\\
     \Theta^{(i);(j'-1),l_{j'},\dots,l_{j}} & = \ourunderbrace{\Theta^{(i);(j'),l_{j'+1},\dots,l_{j}} \underset{\Theta^{(i); (j'-1)}}{\cup} \dots \underset{\Theta^{(i); (j'-1)}}{\cup} \Theta^{(i);(j'),l_{j'+1},\dots,l_{j}}}{l_{j'}\text{ times}}.
  \end{align*}
\item By combining the two gluing steps above, again by induction over $i'$ and $j'$, define
  \begin{multline*}
    \Theta^{(i'-1),k_{i'},\dots;(j'-1),l_{j'},\dots}  
    \\ 
    \begin{aligned}
     & = \mathrm{colim} \left(
      \begin{tikzpicture}[baseline=(center)]
        \path 
          (0,-1.5) coordinate (center)
          (0,0) node (nw) {$\Theta^{(i'),k_{i'+1},\dots;(j'),l_{j'+1},\dots}$}
          (0,-1) node (w) {$\Theta^{(i'),k_{i'+1},\dots;(j'-1)}$}
          (0,-2) node (wdots) {$\cdots$}
          (0,-3) node (sw) {$\Theta^{(i'),k_{i'+1},\dots;(j'),l_{j'+1},\dots}$}
          (4,0) node (n) {$\Theta^{(i'-1);(j'),l_{j'+1},\dots}$}
          (4,-1) node (m) {$\Theta^{(i'-1);(j'-1)}$}
          (4,-2) node (wmdots) {$\cdots$}
          (4,-3) node (s) {$\Theta^{(i'-1);(j'),l_{j'+1},\dots}$}
          (6.5,0) node (ndots) {$\cdots$}
          (6.5,-1) node (nmdots) {$\cdots$}
          (6.5,-3) node (sdots) {$\cdots$}
          (9.25,0) node (ne) {$\Theta^{(i'),k_{i'+1},\dots;(j'),l_{j'+1},\dots}$}
          (9.25,-1) node (e) {$\Theta^{(i'),k_{i'+1},\dots;(j'-1)}$}
          (9.25,-2) node (edots) {$\cdots$}
          (9.25,-3) node (se) {$\Theta^{(i'),k_{i'+1},\dots;(j'),l_{j'+1},\dots}$}
        ;
        \draw[left hook->] (n) -- (nw);   \draw[right hook->] (n) -- (ndots);   \draw[right hook->] (ndots) -- (ne);
        \draw[left hook->] (m) -- (w);   \draw[right hook->] (m) -- (nmdots);   \draw[right hook->] (nmdots) -- (e);
        \draw[left hook->] (s) -- (sw);   \draw[right hook->] (s) -- (sdots);   \draw[right hook->] (sdots) -- (se);
        \draw[right hook->] (w) -- (nw); \draw[left hook->] (w) -- (wdots); \draw[left hook->] (wdots) -- (sw);
        \draw[right hook->] (m) -- (n); \draw[left hook->] (m) -- (wmdots); \draw[left hook->] (wmdots) -- (s);
        \draw[right hook->] (e) -- (ne); \draw[left hook->] (e) -- (edots); \draw[left hook->] (edots) -- (se);
      \end{tikzpicture}
    \right)
    \\ & \simeq \ourunderbrace{\Theta^{(i'),k_{i'+1},\dots; (j'-1),l_{j'},\dots} \underset{\Theta^{(i'-1); (j'-1),l_{j'},\dots}}{\cup} \dots \underset{\Theta^{(i'-1); (j'-1),l_{j'},\dots}}{\cup} \Theta^{(i'),k_{i'+1},\dots; (j'-1),l_{j'},\dots}}{k_{i'}\text{ times}}
    \\ & \simeq \ourunderbrace{\Theta^{(i'-1),k_{i'},\dots;(j'),l_{j'+1},\dots} \underset{\Theta^{(i'-1),k_{i'},\dots; (j'-1)}}{\cup} \dots \underset{\Theta^{(i'-1),k_{i'},\dots; (j'-1)}}{\cup} \Theta^{(i'-1),k_{i'},\dots;(j'),l_{j'+1},\dots}}{l_{j'}\text{ times}}
    \end{aligned}
  \end{multline*}
\item  Finally, if any $k_{i'}$ or $l_{j'}$ vanishes, we truncate: $\Theta^{k_1,\dots,k_{i'-1},0,k_{i'+1},\dots; \vec l} = \Theta^{k_1,\dots,k_{i'-1}; \vec l}$ and $\Theta^{\vec k;l_1,\dots,l_{j'-1},0,l_{j'+1},\dots} = \Theta^{\vec k;l_1,\dots,l_{j'-1}}$.
\end{itemize}
 \end{definition}

Just as in \cref{walking tuples are strict}, enough of the maps involved in the colimits are inclusions, and so an induction argument shows:
\begin{lemma} \label{walking tuples are strict 2}
  The pushouts presenting $\Theta^{\vec k;\vec l}$ are homotopy pushouts. \qedhere
\end{lemma}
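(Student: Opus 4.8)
The plan is to imitate the proof of \cref{walking tuples are strict}: reduce the grid-shaped colimit presenting $\Theta^{\vec k;\vec l}$ to a sequence of ordinary pushouts, and check that each such pushout is taken along a cofibration for the injective model structure, so that it automatically computes the homotopy pushout. The guiding principle is that in the injective model structure on $2n$-fold simplicial spaces the cofibrations are precisely the levelwise monomorphisms, and that a pushout along a cofibration (between injective-cofibrant objects) is a homotopy pushout.

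First I would set up a double induction on $i$ and $j$ (equivalently on the lengths of $\vec k$ and $\vec l$), using either of the two iterated-pushout decompositions recorded at the end of \cref{defn.veck-by-vecl}. Taking the first, $\Theta^{(i'-1),k_{i'},\dots;\vec l}$ is assembled from $k_{i'}$ copies of $\Theta^{(i'),k_{i'+1},\dots;\vec l}$ glued successively along copies of $\Theta^{(i'-1);\vec l}$. I would then invoke \cref{prop.i-by-j-morphism}, whose condition~(2) asserts that the structure maps $s_{h}^{(i')}, t_{h}^{(i')}$ are inclusions of computads, hence monomorphisms on nerves. Their tuple generalizations, built from these by the same inductive gluing, remain inclusions by the induction hypothesis; so each successive copy is attached along a monomorphism, every successive pushout is a pushout along a cofibration, and the chain of pushouts is a chain of homotopy pushouts.

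The main obstacle, and the only genuine difference from the one-variable \cref{walking tuples are strict}, is that the colimit defining $\Theta^{\vec k;\vec l}$ is indexed by a two-dimensional grid rather than a linear chain. I expect the key step to be observing that this grid colimit factors as iterated pushouts in either the horizontal or the vertical direction---exactly the two ``$\simeq$'' identifications recorded in \cref{defn.veck-by-vecl}---and that in each such factorization one leg is an inclusion by the argument above. Since (as noted in the proof of \cref{lemma.mappingspace}) every $2n$-fold simplicial space is injective-cofibrant, each of these pushouts along a monomorphism is automatically a homotopy pushout, and the iterated pushout is an iterated homotopy pushout. As in the remark following \cref{walking tuples are strict}, I would close by noting that it is the nerve of the strict colimit, rather than the levelwise colimit of the nerves, that is fibrant and represents the homotopy colimit---which is precisely what is needed when later computing $\maps^h(-,\cC)$.
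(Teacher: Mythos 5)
Your proof is correct and follows essentially the same route as the paper, whose own argument is precisely the one-line observation that, just as in \cref{walking tuples are strict}, enough of the maps in the colimits are inclusions (hence injective cofibrations between injective-cofibrant objects), so an induction over the grid decompositions of \cref{defn.veck-by-vecl} gives the result. Your additional remarks about factoring the grid colimit into iterated pushouts and about the nerve of the colimit versus the colimit of nerves simply spell out details the paper leaves implicit.
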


\begin{remark}
  At the level of underlying CW complexes, we have $\Gr(\Theta^{\vec k;\vec l}) \cong \Gr(\Theta^{\vec k}) \times \Gr(\Theta^{\vec l})$.
\end{remark}

The computads $\Theta^{\vec \bullet;\vec \bullet}$ are together ``$2n$-fold cosimplicial,'' i.e.\ cosimplicial in each index, by mapping the cells of one computad to compositions of cells of another computad.

\begin{definition}\label{defn.Cbox}
  Let $\Cc$ be a complete $n$-uple Segal space.  Its \define{(op)lax square} $\Cc^{\Box}$ is the $2n$-fold simplicial space 
  $$ \Cc^{\Box}_{\vec \bullet;\vec \bullet} = \maps^{h}(\Theta^{\vec \bullet;\vec \bullet},\Cc). $$
\end{definition}

\begin{theorem}\label{mainthm}
  For any complete $n$-fold Segal space $\Cc$, the $2n$-fold simplicial space $\Cc^{\Box}$ from \cref{defn.Cbox} is a complete $n$-fold Segal object internal to complete $n$-fold Segal spaces.
\end{theorem}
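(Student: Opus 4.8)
The plan is to verify directly that $\Cc^\Box$ satisfies the three defining conditions — the Segal condition, essential constancy, and completeness — in each of the two blocks of $n$ simplicial directions, namely the ``horizontal'' directions $\vec k$ and the ``vertical'' directions $\vec l$. Since equivalences and homotopy limits of complete $n$-fold Segal spaces are computed levelwise, the phrase ``complete $n$-fold Segal object internal to complete $n$-fold Segal spaces'' unwinds to exactly this: for each fixed $\vec l$ the $n$-fold simplicial space $\Cc^\Box_{\vec\bullet;\vec l}$ is a complete $n$-fold Segal space, and the assignment $\vec l \mapsto \Cc^\Box_{\vec\bullet;\vec l}$ satisfies the Segal, essential-constancy, and completeness conditions in the $\vec l$-directions, with no essential-constancy condition crossing between the two blocks. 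The engine throughout is that $\maps^h(-,\Cc)$ carries homotopy colimits of computads to homotopy limits of spaces, combined with the presentations of $\Theta^{\vec k;\vec l}$ as iterated homotopy pushouts from \cref{defn.veck-by-vecl,walking tuples are strict 2}.

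For the Segal conditions I would argue as follows. In each $\vec k$-direction, \cref{defn.veck-by-vecl} together with \cref{walking tuples are strict 2} exhibits $\Theta^{\dots,k_{i'},\dots;\vec l}$ as an iterated homotopy pushout of $k_{i'}$ copies of the ``$k_{i'}=1$'' computad glued along the ``$k_{i'}=0$'' computad; applying the contravariant functor $\maps^h(-,\Cc)$ turns this homotopy pushout into the homotopy fiber product
$$ \Cc^\Box_{\dots,k_{i'},\dots;\vec l} \simeq \Cc^\Box_{\dots,1,\dots;\vec l} \overset{h}{\underset{\Cc^\Box_{\dots,0,\dots;\vec l}}\times} \cdots \overset{h}{\underset{\Cc^\Box_{\dots,0,\dots;\vec l}}\times} \Cc^\Box_{\dots,1,\dots;\vec l}, $$
which is precisely the Segal condition in that direction; the $\vec l$-directions are handled identically using the vertical gluings. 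Essential constancy is even more direct: the truncation clauses in \cref{defn.veck-by-vecl} make the computad $\Theta^{k_1,\dots,k_{i'-1},0,k_{i'+1},\dots;\vec l}$ literally equal to $\Theta^{k_1,\dots,k_{i'-1};\vec l}$, hence independent of the later $\vec k$-indices, and similarly for $\vec l$; so the relevant degeneracy maps are induced by identities of computads and are in particular equivalences. Crucially, the truncation never collapses a $\vec k$-index against a $\vec l$-index, which is exactly why the result is a double (i.e.\ $n$-fold internal to $n$-fold) structure rather than a single $2n$-fold or $2n$-uple one.

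The main obstacle is completeness, where the combinatorics of the bigons is essential rather than cosmetic. My plan is to reduce it to the completeness of $\Cc$ itself in two steps. First, \cref{lemma.weaker completeness condition}, applied within each block (legitimately, since by the previous paragraph each $\Cc^\Box_{\vec\bullet;\vec l}$ and each $\Cc^\Box_{\vec k;\vec\bullet}$ is already an $n$-fold Segal space), reduces the task to showing that every one-variable Segal space of the form $\Cc^\Box_{k_1,\dots,k_i,\bullet,0,\dots;\vec l}$ — and its $\vec l$-analogue — is complete. Second, for such a one-variable family I would invoke \cref{lemma.alternate description of completeness} to rewrite the space of invertibles as $\Cc^\Box_{\dots,3,\dots}\times^h_{\Cc^\Box_{\dots,1,\dots}\times\Cc^\Box_{\dots,1,\dots}}(\Cc^\Box_{\dots,0,\dots}\times\Cc^\Box_{\dots,0,\dots})$; each entry is $\maps^h$ of a computad, and the product description $\Gr(\Theta^{\vec k;\vec l})\cong\Gr(\Theta^{\vec k})\times\Gr(\Theta^{\vec l})$ lets me identify invertibility in the chosen direction of $\Cc^\Box$ with invertibility of the corresponding one-dimensional slice in $\Cc$, whereupon completeness of $\Cc$ forces the degeneracy map to be an equivalence. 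I expect the delicate point to be the base of this induction and the bookkeeping of which generating cell of $\Theta^{\vec k;\vec l}$ must be inverted: this is the higher-categorical analogue of the Eckmann--Hilton step used in the proof of \cref{lemma.weaker completeness condition}, now carried out one direction at a time, and it is where the hypothesis that $\Cc$ is complete (rather than merely an $n$-uple Segal space) is consumed.
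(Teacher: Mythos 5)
Your treatment of essential constancy and of the Segal condition is correct and is essentially the paper's own argument: both come down to the truncation equalities and the iterated homotopy-pushout presentations of the computads $\Theta^{\vec k;\vec l}$ from \cref{defn.veck-by-vecl} and \cref{walking tuples are strict 2}, followed by applying $\maps^h(-,\cC)$. Your first reduction step for completeness --- using \cref{lemma.weaker completeness condition} together with the Segal condition and essential constancy to reduce to the one-variable Segal spaces $\cC^\Box_{(i);(j),\bullet,0,\dots}$ and $\cC^\Box_{(i),\bullet,0,\dots;(j)}$ --- also matches the paper.

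The gap is in your second step. The isomorphism $\Gr(\Theta^{\vec k;\vec l})\cong\Gr(\Theta^{\vec k})\times\Gr(\Theta^{\vec l})$ is a statement about underlying CW complexes only: as a computad, $\Theta^{(i);(j)}$ is a Gray-type tensor product, not a product of higher categories, so neither $\maps^h(\Theta^{(i);(j)},\cC)$ nor its subspace of invertibles decomposes along ``slices.'' Concretely, an element of $\cC^\Box_{(i);(j),\inv}\subset\cC^\Box_{(i);(j+1)}$ carries data on its entire boundary; its horizontal source and target are elements of $\cC^\Box_{(i-1);(j+1)}$, and invertibility of the total cell for vertical composition forces \emph{these} to be invertible in $\cC^\Box_{(i-1);(j),\bullet}$ --- it does not reduce to invertibility of any single morphism of $\cC$. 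What is needed instead is an induction on $i$: assume invertibles in $\cC^\Box_{(i-1);(j),\bullet}$ are identities (the base case $i=0$ is immediate, since $\cC^\Box_{(0);(j),\bullet}=\cC_{(j),\bullet}$ is complete because $\cC$ is); then an invertible element of $\cC^\Box_{(i);(j),\bullet}$ has degenerate horizontal skeleton, and \emph{only after} this collapse does the remaining bulk datum become a single invertible $(i+j+1)$-morphism in $\cC$, which completeness of $\cC$ contracts. The paper implements exactly this by fibering $\cC^\Box_{(i);(j),\inv}$ via the pair $(s_h,t_h)$ over the invertibles of the Segal space attached to the horizontal skeleton $\partial_h\Theta^{(i);(j)}$ of \cref{defn.horizontal skeleton}, identifying that base with identities by induction, and recognizing the fiber as maps out of two copies of $\Theta^{(i);(j)}$ glued along $\partial\Theta^{(i);(j)}$ and filled by an $(i+j+1)$-cell. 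Note also that the delicacy sits in this inductive step, not in the base case as you predict; the Eckmann--Hilton-style argument you defer to is precisely the content that your slice identification silently assumes.
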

\begin{proof}
  There are three conditions to check: essential constancy, the Segal condition, and completeness.  Only the third condition is not an immediate consequence of the construction of $\Theta^{\vec \bullet;\vec \bullet}$.
  
  \textbf{Essential constancy.}  We must verify that the degeneracy maps 
$$\Cc^{\Box}_{k_{1},\dots,k_{i-1},0,0,\dots;\vec l} \to \Cc^{\Box}_{k_{1},\dots,k_{i-1},0,k_{i+1},\dots;\vec l} \quad \mbox{and} \quad \Cc^{\Box}_{\vec k; l_{1},\dots,l_{j-1},0,0,\dots} \to \Cc^{\Box}_{\vec k; l_{1},\dots,l_{j-1},0,l_{j+1},\dots}$$
are homotopy equivalences.  This follows from the equalities $\Theta^{k_{1},\dots,k_{i-1},0,k_{i+1},\dots;\vec l} = \Theta^{k_{1},\dots,k_{i-1},0,0,\dots;\vec l}$ and $\Theta^{\vec k; l_{1},\dots,l_{j-1},0,l_{j+1},\dots} = \Theta^{\vec k; l_{1},\dots,l_{j-1},0,0,\dots}$.
  
  \textbf{Segal condition.} 
  Since colimits commute, we have isomorphisms
  \begin{align*}
    \Theta^{k_1,\dots,k_{i-1},k_i,k_{i+1},\dots;\vec l} &  \cong \Theta^{k_1,\dots,k_{i-1},1,k_{i+1},\dots;\vec l} \underset{\Theta^{k_1,\dots,k_{i-1},0;\vec l}}\cup  \dots \underset{\Theta^{k_1,\dots,k_{i-1},0;\vec l}}\cup  \Theta^{k_1,\dots,k_{i-1},1,k_{i+1},\dots;\vec l},\\
    \Theta^{\vec k;l_1,\dots,l_{j-1},l_j,l_{j+1},\dots} & \cong \Theta^{\vec k;l_1,\dots,l_{j-1},1,l_{j+1},\dots} \underset{\Theta^{\vec k;l_1,\dots,l_{j-1},0} }\cup \dots  \underset{\Theta^{\vec k;l_1,\dots,l_{j-1},0} }\cup \Theta^{\vec k;l_1,\dots,l_{j-1},1,l_{j+1},\dots} .
  \end{align*}
  Just as in \cref{walking tuples are strict,walking tuples are strict 2}, these pushouts are also homotopy pushouts.  Indeed, we have
  \begin{align*}
    \Theta^{k_1,\dots,k_{i-1},k_i,k_{i+1},\dots;\vec l} &\cong \Theta^{k_1,\dots,k_{i-1},k_i-1,k_{i+1},\dots;\vec l}\underset{\Theta^{k_1,\dots,k_{i-1},0;\vec l}}\cup  \Theta^{k_1,\dots,k_{i-1},1,k_{i+1},\dots;\vec l}, \\
    \Theta^{\vec k;l_1,\dots,l_{j-1},l_j,l_{j+1},\dots} &\cong \Theta^{\vec k;l_1,\dots,l_{j-1},l_j-1,l_{j+1},\dots} \underset{\Theta^{\vec k;l_1,\dots,l_{j-1},0} }\cup \Theta^{\vec k;l_1,\dots,l_{j-1},1,l_{j+1},\dots},
  \end{align*}
  but the maps $\Theta^{k_1,\dots,k_{i-1},0;\vec l} \to \Theta^{k_1,\dots,k_{i-1},1,k_{i+1},\dots;\vec l}$ and $\Theta^{\vec k;l_1,\dots,l_{j-1},0} \to \Theta^{\vec k;l_1,\dots,l_{j-1},1,l_{j+1},\dots}$
  are inclusions, and so
  \begin{align*}
    \Theta^{k_1,\dots,k_{i-1},k_i,k_{i+1},\dots;\vec l} &\simeq \Theta^{k_1,\dots,k_{i-1},k_i-1,k_{i+1},\dots;\vec l}\underset{\Theta^{k_1,\dots,k_{i-1},0;\vec l}}{\overset h\cup}  \Theta^{k_1,\dots,k_{i-1},1,k_{i+1},\dots;\vec l}, \\
    \Theta^{\vec k;l_1,\dots,l_{j-1},l_j,l_{j+1},\dots} &\simeq \Theta^{\vec k;l_1,\dots,l_{j-1},l_j-1,l_{j+1},\dots} \underset{\Theta^{\vec k;l_1,\dots,l_{j-1},0} }{\overset h \cup} \Theta^{\vec k;l_1,\dots,l_{j-1},1,l_{j+1},\dots}.
  \end{align*}
  The Segal condition follows, since $\maps^{h}(-,\Cc)$ turns homotopy colimits into homotopy limits.
   
  \textbf{Completeness.}  
  By \cref{lemma.weaker completeness condition}, essential constancy, and the Segal condition, it suffices to show that $\cC^\Box_{(i);(j),\bullet,0,\dots}$ and $\cC^\Box_{(i),\bullet,0,\dots;(j)}$ are complete.  We will check the former; the latter is completely analogous.
  We wish to show that the map $\cC^\Box_{(i);(j)} \to \cC^\Box_{(i);(j),\inv}$ is a homotopy equivalence. 
  As in \cref{defn.1fold complete}, for a Segal space $\cD_\bullet$, we  write $\cD_\inv$ for what is usually called $\cD_1^\inv$.  For example,  $\cC^\Box_{(i);(j),\inv}$ denotes the subspace of $\cC^\Box_{(i);(j),1} = \cC^\Box_{(i);(j+1)}$ of elements that represent  invertible morphisms in $\h_1(\cC^\Box_{(i);(j),\bullet})$. As in the proof of \cref{lemma.alternate description of completeness}, it is a union of connected components of $\cC^\Box_{(i);(j+1)}$.
 
We proceed by induction on $i$. For $i=0$,  by definition $\cC^\Box_{(i);(j),\bullet,0,\dots} = \cC^\Box_{(j),\bullet,0,\dots} = \cC_{(j),\bullet,0,\dots}$, which is complete since $\cC$ is complete. For $i>0$,  as usual let $s_h,t_h : \cC^\Box_{(i);(j+1)} \rightrightarrows \cC^\Box_{(i-1);(j+1)}$ denote the source and target maps in the $i$th direction.  They restrict to maps $\cC^\Box_{(i);(j),\inv} \rightrightarrows \cC^\Box_{(i-1);(j),\inv}$.  By induction, $\cC^\Box_{(i-1);(j),\inv} \simeq \cC^\Box_{(i-1);(j)}$ consists just of identities.  

Note that, since source and target of a map are themselves parallel, we can consider the pair $(s_{h},t_{h})$ as mapping
  $$ \cC^\Box_{(i);(j+1)} \to \cC^{\Box}_{\partial_{h}(i);(j+1)} := \maps^{h}\bigl( \partial_{h}\Theta^{(i);(j+1)},\cC\bigr),$$
  where $\partial_{h}\Theta^{(i);(j+1)}$ is the ``horizontal skeleton'' of $\Theta^{(i);(j+1)}$ from \cref{remark.horizontal skeleton}.  By working more generally with the Segal space $\cC^{\Box}_{\partial_{h}(i);(j),\bullet} = \maps^{h}\bigl( \partial_{h}\Theta^{(i);(j),\bullet,0,\dots},\cC\bigr)$, one sees that $\cC^{\Box}_{\partial_{h}(i);(j+1)}$ is the space of one-morphisms of this Segal space, and the image of $\cC^{\Box}_{(i);(j),\inv} \subset \cC^\Box_{(i);(j+1)}$ therein lands in the invertible elements $\cC^{\Box}_{\partial_{h}(i);(j),\inv}$.  

Since $\partial_h\Theta^{(i);(j+1)}$ consists of two copies of $\Theta^{(i-1);(j+1)}$ glued along two copies of $\Theta^{(i-2);(j+1)}$, the induction hypothesis implies that $\cC^{\Box}_{\partial_{h}(i);(j),\inv} \simeq \cC^{\Box}_{\partial_{h}(i);(j),0}$ consists just of identities.
  
  We find therefore
  \begin{align*}
    \cC^\Box_{(i);(j),\inv} & \simeq \cC^\Box_{(i);(j),\inv} \underset{\cC^{\Box}_{\partial_{h}(i);(j),\inv}}{\overset h \times} \cC^{\Box}_{\partial_{h}(i);(j),\inv} \\
     &  \simeq \cC^\Box_{(i);(j),\inv} \underset{\cC^{\Box}_{\partial_{h}(i);(j),\inv}}{\overset h \times} \cC^{\Box}_{\partial_{h}(i);(j)}.
\intertext{The right-hand side is the subspace of}
  & \subseteq \maps^{h}\left( \Theta^{(i);(j+1)} \underset{\partial_{h}\Theta^{(i);(j+1)}}{\overset h \cup} \partial_{h}\Theta^{(i);(j)}, \cC\right)
  \end{align*}
  consisting of those things that are invertible for composition in the last direction.
  
  The map $\partial_{h}\Theta^{(i);(j+1)} \to \Theta^{(i);(j+1)}$ is an inclusion, hence an injective-fibration, and so the homotopy pushout $\Theta^{(i);(j+1)} \cup_{\partial_{h}\Theta^{(i);(j+1)}}^{h} \partial_{h}\Theta^{(i);(j)}$ is a cofibrant replacement of the strict pushout $\Theta^{(i);(j+1)} \cup_{\partial_{h}\Theta^{(i);(j+1)}} \partial_{h}\Theta^{(i);(j)}$.
  But this strict pushout consists of two copies of $\Theta^{(i);(j)}$ glued along $\partial \Theta^{(i);(j)}$, and filled in by an $(i+j+1)$-morphism.  It is that $(i+j+1)$-morphism which is invertible.  Completeness of $\cC$ in the $(i+j+1)$th direction implies that it can be contracted out, from which we conclude  $\cC^\Box_{(i);(j),\inv} \simeq \cC^\Box_{(i);(j)}$. 
 \end{proof}

\begin{remark}  \label{remark.cartoon of completeness}
  Here is the cartoon of our argument for completeness, when $i=j=1$. The first (i.e.\ $i$th) direction is drawn from left to right and the last direction is drawn vertically.  Suppose that
\;\begin{tikzpicture}[baseline=(center.base)]
  \path node (sl) {} +(2,0) node (sr) {} +(0,-2) node (tl) {} +(2,-2) node (tr) {};
  \draw[twoarrowlonger] (sl) --  (sr);
  \draw[twoarrowlonger] (sl) -- coordinate (l)  (tl);
  \draw[twoarrowlonger] (tl) --  (tr);
  \path (sr) -- coordinate (r)  (tr);
  \path (l) +(-.4,-.2) node[dot] (a) {};
  \path (l) +(.4,.2) node[dot] (b) {};
  \path (r) +(-.4,-.2) node[dot] (c) {};
  \path (r) +(.4,.2) node[dot] (d) {};
  \draw[arrow,thin] (a) .. controls +(.25,1.25) and +(-.25,1.25) .. (b);
  \draw[arrow,thin] (a) .. controls +(.25,-1.25) and +(-.25,-1.25) .. (b);
  \draw[arrow,thin] (c) .. controls +(.25,1.25) and +(-.25,1.25) .. (d);
  \draw[arrow,thin] (c) .. controls +(.25,-1.25) and +(-.25,-1.25) .. (d);
  \draw[arrow,very thin] (b) -- (d);
  \draw[threearrowpart1] (tl) -- node (center) {} (sr);
  \draw[threearrowpart2] (tl) --  (sr);
  \draw[threearrowpart3] (tl) --  (sr);
  \draw[arrow, thick] (a) -- (c);
  \draw[twoarrowlonger] (sr) --  (tr);
\end{tikzpicture}\;\vspace{-1cm}  is invertible for vertical composition.  Then the horizontal skeleton \;\begin{tikzpicture}[baseline=(center.base)]
  \path node (sl) {} +(1,0) node (sr) {} +(0,-2) node (tl) {} +(1,-2) node (tr) {};
  \draw[twoarrowlonger] (sl) -- coordinate (l)  (tl);
  \path (sr) -- coordinate (r)  (tr);
  \path (l) +(-.4,-.2) node[dot] (a) {};
  \path (l) +(.4,.2) node[dot] (b) {};
  \path (r) +(-.4,-.2) node[dot] (c) {};
  \path (r) +(.4,.2) node[dot] (d) {};
  \draw[arrow,thin] (a) .. controls +(.25,1.25) and +(-.25,1.25) .. (b);
  \draw[arrow,thin] (a) .. controls +(.25,-1.25) and +(-.25,-1.25) .. (b);
  \draw[arrow,thin] (c) .. controls +(.25,1.25) and +(-.25,1.25) .. (d);
  \draw[arrow,thin] (c) .. controls +(.25,-1.25) and +(-.25,-1.25) .. (d);
  \draw[twoarrowlonger] (sr) --  (tr);
\end{tikzpicture}\; is invertible for vertical composition, and hence an identity by induction:  \vspace{-1cm}$\;\begin{tikzpicture}[baseline=(center.base)]
  \path node (sl) {} +(1,0) node (sr) {} +(0,-2) node (tl) {} +(1,-2) node (tr) {};
  \draw[twoarrowlonger] (sl) -- coordinate (l)  (tl);
  \path (sr) -- coordinate (r)  (tr);
  \path (l) +(-.4,-.2) node[dot] (a) {};
  \path (l) +(.4,.2) node[dot] (b) {};
  \path (r) +(-.4,-.2) node[dot] (c) {};
  \path (r) +(.4,.2) node[dot] (d) {};
  \draw[arrow,thin] (a) .. controls +(.25,1.25) and +(-.25,1.25) .. (b);
  \draw[arrow,thin] (a) .. controls +(.25,-1.25) and +(-.25,-1.25) .. (b);
  \draw[arrow,thin] (c) .. controls +(.25,1.25) and +(-.25,1.25) .. (d);
  \draw[arrow,thin] (c) .. controls +(.25,-1.25) and +(-.25,-1.25) .. (d);
  \draw[twoarrowlonger] (sr) --  (tr);
\end{tikzpicture} \;\simeq\;
\begin{tikzpicture}[baseline=(center.base)]
  \path node (sl) {} +(1,0) node (sr) {} +(0,-2) node (tl) {} +(1,-2) node (tr) {};
  \path (sr) -- coordinate (r)  (tr);
  \path (l) +(-.4,-.2) node[dot] (a) {};
  \path (l) +(.4,.2) node[dot] (b) {};
  \path (r) +(-.4,-.2) node[dot] (c) {};
  \path (r) +(.4,.2) node[dot] (d) {};
  \draw[arrow,thin] (a) -- (b);
  \draw[arrow,thin] (c) -- (d);
\end{tikzpicture}$.  But then $\;
\begin{tikzpicture}[baseline=(center.base)]
  \path node (sl) {} +(2,0) node (sr) {} +(0,-2) node (tl) {} +(2,-2) node (tr) {};
  \draw[twoarrowlonger] (sl) --  (sr);
  \draw[twoarrowlonger] (sl) -- coordinate (l)  (tl);
  \draw[twoarrowlonger] (tl) --  (tr);
  \path (sr) -- coordinate (r)  (tr);
  \path (l) +(-.4,-.2) node[dot] (a) {};
  \path (l) +(.4,.2) node[dot] (b) {};
  \path (r) +(-.4,-.2) node[dot] (c) {};
  \path (r) +(.4,.2) node[dot] (d) {};
  \draw[arrow,thin] (a) .. controls +(.25,1.25) and +(-.25,1.25) .. (b);
  \draw[arrow,thin] (a) .. controls +(.25,-1.25) and +(-.25,-1.25) .. (b);
  \draw[arrow,thin] (c) .. controls +(.25,1.25) and +(-.25,1.25) .. (d);
  \draw[arrow,thin] (c) .. controls +(.25,-1.25) and +(-.25,-1.25) .. (d);
  \draw[arrow,very thin] (b) -- (d);
  \draw[threearrowpart1] (tl) -- node (center) {} (sr);
  \draw[threearrowpart2] (tl) --  (sr);
  \draw[threearrowpart3] (tl) --  (sr);
  \draw[arrow, thick] (a) -- (c);
  \draw[twoarrowlonger] (sr) --  (tr);
\end{tikzpicture}
\;\simeq\;
\begin{tikzpicture}[baseline=(center.base)]
  \path node (sl) {} +(2,0) node (sr) {} +(0,-2) node (tl) {} +(2,-2) node (tr) {};
  \path (sl)  -- coordinate (sr1) (sr);
  \path (sl) -- coordinate (l)  (tl);
 \path (tl) -- coordinate (tl1) (tr);
  \path (sr) -- coordinate (r)  (tr);
  \path (l) +(-.4,-.2) node[dot] (a) {};
  \path (l) +(.4,.2) node[dot] (b) {};
  \path (r) +(-.4,-.2) node[dot] (c) {};
  \path (r) +(.4,.2) node[dot] (d) {};
  \draw[arrow,thin] (a) -- (b);
  \draw[arrow,thin] (c) -- (d);
  \draw[arrow,very thin] (b) -- (d);
  \draw[twoarrowlonger] (b) .. controls +(.5,-1) and +(-.5,-1) .. (c);
  \draw[threearrowpart1] (tl1) -- node (center) {} (sr1);
  \draw[threearrowpart2] (tl1) --  (sr1);
  \draw[threearrowpart3] (tl1) --  (sr1);
  \draw[arrow, thick] (a) -- (c);
  \draw[twoarrowlonger] (b) .. controls +(.5,1) and +(-.5,1) .. (c);
\end{tikzpicture}\;$ is just an $(i+j+1)$-morphism  connecting some $(i+j)$-morphisms whose sources and targets have been factored in some way, and is invertible in the $(i+j)$th direction.  It is therefore an identity.
\end{remark}

\begin{example}\label{eg.theta11}
  Perhaps the most important example is the space $\cC^{\Box}_{(1);(1)} = \maps^{h}(\Theta^{(1);(1)},\cC)$.  
  The computad $\Theta^{(1);(1)}$ is a strict $2$-category, and so its nerve is a $2$-fold simplicial set, with the following nondegenerate simplices:
  $$\begin{tikzpicture}
    \path (0,0) node[dot] (w) {} +(20pt,10pt) coordinate (w1)  +(20pt,-10pt) coordinate (w2) (3,2) node[dot] (n) {}  +(0pt,-6pt) coordinate (n1)  (6,0) node[dot] (e) {}  +(-20pt,10pt) coordinate (e1)  +(-20pt,-10pt) coordinate (e2)  (3,-2) node[dot] (s) {} +(0pt,6pt) coordinate (s2);
    \draw[arrow,shorten <=8pt,shorten >=8pt] (w) -- (n); \draw[arrow,shorten <=8pt,shorten >=8pt] (w) -- (s); \draw[arrow,shorten <=8pt,shorten >=8pt] (n) -- (e); \draw[arrow,shorten <=8pt,shorten >=8pt] (s) -- (e);
    \draw[arrow,shorten <=16pt,shorten >=16pt] (w) .. controls +(3,1) and +(-3,1) .. coordinate (upper) (e);
    \draw[arrow,shorten <=16pt,shorten >=16pt] (w) .. controls +(3,-1) and +(-3,-1) .. coordinate (lower) (e);
    \draw[twoarrowshorter] (upper) -- (lower);
    \path (upper) +(0,6pt) coordinate (upper1) (lower) +(0,-6pt) coordinate (lower2);
    \draw[ultra thin,fill=black!25] (w1) .. controls +(1.5,.5) and +(-.5,0) .. (upper1) .. controls +(.5,0) and +(-1.5,.5) .. (e1) -- (n1) -- (w1);
    \draw[ultra thin,fill=black!25] (w2) .. controls +(1.5,-.5) and +(-.5,0) .. (lower2) .. controls +(.5,0) and +(-1.5,-.5) .. (e2) -- (s2) -- (w2);
  \end{tikzpicture}$$
  In particular, there are four $(0,0)$-simplices, six nondegenerate $(1,0)$-simplices, two nondegenerate $(2,0)$-simplices, no nondegenerate $(0,1)$-simplices, and one non-degenerate $(1,1)$-simplex.
  
  This simplicial set is not projective-cofibrant as a $2$-fold simplicial space (compare \cref{remark.failure of cofibrancy}), and so the derived mapping space $\maps^{h}(\Theta^{(1);(1)},\cC)$ cannot be computed as the (simplicially-enriched) strict mapping space, unless $\cC$ happens to be injective-fibrant as an $n$-fold simplicial space.  But the above picture can be interpreted as a pushout of multi-simplices, each of which is projective-cofibrant, along injective-cofibrations; therefore a cofibrant replacement of $\Theta^{(1);(1)}$ can be constructed by interpreting the picture instead as a homotopy colimit, just as in \cref{remark.failure of cofibrancy}, with the bigon handled as in \cref{remark.theta2}.  One finds:
\begin{align*}
  \Cc^{\Box}_{(1);(1)} 
  & \;=\;
  \operatorname{holim}\left(
\begin{tikzpicture}[baseline=(middle.base),anchor=mid]
    \path (0,0) node (w) {$\Cc_{0}$} 
          (4,3) node (n) {$\Cc_{0}$}    
          (8,0) node (e) {$\Cc_{0}$}    
          (4,-3) node (s) {$\Cc_{0}$} ;
    \path (w) -- node (nw) {$\cC_{1}$} (n); \path (w) -- node (sw) {$\cC_{1}$} (s); \path (n) -- node (ne) {$\cC_{1}$}  (e); \path (s) -- node (se) {$\cC_{1}$} (e);
    \path (4,1.25) node (upper) {$\cC_{1}$} (e);
    \path (4,-1.25) node (lower) {$\cC_{1}$} (e);
    \path (lower) -- node (middle) {$\cC_{1,1}$} (upper);
    \path (2.5,0) node (wm) {$\cC_{0,1}$} (5.5,0) node (em) {$\cC_{0,1}$};
    \path (4,2.25) node (nn) {$\cC_{2}$} (4,-2.25) node (ss) {$\cC_{2}$};
    \draw[->] (nw) -- (w); \draw[->] (nw) -- (n); \draw[->] (ne) -- (e); \draw[->] (ne)--(n);
    \draw[->] (sw) -- (w); \draw[->] (sw) -- (s); \draw[->] (se) -- (e); \draw[->] (se)--(s);
    \draw[->] (upper) -- (w); \draw[->] (upper) -- (e); \draw[->] (lower) -- (w); \draw[->] (lower) -- (e);
    \draw[->] (nn) -- (upper); \draw[->] (nn) -- (nw); \draw[->] (nn) -- (ne);
    \draw[->] (ss) -- (lower); \draw[->] (ss) -- (sw); \draw[->] (ss) -- (se);
    \draw[->] (middle) -- (upper); \draw[->] (middle) -- (lower); \draw[->] (middle) -- (wm); \draw[->] (middle) -- (em);
    \draw[->] (w) -- node[auto,pos=.7] {$\sim$} (wm);
    \draw[->] (e) -- node[auto,pos=.7,swap] {$\sim$} (em);
  \end{tikzpicture}
  \right)
  \\ & \;\simeq\; \cC_{2} \underset{\cC_{1}}{\overset h \times} \cC_{1,1} \underset{\cC_{1}}{\overset h \times} \cC_{2}.
\end{align*}
Recall that our convention is to drop trailing $0$s; for example $\cC_{2} = \cC_{2,0,0,\dots}$.
The maps $ \cC_{1,\vec\bullet} \to \cC_{0,\vec\bullet}$ that point leftwards are source maps in the first direction; the rightward pointing ones are target maps.  The maps $\cC_{2} \to \cC_{1}$ are, from left to right, the first, middle, and last face maps.  The upward map $\cC_{1,1} \to \cC_{1}$ is the source in the second direction; the downward pointing map is the target.  Finally, the maps $\cC_{0} \to \cC_{0,1}$ are homotopy equivalences by essential constancy.  
\end{example}

Since we will use the following notation repeatedly from now on, it is worth formulating it as a definition.

\begin{definition}\label{defn.(op)lax arrow category}
Given an $(\infty,n)$-category $\cC$, i.e.~a complete $n$-fold Segal space, we will use the notations
$$\cC^{\Box}_{\vec \bullet;(1)} = \cC^{\downarrow}_{\vec \bullet} \quad  \mbox{and} \quad  \cC^{\Box}_{(1);\vec \bullet} = \cC^{\to}_{\vec \bullet}$$
for the $(\infty,n)$-categories of lax and oplax arrows, respectively.
As in the introduction, we will write $[\Theta^{(1)},\cC]$ for the $(\infty,n)$-category of ``strong arrows'' defined by the universal property
$$ \maps^h(\cB,[\Theta^{(1)},\cC]) \simeq \maps^h(\cB \times^h \Theta^{(1)},\cC). $$

More generally, for $\ast = $ ``lax'', ``oplax'', or ``strong'', and for $\vec k \in \bN^n$, we have an $(\infty,n)$-category $\cC^\boxasterisk_{\vec k}$ defined by
$$\bigl(\cC^\boxasterisk_{\vec k}\bigr)_{\vec\bullet}
 = \begin{cases}
\cC^\Box_{\vec \bullet;\vec k}, &\ast =\mbox{lax},\\
\cC^\Box_{\vec k;\vec \bullet}, &\ast =\mbox{oplax},\\
[\Theta^{\vec k},\cC]_{\vec\bullet}, &\ast =\mbox{strong}.
\end{cases}$$
For fixed $\vec k$, all three $(\infty,n)$-categories $\cC^{\smallbox{lax}}_{\vec k}$, $\cC^{\smallbox{oplax}}_{\vec k}$, and $\cC^{\smallbox{strong}}_{\vec k}$ have the same objects, but they have different morphisms.
\end{definition}

\begin{remark}
  \Cref{mainthm} asserts that for fixed $\vec k$, the $n$-fold simplicial spaces $\cC^{\smallbox{lax}}_{\vec k}$ and $\cC^{\smallbox{oplax}}_{\vec k}$ are $(\infty,n)$-categories, and moreover that $\vec k \mapsto \cC^{\smallbox{lax}}_{\vec k}$ and $\vec k \mapsto \cC^{\smallbox{oplax}}_{\vec k}$ are complete $n$-fold Segal objects among $(\infty,n)$-categories.  That $\vec k \mapsto \cC^{\smallbox{strong}}_{\vec k}$ is a complete $n$-fold Segal object among $(\infty,n)$-categories follows from a similar, somewhat easier argument.
\end{remark}

Now we have all ingredients to recall the definition of (op)lax natural transformations between strong functors given in the introduction in \cref{defn.oplax-transformation}.
\begin{definition}\label{defn.oplax-transformation2}
  Let $\cB$ and $\cC$ be $(\infty,n)$-categories and $F,G: \cB \rightrightarrows \cC$ strong functors.  
  A \define{lax transformation} $\eta : F \Rightarrow G$ is a strong functor $\eta : \cB \to \cC^{\downarrow}$ such that $s_{v}\circ\eta = F$ and $t_{v}\circ \eta = G$.
  An \define{oplax transformation} $\eta: F \Rightarrow G$ is a strong functor $\eta: \cB \to \cC^{\rightarrow}$ such that $s_{h}\circ\eta = F$ and $t_{h}\circ \eta = G$.
\end{definition}

\begin{example}\label{eg.recollection table}
An (op)lax transformation $\eta:F\Rightarrow G$ in particular assigns the following:
$$
\begin{array}{rc|c|c}
 && \text{lax} & \text{oplax}\\[3pt] \hline \hline &&& \\[-6pt]
\text{object} & B \in \cB_{(0)} & \eta(B) : F(B) \to G(B) & \eta(B) : F(B) \to G(B)\\[6pt] \hline &&& \\[-6pt]
1\text{-morphism} & \{B_{1}\overset b \to B_{2}\} \in \cB_{(1)} &
\begin{tikzpicture}[baseline=(r.base)]
  \path node[dot] (sl) {} +(2,0) node[dot] (sr) {} +(0,-2) node[dot] (tl) {} +(2,-2) node[dot] (tr) {};
  \draw[arrow] (sl) node[anchor = south east] {$F(B_{1})$} -- node[auto] {$F(b)$} (sr) node[anchor = south west] {$F(B_{2})$};
  \draw[arrow] (sl)  -- node[auto,swap] {$\eta(B_{1})$} (tl);
  \draw[arrow] (tl) node[anchor = north east] {$G(B_{1})$} -- node[auto,swap] {$G(b)$}  (tr) node[anchor = north west] {$G(B_{2})$};
  \draw[arrow] (sr) -- node[auto] (r) {$\eta(B_{2})$} (tr);
  \draw[twoarrow] (tl) -- node[auto] {$\eta(b)$} (sr);
\end{tikzpicture}
&
\begin{tikzpicture}[baseline=(r.base)]
  \path node[dot] (sl) {} +(2,0) node[dot] (sr) {} +(0,-2) node[dot] (tl) {} +(2,-2) node[dot] (tr) {};
  \draw[arrow] (sl) node[anchor = south east] {$F(B_{1})$} -- node[auto] {$\eta(B_{1})$} (sr) node[anchor = south west] {$G(B_{1})$};
  \draw[arrow] (sl)  -- node[auto,swap] {$F(b)$} (tl);
  \draw[arrow] (tl) node[anchor = north east] {$F(B_{2})$} -- node[auto,swap] {$\eta(B_{2})$}  (tr) node[anchor = north west] (GB2) {$G(B_{2})$};
  \draw[arrow] (sr) -- node[auto] (r) {$G(b)$} (tr);
  \draw[twoarrow] (tl) -- node[auto] {$\eta(b)$} (sr);
  \path (GB2.south) ++(0,-5pt) coordinate (bottom);
\end{tikzpicture}
\\[6pt] \hline &&& \\[-6pt]
2\text{-morphism} &
\begin{tikzpicture}[baseline=(base), scale=0.8]
  \path node[dot] (a) {} node[anchor=east] {$B_1$} +(2,0) node[dot] (c) {} node[anchor=west] {$B_2$} +(0,-3pt) coordinate (base);
  \draw[arrow] (a) .. controls +(1,-.5) and +(-1,-.5) .. coordinate (t)   (c);
  \draw[arrow] (a) .. controls +(1,.5) and +(-1,.5) .. coordinate (s)  (c);
\draw (s) node[anchor=south] {$\scriptstyle b_1$};
\draw (t) node[anchor=north] (b2) {$\scriptstyle b_2$};
  \draw[twoarrowlonger] (s) -- node[anchor=east] {$\scriptstyle \beta$}   (t);
\path (b2) +(0,-0.7) node {$\in \cB_{(2)}$};
\end{tikzpicture}
&
\begin{tikzpicture}[baseline=(middle), scale=0.8]
  \path node[dot] (a) {} node[anchor= south east] {$F(B_{1})$} +(3,0) node[dot] (b) {} node[anchor= south west] {$F(B_{2})$} +(0,-3) node[dot] (c) {} node[anchor= north east] {$G(B_{1})$} +(3,-3) node[dot] (d) {} node[anchor= north west] {$G(B_{2})$}+(0,-1.5) coordinate (middle);
  \path (b) +(-.35,.25) coordinate (alpha1);
  \path (c) +(.35,-.25) coordinate (alpha2);
  \draw[arrow] (a) -- coordinate (l1) node[anchor=east] {$\eta(B_1)$} coordinate[very near end] (r1) (c);
  \draw[arrow] (b) -- coordinate[very near start] (l2) coordinate (r2) node[anchor=west] (rightlable) {$\eta(B_2)$} (d);
  \draw[twoarrow,thin] (l1) -- node[auto,pos=.4,inner sep=1pt] {$\scriptstyle \eta(b_{1})$} (l2);
  \draw[arrow] (a) .. controls +(1.5,.75) and +(-1.5,.75) .. coordinate (ss)  (b);
  \draw[arrow,thin] (c) .. controls +(1.5,.75) and +(-1.5,.75) .. coordinate (ts)  (d);
  \draw[arrow] (c) .. controls +(1.5,-.75) and +(-1.5,-.75) .. coordinate (tt) (d);
  \draw[twoarrowlonger,thin] (ts) -- node[anchor=west] {$\scriptstyle G(\beta)$}  (tt);
  \draw[threearrowpart1] (alpha1) --  (alpha2);
  \draw[threearrowpart2] (alpha1) --  (alpha2);
  \draw[threearrowpart3] (alpha1) -- node[inner sep= 1pt, fill=white] {$\scriptstyle \eta(\beta)$}  (alpha2);
  \draw[arrow,thick] (a) .. controls +(1.5,-.75) and +(-1.5,-.75) .. coordinate (st) (b);
  \draw[twoarrowlonger,thick] (ss) -- node[anchor=east] {$\scriptstyle F(\beta)$}  (st);
  \draw[twoarrow,thick] (r1) -- node[auto,swap,pos=.6,inner sep=1pt] {$\scriptstyle \eta(b_{2})$} (r2);
\end{tikzpicture}
&
\begin{tikzpicture}[baseline=(middle), scale=0.8]
  \path node[dot] (a) {} +(3,0) node[dot] (b) {} +(0,-3) node[dot] (c) {} +(3,-3) node[dot] (d) {} +(0,-1.5) coordinate (middle);
  \draw[arrow] (a) .. controls +(-.75,-1.5) and +(-.75,1.5) .. coordinate[near end] (lt) coordinate[very near end] (lt2) (c);
  \draw[arrow,thin] (a) .. controls +(.75,-1.5) and +(.75,1.5) .. coordinate[near start] (ls) coordinate (ls2) (c);
  \draw[twoarrowlonger] (ls) -- node[inner sep=1pt,fill=white] {$\scriptstyle F(\beta)$} (lt);
  \draw[arrow] (b) .. controls +(.75,-1.5) and +(.75,1.5) .. coordinate[near start] (rs) coordinate[very near start] (rs0) (d);
  \draw[twoarrow] (ls2) -- coordinate[near end] (s) node[auto] {$\scriptstyle \eta(b_1)$} (rs0);
  \draw[arrow] (b) .. controls +(-.75,-1.5) and +(-.75,1.5) .. coordinate[near end] (rt) coordinate (rt0) (d);
  \draw[twoarrowlonger,thick] (rs) -- node[inner sep=1pt,fill=white] {$\scriptstyle G(\beta)$} (rt);
  \draw[arrow] (a) node[anchor = south east] {$F(B_1)$} -- node[auto] {$\eta(B_1)$} (b) node[anchor = south west] {$G(B_1)$};
  \draw[arrow] (c) node[anchor = north east] {$F(B_2)$} -- node[below] {$\eta(B_2)$} (d) node[anchor = north west] {$G(B_2)$};
  \path (lt2) -- coordinate[near start] (t) (rt0);
  \draw[threearrowpart1] (t) --  (s); 
  \draw[threearrowpart2] (t) -- (s); 
  \draw[threearrowpart3] (t) -- node[inner sep= 1pt, fill=white] {$\scriptstyle \eta(\beta)$} (s); 
  \draw[twoarrow,thick] (lt2) -- node[auto,swap] {$\scriptstyle \eta(b_2)$} (rt0);
\end{tikzpicture} 
\end{array}
$$
For comparison, in a \define{strong} transformation, i.e.\ a strong functor $\cB \to [\Theta^{(1)},\cC]$, the  ``bulk'' morphisms $\eta(b)$, $\eta(b_1)$, $\eta(b_2)$, and $\eta(\beta)$ are required to be invertible.
\end{example}

Using the full oplax square provides definitions of (op)lax higher transfors:
\begin{definition}\label{defn.oplax-transfor}
  Let $\cB$ and $\cC$ be $(\infty,n)$-categories and let $k>1$.
\begin{enumerate}
\item An \define{(op)lax 1-transfor} is an (op)lax natural transformation between strong functors.
\item A \define{lax $k$-transfor} is a strong functor $\eta : \cB \to \cC^{\smallbox{lax}}_{(k)} = \cC^{\Box}_{\vec\bullet;(k)}$. Its source and target are the lax $(k-1)$-transfors $s_{v}\circ\eta$ and $t_{v}\circ \eta$.
\item An \define{oplax $k$-transfor} is a strong functor $\eta : \cB \to \cC^{\smallbox{oplax}}_{(k)} = \cC^{\Box}_{(k);\vec\bullet}$. Its source and target are the oplax $(k-1)$-transfors $s_{h}\circ\eta$ and $t_{h}\circ \eta$.
\end{enumerate}
\end{definition}

\begin{corollary}\label{cor.oplax-transformation}
  Let $\cB$ and $\cC$ be complete $n$-fold Segal spaces.  The $n$-fold simplicial spaces
\begin{align*}
\vec k &\mapsto \mathrm{Fun}^{\mathrm{lax}}(\cB,\cC)_{\vec k} :=\maps^{h}(\cB,\cC^{\smallbox{lax}}_{\vec k}), \quad\mbox{and}\\
\vec k &\mapsto \mathrm{Fun}^{\mathrm{oplax}}(\cB,\cC)_{\vec k} :=\maps^{h}(\cB,\cC^{\smallbox{oplax}}_{\vec k})
\end{align*}
are complete $n$-fold Segal spaces, i.e.~$(\infty,n)$-categories, which depend naturally on the choices of $\cB$ and $\cC$.
\end{corollary}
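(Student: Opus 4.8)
The plan is to deduce both statements by applying the derived mapping space functor $\maps^{h}(\cB,-)$ to the complete $n$-fold Segal objects internal to $(\infty,n)$-categories furnished by \cref{mainthm}. I treat the lax case; the oplax case is word-for-word identical with $\cC^{\smallbox{oplax}}_{\vec k}$ replacing $\cC^{\smallbox{lax}}_{\vec k}$. By \cref{mainthm}, $\vec k \mapsto \cC^{\smallbox{lax}}_{\vec k}$ is a complete $n$-fold Segal object among $(\infty,n)$-categories: each $\cC^{\smallbox{lax}}_{\vec k}$ is a complete $n$-fold Segal space, and in each of the $n$ directions indexed by $\vec k$ the essential constancy, Segal, and completeness conditions hold, with ``equivalence'' now meaning equivalence of $(\infty,n)$-categories and homotopy fibre products taken among $(\infty,n)$-categories. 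Since all three conditions assert that certain maps assembled from homotopy fibre products are equivalences, it suffices to know that $\maps^{h}(\cB,-)$ sends equivalences to homotopy equivalences and homotopy fibre products to homotopy fibre products.

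The first property is exactly \cref{lemma.mappingspace}. The second is the standard fact that a derived mapping functor out of a fixed object of a simplicial model category preserves homotopy limits in its target variable: homotopy fibre products of complete $n$-fold Segal spaces are computed as homotopy fibre products of underlying $n$-fold simplicial spaces (the relevant fibrant objects being closed under these), and $\maps^{h}(\cB,-)$ commutes with them up to homotopy; see the Appendix. Granting these, essential constancy and the Segal condition for $\vec k \mapsto \maps^{h}(\cB,\cC^{\smallbox{lax}}_{\vec k})$ follow at once, by applying $\maps^{h}(\cB,-)$ to the corresponding degeneracy and Segal equivalences for $\cC^{\smallbox{lax}}$ and identifying the mapping space out of a homotopy fibre product with the homotopy fibre product of mapping spaces.

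Completeness is the only condition needing care, and here I invoke \cref{lemma.alternate description of completeness} to recast invertibility as a homotopy fibre product. By \cref{lemma.weaker completeness condition} it is enough to check completeness one direction at a time; so fix all but one $\vec k$-index and let $\cD_\bullet$ denote the resulting one-parameter Segal object valued in $(\infty,n)$-categories, so that completeness of $\cC^{\smallbox{lax}}$ provides an equivalence $\cD_0 \overset\sim\to \cD_\inv$. Applying \cref{lemma.alternate description of completeness} to the Segal space $\maps^{h}(\cB,\cD_\bullet)$ and then commuting $\maps^{h}(\cB,-)$ past the homotopy fibre product gives
$$
\maps^{h}(\cB,\cD_\bullet)_{\inv}
\simeq \maps^{h}(\cB,\cD_3)
\underset{\maps^{h}(\cB,\cD_1)^{\times 2}}{\overset h \times}
\maps^{h}(\cB,\cD_0)^{\times 2}
\simeq \maps^{h}(\cB,\cD_\inv).
$$
The equivalence $\cD_0 \overset\sim\to \cD_\inv$ together with \cref{lemma.mappingspace} then yields $\maps^{h}(\cB,\cD_0) \overset\sim\to \maps^{h}(\cB,\cD_\inv) \simeq \maps^{h}(\cB,\cD_\bullet)_{\inv}$, which is precisely completeness of $\vec k \mapsto \maps^{h}(\cB,\cC^{\smallbox{lax}}_{\vec k})$ in the chosen direction.

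Naturality is formal: $\cC \mapsto \cC^{\smallbox{lax}}_{\vec k} = \maps^{h}(\Theta^{\vec\bullet;\vec k},\cC)$ is functorial and preserves equivalences by \cref{lemma.mappingspace}, and $\maps^{h}(-,-)$ is strictly functorial in both variables once resolution functors are fixed, so equivalences $\cB' \overset\sim\to \cB$ and $\cC \overset\sim\to \cC'$ induce levelwise equivalences of the $n$-fold simplicial spaces $\mathrm{Fun}^{\mathrm{lax}}(\cB,\cC)$, hence equivalences of the associated $(\infty,n)$-categories, again via \cref{lemma.mappingspace}. The main obstacle is the second of the two properties above: verifying cleanly that $\maps^{h}(\cB,-)$ commutes with the homotopy fibre products that encode the Segal and invertibility conditions inside the category of $(\infty,n)$-categories --- equivalently, that these homotopy limits may be computed on underlying $n$-fold simplicial spaces, where the preceding Segal-space lemmas apply. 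Everything else is then a direct transport of \cref{mainthm} along the functor $\maps^{h}(\cB,-)$.
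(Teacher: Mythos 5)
Your proposal is correct and follows essentially the same route as the paper: the paper's proof consists precisely of the observation that $\maps^{h}(\cB,-)$ preserves weak equivalences and homotopy fiber products (plus the same naturality remarks), and your argument simply makes explicit the details this observation suppresses — in particular, your use of \cref{lemma.alternate description of completeness} to recast completeness as a fiber-product condition is exactly the implicit reason the paper's one-line proof covers that case too.
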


\begin{proof}
  It suffices to observe that $\maps^h(\cB,-)$ preserves weak equivalences and homotopy fiber products.  Naturality in $\cB$ is manifest, and naturality in $\cC$ is clear once it is observed that $\cC^\Box_{\vec \bullet;\vec\bullet} = \maps^h(\Theta^{\vec\bullet;\vec\bullet},\cC)$ depends naturally in $\cC$.
\end{proof}

By definition unpacking, $\vec k \mapsto \mathrm{Fun}^{\mathrm{strong}}(\cB,\cC)_{\vec k} :=\maps^{h}(\cB,\cC^{\smallbox{strong}}_{\vec k})$ is nothing but the complete $n$-fold Segal space $\vec k \mapsto [\cB,\cC]_{\vec k}$ satisfying the universal property $\maps^h(\cX,[\cB,\cC]) \simeq \maps^h(\cX \times^h \cB,\cC)$.

\begin{remark}
Note that the space of objects of each of the $(\infty,n)$-categories $\mathrm{Fun}^{\ast}(\cB,\cC)$, for $\ast = $ ``lax'', ``oplax'', or ``strong'', is the space $\maps^{h}(\cB,\cC) \cong \maps^{h}(\cB,\cC^\boxasterisk_{(0)})$, i.e.\ the space of (strong) functors $\cB \to \cC$.
Their spaces of 1-morphisms are the spaces of lax, oplax, or strong natural transformations between functors.  The remainder of the $(\infty,n)$-categories $\vec k \mapsto \maps^{h}(\cB,\cC^{\boxasterisk}_{\vec k})$, for $\asterisk = $ ``lax'', ``oplax'', or ``strong'', implements composition of lax/oplax/strong natural transformations as well as the higher transfors discussed in \cref{defn.oplax-transfor}.
\end{remark}

\section{Symmetric monoidal structures}\label{section.symmetricmonoidal}

In this section we prove in \cref{prop.c box is symmetric monoidal} that if $\cC$ is symmetric monoidal than so are $\cC^{\smallbox{lax}}_{\vec\bullet}$ and $\cC^{\smallbox{oplax}}_{\vec\bullet}$.  This allows us to present in \cref{defn.symm mon oplax-transfor} the notion of ``symmetric monoidal (op)lax natural transformation between symmetric monoidal functors.''  
We then recall the ``based loop $(\infty,n-1)$-category'' $\Omega \cC = \mathrm{End}_\cC(\unit)$ of a symmetric monoidal $(\infty,n)$, where $\unit$ denotes the monoidal unit in $\cC$, and identify it in \cref{thm.qft} with the category of lax arrows from $\unit$ to $\unit$.  We will use this result in \cref{section.QFT} to identify lax trivially-twisted and untwisted quantum field theories.

We begin by recalling the definition of ``symmetric monoidal complete $n$-fold Segal space'':

\begin{definition}\label{defn.Gamma}
  The category $\Fin_{*}$ 
is the category whose objects are the pointed (at $0$) sets
$$[k]=\{0,\ldots, k\},$$
for every $k\geq0$ and morphisms are pointed functions, i.e.~functions sending $0\mapsto 0$.  In particular, for each $k$, there are $k$ canonical morphisms  
$$\gamma_i:[k]\longrightarrow [1], \quad 0<i\leq k,$$
 such that $\gamma_i(j)=\delta_{ij}$, called the \define{Segal morphisms}.
\end{definition}

\begin{remark}\label{remark.Gamma}
  In~\cite{MR0353298}, Segal defined the category $\Gamma$ to be the opposite of $\Fin_{*}$, although many modern authors use the name $\Gamma$ for $\Fin_*$ instead of $\Fin_*^{\mathrm{op}}$.  Segal set $\Gamma = \Fin_*^{\mathrm{op}}$ in analogy with the category $\Delta$ of simplices, which allows to interpret his ``$\Gamma$-spaces'' as simplicial spaces by composition with the obvious covariant functor $\Delta\to \Fin_*^{\mathrm{op}}$.
In this analogy, the Segal morphisms correspond to the $k$ standard inclusions $[1] \to [k]$ in $\Delta$.
\end{remark}

\begin{definition}\label{defn.Gammaobject}
  A \define{symmetric monoidal complete $n$-fold Segal space} is a strict functor $\cC^{\otimes}$ from $\Fin_{*}$ to the strict category of complete $n$-fold Segal spaces
  satisfying the following \define{Segal condition}:
   for each $[m]$, the map
 \begin{equation*} \prod_{i=1}^{m} \gamma_{i} : \cC^{\otimes}[m]_{\vec \bullet} \longrightarrow \prod_{i=1}^{m}\cC^{\otimes}[1]_{\vec \bullet} 
\end{equation*}
  is a weak equivalence of complete $n$-fold Segal spaces.  (In both the projective and injective model structures this boils down to levelwise weak equivalences of spaces --- note that the product on the right hand side is computed levelwise).
\end{definition}

\begin{remark}
We should define a symmetric monoidal complete $n$-fold Segal space to be an $(\infty,1)$-functor of $(\infty,1)$-categories from $\Fin_{*}$ to the $(\infty,1)$-category of complete $n$-fold Segal spaces. However, by the strictification theorem of To\"en-Vezzosi (Proposition 4.2.1 in \cite{ToenVezzosi}) which in the setting of complete Segal spaces can be found in \cite[Section 8.9]{Rezk}, any such functor can be represented by a strict functor.
\end{remark}

  \define{Homomorphisms} of symmetric monoidal complete $n$-fold Segal spaces are natural transformations of functors (in the strict sense).
Symmetric monoidal complete $n$-fold Segal spaces are the fibrant objects of a model structure  
on the category of functors
$\Fin_{*} \times (\Delta^{\op})^{\times n} \to \cat{sSet}$
defined in \cref{eg.model category of symmetric monoidal $n$-fold complete Segal spaces}.
 Its \define{equivalences} are  homomorphisms that are levelwise equivalences.  A version of \cref{lemma.mappingspace} holds for symmetric monoidal complete $n$-fold Segal spaces considered as functors $\Fin_{*} \times (\Delta^{\op})^{\times n} \to \cat{sSet}$.  In particular, one can define derived mapping spaces $\maps^{h}_\otimes(\cB^{\otimes},\cC^{\otimes})$ between symmetric monoidal complete $n$-fold Segal spaces $\cB^\otimes$ and $\cC^{\otimes}$ by either cofibrantly resolving $\cB^{\otimes}$ in the projective model structure on functors $\Fin_{*} \times (\Delta^{\op})^{\times n} \to \cat{sSet}$ or by fibrantly resolving $\cC^\otimes$ in the injective model structure.

Given a complete $n$-fold Segal space $\cC$, we will say that $\cC$ \define{has a symmetric monoidal structure} if a symmetric monoidal complete $n$-fold Segal space $\cC^{\otimes}$ is given with $\cC^{\otimes}[1] = \cC$.  We will henceforth abuse notation slightly, writing ``$\cC$'' both for a symmetric monoidal complete $n$-fold Segal space and for its underlying complete $n$-fold Segal space $\cC^\otimes[1]$; we will also shorten notation to $\cC[m]:=\cC^\otimes[m]$.  Derived mapping spaces of underlying $n$-fold complete Segal spaces will be denoted $\maps^h(-,-)$; for the derived mapping spaces of symmetric monoidal $n$-fold complete Segal spaces, we will write $\maps^h_\otimes$.

We can now make sense of the term ``symmetric monoidal (op)lax natural transformation.''

\begin{proposition}\label{prop.c box is symmetric monoidal}
  The (op)lax square of a symmetric monoidal complete $n$-fold Segal space $\cC^\otimes$ is symmetric monoidal in each component, i.e.~for any $\vec k$, the assignments
$$\cC^{\smallbox{lax}}_{\vec k}[m]:=  (\cC[m])^{\smallbox{lax}}_{\vec k} \quad \text{and}\quad \cC^{\smallbox{oplax}}_{ \vec k}[m]:= (\cC[m])^{\smallbox{oplax}}_{ \vec k}$$
endow $\cC^{\smallbox{lax}}_{\vec k}$ and $\cC^{\smallbox{oplax}}_{\vec k}$ with symmetric monoidal structures.
\end{proposition}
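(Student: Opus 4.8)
The plan is to verify that $[m]\mapsto(\cC[m])^{\smallbox{lax}}_{\vec k}$ satisfies the two requirements of \cref{defn.Gammaobject}: that it is a strict functor $\Fin_*\to\{\text{complete }n\text{-fold Segal spaces}\}$ whose value at $[1]$ is $\cC^{\smallbox{lax}}_{\vec k}$, and that it satisfies the Segal condition. The oplax case will be handled by the verbatim argument.

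First I would produce the strict functor. Recall that $(\cC[m])^{\smallbox{lax}}_{\vec k}$ has $\vec\bullet$-component $\maps^{h}(\Theta^{\vec\bullet;\vec k},\cC[m])$. Because we have fixed fibrant and cofibrant resolution functors, $\maps^{h}(-,-)$ is strictly functorial in each variable, so $\cD\mapsto\maps^{h}(\Theta^{\vec\bullet;\vec k},\cD)$ is a strict functor on complete $n$-fold Segal spaces. Precomposing with the strict functor $\cC^\otimes\colon\Fin_*\to\{\text{complete }n\text{-fold Segal spaces}\}$ yields the desired strict functor $[m]\mapsto(\cC[m])^{\smallbox{lax}}_{\vec k}$, with value $\cC^{\smallbox{lax}}_{\vec k}$ at $[1]$. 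Each value is genuinely a complete $n$-fold Segal space: this is the content of \cref{mainthm} applied to $\cC[m]=\cC^\otimes[m]$, which asserts that the vertical columns $(\cC[m])^{\smallbox{lax}}_{\vec k}$ of the (op)lax square are complete $n$-fold Segal spaces.

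It remains to check the Segal condition, which after unwinding reads that
$$\prod_{i=1}^{m}\gamma_i\colon \maps^{h}\bigl(\Theta^{\vec\bullet;\vec k},\cC[m]\bigr)\longrightarrow\prod_{i=1}^{m}\maps^{h}\bigl(\Theta^{\vec\bullet;\vec k},\cC[1]\bigr)$$
is a weak equivalence of complete $n$-fold Segal spaces. The Segal condition on $\cC^\otimes$ supplies a levelwise weak equivalence $\cC[m]\overset\sim\to\prod_{i=1}^{m}\cC[1]$, and \cref{lemma.mappingspace} shows that $\maps^{h}(\Theta^{\vec\bullet;\vec k},-)$ carries it to a weak equivalence. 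Since a mapping space out of a fixed object splits over finite products, there is a canonical identification $\maps^{h}(\Theta^{\vec\bullet;\vec k},\prod_i\cC[1])\cong\prod_i\maps^{h}(\Theta^{\vec\bullet;\vec k},\cC[1])$ (the right-hand product being computed levelwise), through which the displayed Segal map factors as an equivalence followed by an isomorphism; hence it is an equivalence, as required. The oplax statement follows by replacing $\Theta^{\vec\bullet;\vec k}$ with $\Theta^{\vec k;\vec\bullet}$ throughout, i.e.\ using the horizontal $\vec k$-morphisms $(\cC[m])^\Box_{\vec k;\vec\bullet}$. The only delicate point is that the functoriality in $[m]$ must be strict, so as to land in the strict model of \cref{defn.Gammaobject}; this is exactly what the fixed resolution functors guarantee, and should one prefer to work only up to coherent homotopy one may instead invoke the To\"en--Vezzosi strictification recalled after \cref{defn.Gammaobject}. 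Everything else is formal, resting only on the two facts that $\maps^{h}$ preserves weak equivalences and finite products in its target variable.
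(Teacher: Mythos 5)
Your proof is correct and follows essentially the same route as the paper: both define the structure by applying the strictly functorial $\maps^{h}(\Theta^{\vec\bullet;\vec\bullet},-)$ to the strict functor $\cC^\otimes : \Fin_* \to \{\text{complete $n$-fold Segal spaces}\}$, and both verify the Segal condition from the fact that $\maps^h$ out of a fixed object carries the levelwise equivalence $\cC[m]\overset\sim\to\prod_i\cC[1]$ to an equivalence and splits over finite products. Your write-up merely spells out the two points the paper leaves implicit (the role of the fixed resolution functors in guaranteeing strictness, and the product identification), so there is nothing to add.
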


\begin{proof}
  Define $\cC^{\Box}[m]_{\vec\bullet;\vec\bullet} := (\cC[m])^{\Box}_{\vec\bullet;\vec\bullet} = \maps^{h}(\Theta^{\vec\bullet;\vec\bullet},\cC[m])$; since derived mapping spaces are functorial, this depends functorially on $[m] \in \Fin_{*}$.  We have only to check that $\prod \gamma_{i} : \maps^{h}(\Theta^{\vec k;\vec l},\cC[m]) \to \maps^{h}(\Theta^{\vec k;\vec l},\cC[1])^{\times m}$ is a weak equivalence for each $\vec k,\vec l$, which indeed holds with $\Theta^{\vec k;\vec l}$ replaced by any complete $n$-fold Segal space.
\end{proof}

\begin{remark}
Note that we also have a strong variant: if $\cC$ is symmetric monoidal, then $\cC^{\smallbox{strong}}_{\vec k}[m]:=(\cC[m])^{\smallbox{strong}}_{\vec k}= [\Theta^{\vec{k}}, \cC[m]]$ defines a symmetric monoidal structure on $\cC^{\smallbox{strong}}_{\vec k}$.
\end{remark}

In particular $\cC^{\downarrow} = \cC^{\smallbox{lax}}_{(1)}$ and $\cC^\to = \cC^{\smallbox{oplax}}_{(1)}$ are symmetric monoidal complete $n$-fold Segal spaces if $\cC$ is, and the structure maps $s_{v},t_{h} : \cC^{\downarrow} \rightrightarrows \cC$ and $s_h,t_h : \cC^\to \rightrightarrows \cC$ are homomorphisms of symmetric monoidal complete $n$-fold Segal spaces.

\begin{definition}\label{defn.sym mon lax transfor}
Let $\cB$ and $ \cC$ be symmetric monoidal complete $n$-fold Segal spaces.  Given symmetric monoidal functors $F, G \in \maps_{\otimes}^{h}(\cB,\cC)$, the space of \define{symmetric monoidal lax transformations} from $F$ to $G$ is the fibered product
$$\mathrm{Lax}_\otimes(F,G)= \{F \} \underset{\maps_{\otimes}^{h}(\cB,\cC)}{\overset h \times} \maps_{\otimes}^{h}(\cB,\cC^{\downarrow}) \underset{\maps_{\otimes}^{h}(\cB,\cC)}{\overset h \times} \{G\}, $$
where the two maps $\maps_{\otimes}^{h}(\cB,\cC^{\downarrow}) \rightrightarrows \maps_{\otimes}^{h}(\cB,\cC)$ are given by composition with $s_{v},t_{v} : \cC^{\downarrow} \rightrightarrows \cC$.  The space of \define{symmetric monoidal oplax transformations} from $F$ to $G$ is
$$\mathrm{Oplax}_\otimes(F,G) = \{F \} \underset{\maps_{\otimes}^{h}(\cB,\cC)}{\overset h \times} \maps_{\otimes}^{h}(\cB,\cC^{\to}) \underset{\maps_{\otimes}^{h}(\cB,\cC)}{\overset h \times} \{G\}, $$
where the two maps $\maps_{\otimes}^{h}(\cB,\cC^{\to}) \rightrightarrows \maps_{\otimes}^{h}(\cB,\cC)$ are given by composition with $s_{h},t_{h} : \cC^{\to} \rightrightarrows \cC$.
\end{definition}

Using the full (op)lax square we obtain the symmetric monoidal versions of  higher transfors.
\begin{definition}\label{defn.symm mon oplax-transfor}
  Let $\cB$ and $\cC$ be symmetric monoidal $(\infty,n)$-categories and let $k>1$.
\begin{enumerate}
\item A \define{symmetric monoidal (op)lax 1-transfor} is a symmetric monoidal (op)lax natural transformation between symmetric monoidal strong functors.
\item A \define{symmetric monoidal lax $k$-transfor} is a symmetric monoidal strong functor $\eta : \cB \to \cC^{\smallbox{lax}}_{(k)}$. Its source and target are the symmetric monoidal lax $(k-1)$-transfors $s_{v}\circ\eta$ and $t_{v}\circ \eta$.
\item A \define{symmetric monoidal oplax $k$-transfor} is a symmetric monoidal strong functor $\eta : \cB \to \cC^{\smallbox{oplax}}_{(k)}$. Its source and target are the symmetric monoidal oplax $(k-1)$-transfors $s_{h}\circ\eta$ and $t_{h}\circ \eta$.
\end{enumerate}
\end{definition}

This allows for the symmetric monoidal version of \cref{cor.oplax-transformation}:
\begin{corollary}\label{cor.symm mon oplax-transformation}
  Let $\cB$ and $\cC$ be symmetric monoidal complete $n$-fold Segal spaces.  The $n$-fold simplicial spaces
\begin{align*}
\vec k &\mapsto \mathrm{Fun}_\otimes^{\mathrm{lax}}(\cB,\cC)_{\vec k} :=\maps^{h}_\otimes(\cB,\cC^{\smallbox{lax}}_{\vec k}), \quad\mbox{and}\\
\vec k &\mapsto \mathrm{Fun}_\otimes^{\mathrm{oplax}}(\cB,\cC)_{\vec k} :=\maps^{h}_\otimes(\cB_{\vec\bullet},\cC^{\smallbox{oplax}}_{\vec k})
\end{align*}
are  complete $n$-fold Segal spaces which depend naturally on the choices of $\cB$ and $\cC$.
\end{corollary}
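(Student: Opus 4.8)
The plan is to mirror the proof of \cref{cor.oplax-transformation}, reducing the claim to two preservation properties of the symmetric monoidal derived mapping functor $\maps^h_\otimes(\cB,-)$: that it preserves weak equivalences, and that it carries the homotopy fiber products appearing in the Segal and completeness conditions to homotopy fiber products of spaces. Two ingredients are already available. First, \cref{mainthm} together with the remark following it shows that $\vec k \mapsto \cC^{\smallbox{lax}}_{\vec k}$ and $\vec k \mapsto \cC^{\smallbox{oplax}}_{\vec k}$ are complete $n$-fold Segal objects among complete $n$-fold Segal spaces. Second, \cref{prop.c box is symmetric monoidal} endows each $\cC^{\smallbox{lax}}_{\vec k}$ and $\cC^{\smallbox{oplax}}_{\vec k}$ with a symmetric monoidal structure via $\cC^{\smallbox{lax}}_{\vec k}[m] = (\cC[m])^{\smallbox{lax}}_{\vec k}$, defined levelwise in $[m] \in \Fin_{*}$.

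First I would upgrade the conclusion of \cref{mainthm} to the symmetric monoidal setting. All of the structure maps of $\cC^{\Box}_{\vec\bullet;\vec\bullet}$ — those entering the Segal condition, the essential-constancy degeneracies, and the completeness maps — are induced by the $2n$-fold cosimplicial structure on the computads $\Theta^{\vec\bullet;\vec\bullet}$, and are therefore natural in the variable $[m]\in\Fin_{*}$ that defines the symmetric monoidal structure of \cref{prop.c box is symmetric monoidal}; that is, they are homomorphisms of symmetric monoidal complete $n$-fold Segal spaces. Since a weak equivalence, and likewise a homotopy fiber product, of symmetric monoidal complete $n$-fold Segal spaces is computed levelwise in $[m]$ (as recorded in \cref{defn.Gammaobject}), the equivalences proved in \cref{mainthm} at each level $[m]$ assemble into equivalences of symmetric monoidal objects. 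Consequently $\vec k \mapsto \cC^{\smallbox{lax}}_{\vec k}$ and $\vec k \mapsto \cC^{\smallbox{oplax}}_{\vec k}$ are complete $n$-fold Segal objects among symmetric monoidal complete $n$-fold Segal spaces.

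It then remains to apply $\maps^h_\otimes(\cB,-)$. By the symmetric monoidal analog of \cref{lemma.mappingspace}, whose validity is asserted in the discussion following \cref{defn.Gammaobject}, this derived mapping functor preserves weak equivalences in its target variable; and, being the derived functor of an enriched hom out of a fixed object in the simplicial model category of functors $\Fin_{*} \times (\Delta^{\op})^{\times n} \to \cat{sSet}$, it sends homotopy limits to homotopy limits, in particular homotopy fiber products to homotopy fiber products. Applying it to the complete $n$-fold Segal object of the previous paragraph thus produces the complete $n$-fold Segal spaces $\vec k \mapsto \maps^h_\otimes(\cB,\cC^{\smallbox{lax}}_{\vec k})$ and $\vec k \mapsto \maps^h_\otimes(\cB,\cC^{\smallbox{oplax}}_{\vec k})$. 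Naturality in $\cB$ is manifest from the contravariant functoriality of $\maps^h_\otimes(-,\cC)$, and naturality in $\cC$ follows once one observes that $\cC \mapsto \cC^{\smallbox{lax}}_{\vec k}$, assembled levelwise from $\maps^h(\Theta^{\vec\bullet;\vec\bullet},\cC[m])$, is functorial in $\cC$ through homomorphisms of symmetric monoidal objects.

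The main obstacle, and the only step needing genuine care, is the assertion that $\maps^h_\otimes(\cB,-)$ preserves the relevant homotopy fiber products. This does not follow from the set-level description of $\maps_\otimes$; it requires the homotopical one, namely that the symmetric monoidal derived mapping space is a genuine homotopy-limit-preserving functor. As in the proof of \cref{lemma.mappingspace}, this rests on the existence of both the projective and the injective model structures on $\Fin_{*} \times (\Delta^{\op})^{\times n} \to \cat{sSet}$ together with the fact that they are simplicial, so that $\maps^h_\otimes(\cB,-)$ may be computed by cofibrantly resolving $\cB$ (or fibrantly resolving the target) and then taking strictly functorial simplicial homs. Once this is granted the remainder of the argument is formal.
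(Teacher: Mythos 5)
Your proposal is correct and follows essentially the same route the paper intends: the paper deduces this corollary exactly as it proves \cref{cor.oplax-transformation}, by observing that $\maps^h_\otimes(\cB,-)$ preserves weak equivalences and homotopy fiber products, combined with \cref{prop.c box is symmetric monoidal} and the fact (recorded after \cref{defn.trivial theory}, via \cref{lemma.basic facts about localizations}) that equivalences and homotopy fiber products of symmetric monoidal complete $n$-fold Segal spaces are computed levelwise in $[m]\in\Fin_*$. Your write-up merely spells out these levelwise-assembly and homotopy-limit-preservation points in more detail than the paper does.
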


We end this section by studying the special case of symmetric monoidal (op)lax natural natural endomorphisms of the ``trivial'' symmetric monoidal functor $\cB \to \cC$ whose constant value is the monoidal unit $\unit \in \cC$ (or a higher identity  morphism thereon).

\begin{definition}\label{defn.trivial theory}
  The Segal condition for $m=0$ in \cref{defn.Gammaobject} gives an equivalence of complete $n$-fold Segal spaces
$$\Cc[0] \simeq *.$$
  The map $\unit: *\simeq\Cc[0] \to \Cc[1]$ picks out the \define{unit}.

  We make the contractible space $\cC[0]$ into a symmetric monoidal complete $n$-fold Segal space by declaring $\cC[0][m] = \cC[0]$; the Segal condition follows from contractibility.  Then the unit map $\unit : \cC[0] \to \cC[1]$ is in fact symmetric monoidal.  

Let $\cB$ and $\cC$ be symmetric monoidal complete $n$-fold Segal spaces.  Since $\cC[0]$ is contractible, so too is $\maps_{\otimes}^{h}(\cB,\cC[0])$.  The \define{trivial symmetric monoidal functor} from $\cB$ to $\cC$ is picked out by the map $\unit : \ast \simeq \maps_{\otimes}^{h}(\cB,\cC[0]) \to \maps_{\otimes}^{h}(\cB,\cC)$.
\end{definition}

  \Cref{lemma.basic facts about localizations} implies that homotopy fibered products of symmetric monoidal complete $n$-fold Segal spaces can be computed levelwise: $(\cX \times^h_\cY \cZ)[m]_{\vec k} \simeq \cX[m]_{\vec k} \times^h_{\cY[m]_{\vec k}} \cZ[m]_{\vec k}$. Abbreviating $\cC^{\unit\downarrow\unit} = \cC[0] \times^{h}_{\cC} \cC^{\downarrow} \times^{h}_{\cC} \cC[0]$ and $\cC^{\unit\to\unit} = \cC[0] \times^{h}_{\cC} \cC^{\to} \times^{h}_{\cC} \cC[0]$, it follows that:
\begin{align*}
\mathrm{Lax}_\otimes(\unit,\unit) &
   \simeq \maps_\otimes^h\left(\cB, \cC[0] \underset{\cC }{\overset h\times} \cC^\downarrow  \underset{\cC }{\overset h\times} \cC[0] \right) \,\,\,= \maps_\otimes^h\left(\cB,  \cC^{\unit\downarrow\unit} \right).
\\
\mathrm{Oplax}_\otimes(\unit,\unit) &
 \simeq \maps_\otimes^h\left(\cB, \cC[0] \underset{\cC }{\overset h\times} \cC^\to  \underset{\cC }{\overset h\times} \cC[0] \right)  = \maps_\otimes^h\left(\cB,  \cC^{\unit\to\unit} \right).
   \end{align*}  
The objects of both $\cC^{\unit\downarrow\unit}$ and $\cC^{\unit\to\unit}$ are the 1-morphisms in $\cC$ with source and target $\unit \in \cC$.  There is also an $(\infty,n-1)$-category with this property: the ``based loop category'' $\Omega\cC$ of  endomorphisms of $\unit\in \cC$.  The based loop category $\Omega\cC$ is an example of the general phenomenon that complete $n$-fold Segal spaces are ``enriched'' in complete $(n-1)$-fold Segal spaces:

\begin{definition}\label{defn.loop space}
  Let $\cC = \cC_{\vec\bullet}$ be a complete $n$-fold Segal space.  For any objects $X,Y\in \cC_{(0)}$, the \define{hom category} $\cC(X,Y)$ is the complete $(n-1)$-fold Segal space
  $$ \cC(X,Y)_{\vec\bullet} = \{X\} \underset{\cC_{0,\vec \bullet}}{\overset h \times} \cC_{1,\vec\bullet} \underset{\cC_{0,\vec \bullet}}{\overset h \times} \{Y\} $$
  where the two maps $\cC_{1,\vec\bullet} \rightrightarrows \cC_{0,\vec\bullet}$ are the source and target maps in the first direction, and $\{X\}$ (resp.\ $\{Y\}$) denotes the constant complete $n$-fold Segal space $\{X\}_{\vec\bullet} = \{X\}$ (resp.\ $\{Y\}_{\vec\bullet} = \{Y\}$).
  
  If $\cC$ is additionally symmetric monoidal, the \define{based loop category} $\Omega\cC$ is the symmetric monoidal complete $(n-1)$-fold Segal space
  $$ (\Omega\cC)_{\vec\bullet} = \cC(\unit,\unit)_{\vec\bullet} = \cC[0]_{0,\vec\bullet} \underset{\cC_{0,\vec \bullet}}{\overset h \times} \cC_{1,\vec\bullet} \underset{\cC_{0,\vec \bullet}}{\overset h \times} \cC[0]_{0,\vec\bullet}. $$
Being an $(\infty,n-1)$-category, we can treat $\Omega\cC$ as an $(\infty,n)$-category with only identity $n$-morphisms.
\end{definition}

\begin{proposition}\label{thm.qft}
  Let $\cC$ be a symmetric monoidal complete $n$-fold Segal space.  Then $\cC^{\unit\downarrow\unit}$ and $\Omega\cC$ are canonically equivalent as symmetric monoidal complete $n$-fold Segal spaces.
\end{proposition}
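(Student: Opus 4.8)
The plan is to reduce the whole statement to the combinatorial input \cref{lemma.untwisted}, convert it through $\maps^h(-,\cC)$ into an identification of spaces of higher morphisms, and then insert the unit by a pasting of homotopy pullbacks; the symmetric monoidal refinement will come essentially for free from naturality. First I would dispose of the symmetric monoidal structure. Both $\cC^{\unit\downarrow\unit}$ and $\Omega\cC$ are built from $\cC$, $\cC[0]$, and the (op)lax square by taking homotopy fiber products, and by \cref{lemma.basic facts about localizations} these are computed levelwise, both in the $(\Delta^{\op})^{\times n}$ directions and in the $\Fin_*$ direction. Since $\cC[0][m]=\cC[0]$ by \cref{defn.trivial theory}, replacing $\cC$ by $\cC[m]$ throughout produces $\cC^{\unit\downarrow\unit}[m]$ and $(\Omega\cC)[m]$, and $\Omega\cC$ is already symmetric monoidal by \cref{defn.loop space}. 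Hence it suffices to produce an equivalence of underlying complete $n$-fold Segal spaces $\cC^{\unit\downarrow\unit}\simeq\Omega\cC$ that is natural in $\cC$; applying it to each $\cC[m]$ and observing that it respects the Segal morphisms $\gamma_i$ upgrades it to the symmetric monoidal statement.

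Next I would set up both sides as levelwise homotopy limits. Computing fiber products levelwise gives $\cC^{\unit\downarrow\unit}_{\vec\bullet}=\cC[0]_{\vec\bullet}\times^h_{\cC_{\vec\bullet}}\maps^h(\Theta^{\vec\bullet;(1)},\cC)\times^h_{\cC_{\vec\bullet}}\cC[0]_{\vec\bullet}$, the two maps to $\cC_{\vec\bullet}$ being $s_v,t_v$, while $(\Omega\cC)_{\vec\bullet}=\cC[0]_{0,\vec\bullet}\times^h_{\cC_{0,\vec\bullet}}\cC_{1,\vec\bullet}\times^h_{\cC_{0,\vec\bullet}}\cC[0]_{0,\vec\bullet}$. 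Conceptually both are "higher morphisms of $\cC$ whose two endpoint objects are $\unit$," presented differently. The bridge is \cref{lemma.untwisted}: collapsing the vertical source and target of $\Theta^{(i);(1)}$ yields $\Theta^{(i+1)}=\Theta^{1,(i)}$ compatibly with horizontal source and target. Applying $\maps^h(-,\cC)$, which sends the homotopy pushout of \cref{lemma.untwisted} to a homotopy pullback, identifies $\cC_{1,(i)}=\maps^h(\Theta^{(i+1)},\cC)$ with $\cC_0\times^h_{\cC_{(i)}}\maps^h(\Theta^{(i);(1)},\cC)\times^h_{\cC_{(i)}}\cC_0$, where the maps $\cC_0\to\cC_{(i)}$ are the degeneracies $X\mapsto \id^{i}_X$ and the maps $\maps^h(\Theta^{(i);(1)},\cC)\to\cC_{(i)}$ are $s_v,t_v$.

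The remaining local step is to insert the unit. Because $\unit:\cC[0]\to\cC$ is a homomorphism it commutes with degeneracies, so at level $(i)$ it factors (up to the contractibility equivalence $\cC[0]_{(i)}\simeq\{\unit\}$) as $\{\unit\}\to\cC_0\xrightarrow{\mathrm{deg}}\cC_{(i)}$. Pasting homotopy pullbacks along this factorization gives $\{\unit\}\times^h_{\cC_{(i)}}\maps^h(\Theta^{(i);(1)},\cC)=\{\unit\}\times^h_{\cC_0}\bigl(\cC_0\times^h_{\cC_{(i)}}\maps^h(\Theta^{(i);(1)},\cC)\bigr)$, and likewise on the other factor. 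Combining with the previous identification yields $\cC^{\unit\downarrow\unit}_{(i)}\simeq\{\unit\}\times^h_{\cC_0}\cC_{1,(i)}\times^h_{\cC_0}\{\unit\}=(\Omega\cC)_{(i)}$, the two maps $\cC_{1,(i)}\to\cC_0$ being the first-direction faces, i.e.\ the endpoint objects; the compatibility-with-horizontal-source-and-target clause of \cref{lemma.untwisted} is exactly what guarantees that these endpoint maps agree under both presentations (the collapsed vertical source/target matching the first-direction faces of $\Theta^{(i+1)}$).

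Finally I would assemble these into an equivalence of complete $n$-fold Segal spaces. The identifications above are natural for the simplicial source/target structure, so they define a homomorphism (or short zig-zag) $\cC^{\unit\downarrow\unit}\to\Omega\cC$, under which the vertical arrow direction of the lax square becomes the first ("loop") direction of $\cC$, the horizontal directions shift up by one, and the top direction of $\cC^{\unit\downarrow\unit}$ is forced to be essentially constant (matching the constancy built into $\Omega\cC$). Since a homomorphism of complete $n$-fold Segal spaces is an equivalence as soon as it is an equivalence on all spaces of $(j)$-morphisms — the principle used in the proof of \cref{lemma.weaker completeness condition} — and we have just verified this on every $\cC^{\unit\downarrow\unit}_{(i)}$, the map is an equivalence. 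The hard part will be precisely this last assembly: promoting the pointwise equivalences to a genuine natural homomorphism and correctly tracking the re-indexing (arrow direction $\mapsto$ loop direction, horizontal shift, enforced essential constancy in the top direction). All the essential content is already carried by the matching of source and target maps in \cref{lemma.untwisted}, so what remains is careful diagram chasing with homotopy pullbacks rather than any new idea.
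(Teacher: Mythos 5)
Your proposal is correct and follows essentially the same route as the paper: both hinge on \cref{lemma.untwisted}, apply $\maps^{h}(-,\cC)$ to turn the computad pushout into a homotopy pullback, insert the unit via the degeneracy factorization $\{\unit\}\to\cC_0\to\cC_{(i)}$, and deduce the symmetric monoidal statement from naturality in $\cC$ applied to $[m]\mapsto\cC[m]$. The only real difference is presentational: because \cref{lemma.untwisted} is an equivalence of \emph{uple-cosimplicial} computads, the paper obtains the identification $\{X\}\times^h_{\cD}\cD^{\downarrow}\times^h_{\cD}\{Y\}\simeq\cD(X,Y)$ as $n$-fold simplicial spaces in one stroke, so the ``hard assembly step'' you flag at the end is automatic rather than requiring a separate diagram chase.
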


\begin{proof}
 \cref{lemma.untwisted} implies that there is an equivalence of uple-cosimplicial computads
$$ \Theta^{(0);(0)} \underset{\Theta^{\vec\bullet;(0)}}{\overset h \cup} \Theta^{\vec\bullet;(1)} \underset{\Theta^{\vec\bullet;(0)}}{\overset h \cup} \Theta^{(0);(0)} \simeq \Theta^{1,\vec\bullet}.$$

  It follows that for any complete $n$-fold Segal space $\cD$ and any objects $X,Y\in \cD_{(0)}$, there is a functorial-in-$\cD$ equivalence of complete $n$-fold Segal spaces
  $$ \{X\} \underset{\cD}{\overset h \times}  
  \cD^\downarrow
  \underset{\cD}{\overset h \times} \{Y\}
  \simeq \cD(X,Y), $$
  where the two maps $\cD^\downarrow \rightrightarrows \cD$ are $s_v$ and $t_v$, and the inclusions $\{X\},\{Y\} \to \cD$ extend the maps $\{X\},\{Y\} \to \cD_{(0)}$ along degeneracies.  Indeed, the left-hand side is just the derived mapping space into $\cD$ from $\Theta^{(0);(0)} \underset{\Theta^{\vec\bullet;(0)}}{\overset h \cup} \Theta^{\vec\bullet;(1)} \underset{\Theta^{\vec\bullet;(0)}}{\overset h \cup} \Theta^{(0);(0)}$ such that the two copies of $\Theta^{(0);(0)}$ go to $X$ and $Y$ respectively; the right-hand side is the derived mapping space into $\cD$ from $\Theta^{1,\vec\bullet}$ such that the two $0$-cells in $\Theta^{1,\vec\bullet}$ (corresponding to the two inclusions $\Theta^{(0)} = \Theta^{0,\vec\bullet} \rightrightarrows \Theta^{1,\vec\bullet}$) go to $X$ and $Y$.
  
  Let $\cC$ now be a symmetric monoidal complete $n$-fold Segal space and let $\cD$ range over the complete $n$-fold Segal spaces $\cC[m]$, with $X = Y = \unit : \cC[0] \to \cC[m]$ for every $m$.  The \emph{symmetric monoidal} equivalence $\{\unit\} \times^h_\cC \cC^\downarrow \times^h_\cC \{\unit\} \simeq \cC(\unit,\unit)$ then follows from functoriality-in-$\cD$ of the above equivalence.
\end{proof}

\begin{remark}\label{remark.oplax wrong way}
In particular, $\cC^{\unit\downarrow\unit}$, which is {a priori} an $n$-fold Segal space, is actually (equivalent to) an $(n-1)$-fold Segal space. 
 It turns out that $\cC^{\unit \to \unit}$, which controls oplax natural transformations, as well as the fibered product $\cC[0] \times_{\cC}^{h} [\Theta^{(1)},\cC] \times_{\cC}^{h} \cC[0]$ controlling strong natural transformations, are also only $(n-1)$-fold Segal, even though they are {a priori} $n$-fold Segal.  However, already in the case of bicategories, one may check that neither $\cC^{\unit \to \unit}$ nor $\cC[0] \times_{\cC}^{h} [\Theta^{(1)},\cC] \times_{\cC}^{h} \cC[0]$ is equivalent to $\Omega\cC$ (although the former always is equivalent to what you get from $\Omega\cC$ by reversing all odd-dimensional morphisms).
\end{remark}

\section{Twisted quantum field theories}\label{section.QFT}

In this section we apply \cref{defn.sym mon lax transfor} to clarify the notion of ``twisted quantum field theory'' from \cref{defn.T-twisted theory introduction}.  We begin by letting $\cat{Bord} = \cat{Bord}_{d-n,\dots,d}^\cG$ be an arbitrary symmetric monoidal complete $n$-fold Segal space, which we think of as the $(\infty,n)$-category of ``geometric bordisms'' upon which quantum field theories are to be based.  We will later specialize to the case $\cat{Bord} = \cat{Bord}_n^{\fr}$ of framed cobordisms.  Recall that a \define{quantum field theory} based on $\cat{Bord}$ and valued in a symmetric monoidal $(\infty,n)$-category $\cC$ is a symmetric monoidal functor $\cat{Bord} \to \cC$.

\begin{definition}\label{defn.relative}
Let $\cC$ be a symmetric monoidal complete $(n+1)$-fold Segal space.  The space
$$\maps_\otimes^h(\cat{Bord},\cC^\downarrow), \quad\mbox{respectively}\quad \maps_\otimes^h(\cat{Bord},\cC^\to),$$
is the space of \define{lax}, respectively \define{oplax}, \define{twisted field theories} valued in $\cC$.  More generally, the space of \define{lax}, resp.\ \define{oplax}, \define{$k$-times-twisted field theories} in $\cC$ is $\maps_\otimes^h(\cat{Bord},\cC^{\smallbox{lax}}_{(k)})$, resp.\ $\maps_\otimes^h(\cat{Bord},\cC^{\smallbox{oplax}}_{(k)})$.  By \cref{cor.symm mon oplax-transformation}, lax and oplax $k$-times-twisted field theories in $\cC$ are the $k$-morphisms in $(\infty,n)$-category $\operatorname{Fun}_\otimes^{\mathrm{lax}}(\cat{Bord},\cC)$ and $\operatorname{Fun}_\otimes^{\mathrm{oplax}}(\cat{Bord},\cC)$ respectively.

$$\begin{tikzpicture}[scale=2]
  \path node (B) {} node[anchor=east] {$\cat{Bord}$} +(2,0) node (C) {} node[anchor=west] {$\cC$};
  \draw[arrow] (B) .. controls +(1,-.5) and +(-1,-.5) .. (C) node[pos=0.4] (t1) {} node[pos=0.6] (t2) {};
  \draw[arrow] (B) .. controls +(1,.5) and +(-1,.5) .. (C) node[pos=0.4] (s1) {} node[pos=0.6] (s2) {};
  \draw[twoarrowlonger] (s1) .. controls +(-.15, -.275) and +(-0.15, 0.275) .. (t1);
  \draw[twoarrowlonger] (s2) .. controls +(.15, -.275) and +(0.15, 0.275) .. (t2);
\draw (1,0) node {$\dots$};
\end{tikzpicture}$$

For an (op)lax twisted field theory, we call its compositions with the source and target maps $\cC^\downarrow\rightrightarrows \cC$, respectively $\cC^\to\rightrightarrows \cC$, its \define{twists $S$ and $T$} and say the (op)lax twisted field theory is \define{$(S,T)$-twisted}.

In the special case when $S=\unit$ is the trivial twist from \cref{defn.trivial theory}, we say the twisted field theory in question is an \define{(op)lax $T$-twisted  field theory}. 
As in the discussion after \cref{defn.trivial theory}, we see that the spaces of lax and oplax $T$-twisted field theories can be computed as:
\begin{align*}
\mathrm{Lax}_\otimes(\unit, T) &\simeq 
  \maps_\otimes^h\left(\cat{Bord}, \cC[0] \underset{\cC }{\overset h\times} \cC^\downarrow\right) \underset{\maps_\otimes^h(\cat{Bord},\cC)}{\overset h\times} \{T\},\\
\mathrm{Oplax}_\otimes(\unit, T) &\simeq 
   \maps_\otimes^h\left(\cat{Bord}, \cC[0] \underset{\cC }{\overset h\times} \cC^\to\right) \underset{\maps_\otimes^h(\cat{Bord},\cC)}{\overset h\times} \{T\} . 
\end{align*}
We will henceforth denote $\cC^{\unit\downarrow}:=\cC[0] \underset{\cC }{\overset h\times} \cC^\downarrow$ and $\cC^{\unit\to}:=\cC[0] \underset{\cC }{\overset h\times} \cC^\to$.
\end{definition}

\begin{example}
Twisted field theories as defined in \cite[Definition 5.2]{MR2742432} unravel to lax $T$-twisted field theories based on their category $(G,\mathbb M)\cat{-Bord}$ of super-Euclidean bordisms and valued in their category $\cat{TA}$ of topological algebras. 
\end{example}

 \begin{example} \label{eg.lax versus oplax T-twisted qfts}
 To illustrate \cref{defn.relative}, consider the values of a lax or oplax $T$-twisted  field theory on a \emph{closed} bordism $b\in \cat{Bord}$, i.e.\ a $k$-morphism whose source and target are ($(k-1)$-dimensional identity morphisms on) the monoidal unit $\emptyset \in \cat{Bord}$.  By symmetric monoidality, $T(\emptyset) = \unit \in \cC$, and so $T(b)$ is a $k$-morphism from $\unit$ to $\unit$ (or rather, $(k-1)$-dimensional identity morphisms thereon).
 
 First, consider the case when $Z : \unit \Rightarrow T$ is an oplax transformation, i.e.\ a functor $Z : \cat{Bord} \to \cC^{\to}$ with $s_{h}\circ Z = \unit$ and $t_{h}\circ Z = T$.  Since $Z$ is also symmetric monoidal, $Z(\emptyset) = \unit$.  Thus $Z(b)$ is an  $1$-by-$k$ morphism in $\cC$ with:
 $$ s_{h}(Z(b)) = \unit, \quad t_{h}(Z(b)) = T(b), \quad s_{v}(Z(b)) = \unit, \quad t_{v}(Z(b)) = \unit. $$
 Finally, the ``square'' with these boundaries is to be filled in with the $(k+1)$-morphism $Z(b)$.  According to \cref{prop.i-by-j-morphism}, such filling-in cell always goes ``from $s_{h}$ to $t_{h}$,''  with compositions as necessary.  In our case such compositions are trivial, and the end result is that $Z(b)$ is a $(k+1)$-morphism in $\cC$ with source $\unit$ and target $T(b)$.  Such a $(k+1)$-morphism can be thought of as an ``element'' of the $k$-morphism $T(b)$.
 
 For comparison, when $Z : \unit \Rightarrow T$ is a lax transformation, the  $k$-by-$1$ morphism $Z(b)$ enjoys
 $$ s_{v}(Z(b)) = \unit, \quad t_{v}(Z(b)) = T(b), \quad s_{h}(Z(b)) = \unit, \quad t_{h}(Z(b)) = \unit. $$
 According to \cref{prop.i-by-j-morphism}, whether the filling $(k+1)$-morphism 
 goes from $s_v$ to $t_v$ or from $t_v$ to $s_v$ depends on the parity of $k$: when $k$ is even, $Z(b) : \unit \to T(b)$ is an element of $T(b)$, whereas when $k$ is odd, $Z(b) : T(b) \to \unit$ is a ``coelement.''
 
 The cases when $k=0$, $1$, and $2$ can be read from \cref{eg.recollection table}:
$$
\begin{array}{rc|c|c}
 && \text{lax} & \text{oplax}\\[3pt] \hline \hline &&& \\[-6pt]
\text{object} & b \in \cB_{(0)} & Z(b) : \unit \to T(b) & Z(b) : \unit \to T(b)\\[6pt] \hline &&& \\[-6pt]
1\text{-morphism} & \unit\overset b \to \unit  \in \cB_{(1)} &
\begin{tikzpicture}[baseline=(r.base)]
  \path node[dot] (sl) {} +(2,0) node[dot] (sr) {} +(0,-2) node[dot] (tl) {} +(2,-2) node[dot] (tr) {};
  \draw[arrow] (sl) node[anchor = south east] {$\unit$} -- node[auto] {$\unit$} (sr) node[anchor = south west] {$\unit$};
  \draw[arrow] (sl)  -- node[auto,swap] {$\unit$} (tl);
  \draw[arrow] (tl) node[anchor = north east] {$\unit$} -- node[auto,swap] (Tb) {$T(b)$}  (tr) node[anchor = north west] {$\unit$};
  \draw[arrow] (sr) -- node[auto] (r) {$\unit$} (tr);
  \draw[twoarrow] (tl) -- node[auto] {$Z(b)$} (sr);
\path (Tb) +(0,-1) node {$Z(b):  T(b) \Rightarrow \unit$};
\end{tikzpicture}
&
\begin{tikzpicture}[baseline=(r.base)]
  \path node[dot] (sl) {} +(2,0) node[dot] (sr) {} +(0,-2) node[dot] (tl) {} +(2,-2) node[dot] (tr) {};
  \draw[arrow] (sl) node[anchor = south east] {$\unit$} -- node[auto] {$\unit$} (sr) node[anchor = south west] {$\unit$};
  \draw[arrow] (sl)  -- node[auto,swap] {$\unit$} (tl);
  \draw[arrow] (tl) node[anchor = north east] {$\unit$} -- node[auto,swap] (Tb) {$\unit$}  (tr) node[anchor = north west] (GB2) {$\unit$};
  \draw[arrow] (sr) -- node[auto] (r) {$T(b)$} (tr);
  \draw[twoarrow] (tl) -- node[auto] {$Z(b)$} (sr);
  \path (GB2.south) ++(0,-5pt) coordinate (bottom);
\path (Tb) +(0,-1) node {$Z(b):  \unit \Rightarrow T(b)$};
\end{tikzpicture}
\\[6pt] \hline &&& \\[-6pt]
2\text{-morphism} &
\begin{tikzpicture}[baseline=(base), scale=0.8]
  \path node[dot] (a) {} node[anchor=east] {$\unit$} +(2,0) node[dot] (c) {} node[anchor=west] {$\unit$} +(0,-3pt) coordinate (base);
  \draw[arrow] (a) .. controls +(1,-.5) and +(-1,-.5) .. coordinate (t)   (c);
  \draw[arrow] (a) .. controls +(1,.5) and +(-1,.5) .. coordinate (s)  (c);
\draw (s) node[anchor=south] {$\scriptstyle \unit$};
\draw (t) node[anchor=north] {$\scriptstyle \unit$};
  \draw[twoarrowlonger] (s) -- node[anchor=east] {$\scriptstyle b$}   (t);
\end{tikzpicture} \in \cB_{(2)} 
&
\begin{tikzpicture}[baseline=(middle), scale=0.8]
  \path node[dot] (a) {} node[anchor= south east] {} +(3,0) node[dot] (b) {} node[anchor= south west] {} +(0,-3) node[dot] (c) {} node[anchor= north east] {} +(3,-3) node[dot] (d) {} node[anchor= north west] {}+(0,-1.5) coordinate (middle);
  \path (b) +(-.35,.25) coordinate (alpha1);
  \path (c) +(.35,-.25) coordinate (alpha2);
  \draw[arrow] (a) -- coordinate (l1) node[anchor=east] {} coordinate[very near end] (r1) (c);
  \draw[arrow] (b) -- coordinate[very near start] (l2) coordinate (r2) node[anchor=west] (rightlable) {} (d);
  \draw[twoarrow,thin] (l1) -- node[auto,pos=.4,inner sep=1pt] {$\scriptstyle \unit$} (l2);
  \draw[arrow] (a) .. controls +(1.5,.75) and +(-1.5,.75) .. coordinate (ss)  (b);
  \draw[arrow,thin] (c) .. controls +(1.5,.75) and +(-1.5,.75) .. coordinate (ts)  (d);
  \draw[arrow] (c) .. controls +(1.5,-.75) and +(-1.5,-.75) .. coordinate (tt) (d);
  \draw[twoarrowlonger,thin] (ts) -- node[anchor=west] {$\scriptstyle T(b)$}  (tt);
  \draw[threearrowpart1] (alpha1) --  (alpha2);
  \draw[threearrowpart2] (alpha1) --  (alpha2);
  \draw[threearrowpart3] (alpha1) -- node[inner sep= 1pt, fill=white] {$\scriptstyle Z(b)$}  (alpha2);
  \draw[arrow,thick] (a) .. controls +(1.5,-.75) and +(-1.5,-.75) .. coordinate (st) (b);
  \draw[twoarrowlonger,thick] (ss) -- node[anchor=east] {$\scriptstyle \unit$}  (st);
  \draw[twoarrow,thick] (r1) -- node[auto,swap,pos=.6,inner sep=1pt] {$\scriptstyle \unit$} (r2);
\path (tt) +(0,-1) node {$Z(b):  \unit \Rrightarrow T(b)$};
\end{tikzpicture}
&
\begin{tikzpicture}[baseline=(middle), scale=0.8]
  \path node[dot] (a) {} +(3,0) node[dot] (b) {} +(0,-3) node[dot] (c) {} +(3,-3) node[dot] (d) {} +(0,-1.5) coordinate (middle);
  \draw[arrow] (a) .. controls +(-.75,-1.5) and +(-.75,1.5) .. coordinate[near end] (lt) coordinate[very near end] (lt2) (c);
  \draw[arrow,thin] (a) .. controls +(.75,-1.5) and +(.75,1.5) .. coordinate[near start] (ls) coordinate (ls2) (c);
  \draw[twoarrowlonger] (ls) -- node[auto,swap] {$\scriptstyle \unit$} (lt);
  \draw[arrow] (b) .. controls +(.75,-1.5) and +(.75,1.5) .. coordinate[near start] (rs) coordinate[very near start] (rs0) (d);
  \draw[twoarrow] (ls2) -- coordinate[near end] (s) node[auto] {$\scriptstyle \unit$} (rs0);
  \draw[arrow] (b) .. controls +(-.75,-1.5) and +(-.75,1.5) .. coordinate[near end] (rt) coordinate (rt0) (d);
  \draw[twoarrowlonger,thick] (rs) -- node[auto,fill=white,inner sep=1pt] {$\scriptstyle T(b)$} (rt);
  \draw[arrow] (a) node[anchor = south east] {} -- node[auto] {} (b) node[anchor = south west] {};
  \draw[arrow] (c) node[anchor = north east] {} -- node[below] {} (d) node[anchor = north west] {};
  \path (lt2) -- coordinate[near start] (t) (rt0);
  \draw[threearrowpart1] (t) --  (s); 
  \draw[threearrowpart2] (t) -- (s); 
  \draw[threearrowpart3] (t) -- node[inner sep= 1pt, fill=white] {$\scriptstyle Z(b)$} (s); 
  \draw[twoarrow,thick] (lt2) -- node[auto,swap] {$\scriptstyle  \unit$} (rt0);
\path (tt) +(0,-1) node {$Z(b):  \unit \Rrightarrow T(b)$};
\end{tikzpicture} 
\end{array}
$$
 \end{example}
 
 Based on the examples described in the literature,
it is tempting to take  the slogan ``$Z(b)$ is an element of $T(b)$'' as a defining property of ``$T$-twisted quantum field theory'' --- for example, \cite{FreedTeleman2012} take this slogan as part of their closely related notion of ``relative quantum field theory.'' Based on \cref{eg.lax versus oplax T-twisted qfts}, this suggests that a ``{$T$-twisted field theory}'' is best defined to be what we have called an \emph{oplax} $T$-twisted field theory.  However, another basic desired property suggested is that we should be able to recover the usual notion of functorial QFT when twisting by the trivial twist (c.f.\ \cite[Lemma 5.7]{MR2742432}).  For this criterion we find that \emph{lax} $T$-twisted field theories behave better:

\begin{theorem}\label{trivially twisted versus untwisted}
  For any  symmetric monoidal $(\infty,n)$-category $\cat{Bord}$ and any target symmetric monoidal $(\infty,n+1)$-category $\cC$, the space $\mathrm{Lax}_{\otimes}(\unit,\unit) \simeq \maps^{h}_{\otimes}(\cat{Bord},\cC^{\unit\downarrow\unit})$ of \define{lax trivially-twisted field theories} (based on $\cat{Bord}$) is canonically equivalent to the space $\maps^{h}_{\otimes}(\cat{Bord},\Omega\cC)$ of ``absolute'' field theories valued in the loop category $\Omega\cC$. 
\end{theorem}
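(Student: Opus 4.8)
The plan is to read this off from \cref{thm.qft} together with the computation of $\mathrm{Lax}_{\otimes}(\unit,\unit)$ already recorded in the discussion following \cref{defn.trivial theory}. The first of the two equivalences in the statement, namely $\mathrm{Lax}_{\otimes}(\unit,\unit) \simeq \maps^{h}_{\otimes}(\cat{Bord},\cC^{\unit\downarrow\unit})$, is exactly that computation specialized to $\cB = \cat{Bord}$: abbreviating $\cC^{\unit\downarrow\unit} = \cC[0] \times^{h}_{\cC} \cC^{\downarrow} \times^{h}_{\cC} \cC[0]$, one uses that homotopy fibered products of symmetric monoidal complete $(n+1)$-fold Segal spaces are computed levelwise and that $\maps^{h}_{\otimes}(\cat{Bord},-)$ preserves them. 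So the only content left is the second equivalence $\maps^{h}_{\otimes}(\cat{Bord},\cC^{\unit\downarrow\unit}) \simeq \maps^{h}_{\otimes}(\cat{Bord},\Omega\cC)$.

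For that, I would simply invoke \cref{thm.qft}, but applied one level up --- with $n$ there replaced by $n+1$, since in the present statement $\cC$ is a symmetric monoidal complete $(n+1)$-fold Segal space. That proposition then supplies a \emph{canonical} equivalence $\cC^{\unit\downarrow\unit} \simeq \Omega\cC$ of symmetric monoidal complete $(n+1)$-fold Segal spaces, where $\Omega\cC$ is the based loop category of \cref{defn.loop space}. As noted there, $\Omega\cC$ is a priori only an $(\infty,n)$-category, which we regard as an $(\infty,n+1)$-category with only identity top-dimensional morphisms; this is precisely the sense in which the ``absolute'' $\Omega\cC$-valued field theories are the points of $\maps^{h}_{\otimes}(\cat{Bord},\Omega\cC)$. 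Because $\maps^{h}_{\otimes}(\cat{Bord},-)$ sends equivalences of symmetric monoidal complete $(n+1)$-fold Segal spaces to homotopy equivalences of spaces (treating $\cat{Bord}$ as a constant $(n+1)$-fold Segal space), the canonical equivalence $\cC^{\unit\downarrow\unit} \simeq \Omega\cC$ induces the desired canonical equivalence of mapping spaces, completing the argument.

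The genuine content is therefore entirely upstream, in \cref{thm.qft} and its input \cref{lemma.untwisted}: the nontrivial step is the source-and-target-compatible computad equivalence $\Theta^{(0);(0)} \cup^{h}_{\Theta^{\vec\bullet;(0)}} \Theta^{\vec\bullet;(1)} \cup^{h}_{\Theta^{\vec\bullet;(0)}} \Theta^{(0);(0)} \simeq \Theta^{1,\vec\bullet}$, which is what forces $\cC^{\unit\downarrow\unit}$ to collapse from an a priori $(n+1)$-fold Segal space down to an $n$-fold one. Given that collapse, the present theorem is a formal corollary, so I do not expect any serious obstacle here. The one point demanding genuine care is the dimension bookkeeping: I must check that invoking \cref{thm.qft} at level $n+1$ really produces $\Omega\cC$ as an $(\infty,n)$-category matching the intended notion of an untwisted $\Omega\cC$-valued field theory --- and, crucially, that it is the \emph{lax} square $\cC^{\downarrow}$ rather than $\cC^{\to}$ that yields this, since by \cref{remark.oplax wrong way} the oplax variant $\cC^{\unit\to\unit}$ is genuinely different (it reverses odd-dimensional morphisms and is not equivalent to $\Omega\cC$). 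This asymmetry is exactly what \cref{remark.computad for oplax trivially twisted case} flags at the computad level, and it is the reason the analogous statement fails for oplax trivially-twisted theories.
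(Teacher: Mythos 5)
Your proposal is correct and follows exactly the paper's route: the paper's proof is the one-line "This follows immediately from \cref{thm.qft}," i.e.\ the canonical symmetric monoidal equivalence $\cC^{\unit\downarrow\unit} \simeq \Omega\cC$ (applied with the target an $(\infty,n+1)$-category) combined with the computation of $\mathrm{Lax}_\otimes(\unit,\unit)$ from the discussion after \cref{defn.trivial theory} and the invariance of $\maps^h_\otimes(\cat{Bord},-)$ under equivalences. Your additional remarks on the dimension bookkeeping and on why the oplax variant fails (\cref{remark.oplax wrong way}) simply make explicit what the paper leaves implicit.
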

\begin{proof}
  This follows immediately from \cref{thm.qft}.
\end{proof}

\begin{remark}
\Cref{remark.oplax wrong way} implies that the space $\mathrm{Oplax}_\otimes(\unit,\unit)$ of \define{oplax trivially-twisted field theories} is also equivalent to a space of absolute field theories.  However, they are not  valued in $\Omega \cC$, but rather in the ``odd opposite'' of $\Omega \cC$ in which the directions of the odd morphisms are reversed.
\end{remark}

\Subsection{Fully extended twisted field theories}

We now turn to the fully local topological case.  This will allow us, in the next subsection, to compare our notion of (op)lax twisted field theory to the boundary field theories proposed in \cite{FreedTeleman2012} as the definition of ``relative field theory''; another very nice account of the comparison of boundary conditions to (op)lax twisted field theories is available in~\cite{FV2014}.

Let $\cat{Bord} = \cat{Bord}_n^{\mathrm{fr}}$ denote the fully extended framed topological bordism category from \cite{Lur09,ClaudiaDamien1}.  
The main theorem of~\cite{Lur09}, the so-called \define{Cobordism Hypothesis}, 
asserts that for any $(\infty,N)$-category $\cC$, possibly with $N \geq n$, the $(\infty,N)$-category $\operatorname{Fun}_\otimes^{\mathrm{strong}}(\cat{Bord}_n^{\fr},\cC)$ (of symmetric monoidal functors $\cat{Bord}_n^{\fr} \to\cC$ and \emph{strong} transformations as morphisms) is equivalent to the maximal $\infty$-groupoid of $\cC$ whose objects are ``$n$-dualizable objects in $\cC$.''

Although this story is well-studied, we include a brief review to establish notation.  A 1-morphism $f : X \to Y$ in a bicategory $\cC$ is \define{left-adjunctible} if there is a 1-morphism $f^L : Y \to X$, called the \define{left adjoint} of $f$, and 2-morphisms $\ev_f : f^L \circ f \Rightarrow \id_X$ and $\coev_f : \id_Y \Rightarrow f \circ f^L$, called \define{evaluation} (or \define{counit}) and \define{coevaluation} (or \define{unit}), such that the two natural compositions ``$\ev \circ \coev$'' are identities on $f$ and $f^L$.  Similarly, a 1-morphism $f$ is \define{right-adjunctible} if there is a \define{right adjoint} $f^R$ and 2-morphisms $f \circ f^R \Rightarrow \id_Y$ and $\id_X \Rightarrow f^R \circ f$ whose two possible compositions are identities.  Left and right adjoints are unique up to unique isomorphism if they exist, and clearly $f \cong (f^L)^R \cong (f^R)^L$ if $f$ is both left- and right-adjunctible.  We will call $f$ just \define{adjunctible} if it is both left and right adjunctible and moreover $f^L$ is left adjunctible, as is $(f^L)^L$, as is $((f^L)^L)^L)$, etc., and $f^R$ is right adjunctible, as is $(f^R)^R$, etc.; this notion does not seem to have a standard name in the literature.

For $k\geq 1$, a $k$-morphism in an $(\infty,N)$-category $\cC$ is called \define{left-adjunctible}, \define{right-adjunctible}, or \define{adjunctible} if it is so is the appropriate homotopy bicategory (defined analogously to \cref{defn.homotopy bicategory}).  Thus left-adjunctibility (or right-adjunctibility, or adjunctibility) of a $k$-morphism asserts the existence of certain $(k+1)$-morphisms; we will say that those $(k+1)$-morphisms \define{witness} left- (or right-) adjunctibility of the given $k$-morphism.  By induction, we will say that a $k$-morphism in $\cC$ is \define{$n$-times left-adjunctible} for $n\geq 2$ if it is $(n-1)$-times left-adjunctible and the $(k+n-1)$-morphisms witnessing $(n-1)$-times left-adjunctibility are themselves left-adjunctible.

If $\cC$ is a symmetric monoidal category, an object $X \in \cC$ is \define{1-dualizable} if there is a \define{dual} object $X^*$ and 1-morphisms $\ev_X : X^* \otimes X \to \unit$ and $\coev_X : \unit \to X \otimes X^*$ such that the two compositions $(\id_X \otimes \ev_X) \circ (\coev_X \otimes \id_X) : X \to X$ and $(\ev_X \otimes \id_{X^*}) \circ (\id_{X^*} \otimes \coev_X) : X^* \to X^*$ are identities; 
these are called the \define{snake relations}.
If $\cC$ is a symmetric monoidal $(\infty,N)$-category, an object is 1-dualizable if it is so in the homotopy category of $\cC$.  Now proceeding by induction, an object in a symmetric monoidal $(\infty,N)$-category is \define{$n$-dualizable} for $n\geq 2$ if it is $(n-1)$-dualizable and all the evaluation and coevaluation $(n-1)$-morphisms witnessing $(n-1)$-dualizability are themselves adjunctible.
Lest $n$-dualizability look like too much information to check in practice, we will later recall a few standard lemmas simplifying the problem.

In the remainder of this section, we will use the Cobordism Hypothesis to analyze fully extended framed topological twisted field theories and their higher analogs by unravelling $n$-dualizability in $\cC^\downarrow$, $\cC^\to$, and their higher analogs. Our main theorem in this section, \cref{thm.topological higher twisted field theories}, gives a full classification thereof. From now on, let $n\geq 0$ and let $\cC$ be a symmetric monoidal $(\infty,N)$-category, possibly with $N \geq n$.

\begin{theorem}\label{thm.topological higher twisted field theories}
For $\ast = $ ``lax'', respectively ``oplax,'' an object in $\cC^\boxasterisk_{(j)}$ is $n$-dualizable if and only if its vertical, respectively horizontal, source and target under $\cC^\boxasterisk_{(j)} \rightrightarrows\cC^\boxasterisk_{(j-1)}$ are $n$-dualizable and moreover the corresponding $j$-morphism in $\cC$ is $n$-times left-adjunctible, respectively right-adjunctible.
\end{theorem}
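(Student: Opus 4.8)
The plan is to analyze $n$-dualizability directly inside the (op)lax squares, reducing every piece of duality and adjunction data to data in $\cC$ by means of the explicit computadic description of $\Theta^{(i);(j)}$ in \cref{prop.i-by-j-morphism}. I will treat the lax case; the oplax case is obtained by interchanging the roles of the two directions (replacing $s_v,t_v$ by $s_h,t_h$), which reverses the orientation of the relevant glued cells and so exchanges left with right adjoints throughout. The first, easy, half of each implication concerns the source and target. The maps $s_v,t_v : \cC^{\smallbox{lax}}_{(j)} \to \cC^{\smallbox{lax}}_{(j-1)}$ are homomorphisms of symmetric monoidal complete Segal spaces (the case $j=1$ is recorded after \cref{prop.c box is symmetric monoidal}, and the general case is identical), and symmetric monoidal functors preserve duals and, inductively, carry $n$-dualizable objects to $n$-dualizable objects. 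Hence if $f$ is $n$-dualizable in $\cC^{\smallbox{lax}}_{(j)}$ then $s_v f$ and $t_v f$ are $n$-dualizable. Combined with the componentwise description of the monoidal structure from \cref{prop.c box is symmetric monoidal}, this also pins down the vertical source and target of every candidate dual and of every evaluation and coevaluation square.

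The heart of the argument is a characterization, which I would prove by induction on $j$, of left-adjunctibility inside $\cC^{\smallbox{lax}}_{(j)}$: a $k$-morphism $F$ of $\cC^{\smallbox{lax}}_{(j)}$ — that is, a $k$-by-$j$ morphism of $\cC$ of shape $\Theta^{(k);(j)}$ — is left-adjunctible precisely when its vertical source $s_v F$ and target $t_v F$ are left-adjunctible in $\cC^{\smallbox{lax}}_{(j-1)}$ and the bulk $(k+j)$-cell $F(\theta_{k;j})$ is left-adjunctible in $\cC$. The left adjoint is assembled from the left adjoint of the bulk cell together with the left adjoints of $s_v F$ and $t_v F$, and the unit and counit are the $(k+1)$-by-$j$ morphisms obtained by whiskering the bulk unit and counit into the diagram, which are exactly cells of $\Theta^{(k+1);(j)}$. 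The gluing recipe of \cref{prop.i-by-j-morphism} — always sending $\theta_{i;j}$ from $s_h(\theta_{i-1;j})$ to $t_h(\theta_{i-1;j})$, giving the horizontal direction preference — is what forces these whiskered units and counits to satisfy the triangle identities and makes the emergent condition \emph{left}-adjunctibility rather than its reverse. The whiskered fillings are in general non-invertible, which is exactly why the lax (rather than strong) square is the correct ambient category.

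Granting this characterization, the theorem follows by the standard induction on $n$. For $n=1$: the monoidal and source/target considerations force any dual $f^\vee$ to have source and target the duals of $s_v f$ and $t_v f$, and force the evaluation and coevaluation squares to carry the evaluations and coevaluations of $s_v f$ and $t_v f$ along their horizontal edges; one checks that such squares exist and satisfy the snake identities exactly when $s_v f, t_v f$ are $1$-dualizable and $f$ has a left adjoint in $\cC$, the square fillings being the whiskerings of $\ev_f$ and $\coev_f$. For the step from $n$ to $n+1$, $(n+1)$-dualizability of $f$ means $n$-dualizability together with adjunctibility of the evaluation and coevaluation $1$-morphisms of $f$ in $\cC^{\smallbox{lax}}_{(j)}$. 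Applying the adjunctibility characterization to these two squares splits their adjunctibility into (a) adjunctibility in $\cC^{\smallbox{lax}}_{(j-1)}$ of the evaluations and coevaluations of $s_v f$ and $t_v f$, which is exactly the assertion that $s_v f$ and $t_v f$ are $(n+1)$-dualizable, and (b) left-adjunctibility of the bulk cells $\ev_f$ and $\coev_f$ and of their iterated witnesses, which is exactly the assertion that $f$ is $(n+1)$-times left-adjunctible.

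The main obstacle is establishing the adjunctibility characterization while correctly tracking the parity phenomenon of \cref{prop.i-by-j-morphism}: passing from a $k$-morphism to the $(k+1)$-morphisms witnessing an adjunction changes the parity of the horizontal index, and one must verify that, because the construction always prefers the horizontal direction, both the left- and the right-adjunctibility demanded of the evaluation and coevaluation squares by the definition of $n$-dualizability collapse onto one-sided (left) adjunctibility of the bulk cells — the right-adjoint square being furnished by the coevaluation package rather than by any independent right adjoint of the bulk. Making this collapse precise, and checking the coherence of the whiskered triangle and snake identities, is where essentially all the diagrammatic work lies; I expect it is cleanest to carry it out universally, by exhibiting the required cells inside the computads $\Theta^{(i);(j)}$ themselves and only then mapping into $\cC$.
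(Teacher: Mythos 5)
Your proposal is correct and follows essentially the same route as the paper's proof: your left-adjunctibility characterization in $\cC^{\smallbox{lax}}_{(j)}$ (vertical source and target plus the bulk cell, with adjoints related as mates via whiskering) is \cref{lax left adjunctibility}, your $n=1$ base case is \cref{prop.1dualizability in C-down}, and the induction on $n$ is exactly how the paper assembles \cref{thm.topological higher twisted field theories}. One correction: the collapse of the two-sided adjunctibility demanded by $n$-dualizability onto iterated \emph{left}-adjunctibility is not a consequence of the horizontal-preference combinatorics of $\Theta^{(i);(j)}$ as you suggest, but a general fact valid in any symmetric monoidal $(\infty,N)$-category --- the left adjoints of an adjunction's unit and counit witness the opposite-handed adjunction, which is \cref{lemma simplifying adjunctibility} and \cref{corollary.left is enough}, proved before the lax square enters at all --- whereas the computadic parity of \cref{prop.i-by-j-morphism} is only responsible for the bulk-cell condition coming out as left- rather than right-adjunctibility (cf.\ \cref{oplax handedness}).
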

\begin{proof}
  By \cref{corollary.left is enough}, to check $n$-dualizability it is enough to check $n$-times left-adjunctibility.  The claim then follows by induction, using \cref{prop.1dualizability in C-down} as the base case and \cref{lax left adjunctibility} as the induction step.
\end{proof}
Assuming the Cobordism Hypothesis, \cref{thm.topological higher twisted field theories} has an immediate corollary:
\begin{corollary} \label{cor.lax twisted framed tfts}
There is an equivalence of $(\infty,N)$-categories between:
  \begin{enumerate}
  \item  
  the $(\infty,N)$-category $\operatorname{Fun}_\otimes^{\mathrm{lax}}(\cat{Bord}_n^{\fr},\cC)$ whose objects are fully-extended framed topological field theories, 1-morphisms are lax twisted field theories between them, and in general whose $k$-morphisms are lax $k$-times-twisted field theories;
  \item the sub-$(\infty,N)$-category of $\cC$ whose objects are $n$-dualizable objects in $\cC$, and 1-morphisms are $n$-times left-adjunctible 1-morphisms between $n$-dualizable objects in $\cC$, and in general whose $k$-morphisms are the $n$-times left-adjunctible $k$-morphisms in $\cC$ between the allowed $(k-1)$-morphisms. \label{cor.lax twisted framed tfts.item2}
  \end{enumerate}
The same statement holds for ``lax'' replaced by ``oplax'' and ``left'' replaced by ``right''. \qedhere
\end{corollary}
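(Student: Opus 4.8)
The plan is to run, \emph{mutatis mutandis}, the same argument that proves the lax half of \cref{cor.lax twisted framed tfts}: essentially all of the content has already been isolated into \cref{thm.topological higher twisted field theories}, which is stated simultaneously for the lax and oplax cases, so the oplax corollary requires no genuinely new input. Concretely, I would first observe that the Cobordism Hypothesis of \cite{Lur09} is insensitive to the lax/oplax dichotomy: it applies to \emph{any} symmetric monoidal $(\infty,N)$-category as target, and by \cref{mainthm} together with \cref{prop.c box is symmetric monoidal} each $\cC^{\smallbox{oplax}}_{(k)}$ is such a symmetric monoidal $(\infty,N)$-category. Thus, for each $k$, the Cobordism Hypothesis identifies the space $\maps^h_\otimes(\cat{Bord}_n^{\fr},\cC^{\smallbox{oplax}}_{(k)})$ of symmetric monoidal strong functors with the space of $n$-dualizable objects in $\cC^{\smallbox{oplax}}_{(k)}$.

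Next I would combine this with the oplax clause of \cref{thm.topological higher twisted field theories}: an object of $\cC^{\smallbox{oplax}}_{(k)}$ is $n$-dualizable precisely when its horizontal source and target $s_h, t_h$ (which live in $\cC^{\smallbox{oplax}}_{(k-1)}$) are $n$-dualizable and the corresponding $k$-morphism of $\cC$ is $n$-times right-adjunctible. By \cref{defn.oplax-transfor}, the $k$-morphisms of $\operatorname{Fun}_\otimes^{\mathrm{oplax}}(\cat{Bord}_n^{\fr},\cC)$ are exactly such strong functors $\cat{Bord}_n^{\fr}\to\cC^{\smallbox{oplax}}_{(k)}$, with source and target computed by $s_h$ and $t_h$; so the two identifications match the $k$-morphisms of $\operatorname{Fun}_\otimes^{\mathrm{oplax}}(\cat{Bord}_n^{\fr},\cC)$ with the $n$-times right-adjunctible $k$-morphisms of $\cC$ lying over $n$-dualizable $(k-1)$-morphisms of the inductively-defined subcategory. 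An induction on $k$, with base case $k=0$ supplied by the ordinary Cobordism Hypothesis for $\cC$ itself (identifying objects with $n$-dualizable objects of $\cC$), then produces the desired sub-$(\infty,N)$-category of $\cC$.

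Finally, I would assemble these levelwise equivalences into an equivalence of $(\infty,N)$-categories. This uses that $\vec k \mapsto \cC^{\smallbox{oplax}}_{\vec k}$ is a complete $n$-fold Segal object among $(\infty,N)$-categories (the remark following \cref{defn.(op)lax arrow category}) and that $\maps^h_\otimes(\cat{Bord}_n^{\fr},-)$ preserves the relevant homotopy limits, exactly as in \cref{cor.symm mon oplax-transformation}; naturality of the Cobordism Hypothesis in the target then guarantees that the pointwise identifications are compatible with all face and degeneracy maps, hence glue to a functor rather than a mere level-by-level bijection. The only point demanding care — and it is already handled by \cref{thm.topological higher twisted field theories} — is bookkeeping the handedness: the direction-reversal built into $\cC^\to$ (governed by the parity of $i$ in \cref{prop.i-by-j-morphism}) is precisely what converts ``left-adjunctible'' into ``right-adjunctible,'' so no independent verification beyond quoting the oplax clause of that theorem is required. \qedhere
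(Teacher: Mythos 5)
Your proposal is correct and follows exactly the paper's route: the paper derives \cref{cor.lax twisted framed tfts} as an ``immediate corollary'' of the Cobordism Hypothesis applied to the symmetric monoidal targets $\cC^{\boxasterisk}_{(k)}$ (via \cref{mainthm} and \cref{prop.c box is symmetric monoidal}) combined with \cref{thm.topological higher twisted field theories}, which is precisely your argument. The only difference is one of exposition—the paper leaves the level-by-level assembly and naturality bookkeeping implicit, which you spell out—so no further comparison is needed.
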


\begin{remark}\label{comment about Cfd}
The $(\infty,N)$-category in (\ref{cor.lax twisted framed tfts.item2}) is not the same as the $(\infty,N)$-category $\cC^\fd$ appearing in the statement of the Cobordism Hypothesis in \cite{Lur09}.  The objects of $\cC^\fd$ are required to be $N$-dualizable; when $N>n$, this is strictly stronger than the $n$-dualizability required in (\ref{cor.lax twisted framed tfts.item2}).  On the other hand, $k$-morphisms in $\cC^\fd$ are required to be $(N-k)$-times adjunctible, whereas $k$-morphisms in the category from (\ref{cor.lax twisted framed tfts.item2}) are required to be $n$-times left-adjunctible; the latter is strictly stronger when $k > N-n$, as then $n$-times left adjunctibility forces the $k$-morphism to be invertible.  
\end{remark}

In the proofs of the results leading to \cref{thm.topological higher twisted field theories}, we will adopt a certain cavalier attitude towards the difference between equivalence and equality in higher categories, generally assuming that any category in question is strict.  This is allowed by the \cref{lemma about gauntification}, which implies that   when studying questions about dualizability and adjunctibility we can assume that the categories in question are gaunt.

For a bicategory $\cC$, let its \define{gauntification} $\operatorname{gau}(\cC)$ be the strict 2-category given by the left adjoint to the inclusion of gaunt 2-categories into bicategories
$$\cat{Gaunt}_2\hookrightarrow 2\cat{Cat} \hookrightarrow \cat{Bicat},$$
i.e.~it is formed by first forcing all invertible 2-morphisms in $\cC$ to be identities (thereby identifying isomorphic 1-morphisms) and then forcing all invertible 1-morphisms to be identities (thereby identifying isomorphism objects).
\begin{lemma}\label{lemma about gauntification}
  Let $\cC$ be a bicategory. A 1-morphism $f : X \to Y$ in $\cC$ is left- (resp.\ right-) adjunctible iff its class $[f] : [X] \to [Y]$ in $\operatorname{gau}(\cC)$ is left- (resp.\ right-) adjunctible.
\end{lemma}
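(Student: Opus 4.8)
The plan is to work with the canonical gauntification map $\pi \colon \cC \to \operatorname{gau}(\cC)$, i.e.\ the unit of the adjunction between $\cat{Gaunt}_2 \hookrightarrow \cat{Bicat}$ and its left adjoint. This $\pi$ is a homomorphism of bicategories, and by construction it is surjective on objects, on $1$-morphisms, and on $2$-morphisms. For the ``only if'' direction I would simply invoke functoriality: any homomorphism of bicategories preserves adjunctions, since the triangle identities are equations between the (compositor- and unitor-corrected) images of the unit and counit. Thus if $f^L$, $\ev_f$, $\coev_f$ witness left-adjunctibility of $f$, their images witness left-adjunctibility of $[f] = \pi(f)$; the right-adjunctible case is identical.

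The substance is the ``if'' direction, where I must lift an adjoint. Suppose $[f]$ has a left adjoint $g'$ in $\operatorname{gau}(\cC)$, with counit and unit. The objects identified by $\pi$ are exactly those that become equivalent in $\cC$ (forcing invertible $2$-morphisms to identities turns equivalences into isomorphisms, which are then collapsed), and equivalences are adjunctible; so by pre- and post-composing a chosen lift of $g'$ with equivalences — an operation that does not affect adjunctibility — I obtain an honest $g \colon Y \to X$ in $\cC$ with $\pi(g) = g'$ and the correct endpoints. Using surjectivity of $\pi$ on $2$-morphisms I then lift the counit and unit to $2$-morphisms $\ev, \coev$ in $\cC$.

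The crux is that the two triangle composites $T_f \colon f \Rightarrow f$ and $T_g \colon g \Rightarrow g$ assembled from $\ev, \coev$ are \emph{invertible} in $\cC$. They map to the triangle composites of the genuine adjunction downstairs, which are identities, so $\pi(T_f) = \id$ and $\pi(T_g) = \id$. I then unwind $\pi = \pi_2 \circ \pi_1$ along the two steps of the definition of gauntification. The second step $\pi_2$ (forcing invertible $1$-morphisms to identities) sends a $2$-\emph{endomorphism} of a fixed $1$-morphism to an identity only if it was already an identity, because the congruence class of an identity $2$-cell contains only identity $2$-cells (whiskering identities by invertible $1$-cells again yields identities); hence $\pi_1(T_f) = \id$ in the intermediate strict $2$-category. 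The first step $\pi_1$ (forcing invertible $2$-morphisms to identities) sends a $2$-cell to an identity only if that cell is invertible, since the congruence class of an identity consists of invertible $2$-cells; hence $T_f$, and likewise $T_g$, is invertible in $\cC$. Finally I apply the standard rectification: a unit and counit whose triangle composites are invertible can be corrected (adjusting the counit) to a genuine adjunction with the same underlying $1$-morphisms, so $f$ is left-adjunctible. The right-adjunctible statement follows by applying this to the bicategory with the directions of all $2$-morphisms reversed, whose gauntification is obtained from $\operatorname{gau}(\cC)$ by the same reversal.

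The main obstacle I anticipate is exactly the invertibility claim for $T_f$ and $T_g$: one must be careful about what it means for a $2$-cell to become an identity under the strict, two-stage gauntification quotient, keeping the object-level identifications (which can only kill non-identities by identifying isomorphic objects) cleanly separated from the $1$-morphism-level identifications (which can produce genuinely noninvertible collapses only away from endomorphisms of a fixed morphism). Once invertibility of the triangle composites is established, both the endpoint bookkeeping and the rectification step are routine.
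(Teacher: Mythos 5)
Your proposal follows the same route as the paper's own proof: the ``only if'' direction by functoriality, and the ``if'' direction by lifting the adjoint and correcting its endpoints by equivalences, lifting the unit and counit and correcting by whiskering, arguing that the two triangle composites are invertible because they map to identities in $\operatorname{gau}(\cC)$, and finally rectifying (the paper does this last step explicitly: correct $\coev$ so that one triangle composite is the identity, then show the other composite $\phi$ is an invertible idempotent, hence an identity). However, there is a genuine gap at exactly the point you call the crux, and the justification you supply is false. You claim that under the quotient forcing invertible $2$-cells to be identities, ``the congruence class of an identity consists of invertible $2$-cells.'' A congruence class is closed under vertical and horizontal composition with \emph{arbitrary} cells, not merely under the generating identifications and whiskerings, and this closure can place non-invertible cells in the class of the identity. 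Concretely, let $\cC$ be the strict $2$-category with two objects $X,Y$, no $1$-morphisms besides $\id_X$, $\id_Y$, and $g : X \to Y$, and with $\mathrm{End}(g)$ the monoid $N = \langle p,q,u \mid pq=1,\ u^2=1\rangle$, the free product of the bicyclic monoid $B=\langle p,q \mid pq=1\rangle$ with $\bZ/2$. The units of $N$ are exactly $\{1,u\}$ (in a free product of monoids every letter of a unit must be a unit of its factor, and $B$ has only trivial units), so $puq$ is not invertible; yet from $u\sim \id_g$ the congruence gives $puq \sim pq = \id_g$. Indeed one checks the universal property directly: $\operatorname{gau}(\cC)$ is the same $2$-category with $\mathrm{End}([g]) = B$, and the gauntification map sends the non-invertible cell $puq$ to an identity. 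So ``maps to an identity in $\operatorname{gau}(\cC)$'' does not imply ``invertible,'' and your argument that $T_f$ and $T_g$ are invertible collapses. (Your parallel claim about $\pi_2$, that only identities map to identities, is unjustified for the same reason: after objects and $1$-cells are identified, congruence classes are closed under the newly created compositions, not just whiskering.)

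To be fair, the paper's own proof rests on precisely the same inference, asserted without justification (``Both lift identities in $\operatorname{gau}(\cC)$, and therefore are isomorphisms''), so your reconstruction is faithful to the paper's strategy including its weakest step; the difference is that you attempt to prove that step, and the attempted proof is wrong. Note also that the counterexample above defeats the proof step but not the lemma itself (there $g$ has no adjoint either upstairs or downstairs, so the lemma holds vacuously); the statement may well be true, but any complete argument cannot treat ``the triangle composites become identities downstairs'' as sufficient — it would have to exploit special features of the chosen lifts, choose better lifts, or avoid the gauntification quotient altogether.
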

\begin{proof}
  Adjunctions are preserved by functors, including the quotient functor $\cC \to \operatorname{gau}(\cC)$; this establishes the ``only if'' direction.  For the ``if'' direction, assume $[f]$ is left-adjunctible (the right-adjunctible case being analogous), and let $[f]^L : [Y] \to [X]$ be its left adjoint.  Choose any lift of $[f]^L$ to $\cC$; it has domain some object equivalent to $Y$ and codomain some object equivalent to $X$, and by composing with these equivalences we can assume that our choice of lift is a 1-morphism $Y \to X$.  We will suggestively write $f^L : Y \to X$ for this choice of lift.  Now choose lifts of $\ev_{[f]}$ and $\coev_{[f]}$.  Whiskering by equivalences as necessary, we can assume that the lifts are 2-morphisms $f^L \circ f \Rightarrow \id_X$ and $\id_Y \Rightarrow f \circ f^L$ respectively.  We will suggestively write $\ev_f: f^L \circ f \Rightarrow \id_X$ for this choice of a lift of $\ev_{[f]}$; we will write $\widetilde{\coev_f}:\id_Y \Rightarrow f \circ f^L$ for the choice of lift of $\coev_{[f]}$.  Now consider the two compositions between $\ev_f$ and $\widetilde{\coev_f}$.  Both lift identities in $\operatorname{gau}(\cC)$, and therefore are isomorphisms.  By multiplying $\widetilde{\coev_f}$ by the appropriate isomorphism, we can guarantee that the composition $f \Rightarrow f$ is the identity.  Let $\phi : f^* \Rightarrow f^*$ denote the other composition.  On the one hand it is an automorphism; on the other hand, the fact that the other composition is the identity implies that $\phi^2 = \phi$.  It follows that $\phi$ is an identity.
\end{proof}

We now proof the base case for \cref{thm.topological higher twisted field theories}. The oplax case is also proven in \cite{HSS2015}.
\begin{proposition} \label{prop.1dualizability in C-down}
  Let $\cC$ be a symmetric monoidal $(\infty,n)$-category and $f : X\to Y$ a 1-morphism in $\cC$.  Then $f$ is 1-dualizable as an object in $\cC^\downarrow$ if and only if $X$ and $Y$ are 1-dualizable in $\cC$ and $f : X \to Y$ is (1-times) left-adjunctible. Similarly, $f$ is 1-dualizable as an object in $\cC^\to$ if and only if $X$ and $Y$ are 1-dualizable in $\cC$ and $f : X \to Y$ is (1-times) right-adjunctible.
\end{proposition}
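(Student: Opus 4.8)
The plan is to reduce the statement to a concrete computation in a gaunt symmetric monoidal $2$-category and then match duality data in $\cC^\downarrow$ with adjunction data in $\cC$ via the mate correspondence. Since $1$-dualizability of an object and $1$-adjunctibility of a $1$-morphism are both detected in the homotopy bicategory, \cref{lemma about gauntification} lets me assume throughout that $\cC$ (hence $\cC^\downarrow$ and $\cC^\to$, which are built from $\cC$) is gaunt, so that a $1$-morphism of $\cC^\downarrow$ is literally a square filled by a single (non-invertible) $2$-cell as in \cref{prop.i-by-j-morphism}. I will prove the $\cC^\downarrow$ case; the $\cC^\to$ case is the mirror image, with every filling $2$-cell reversed.

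First I would record the easy half of the ``only if'' direction. By \cref{prop.c box is symmetric monoidal} the projections $s_v,t_v\colon\cC^\downarrow\rightrightarrows\cC$ are symmetric monoidal functors; since symmetric monoidal functors preserve dualizable objects together with their duals and (co)evaluations, a dual $f^*$ of $f$ in $\cC^\downarrow$ forces $X=s_v(f)$ and $Y=t_v(f)$ to be $1$-dualizable in $\cC$, with $s_v(f^*)\cong X^*$ and $t_v(f^*)\cong Y^*$. Thus $f^*$ is a $1$-morphism $X^*\to Y^*$, and the evaluation $\ev_f\colon f^*\otimes f\to\unit_{\cC^\downarrow}=\id_\unit$ and coevaluation $\coev_f\colon\id_\unit\to f\otimes f^*$ are lax squares whose top and bottom edges are, by the same monoidality, $\ev_X,\ev_Y$ and $\coev_X,\coev_Y$. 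Reading off the filling $2$-cell of \cref{prop.i-by-j-morphism} (which points from ``bottom $\circ$ left'' to ``right $\circ$ top''), the bulk data of $\ev_f$ and $\coev_f$ are exactly two $2$-cells
$$ \alpha\colon \ev_Y\circ(f^*\otimes f)\Rightarrow\ev_X, \qquad \beta\colon \coev_Y\Rightarrow(f\otimes f^*)\circ\coev_X. $$

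Now the crux: I would fix the chosen dualities $X\dashv X^*$, $Y\dashv Y^*$ and set up the mate correspondence. Define the candidate left adjoint $f^L\colon Y\to X$ as the mate of $f^*$, namely $(\id_X\otimes\ev_Y)\circ(\id_X\otimes f^*\otimes\id_Y)\circ(\coev_X\otimes\id_Y)$, and conversely recover $f^*$ as the mate of $f^L$. Bending $\alpha$ and $\beta$ along the same dualities turns them into $2$-cells $f^L\circ f\Rightarrow\id_X$ and $\id_Y\Rightarrow f\circ f^L$, the candidate counit $\ev_f$ and unit $\coev_f$ of an adjunction $f^L\dashv f$; since taking mates of $1$- and $2$-cells is a bijection once the dualities are fixed, this assignment is invertible, so I can run it in both directions at once. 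It then remains to check that the two snake relations witnessing $1$-dualizability of $f$ in $\cC^\downarrow$ correspond, under this bijection, precisely to the two triangle identities for $f^L\dashv f$. Projecting a snake relation by $s_v$ and $t_v$ reproduces the snake relations for $X$ and $Y$ (already used above), while the remaining ``bulk'' content --- obtained by horizontally pasting the lax squares $\ev_f,\coev_f$ and their identity-whiskerings according to the composition law of \cref{prop.i-by-j-morphism} --- is an equality of $2$-cells that unbends exactly to a triangle identity. Establishing this last matching, with careful bookkeeping of the pasting and whiskering of lax squares and of $2$-cell directions, is the main obstacle; it is where the lax convention (filling $2$-cell as in \cref{prop.i-by-j-morphism}) feeds in and produces a \emph{left} adjoint, whereas the reversed oplax filling in $\cC^\to$ would instead yield a \emph{right} adjoint.

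Finally, I would assemble the two directions. For ``only if'', the argument above extracts $f^L$, $\ev_f$, $\coev_f$ from the duality data and the matched snake relations supply the triangle identities, so $f$ is left-adjunctible. For ``if'', given $X,Y$ $1$-dualizable and a left adjunction $f^L\dashv f$, I run the bijection backwards: define $f^*$ as the mate of $f^L$, build the lax squares $\ev_f,\coev_f$ whose bulk $2$-cells $\alpha,\beta$ are the mates of the counit and unit, and invoke the matched triangle identities to verify the snake relations, exhibiting $f^*$ as a dual of $f$ in $\cC^\downarrow$. The symmetric monoidal structure of $\cC^\downarrow$ from \cref{prop.c box is symmetric monoidal} guarantees all the tensor products and units used are the correct ones, and the entirely parallel computation with reversed filling $2$-cells gives the oplax statement for $\cC^\to$ with right adjunctibility.
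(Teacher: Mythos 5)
Your proposal is correct and follows essentially the same route as the paper's proof: necessity of dualizability of $X$ and $Y$ via the symmetric monoidal projections $s_v,t_v$, then the mate correspondence (the paper's $f^\ddagger$, the mate of $f^*$, as candidate left adjoint, and conversely the mate of $f^L$ as candidate dual), with the bulk $2$-cells of $\ev_f,\coev_f$ bending into the unit and counit and the snake relations in $\cC^\downarrow$ matching the triangle identities in $\cC$. The paper likewise works up to strictness (justified by the gauntification lemma) and likewise leaves the final diagram chases to the reader, so the level of detail is comparable.
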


\begin{proof}
  Since the functors $s_v,t_v : \cC^\downarrow \rightrightarrows \cC$ are symmetric monoidal and symmetric monoidal functors preserve dualizability, dualizability of both $X$ and $Y$ is certainly a necessary condition for dualizability of $f$.
  
  Suppose that $f$ has dual $f^* : X^* \to Y^*$ in $\cC^\downarrow$.  Then we have 2-morphisms in $\cC$
$$
\begin{tikzpicture}[baseline=(r.base)]
  \path node[dot] (sl) {} +(2,0) node[dot] (sr) {} +(0,-2) node[dot] (tl) {} +(2,-2) node[dot] (tr) {} +(1,1.5) node (e) {$\ev_f: \ev_Y \circ (f\otimes f^*) \Rightarrow \ev_X$};
  \draw[arrow] (sl) node[anchor = south east] {$X\otimes X^*$} -- node[auto] {$\ev_X$} (sr) node[anchor = south west] {$\unit$};
  \draw[arrow] (sl)  -- node[auto,swap] {$f\otimes f^*$} (tl);
  \draw[arrow] (tl) node[anchor = north east] {$Y\otimes Y^*$} -- node[auto,swap] {$\ev_Y$}  (tr) node[anchor = north west] {$\unit$};
  \draw[arrow] (sr) -- node[auto] (r) {$\id_\unit$} (tr);
  \draw[twoarrow] (tl) -- node[auto] {$\ev_f$} (sr);
\end{tikzpicture}
\hspace{2cm}
\begin{tikzpicture}[baseline=(r.base)]
  \path node[dot] (sl) {} +(2,0) node[dot] (sr) {} +(0,-2) node[dot] (tl) {} +(2,-2) node[dot] (tr) {} +(1,1.5) node {$\coev_f: \coev_Y \Rightarrow (f^*\otimes f)\circ \coev_X$};
  \draw[arrow] (sl) node[anchor = south east] {$\unit$} -- node[auto] {$\coev_X$} (sr) node[anchor = south west] {$X^*\otimes X$};
  \draw[arrow] (sl)  -- node[auto,swap] {$\id_\unit$} (tl);
  \draw[arrow] (tl) node[anchor = north east] {$\unit$} -- node[auto,swap] {$\coev_Y$}  (tr) node[anchor = north west] {$Y^*\otimes Y$};
  \draw[arrow] (sr) -- node[auto] (r) {$f^*\otimes f$} (tr);
  \draw[twoarrow] (tl) -- node[auto] {$\coev_f\!\!$} (sr);
\end{tikzpicture}
$$
such that the compositions
$$
\begin{tikzpicture}[baseline=(r.base), scale=1.5]
  \path node[dot] (sl) {} +(2,0) node[dot] (sr) {} +(-2,0) node[dot] (sll) {} +(0,-2) node[dot] (tl) {} +(2,-2) node[dot] (tr) {} +(-2,-2) node[dot] (tll) {};
  \draw[arrow] (sll) node[anchor=south east] {$X$} -- node[auto, swap] {$\scriptstyle \id_X\otimes \coev_X$} (sl) node[anchor = south] {$X\otimes X^*\otimes X$} ;
 \draw[arrow] (sll) -- node[auto, swap] (ll) {$f$} (tll) node [anchor= north east] {$Y$};
\draw[arrow] (tll) -- node[auto] {$\scriptstyle \id_Y\otimes \coev_Y$} (tl);
 \draw[arrow] (sl) -- node[auto, swap] {$\scriptstyle \ev_X \otimes \id_X$} (sr) node[anchor = south west] {$X$};
  \draw[arrow] (sl)  -- node[inner sep= 1pt, fill=white] {$f\otimes f^* \otimes f$} (tl);
  \draw[arrow] (tl) node[anchor = north] {$Y\otimes Y^* \otimes Y$} -- node[auto] {$\scriptstyle \ev_Y \otimes \id_Y$}  (tr) node[anchor = north west] {$Y$};
  \draw[arrow] (sr) -- node[auto] (r) {$f$} (tr);
  \draw[twoarrow] (tl) -- node[auto, pos=0.6] {$\scriptstyle \ev_f \otimes \id_f$} (sr);
 \draw[twoarrow] (tll) -- node[auto, pos=0.6] {$\scriptstyle \id_f\otimes \coev_f$} (sl);
\end{tikzpicture}
\mbox{and}
\begin{tikzpicture}[baseline=(r.base), scale=1.5]
  \path node[dot] (sl) {} +(2,0) node[dot] (sr) {} +(-2,0) node[dot] (sll) {} +(0,-2) node[dot] (tl) {} +(2,-2) node[dot] (tr) {} +(-2,-2) node[dot] (tll) {};
  \draw[arrow] (sll) node[anchor=south east] {$X^*$} -- node[auto, swap] {$\scriptstyle \coev_X \otimes \id_{X^*}$} (sl) node[anchor = south] {$X^*\otimes X\otimes X^*$} ;
 \draw[arrow] (sll) -- node[auto, swap] (ll) {$f^*$} (tll) node [anchor= north east] {$Y^*$};
\draw[arrow] (tll) -- node[auto] {$\scriptstyle \coev_Y \otimes \id_{Y^*}$} (tl);
 \draw[arrow] (sl) -- node[auto, swap] {$\scriptstyle \id_{X^*}\otimes \ev_X$} (sr) node[anchor = south west] {$X^*$};
  \draw[arrow] (sl)  -- node[inner sep= 1pt, fill=white] {$f^*\otimes f\otimes f^*$} (tl);
  \draw[arrow] (tl) node[anchor = north] {$Y^* \otimes Y\otimes Y^*$} -- node[auto] {$\scriptstyle \id_{Y^*} \otimes \ev_Y$}  (tr) node[anchor = north west] {$Y^*$};
  \draw[arrow] (sr) -- node[auto] (r) {$f^*$} (tr);
  \draw[twoarrow] (tl) -- node[auto, pos=0.6] {$\scriptstyle \id_{f^*} \otimes \ev_f$} (sr);
 \draw[twoarrow] (tll) -- node[auto, pos=0.6] {$\scriptstyle \coev_f \otimes \id_{f^*}$} (sl);
\end{tikzpicture}
$$
are identities on $f$ respectively $f^*$.

We claim that the \define{mate} $f^\ddagger$ of $f^*$, defined by
$$f^\ddagger: Y \xrightarrow{\id_Y\otimes \coev_X} Y\otimes X^* \otimes X \xrightarrow{\id_Y\otimes f^* \otimes \id_X} Y\otimes Y^*\otimes X \xrightarrow{\ev_Y\otimes \id_X} X,$$
is a left adjoint to $f$.
 To see this, we compute
\begin{align*}
f\circ f^\ddagger &= f\circ (\ev_Y\otimes \id_X)\circ (\id_Y\otimes f^*\otimes \id_X) \circ (\id_Y \otimes \coev_X)\\
&= (\ev_Y\otimes \id_X)\circ (\id_Y\otimes f^*\otimes f) \circ (\id_Y \otimes \coev_X)\\
&= (\ev_Y\otimes \id_X)\circ (\id_Y\otimes ((f^*\otimes f)\circ  \coev_X))
\end{align*}
Note that $\coev_f: \coev_Y \Rightarrow (f^*\otimes f)\circ \coev_X$, so we define the unit of the adjunction to be
$$u=(\ev_Y\otimes \id_Y)\circ(\id_Y\otimes \coev_f) : \id_Y=(\ev_Y\otimes \id_Y)\circ (\id_Y\otimes \coev_Y) \Longrightarrow f\circ f^\ddagger.$$
Similarly,
\begin{align*}
f^\ddagger \circ f &= (\ev_Y\otimes \id_X)\circ (\id_Y\otimes f^*\otimes \id_X) \circ (\id_Y \otimes \coev_X) \circ f\\
&= (\ev_Y\otimes \id_X)\circ (f \otimes f^*\otimes \id_X) \circ (\id_X \otimes \coev_X)\\
&= ((\ev_Y \circ (f\otimes f^*) )\otimes \id_X)\circ  (\id_X \otimes \coev_X))
\end{align*}
and using $\ev_f$ we can define the counit of the adjunction
$$v=(\ev_f\otimes \id_X)\circ (\id_X\otimes \coev_X):  f^\ddagger \circ f  \Longrightarrow (\ev_X\otimes \id_X)\circ (\id_X\otimes \coev_X) =id_X.$$
To see that they satisfy the conditions to be an adjunction, note that the bold red composition
$$
\begin{tikzpicture}[baseline=(r.base), scale=2]
  \path node[dot] (sl) {} +(2,0) node[dot] (sr) {} +(-2,0) node[dot] (sll) {} +(0,-2) node[dot] (tl) {} +(2,-2) node[dot] (tr) {} +(-2,-2) node[dot] (tll) {};
  \draw[arrow] (sll) node[anchor=south east] {$X$} -- node[auto, swap] {$\id_X\otimes \coev_X$} (sl) node[anchor = south] {$X\otimes X^*\otimes X$} ;
 \draw[arrow] (sll) -- node[auto, swap] (ll) {$f$} (tll) node [anchor= north east] {$Y$};
\draw[arrow] (tll) -- node[auto] {$\id_Y\otimes \coev_Y$} (tl);
 \draw[arrow] (sl) -- node[auto, swap] {$\ev_X \otimes \id_X$} (sr) node[anchor = south west] {$X$};
  \draw[arrow] (sl)  --  (tl);
  \draw[arrow] (tl) node[anchor = north] {$Y\otimes Y^* \otimes Y$} -- node[auto] {$\ev_Y \otimes \id_Y$}  (tr) node[anchor = north west] {$Y$};
  \draw[arrow] (sr) -- node[auto] (r) {$f$} (tr);
  \draw[twoarrow] (tl) -- node[auto, pos=0.6] {$\ev_f \otimes \id_f$} (sr);
 \draw[twoarrow] (tll) -- node[auto, pos=0.6] {$\id_f\otimes \coev_f$} (sl);
\draw [arrow, color=red, line width=3pt, join=round] (sll) -- (sl) -- node[inner sep= 1pt, fill=white] {$\color{black} f\otimes f^* \otimes f$} (tl) --(tr);
\end{tikzpicture}
$$
is $f\circ f^\ddagger \circ f$, and the composition of the 2-cells is the one we need. Similarly for $f^\ddagger \circ f \circ f^\ddagger$.

Conversely, suppose that $X$ and $Y$ are dualizable and that $f : X \to Y$ has a left adjoint $f^L : Y \to X$.  In keeping with the above notation, we will write $u:\id_Y \Rightarrow f\circ f^L$ and $v: f^L\circ f \Rightarrow \id_X$ for the unit and counit of the adjunction, i.e.\ the compositions
\begin{gather*}
f= \id_Y \circ f \overset{u\times \id_f}{\Rightarrow} f\circ f^L \circ f \overset{\id_f\times v}{\Rightarrow} f \circ id_X = f,\\
f^L  = f^L\circ \id_Y \overset{\id_{f^L} \times u}{\Rightarrow} f^L \circ f\circ f^L \overset{v\times \id_{f^L}}{\Rightarrow} \id_X \circ f^L = f^L
\end{gather*}
are identities.  Define $f^*$ to be the mate of $f^L$:
$$f^*: X^* \xrightarrow{\coev_Y\otimes \id_{X^*}} Y^*\otimes Y\otimes X^* \xrightarrow{\id_{Y^*}\otimes g \otimes \id_{X^*}} Y^*\otimes X\otimes X^* \xrightarrow{\id_{Y^*} \otimes \ev_X} Y^*.$$
We claim that $f^*$ is a dual to $f$ in $\cC^\downarrow$.  To see this, note first that
$$(f^*\otimes f)\circ \coev_X = (\id_{Y^*} \otimes (f\circ f^L))\circ \coev_Y,$$
as follows from the commutativity if the following diagram:
$$
\begin{tikzpicture}
\matrix(m)[matrix of math nodes,
row sep=2.6em, column sep=2.4em,
text height=1.5ex, text depth=0.25ex]
{\unit & X^*\otimes X & Y^*\otimes Y \otimes X^*\otimes X &Y^* \otimes X \otimes X^*\otimes X & Y^*\otimes X & Y^*\otimes Y\\
&&Y^*\otimes Y&Y^*\otimes X\\};
\path[->,font=\scriptsize]
(m-1-1) edge node[auto] {$\coev_X$} (m-1-2)
edge node[auto, swap] {$\coev_Y$} (m-2-3)
(m-1-2) edge node[auto] {$\coev_Y$} (m-1-3)
 edge[bend left=20] node[auto] {$f^*$} (m-1-5)
(m-1-3) edge node[auto] {$f^L$} (m-1-4)
(m-1-4) edge node[auto] {$\ev_X$} (m-1-5)
(m-1-5) edge node[auto] {$f$} (m-1-6)
(m-2-3) edge node[auto] {$\coev_X$} (m-1-3)
 edge node[auto] {$f^L$} (m-2-4)
(m-2-4) edge node[auto] {$\coev_X$} (m-1-4)
 edge node[auto, swap] {$\id$} (m-1-5);
\end{tikzpicture}
$$
This allows to define
$$\coev_f= (\id_{Y^*} \otimes u)\circ \coev_Y: \coev_Y \Longrightarrow (\id_{Y^*} \otimes (f\circ f^L))\circ \coev_Y = (f^*\otimes f)\circ \coev_X.$$

Similarly,
$$\ev_Y\circ(f\otimes f^*) = \ev_X \circ ((f^L\circ f)\otimes \id_{X^*}),$$
because of the commutativity of
$$
\begin{tikzpicture}
\matrix(m)[matrix of math nodes,
row sep=2.6em, column sep=2.4em,
text height=1.5ex, text depth=0.25ex]
{X\otimes X^* & Y\otimes X^* & Y\otimes Y^* \otimes Y\otimes X^* &Y\otimes Y^* \otimes X\otimes X^* & Y\otimes Y^* & \unit\\
&&Y\otimes X^* &X\otimes X^*\\};
\path[->,font=\scriptsize]
(m-1-1) edge node[auto] {$f$} (m-1-2)
(m-1-2) edge node[auto, swap] {$\id$} (m-2-3)
(m-1-2) edge node[auto] {$\coev_Y$} (m-1-3)
 edge[bend left=20] node[auto] {$f^*$} (m-1-5)
(m-1-3) edge node[auto] {$f^L$} (m-1-4)
edge node[auto] {$\ev_Y$} (m-2-3)
(m-1-4) edge node[auto] {$\ev_X$} (m-1-5)
(m-1-5) edge node[auto] {$\ev_Y$} (m-1-6)
(m-2-3) edge node[auto] {$f^L$} (m-2-4)
(m-1-4) edge node[auto] {$\ev_Y$} (m-2-4)
 (m-2-4) edge node[auto, swap] {$\ev_X$} (m-1-5);
\end{tikzpicture}
$$
This allows to define
$$\ev_f = \ev_X\circ(v\otimes \id_{X^*}): \ev_Y\circ(f\otimes f^*) = \ev_X \circ ((f^L\circ f)\otimes \id_{X^*}) \Longrightarrow \ev_X.$$
A diagram chase, which we leave to the reader, shows that $\ev_f$ and $\coev_f$ satisfy the snake relations using the fact that $u$ and $v$ form the unit and counit of an adjunction.
\end{proof}

In order to establish the induction step in \cref{thm.topological higher twisted field theories}, we first simplify the conditions we need to check:

\begin{lemma}\label{lemma simplifying adjunctibility}
  Let $\cC$ be an $(\infty,n)$-category and $f : X \to Y$ a $k$-morphism in $\cC$.  The following are equivalent:
  \begin{enumerate}
    \item \label{item1.lemma simplifying adjunctibility} $f$ is adjunctible and all the evaluation and coevaluation $(k+1)$-morphisms witnessing adjunctibility are themselves adjunctible.
    \item \label{item2.lemma simplifying adjunctibility} $f$ is left-adjunctible and both $(k+1)$-morphisms $\ev_f : f^L \circ f \to \id_X$ and $\coev_f : \id_Y \to f \circ f^L$ are both left- and right-adjunctible.
    \item \label{item3.lemma simplifying adjunctibility} $f$ is both left- and right-adjunctible and all four $(k+1)$-morphisms $\ev_f : f^L \circ f \to \id_X$, $\coev_f : \id_X \to f \circ f^L$, $\ev_{f^R} : f \circ f^R \to \id_Y$, and $\coev_{f^R} : \id_X \to f^R \circ f$ are left-adjunctible.
  \end{enumerate}
The same statement hold with the words ``left'' and ``right'' reversed.
\end{lemma}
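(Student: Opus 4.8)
The plan is to first strip away the $\infty$-categorical packaging and reduce to manipulating honest adjunctions. By \cref{lemma about gauntification} and passage to the appropriate homotopy bicategory (\cref{defn.homotopy bicategory}), in which $f$ becomes a $1$-morphism and the witnessing $(k+1)$-cells become $2$-morphisms, I may assume everything lives in a gaunt setting, so that the triangle identities are strict equalities and the spaces of adjoint data are either empty or contractible. In particular adjunctibility on either side is a \emph{property} of a cell, and adjoints, when they exist, are unique. Two cheap symmetries organize the argument: applying the un-reversed statement to $\cC$ with all $k$-morphisms reversed yields the ``left''/``right'' reversed version for free, and the reciprocity $f^L \dashv f$ lets me reindex along the adjoint tower by replacing $f$ with $f^L$ or $f^R$. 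With these reductions, $(1)\Rightarrow(2)$ and $(1)\Rightarrow(3)$ are immediate: by definition ``adjunctible'' entails both left- and right-adjunctibility, and ``all witnessing cells adjunctible'' entails in particular that each of $\ev_f,\coev_f,\ev_{f^R},\coev_{f^R}$ is left-adjunctible.

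The two engines driving the nontrivial directions are, first, \emph{reciprocity}: a left adjoint $f^L$ of $f$ is automatically right-adjunctible, since the very data of $f^L\dashv f$ exhibit $f$ as a right adjoint of $f^L$. The second and crucial engine is a \emph{bootstrap}: for an adjunction $f^L\dashv f$, the $1$-cell $f$ admits a right adjoint $f^R$ if and only if the two witnessing cells $\ev_f,\coev_f$ admit adjoints of the appropriate handedness, and in that case the unit $\coev_{f^R}$ and counit $\ev_{f^R}$ of $f\dashv f^R$ may be produced as \emph{mates} of the adjoints of $\ev_f$ and $\coev_f$. Establishing this bootstrap, together with the functoriality of the mate construction (so that it carries adjunctible cells to adjunctible cells), is the technical heart of the lemma; I expect to prove it by the usual iterated zig-zag construction, assembling the unit and counit of the new adjunction out of the adjoints of $\ev_f,\coev_f$ and the original adjunction data, and then checking the snake relations.

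With both engines in hand I would first settle $(2)\Leftrightarrow(3)$. Given $(2)$, reciprocity and the bootstrap produce $f^R$ from the right-handed adjoints of $\ev_f,\coev_f$, while $\ev_{f^R},\coev_{f^R}$, being mates of the adjoints of $\ev_f,\coev_f$, inherit left-adjunctibility; since $\ev_f,\coev_f$ are in particular left-adjunctible, all four cells of $(3)$ are left-adjunctible. Conversely, the same mate correspondence turns the left-adjunctibility of $\ev_{f^R},\coev_{f^R}$ in $(3)$ into the missing right-adjunctibility of $\ev_f,\coev_f$ demanded by $(2)$. The remaining and hardest implication $(3)\Rightarrow(1)$ is then a double induction. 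Outward along the adjoint tower, the bootstrap (applied to $f$, to $f^L$ in its reversed form, and to their iterates) produces all the adjoints $\dots,f^{LL},f^{L},f,f^{R},f^{RR},\dots$; upward in dimension, the ``moreover'' clause exhibits each newly created witnessing $(k+1)$-cell as a mate of cells already known to be (left-)adjunctible, so left-adjunctibility propagates to every witnessing cell at every level. The self-similarity of the tower is exactly what makes the \emph{left}-adjunctibility of the four base cells of $(3)$ enough to feed the induction in both directions.

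The step I expect to be the genuine obstacle is the bootstrap engine and the careful bookkeeping of handedness around it: one must verify not only that the adjoints of $\ev_f,\coev_f$ assemble into $f^R$ together with its unit and counit, but that the handedness works out so that \emph{left}-adjunctibility of the four cells in $(3)$ — rather than two-sided adjunctibility — suffices to run the outward-and-upward induction. Coherence issues should be mild, since the gaunt/contractibility reduction means the induction need only propagate the \emph{existence} of adjoint data rather than a coherent global choice; the real care is in the handedness combinatorics of the mate construction and in confirming that the mate of an adjunctible cell is again adjunctible.
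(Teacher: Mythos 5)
Your skeleton is the paper's: the easy implications, the exchange of (2) and (3) via the fact that one-sided adjoints of $\ev_f,\coev_f$ witness the opposite-handed adjunction $f\dashv f^L$ (the paper's citation of Lurie and of Douglas--Schommer-Pries--Snyder, your ``bootstrap''), and a tower-climbing induction for the hard implication; your treatment of $(2)\Leftrightarrow(3)$ via reciprocity is essentially the paper's $(3)\Rightarrow(2)$ run in both directions, and it is fine. The gap is in the propagation step of the tower-climbing, and it is twofold. First, your bootstrap is asserted as an ``if and only if,'' and your outward-and-upward induction leans on the ``only if'' direction; that direction is false. Regard the $2$-category $\cat{Cat}$ as an $(\infty,3)$-category whose $3$-morphisms are all invertible: then a $2$-morphism is (left- or right-) adjunctible exactly when it is invertible, and the diagonal $\Delta\colon\cat{Set}\to\cat{Set}\times\cat{Set}$ sits in adjunctions $\sqcup\dashv\Delta\dashv\times$ while the unit and counit of $\sqcup\dashv\Delta$ (the inclusion and fold maps) are non-invertible; so the existence of $f^R$ does not force any adjoints on $\ev_f,\coev_f$. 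Second, the propagation principle you state --- ``each newly created witnessing cell is a mate of cells already known to be left-adjunctible, so left-adjunctibility propagates'' --- fails: the new witnesses of $f\dashv f^L$ are the \emph{adjoints} $\ev_f^L,\coev_f^L$ of the old ones, not mates of them, and a left adjoint of a left-adjunctible cell need not be left-adjunctible (again $\Delta$ versus $\sqcup$: $\Delta$ has a left adjoint, $\sqcup$ has none); producing $(\ev_f^L)^L$ is precisely the problem, not something that spreads for free. Note also that (1) demands \emph{two-sided} adjunctibility of the witnesses, so a mechanism that only spreads left-adjunctibility could not suffice even if it worked.

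The missing idea, which is the heart of the paper's $(2)\Rightarrow(1)$, is a uniqueness-of-adjunction-data argument. Given (2), \emph{both} the left adjoints $\ev_f^L,\coev_f^L$ and the right adjoints $\ev_f^R,\coev_f^R$ serve as unit and counit of the \emph{same} adjunction $f\dashv f^L$; hence they differ by conjugation by an invertible (so adjunctible) automorphism $S$ of $f^L$. Since $\ev_f^R$ has left adjoint $\ev_f$ by construction, its conjugate $\ev_f^L$ is left-adjunctible with $(\ev_f^L)^L$ a conjugate of $\ev_f$; symmetrically $\ev_f^R$ is right-adjunctible. Iterating, the left and right adjoint towers of $\ev_f$ (and of $\coev_f$, and of the witnesses of $f\dashv f^L$) alternate between conjugates of $\ev_f$ and of its two one-sided adjoints, which is exactly the full adjunctibility demanded by (1). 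This is also why the paper routes $(3)\Rightarrow(2)\Rightarrow(1)$: the two-sidedness supplied by (2) is an essential input to the induction, not bookkeeping to be fixed later. Your gaunt reduction via \cref{lemma about gauntification} is legitimate and even trivializes $S$ --- in the gaunt setting the two one-sided adjoints of each witness become literally equal --- but that identification is the step that must be stated and proved; it cannot be replaced by the mate/handedness argument you describe.
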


\begin{proof}
  That (\ref{item1.lemma simplifying adjunctibility}) implies both (\ref{item2.lemma simplifying adjunctibility}) and (\ref{item3.lemma simplifying adjunctibility}) is clear.  
  
  Suppose that $f$ satisfies (\ref{item2.lemma simplifying adjunctibility}).  Then the left adjoints $\ev_f^L : \id_X \to f^L \circ f$ and $\coev_f^L : f \circ f^L \to \id_Y$ together witness $f^L$ as the \emph{right} adjoint to $f$, see \cite[Remark 3.4.22]{Lur09} and \cite[Lemma 2.4.4]{DSPS1}. Similarly, the right adjoints $\ev_f^R : \id_X \to f^L \circ f$ and $\coev_f^R : f \circ f^L \to \id_Y$ also witness $f^L$ as a right adjoint to $f$. 

It follows that there is an automorphism $S$ of $f^L$ formed by composing $\ev_f^R\times \id_{f^L}$ with $\id_{f^L}\times\coev_f^L$; its inverse is formed by composing $\ev_f^L\times \id_{f^L}$ with $\id_{f^L}\times\coev_f^R$. It intertwines $\ev_f^L\times \id_{f^L\circ f}$ with $\ev_f^R\times\id_{f^L\circ f}$ and $\id_{f^L}\times \coev_f^L\times \id_{f^L}$ and $\id_{f^L}\times \coev_f^R\times \id_f$:
$$
\begin{tikzpicture}[anchor=base]
  \path
   node (a) {$S\times \id_f: \quad f^L\circ f$}
   ++(6.5cm,0) node (b) {$(f^L\circ f)\circ (f^L\circ f)$}
   ++(6.5cm,0) node (c) {$f^L\circ f \quad :S^{-1} \times \id_f.$};
  \path 
   (a.east) +(0,2pt) coordinate (ane)
   (a.east) +(0,-2pt) coordinate (ase)
   (b.west |- ane) coordinate (bnw)
   (b.west |- ase) coordinate (bsw)
   (b.east |- ane) coordinate (bne)
   (b.east |- ase) coordinate (bse)
   (c.west |- ane) coordinate (cnw)
   (c.west |- ase) coordinate (csw)
  ;
  \draw[->] (ane) -- node[auto] {$\scriptstyle \ev_f^L\times \id_{f^L\circ f}$} (bnw);
  \draw[->] (bsw) -- node[auto] {$\scriptstyle \id_{f^L}\times \coev_f^L\times \id_{f^L}$} (ase);
  \draw[->] (bne) -- node[auto] {$\scriptstyle \id_{f^L}\times \coev_f^R\times \id_f$} (cnw);
  \draw[->] (csw) -- node[auto] {$\scriptstyle \ev_f^R\times\id_{f^L\circ f}$} (bse);
\end{tikzpicture}
$$
Invertible morphisms are adjunctible, so $S$ and $S^{-1}$ are adjunctible. Since compositions of adjunctible morphisms are adjunctible and $\ev_f^L$ is right-adjunctible by construction, $\ev_f^R$ is right-adjunctible. Similarly, $\ev_f^L$ is left-adjunctible. Iterating gives adjunctibility of $\ev_f$, and a similar argument gives adjunctibility of $\coev_f$ and of the other evaluation and coevaluation morphisms.  This finishes the proof that (\ref{item2.lemma simplifying adjunctibility}) implies (\ref{item1.lemma simplifying adjunctibility}).
  
  Finally, assume that $f$ satisfies (\ref{item3.lemma simplifying adjunctibility}).  Similarly to the previous case, the left adjoints $\ev_{f^R}^L : \id_Y \to f \circ f^R$ and $\coev_{f^R}^L : f^R \circ f \to \id_X$ establish $f^R$ as a left adjoint to $f$.  It follows that there is an isomorphism $f^R \cong f^L$ intertwining $\ev_{f^R}^L$ with $\coev_f$ and $\coev_{f^R}^L$ with $\ev_f$.  But $\ev_{f^R}^L$ and $\coev_{f^R}^L$ are right-adjunctible.  This establishes that (\ref{item3.lemma simplifying adjunctibility}) implies (\ref{item2.lemma simplifying adjunctibility}) and completes the proof.
\end{proof}

\begin{corollary} \label{corollary.left is enough}
  Let $X$ be an object in a symmetric monoidal $(\infty,N)$-category. The following are equivalent:
\begin{enumerate}
\item $X$ is $n$-dualizable.
\item $X$ is 1-dualizable and the evaluation and coevaluation morphisms $\ev_X : X^* \otimes X \to \unit$ and $\coev_X : \unit \to X \otimes X^*$ are $(n-1)$-times left-adjunctible.
\item $X$ is 1-dualizable and the evaluation and coevaluation morphisms $\ev_X : X^* \otimes X \to \unit$ and $\coev_X : \unit \to X \otimes X^*$ are $(n-1)$-times right-adjunctible.\qedhere
\end{enumerate} 
\end{corollary}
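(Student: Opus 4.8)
The plan is to peel off the symmetric ``adjunctible'' requirements built into $n$-dualizability one layer at a time and trade each for a one-sided requirement, with \cref{lemma simplifying adjunctibility} as the engine. By \cref{lemma about gauntification} I may assume $\cC$ is gaunt, so that adjoints are unique and all of the whiskerings and composites below are strict. Unwinding the definition, $X$ is $n$-dualizable precisely when $X$ is $1$-dualizable and each of the $1$-morphisms $\ev_X$ and $\coev_X$ is \emph{iteratively adjunctible to depth $n-1$}: it is adjunctible, the evaluation and coevaluation morphisms witnessing its adjunctibility are adjunctible, the witnesses of \emph{their} adjunctibility are adjunctible, and so on through $n-1$ layers. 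Conditions (2) and (3) replace ``iteratively adjunctible to depth $n-1$'' by ``$(n-1)$-times left-adjunctible'' and ``$(n-1)$-times right-adjunctible'' respectively; by symmetry it suffices to treat the left case.

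I would then prove, by induction on the depth $m$, that for the evaluation and coevaluation morphisms occurring in such a tower the symmetric condition ``iteratively adjunctible to depth $m$'' is equivalent to ``$m$-times left-adjunctible'' and to ``$m$-times right-adjunctible.'' The inductive step is exactly \cref{lemma simplifying adjunctibility}, read as converting, at a single layer, its symmetric condition~(1) into its one-sided condition~(2). The catch is that condition~(2) of that lemma still demands that the witnesses $\ev_f$ and $\coev_f$ be \emph{both} left- and right-adjunctible, whereas the left tower supplies only left-adjunctibility of the witnesses; this gap is closed by the inductive hypothesis one dimension up, which has already identified the left, right, and symmetric towers for those higher-dimensional witnesses. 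In the forward direction one feeds ``witnesses $(m-1)$-times left-adjunctible $\Rightarrow$ witnesses two-sided'' into \cref{lemma simplifying adjunctibility} to promote left-adjunctibility of $f$ to full adjunctibility, upgrading the produced witnesses back to depth $m-1$ by the inductive hypothesis; in the reverse direction one simply restricts the four witnesses of a two-sided adjunction to the two left-handed ones and applies the inductive hypothesis again. Re-running the proof of \cref{lemma simplifying adjunctibility} in this strengthened form requires the closure of iterated adjunctibility under composition, under passage to adjoints, and under whiskering by invertibles --- the same closure properties used in the original argument via the intertwining automorphism $S$ --- so this part is routine once the statement is set up correctly.

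The main obstacle is the bottom of this induction, where there are no higher witnesses to invoke and one must promote plain left-adjunctibility of $\ev_X$ and $\coev_X$ to two-sided adjunctibility. This cannot come from \cref{lemma simplifying adjunctibility} alone, which always requires two-sided information one dimension up, and so it must be supplied by the genuine structure present: either the invertibility of morphisms above the top dimension of $\cC$, which makes two-sidedness automatic once the tower runs past dimension $N$, or, in the remaining range, the fact that $\ev_X$ and $\coev_X$ are not arbitrary morphisms but the evaluation and coevaluation of a dual pair, so that the snake relations furnished by $1$-dualizability tie the left-adjunctibility of one to the right-adjunctibility of the other. Pinning down this anchoring, and carefully matching the four symmetric witnesses against the two one-sided witnesses at every intermediate level, is where the real care is needed; the rest is the bookkeeping of the induction.
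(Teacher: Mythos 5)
Your skeleton---induct on the depth of the tower, using \cref{lemma simplifying adjunctibility} as the engine to trade the symmetric condition for a one-sided one level by level---is exactly the route the paper intends: the corollary carries no separate proof and is presented as an immediate iteration of that lemma. Moreover, your diagnosis of where this iteration strains is correct, and it is a point the paper glosses over entirely. The lemma's condition~(2) \emph{consumes} two-sided adjunctibility of the witnesses $\ev_f,\coev_f$, while a one-sided tower only supplies left-adjunctibility; already for $n=2$ the corollary asserts that mere left-adjunctibility of $\ev_X$ and $\coev_X$, with no hypothesis on any witnesses, implies full adjunctibility (right adjoints and all iterated towers)---a statement about which the lemma is silent, since none of its three conditions can even be verified from that input.

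As a standalone proof, however, your proposal has a genuine gap at precisely the step you flag, and the inductive statement you formulate cannot absorb it. Your induction on depth $m$ needs, at its base, that every evaluation/coevaluation morphism in the tower satisfy ``left-adjunctible $\Rightarrow$ adjunctible''; but the deepest witnesses are units and counits of bare adjunctions, and for those this implication is not available: the mate results used inside the lemma's proof (\cite[Remark 3.4.22]{Lur09}, \cite[Lemma 2.4.4]{DSPS1}) convert left adjoints of \emph{both} witnesses of an adjunction into witnesses of the opposite adjunction one dimension \emph{down}, but never produce right adjoints of the witnesses themselves. The implication does hold for the specific pair $(\ev_X,\coev_X)$, but only because of structure appearing nowhere in your argument: $1$-dualizability of $X$ makes dualization $(-)^\vee$ an equivalence exchanging $\ev_X$ with $\coev_X$ (up to symmetry and invertibles) while exchanging left with right adjoints, so left-adjunctibility of the \emph{pair} yields right-adjunctibility of the pair, and the adjoints so produced are again ev/coev-type morphisms twisted by invertibles, which is what lets the towers and then the higher levels propagate. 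Your alternative anchor, invertibility above dimension $N$, is unavailable in general since $n$ may be far smaller than $N$, as you yourself note. Until the duality argument is actually carried out---and the induction restructured so that each step looks \emph{two} levels up, using the mate results to convert left-adjunctibility of witnesses-of-witnesses into two-sidedness of witnesses before feeding the lemma---the proposal reduces the corollary to exactly the statement that constitutes its content beyond \cref{lemma simplifying adjunctibility}, without proving it. (In fairness, the paper's own \textit{proof} is the bare assertion that the lemma suffices, so your analysis exposes a gap in the paper as much as in your own write-up.)
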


We turn now to $n$-dualizability in the categories $\cC^{\boxasterisk}_{(j)}$ of lax, respectively oplax, $j$-morphisms in an $(\infty,n)$-category $\cC$. For simplicity, we focus on the \emph{lax} case, and thus left-adjunctibility; the oplax case works similarly.

Recall that an $i$-morphism in $\cC^{\smallbox{lax}}_{(j)}$ is by definition an element of $\cC^\Box_{(i);(j)}$.  Given $f \in \cC^\Box_{(i);(j)}$, its horizontal source and target $s_hf,t_hf \in \cC^\Box_{(i-1);(j)}$ are its source and target $(i-1)$-morphisms in $\cC^{\smallbox{lax}}_{(j)}$ and its vertical source and target $s_vf,t_vf \in \cC^\Box_{(i);(j-1)}$ are $i$-morphisms in $\cC^{\smallbox{lax}}_{(j-1)}$.  
We will write $f^\#$ for the ``bulk'' $(i+j)$-morphism in $\cC$ that fills in the ``box'' with sides $s_hf$, $t_hf$, $s_vf$, and $t_vf$.  Since its source and target depend on the parity of $j$, we unify notation in both cases: let $a = s_vf$ when $j$ is even and $a = t_vf$ when $j$ is odd, and let $b = t_vf$ when $j$ is even and $b = s_vf$ with $j$ is odd, and let $s = s_hf$ and $t = t_hf$; then \cref{prop.i-by-j-morphism} says that there are whiskerings $a^\natural$, $b^\natural$, $s^\natural$, and $t^\natural$ of the ``bulk'' $(i+j-1)$-morphisms $a^\#,b^\#,s^\#,t^\#$ such that 
$$f^\# : a^\natural \circ s^\natural \Rightarrow t^\natural \circ b^\natural.$$

\begin{proposition} \label{lax left adjunctibility}
  Let $f \in \cC^\Box_{(i);(j)}$ with notation as above.  Then $f$ is left-adjunctible as an $i$-morphism in $\cC^{\smallbox{lax}}_{(j)}$ if and only if 
  $a$ and $b$ are left-adjunctible as $i$-morphisms in $\cC^{\smallbox{lax}}_{(j-1)}$
  and $f^\#$ is left-adjunctible as an $(i+j)$-morphism in $\cC$.  Moreover, letting $f^L$ denote the left adjoint of $f$ in $\cC^{\smallbox{lax}}_{(j)}$, the $(i+j)$-morphisms $(f^L)^\#$ and $(f^\#)^L$ in $\cC$ are \define{mates} in the sense that they are related by units and counits of the adjunctions for $a$ and $b$.
\end{proposition}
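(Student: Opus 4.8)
The plan is to reduce the statement to a strict computation inside $\cC$ and then run exactly the \emph{mate} calculus of the base case \cref{prop.1dualizability in C-down}, now with the left-adjunctible vertical sides $a$ and $b$ playing the role that the dualizable objects played there, and the bulk $f^\#$ playing the role of the left-adjunctible $1$-morphism. First I would invoke \cref{lemma about gauntification} together with the surrounding reduction, assuming all higher categories in sight are gaunt; then composition is strictly associative and unital and adjunctions are witnessed by honest triangle identities, which legitimizes the explicit whiskering manipulations below. Throughout, the relevant composition in $\cC^{\smallbox{lax}}_{(j)}$ is the one in the $i$th (horizontal) direction, governed by the gluing description of $\Theta^{(i);(j)}$ in \cref{prop.i-by-j-morphism}.

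The vertical sides are handled by functoriality. Since $s_v,t_v : \cC^{\smallbox{lax}}_{(j)} \rightrightarrows \cC^{\smallbox{lax}}_{(j-1)}$ are functors, they preserve adjunctions together with their units and counits; hence if $f$ has a left adjoint $f^L$ then $s_v f$ and $t_v f$ — that is, $a$ and $b$ — are left-adjunctible, with $s_v f^L = (s_v f)^L$ and $t_v f^L = (t_v f)^L$. This already supplies the ``$a,b$ left-adjunctible'' half of the ``only if'' direction and pins down the vertical faces of any candidate adjoint to be $a^L,b^L$. Next I would record, from \cref{prop.i-by-j-morphism} and the description of composition in $\cC^\Box$, the bulk of a horizontal composite and of a horizontal identity: $(\id_s)^\#$ is a whiskering of an identity, and the bulk of $g\circ f$ is the $(i+j)$th-direction composite of $f^\#$ and $g^\#$ whiskered by the vertical-side data. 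I would likewise note that the vertical faces of a bulk $(i+1)$-morphism such as $\ev_f$ are precisely the evaluation and coevaluation cells of the adjunctions for $a$ and $b$.

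For the ``if'' direction, assume $a,b$ and $f^\#$ are left-adjunctible. I would define $f^L\in\cC^\Box_{(i);(j)}$ to be the box with horizontal source $t_h f$ and target $s_h f$, vertical faces $a^L,b^L$, and bulk $(f^L)^\#$ equal to the mate of $(f^\#)^L$ formed using the units and counits of the adjunctions $a^L\dashv a$ and $b^L\dashv b$ — this is exactly the mate formula of \cref{prop.1dualizability in C-down}, and the parity of $j$ only dictates which of $s_vf,t_vf$ is $a$ and which is $b$, hence the direction in which the mate is taken. Parallelism of its boundary, so that it is a legitimate box, follows as in \cref{prop.i-by-j-morphism}. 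I would then define $\ev_f,\coev_f\in\cC^\Box_{(i+1);(j)}$ with vertical faces the evaluation/coevaluation cells for $a,b$ and bulks assembled from $\ev_{f^\#}$ and $\coev_{f^\#}$ whiskered as in the base case, and verify the two adjunction (snake) relations in $\cC^{\smallbox{lax}}_{(j)}$. By the composite-bulk formula each such relation splits into its vertical-face component — a snake relation for $a$ or $b$ in $\cC^{\smallbox{lax}}_{(j-1)}$, true by hypothesis — and its bulk component — a snake relation for $f^\#$ in $\cC$ modulo the mate calculus, which is precisely the computation already carried out in \cref{prop.1dualizability in C-down}. The same bookkeeping, read backwards, finishes the ``only if'' for the bulk: taking bulks of $f^L,\ev_f,\coev_f$ exhibits $(f^\#)^L$ as the mate of $(f^L)^\#$ together with unit and counit satisfying the snake relations, so $f^\#$ is left-adjunctible; this simultaneously establishes the final mate assertion in both directions.

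The hard part will be the verification in this last step: correctly tracking the whiskerings $a^\natural,b^\natural,s^\natural,t^\natural$ and the parity-dependent source and target of $f^\#$, so that the bulk of the composite boxes $f^L\circ f$ and $f\circ f^L$ really is the whiskered composite that the mate calculus predicts. As in \cref{prop.1dualizability in C-down}, the individual diagram chase is routine but lengthy; the genuine content is organizing it so that it factors cleanly through the already-established base case rather than being redone from scratch, and checking that the parity conventions of \cref{prop.i-by-j-morphism} match the direction of the mates throughout.
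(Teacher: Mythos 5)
Your overall strategy---functoriality of $s_v,t_v$ for the necessity of adjunctibility of $a$ and $b$, the mate calculus of \cref{prop.1dualizability in C-down} to pass between candidate adjoints of $f$ and of $f^\#$, and a componentwise (faces-plus-bulk) verification of the snake relations---is indeed the paper's strategy. But there is one load-bearing ingredient you never invoke, and without it your constructions in both directions do not typecheck: the proof must apply \cref{lax left adjunctibility} \emph{itself}, inductively, to the vertical faces $a,b \in \cC^\Box_{(i);(j-1)}$.

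The issue is concrete. To form ``the mate of $(f^\#)^L$'' you need evaluation and coevaluation cells \emph{in $\cC$} for the whiskered bulks $a^\natural, b^\natural$; your hypothesis only provides adjunctions $a^L \dashv a$ and $b^L \dashv b$ in $\cC^{\smallbox{lax}}_{(j-1)}$, whose units and counits are cells of $\cC^\Box_{(i+1);(j-1)}$, not of $\cC$. Converting the former into the latter is exactly the only-if direction of the proposition one level down. Moreover, even granting that data, the mate of $(f^\#)^L$ is a cell $(a^\natural)^L \circ t^\natural \Rightarrow s^\natural \circ (b^\natural)^L$, whereas the bulk of a box with vertical faces $a^L, b^L$ must have boundary expressed through whiskerings of $(a^L)^\#$ and $(b^L)^\#$, i.e., through $(a^L)^\natural$ and $(b^L)^\natural$. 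Identifying $(a^\natural)^L$ with a whiskering-and-mate of $(a^L)^\natural$ is precisely the ``Moreover'' (mate) clause of the proposition applied to $a$ and $b$: that clause is part of the statement because it is the inductive hypothesis, not merely a byproduct, which is how your write-up treats it. This is also exactly where your guiding analogy breaks down: in the base case the faces are objects of $\cC$ whose duality data already lives in $\cC$, while here the faces' adjunction data lives one categorical level up and must be transported down by the induction. Your appeal to ``parallelism as in \cref{prop.i-by-j-morphism}'' does not address this, since that proposition concerns the computad, not the matching of your candidate bulk's boundary with the faces $a^L,b^L$. Once you add the induction on $j$ (base case \cref{prop.1dualizability in C-down}, as in the paper), the rest of your outline goes through.
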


\begin{proof}
  The proof is almost the same as the proof of \cref{prop.1dualizability in C-down}, and so we give only the outline.  The functors $s_v,t_v : \cC^{\smallbox{lax}}_{(j)} \rightrightarrows \cC^{\smallbox{lax}}_{(j-1)}$ preserve left-adjunctibility, and so a necessary condition for left-adjunctibility of $f$ is left-adjunctibility of $a$ and $b$.  Any alleged adjoint $f^L$ of $f$ in $\cC^{\smallbox{lax}}_{(j)}$ necessarily has a ``bulk'' $(i+j)$-morphism
  $$ (f^L)^\# : (a^L)^\natural \circ t^{\larunat} \Rightarrow s^{\larunat} \circ (b^L)^\natural,$$
  where $s^{\larunat}$ and $t^{\larunat}$ are some whiskerings of $s^\#$ and $t^\#$, possibly different from $s^\natural$ and $t^\natural$.  For comparison, any alleged adjoint $(f^\#)^L$ of $f^\#$ in $\cC$ is a map
  $$ (f^\#)^L : t^\natural \circ b^\natural \Rightarrow a^\natural \circ s^\natural.$$  
  
  Whiskering is functorial, and so preserves left-adjunctibility.  Given any $g : t^\natural \circ b^\natural \Rightarrow a^\natural \circ s^\natural$, define its \define{mate}
  $$ g^\dagger : (a^\natural)^L \circ t^\natural \Rightarrow s^\natural \circ (b^\natural)^L $$
  as the composition
  $$ (a^\natural)^L \circ t^\natural \xRightarrow{\coev_{b^\natural}} (a^\natural)^L \circ t^\natural \circ b^\natural \circ (b^\natural)^L \xRightarrow{g}  (a^\natural)^L \circ a^\natural \circ s^\natural \circ (b^\natural)^L \xRightarrow{\ev_{a^\natural}} s^\natural \circ (b^\natural)^L. $$
  Being functorial, whiskering also preserves mates.  By induction, $(a^L)^\natural$, a whiskering of $(a^L)^\#$, is a mate of a whiskering of $(a^\#)^L$.  It follows that, up to whiskering, $(a^L)^\natural$ is a mate of $(a^\natural)^L$, and $(b^L)^\natural$ is similarly a mate of $(b^\natural)^L$.  Therefore we can whisker $g^\dagger$ into a map
  $$ g^\ddagger : (a^L)^\natural \circ t^{\larunat} \Rightarrow s^{\larunat} \circ (b^L)^\natural.$$
  Conversely, given such a $g^\ddagger$, we can whisker it to a map $(a^\natural)^L \circ t^\natural \Rightarrow s^\natural \circ (b^\natural)^L$ and take its mate to restore $g$.
  
  We claim that $(f^L)^\#$ and $(f^\#)^L$ are {mates} in the sense that
  $$ ((f^\#)^L)^\ddagger = (f^L)^\#,$$
  by which we mean in particular that given one of $(f^\#)^L$ and $(f^L)^\#$, we can define the other as the above composition of mates and whiskerings.
  The evaluation and coevaluation morphisms are defined by dragging those from one side through the mates-and-whiskerings to the other side.  Checking the appropriate relations is then a diagram chase no harder than those in the proof of \cref{prop.1dualizability in C-down} and is left to the reader.
\end{proof}

\begin{remark}\label{oplax handedness}
  For comparison, right-adjunctibility of an $i$-morphism in $\cC^{\smallbox{lax}}_{(j)}$ is not the same as right adjunctibility of the bulk $(i+j)$-morphism in $\cC$, but rather as right-adjunctibility of a certain mate of the bulk.  In the oplax case, however, an $i$-morphism in $\cC^{\smallbox{oplax}}_{(j)}$ is right-adjunctible if and only if its horizontal source and target are right-adjunctible in $\cC^{\smallbox{oplax}}_{(j-1)}$ and its bulk $(i+j)$-morphism is right-adjunctible in $\cC$.
\end{remark}

\Subsection{Comparison with boundary field theories}

Lurie's paper \cite{Lur09} on the Cobordism Hypothesis focuses mostly on categories ``with duals.''  By definition, an $(\infty,n)$-category \define{has duals} if every object is $n$-dualizable and every $k$-morphism for $k\geq 1$ is $(n-k)$-times adjunctible.  Given an arbitrary symmetric monoidal $(\infty,n)$-category $\cC$, the $(\infty,n)$-category $\cC^\fd$ denotes the maximal subcategory of $\cC$ with duals.

 In Section 4.3 of \cite{Lur09}, Lurie describes a category $\cat{Bord}_{n+1}^{\fr,\partial}$ of fully extended framed bordisms with ``free boundaries'' along with an embedding $\cat{Bord}_{n+1}^{\fr} \hookrightarrow \cat{Bord}_{n+1}^{\fr,\partial}$.
  Given an $(n+1)$-dimensional framed topological field theory $Z : \cat{Bord}_{n+1}^{\fr} \to \cC$, a \define{boundary field theory for $Z$} is an extension of $Z$ to a symmetric monoidal functor $\bar Z : \cat{Bord}_{n+1}^{\fr,\partial} \to \cC$.  The field theory $Z$ plays the same role for boundary field theories as a twist plays for twisted field theories.
  \Cref{cor.lax twisted framed tfts} has the following corollary, providing a comparison between lax twisted field theories and boundary theories. 

\begin{theorem}\label{thm.comparison bdry and twisted}
Let $\cat{Bord} = \cat{Bord}_n^{\fr}$ denote the fully extended framed topological bordism category from \cite{Lur09,ClaudiaDamien1} and $\cat{Bord}_{n+1}^{\fr,\partial}$ the fully extended framed bordism category with ``free boundaries'' described in Section 4.3 in \cite{Lur09}.  Let $\cC$ be a symmetric monoidal $(\infty,n+1)$-category with duals.  The following are equivalent:
\begin{enumerate}
  \item \label{thm.comparison bdry and twisted.item1} 1-morphisms in $\cC$ with source $\unit$,
  \item \label{thm.comparison bdry and twisted.item2} fully extended $n$-dimensional {boundary} field theories $\cat{Bord}_{n+1}^\partial \to \cC$,
  \item \label{thm.comparison bdry and twisted.item3} lax twisted field theories with trivial source $\cat{Bord}_n\to \cC^{\unit\downarrow}$,
  \item \label{thm.comparison bdry and twisted.item4} oplax twisted field theories with trivial source $\cat{Bord}_n\to \cC^{\unit\rightarrow}$.
\end{enumerate}
\end{theorem}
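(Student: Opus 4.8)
The plan is to treat item~(1) as a hub and to build the three equivalences $(1)\simeq(3)$, $(1)\simeq(4)$, and $(1)\simeq(2)$ separately, each time exploiting the hypothesis that $\cC$ has duals to collapse the dualizability and adjunctibility side-conditions that would otherwise appear. The single recurring observation is that if $\cC$ has duals then every object of $\cC$ is $(n+1)$-dualizable, hence in particular $n$-dualizable, and every $1$-morphism is $n$-times adjunctible, hence both $n$-times left- and $n$-times right-adjunctible. Consequently every criterion involving $n$-dualizability of objects and $n$-fold one-sided adjunctibility of $1$-morphisms that governs the classifications below is automatically met, and each classification collapses to the bare datum of a $1$-morphism out of $\unit$.

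For $(1)\simeq(3)$ and $(1)\simeq(4)$ I would invoke \cref{cor.lax twisted framed tfts}, which (granting the Cobordism Hypothesis) identifies the $(\infty,n+1)$-category of lax, resp.\ oplax, twisted field theories based on $\cat{Bord}_n^\fr$ with the subcategory of $\cC$ on $n$-dualizable objects and $n$-times left-, resp.\ right-, adjunctible morphisms. Under this identification the trivial source theory $\unit$ corresponds to the unit object $\unit\in\cC$, so that lax (oplax) twisted field theories with trivial source --- i.e.\ the spaces $\maps^h_\otimes(\cat{Bord}_n,\cC^{\unit\downarrow})$, resp.\ $\maps^h_\otimes(\cat{Bord}_n,\cC^{\unit\rightarrow})$, of \cref{defn.relative} --- are exactly the $1$-morphisms in that subcategory whose source is $\unit$. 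Equivalently, applying the Cobordism Hypothesis to $\cC^{\unit\downarrow}$ and $\cC^{\unit\rightarrow}$ directly and then \cref{thm.topological higher twisted field theories} (with base case \cref{prop.1dualizability in C-down}), with vertical source pinned to $\unit$, an object $f:\unit\to Y$ is $n$-dualizable precisely when $Y$ is $n$-dualizable and $f$ is $n$-times left- (resp.\ right-) adjunctible. Since $\cC$ has duals, both conditions hold for every such $f$, so the maximal groupoid of $n$-dualizable objects is the full groupoid of $1$-morphisms with source $\unit$, which is item~(1).

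For $(1)\simeq(2)$ I would appeal to the version of the Cobordism Hypothesis for manifolds with singularities developed in Section~4.3 of \cite{Lur09}. A symmetric monoidal functor $\bar Z:\cat{Bord}_{n+1}^{\fr,\partial}\to\cC$ restricts along $\cat{Bord}_{n+1}^{\fr}\hookrightarrow\cat{Bord}_{n+1}^{\fr,\partial}$ to an ordinary $(n+1)$-dimensional theory, classified by the object $Y=\bar Z(\mathrm{pt})$, while the remaining ``free boundary'' datum is a boundary condition realized as a $1$-morphism $\unit\to Y$ subject to dualizability constraints. Because $\cC$ has duals, $Y$ ranges over all objects and the boundary condition over all $1$-morphisms $\unit\to Y$ with no further restriction; assembling the two pieces yields precisely a $1$-morphism with source $\unit$, i.e.\ item~(1). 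Throughout I would track naturality of the three comparisons so that the resulting square commutes.

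The hard part will be $(1)\simeq(2)$: it rests on the Cobordism Hypothesis with singularities, and the genuine content lies in correctly extracting the classifying data of a boundary theory and fixing the handedness conventions (source versus target of the boundary $1$-morphism, and lax versus oplax). The conceptual payoff --- and the reason the lax theories in~(3) and the oplax theories in~(4) land on the same answer --- is exactly that having duals renders $n$-times left- and right-adjunctibility vacuous, in line with \cref{oplax handedness}; verifying that the automatic adjunctions produced by the boundary construction match those classifying~(3) and~(4) is where the real work of the comparison sits.
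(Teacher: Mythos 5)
Your proposal is correct and follows essentially the same route as the paper: the equivalences $(1)\simeq(3)$ and $(1)\simeq(4)$ via \cref{cor.lax twisted framed tfts} together with the observation that ``having duals'' makes $n$-dualizability of objects and $n$-times left/right-adjunctibility of $1$-morphisms automatic, and $(1)\simeq(2)$ via Lurie's Cobordism Hypothesis for Manifolds with Singularities, which identifies extensions of $Z$ to $\cat{Bord}_{n+1}^{\fr,\partial}$ with $1$-morphisms $\unit \to Z(\mathrm{pt})$. The paper's proof is exactly this three-pronged argument with $(1)$ as the hub, so no substantive difference remains to discuss.
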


\begin{proof}
  The equivalence between (\ref{thm.comparison bdry and twisted.item1}) and (\ref{thm.comparison bdry and twisted.item2}) is the first example of Lurie's \define{Cobordism Hypothesis for Manifolds with Singularities}: assuming $\cC$ has duals, given a fully extended framed topological field theory $Z: \cat{Bord}_{n+1}^{\fr} \to \cC$, the data of an extension to a boundary theory $\bar{Z}: \cat{Bord}_{n+1}^{\fr,\partial} \to \cC$ is equivalent to the data of a morphism $\unit \to Z(\mathrm{pt})$.
  The equivalence between (\ref{thm.comparison bdry and twisted.item1}) and (\ref{thm.comparison bdry and twisted.item3}) follows from \cref{cor.lax twisted framed tfts} and the observation that if $\cC$ is an $(\infty,n+1)$-category with duals, then in particular every object is $n$-dualizable and every 1-morphism is $n$-times left-adjunctible.  For the equivalence between (\ref{thm.comparison bdry and twisted.item1}) and (\ref{thm.comparison bdry and twisted.item4}), one follows  the same arguments as above except consistently substituting ``right'' for ``left'' and ``oplax'' for ``lax.''
\end{proof}

\begin{remark}
  The Cobordism Hypothesis for Manifolds with Singularities furthermore describes functors from fully-extended cobordism categories with certain types of higher-codimension defects.  \Cref{cor.lax twisted framed tfts} can also be used to compare those theories with {higher} twisted theories in the case when the target category has enough duals.  
  
  The most interesting case is when $\cC$ is not required to have all duals.  Then to have a boundary field theory $\bar Z : \cat{Bord}_{n+1}^{\fr,\partial} \to \cC$ still requires that the ``bulk'' theory $Z : \cat{Bord}_{n+1}^{\fr} \to \cC$ assigns to a point an $(n+1)$-dualizable object $Z(\mathrm{pt}) \in \cC$.  On the other hand, in a lax twisted field theory, the twist $T:\cat{Bord}_n\to\cC$ only requires that $T(\mathrm{pt})$ be $n$-dualizable.  Compare \cref{comment about Cfd}.
\end{remark}

\section{The even higher Morita category of \texorpdfstring{$E_d$}{E_d}-algebras}\label{section.evenhigher}

In this section, as an application of (op)lax homomorphisms, we construct an ``even higher'' version of the 
``higher (pointed) Morita'' $(\infty,d)$-category $\Alg_d(\cC)$ of $E_d$-algebras in a given symmetric monoidal $(\infty,n)$-category~$\cC$. Its objects are $E_d$-algebras in $\cC$, its 1-morphisms are (pointed) bimodules in $ E_{d-1}$-algebras in $\cC$, 2-morphisms are (pointed) bimodules of (pointed) bimodules, etc.,\ up to its $d$-morphisms, which are (pointed) bimodules of (pointed) bimodules \dots of (pointed) bimodules. 

We will extend the $(\infty,d)$-category $\Alg_d(\cC)$ to an $(\infty,d+n)$-category whose $k$-morphisms for $k>d$ are $(k-d)$-morphisms in $\cC$ between such (pointed) bimodules.   
 As in \cref{defn.oplax-homomorphism}, in addition to strong morphisms between algebraic structures in higher categories, there are also lax and oplax morphisms.
 As such, we will build three separate ``even higher'' Morita $(\infty,d+n)$-categories, which we will call $\Alg_d^{\mathrm{lax}}(\cC)$, $\Alg_d^{\mathrm{oplax}}(\cC)$, and $\Alg_d^{\mathrm{strong}}(\cC)$.
 The main example, described in \cref{eg.Alg2Rex}, is the $(\infty,4)$-category $\Alg_2^{\mathrm{strong}}(\cat{Rex}_\bK)$ controlling the Morita theory of braided monoidal finitely-cocomplete $\bK$-linear categories.

An unpointed version of $\Alg_d(\cC)$ was constructed in \cite{HaugsengEn} using $\infty$-operadic methods. A pointed version was constructed in \cite{ClaudiaDamien2} using  more geometric tools, namely factorization algebras.  
Our generalization applies to both versions.
The two constructions seem to be closely related and a comparison statement is the subject of an ongoing project. 
We will briefly recall outlines of both versions later in this section.
  We will need only certain formal properties from the constructions, and so will not recall all details. 
  Both constructions need certain technical conditions which we recall here in order to state the results.

\begin{definition}\label{defn.sifted}
\begin{enumerate}

    \item A \define{geometric realization} is a colimit of a simplicial object, i.e.\ a colimit of a diagram indexed by the category $\Delta^\op$.  
An $(\infty,1)$-category is \define{$\otimes$-GR-cocomplete} if it is symmetric monoidal, has geometric realizations, and if the symmetric monoidal structure distributes over geometric realizations in each variable.  A functor between $\otimes$-sifted-cocomplete $(\infty, 1)$-categories is \define{$\otimes$-GR-cocontinuous} if it is symmetric monoidal and preserves geometric realizations.
    
    We will call a symmetric monoidal $(\infty,n)$-category $\cC_{\vec\bullet}$ ``$\otimes$-GR-cocomplete'' if its underlying $(\infty,1)$-category $\cC_{\bullet,0,\dots,0}$ is.
    
    \item   A \define{sifted colimit} is
  a colimit over a nonempty diagram $D$ such that the diagonal functor $D \to D \times D$ is final (see \cite[Definition 5.5.8.1]{LurieHTT} for details).
    An $(\infty,1)$-category is \define{$\otimes$-sifted-cocomplete} if it is symmetric monoidal, has sifted colimits, and if the symmetric monoidal structure distributes over sifted colimits in each variable.  A functor between $\otimes$-sifted-cocomplete $(\infty, 1)$-categories is \define{$\otimes$-sifted-cocontinuous} if it is symmetric monoidal and preserves sifted colimits.
    
    We will call a symmetric monoidal $(\infty,n)$-category $\cC_{\vec\bullet}$ ``$\otimes$-sifted-cocomplete'' if its underlying $(\infty,1)$-category $\cC_{\bullet,0,\dots,0}$ is.

\end{enumerate}
\end{definition}

Since $\Delta^\op$ is sifted,    geometric realizations are special cases of sifted colimits. Some readers might know geometric realizations from their strict-categorical counterparts, which are called \define{reflexive coequalizers}.  
A well-developed theory of colimits within $(\infty,1)$-categories exists \cite{LurieHTT,RV2014} but consistently uses quasicategories to model $(\infty,1)$-categories, rather than complete Segal spaces.  These models are known to be equivalent \cite{MR2342834}, and in this section we will use them interchangeably.

The following unifies the main results of \cite{HaugsengEn, ClaudiaDamien2}:

\begin{theorem} \label{CS and Haugseng theorem} 
Let $\cC$ be a symmetric monoidal $(\infty,1)$-category.
\begin{enumerate}
  \item (\cite{HaugsengEn})   If $\cC$ is $\otimes$-GR-cocomplete,
  then there is a symmetric monoidal $d$-fold Segal space $\Alg_d(\cC)_{\vec \bullet}$ (which is not complete) whose objects are $E_d$-algebras in $\cC$, 1-morphisms are unpointed bimodules in $\mathrm E_{d-1}$-algebras in $\cC$, \dots, and $d$-morphisms are  unpointed bimodules of pointed bimodules \dots of pointed bimodules. The assignment $\cC \mapsto \Alg_d(\cC)_{\vec\bullet}$ is functorial for $\otimes$-GR-cocontinuous functors. 
  \item (\cite{ClaudiaDamien2}) 
  If $\cC$ is $\otimes$-sifted-cocomplete,
  then there is a symmetric monoidal complete $d$-fold Segal space i.e.~a symmetric monoidal $(\infty,d)$-category $\Alg_d(\cC)_{\vec \bullet}$ whose objects are $E_d$-algebras in $\cC$, 1-morphisms are pointed bimodules in $\mathrm E_{d-1}$-algebras in $\cC$, \dots, and $d$-morphisms are  pointed bimodules of pointed bimodules \dots of pointed bimodules.  The assignment $\cC \mapsto \Alg_d(\cC)_{\vec\bullet}$ is functorial for $\otimes$-sifted-cocontinuous functors.
  \qedhere
\end{enumerate}
\end{theorem}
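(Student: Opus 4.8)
The statement is a recollection: part~(1) is the main construction of \cite{HaugsengEn} and part~(2) is that of \cite{ClaudiaDamien2}, so the only genuine work is to package their hypotheses into the uniform language of \cref{defn.sifted} and to read off the formal outputs that the rest of \cref{section.evenhigher} will consume --- existence of the $d$-fold simplicial space, its (complete) Segal and essential-constancy properties, the symmetric monoidal refinement, and functoriality in $\cC$. The plan is therefore to recall each construction only to the depth needed to extract these properties, citing the references for the internal verifications.

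For part~(1), I would recall Haugseng's $\infty$-operadic construction. The $d$-fold simplicial space $\Alg_d(\cC)_{\vec\bullet}$ is assembled so that $\Alg_d(\cC)_{k_1,\dots,k_d}$ is the space of $(k_1\times\dots\times k_d)$-grids of $E_d$-algebras and iterated (unpointed) bimodules in $\cC$, with the face maps in the $j$th direction given by relative tensor product along the $j$th composition. Each such relative tensor product is computed by a two-sided bar construction, i.e.\ a geometric realization; the hypothesis that $\cC$ is $\otimes$-GR-cocomplete is exactly what guarantees that these bar constructions exist and that $\otimes$ distributes over them, which in turn makes the relative tensor product associative and yields the Segal condition in each direction. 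Essential constancy holds by construction, and no completeness is asserted --- indeed it fails without pointing --- so nothing further is required here.

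For part~(2), I would recall the factorization-algebra construction of \cite{ClaudiaDamien2}, in which a grid of \emph{pointed} bimodules is encoded as a locally constant factorization algebra on a stratified product of intervals, labelled by the given algebras. Here $\otimes$-sifted-cocompleteness plays the role that $\otimes$-GR-cocompleteness played above: the reflexive-coequalizer/bar-type colimits defining the relative tensor products are sifted colimits, so they exist and interact well with $\otimes$, again giving associativity and the Segal condition. The extra input is the pointing: a pointed bimodule carries a canonical map from its source algebra, and this is precisely the data that identifies the homotopy-invertible $1$-morphisms with honest Morita equivalences, so that the completeness criterion of \cref{defn.1fold complete} --- checked in each direction via \cref{lemma.weaker completeness condition} --- is met and $\Alg_d(\cC)$ is a complete $d$-fold Segal space.

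In both cases the symmetric monoidal structure is produced for free by the construction: a symmetric monoidal $\cC$, viewed as a functor from $\Fin_*$ to complete $d$-fold Segal spaces as in \cref{defn.Gammaobject}, is carried to $[m]\mapsto\Alg_d(\cC[m])$, and the Segal condition for this $\Fin_*$-diagram reduces to the fact that $\Alg_d$ preserves finite products of its input. Functoriality in $\cC$ is likewise immediate once one observes that a $\otimes$-GR-cocontinuous (resp.\ $\otimes$-sifted-cocontinuous) functor preserves exactly the colimits used to form the bar constructions, hence sends algebras and bimodules to algebras and bimodules compatibly with composition. The main obstacle lies not in this packaging but in the underlying verifications of associativity of the relative tensor product and --- for part~(2) --- of completeness; both are carried out in the references under precisely the cocompleteness hypotheses recorded in \cref{defn.sifted}, so for our purposes it suffices to invoke \cite{HaugsengEn} and \cite{ClaudiaDamien2} and confirm that their standing assumptions are implied by ours.
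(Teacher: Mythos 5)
Your proposal takes essentially the same approach as the paper: there the theorem is stated as a pure recollection unifying the main results of \cite{HaugsengEn} and \cite{ClaudiaDamien2}, with the entire burden of proof deferred to those references, exactly as you do, and with the same matching of hypotheses ($\otimes$-GR-cocompleteness for the unpointed construction, $\otimes$-sifted-cocompleteness for the pointed one). The additional detail you give about bar constructions, pointings, and the symmetric monoidal/functoriality packaging is explanatory gloss on the cited constructions rather than a genuinely different argument, and it is consistent with the outlines the paper itself recalls later in \cref{section.evenhigher}.
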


Both papers \cite{HaugsengEn, ClaudiaDamien2} use the name ``$\Alg_d$'' for their constructions, even though the two constructions are different.  In this article we will not try to give them different names, since our generalization applies to both.  The construction given in \cite{ClaudiaDamien2} naturally produces a complete $d$-fold Segal space.  The construction in  \cite{HaugsengEn} produces a $d$-fold Segal space which is not complete; the ``higher Morita category'' from \cite{HaugsengEn} is its completion to a complete $d$-fold Segal space.  We will let $\Alg_d$ denote the the uncompleted version, as it's the one we will need for our construction. 

In both \cite{HaugsengEn, ClaudiaDamien2}, a stronger statement holds: the (complete) $d$-fold Segal space $\Alg_d(\cC)_{\vec \bullet}$ is naturally the ``object'' part of a (complete) $d$-fold Segal object internal to complete Segal spaces $\Alg_d(\cC)_{\vec \bullet;\bullet}$, whose morphisms in the latter direction are homomorphisms of bimodules.  This gives (after completion, in the case of \cite{HaugsengEn}) an $(\infty,d)$-by-$(\infty,1)$ ``double'' category whose underlying $(\infty,d+1)$-category from \cref{underlying n-fold segal space} is the ``higher Morita category of $E_d$-algebras in $\cC$.''

Our strategy for generalizing to the case when $\cC$ is an $(\infty,n)$-category is to apply the $\Alg_d(-)$ construction to the double $(\infty,n)$-category $\cC^\boxasterisk$ from \cref{defn.(op)lax arrow category}, for $\ast = $ ``lax,'' ``oplax,'' or ``strong.''  More precisely, since $\Alg_d(-)$ inputs an $(\infty,1)$-category, we use for each $\vec l$ only the underlying $(\infty,1)$-category of the $(\infty,n)$-category $\cC^\boxasterisk_{\vec l}$.  The motivation for this choice comes from \cref{defn.oplax-homomorphism}: a {lax morphism} of some type of structure in $\cC$ is that type of structure in $\cC^\downarrow = \cC^{\smallbox{lax}}_{(1)}$, an {oplax morphism} is that type of structure in $\cC^\rightarrow = \cC^{\smallbox{oplax}}_{(1)}$, and a {strong morphism} of some type is that type of structure in $[\Theta^{(1)}, \cC] = \cC^{\smallbox{strong}}_{(1)}$.

However, since $\Alg_d(-)$ is not functorial for arbitrary functors, the spaces $\Alg_d(\cC^\boxasterisk_{\vec \bullet})_{\vec k}$ may fail to be an $n$-uple simplicial space, let alone an $(\infty,n)$-category.  Instead, we must ask that $\cC^\boxasterisk_{\vec \bullet}$ enjoy one further property:

\begin{definition}
  Let $\cC$ be a symmetric monoidal $(\infty,n)$-category and $\ast = $ ``lax,'' ``oplax,'' or ``strong.'' 
  \begin{enumerate}
  \item
   We will say that $\cC^\boxasterisk$ is \define{\ensuremath\otimes-GR-cocomplete} if it is an $n$-fold simplicial diagram not just of symmetric monoidal categories and symmetric monoidal functors, but of $\otimes$-GR-cocomplete categories and $\otimes$-GR-cocontinuous functors.  I.e.\ $\cC^\boxasterisk$ is {\ensuremath\otimes-GR-cocomplete} if {and only if} every $\cC^\boxasterisk_{\vec l}$ is $\otimes$-GR-cocomplete and  all face and degeneracy functors preserve geometric realizations.
  \item
   We will say that $\cC^\boxasterisk$ is \define{\ensuremath\otimes-sifted-cocomplete} if it is an $n$-fold simplicial diagram not just of symmetric monoidal categories and symmetric monoidal functors, but of $\otimes$-sifted-cocomplete categories and $\otimes$-sifted-cocontinuous functors.  I.e.\ $\cC^\boxasterisk$ is {\ensuremath\otimes-sifted-cocomplete} if {and only if} every $\cC^\boxasterisk_{\vec l}$ is $\otimes$-sifted-cocomplete and  all face and degeneracy functors preserve sifted colimits.
   \end{enumerate}
\end{definition}

\begin{remark}\label{complete n-fold segality among sifted-cocomplete functors}
By \cite[Lemma 5.4.5.5]{LurieHTT}, for any collection $X$ of shapes of colimits, fiber products of categories which each have colimits of shape $X$ along functors that preserve colimits of shape $X$ again have colimits of shape $X$.  It follows that if $\cC^\boxasterisk$ is \ensuremath\otimes-sifted-cocomplete (resp.\ \ensuremath\otimes-GR-cocomplete), then \cref{mainthm,prop.c box is symmetric monoidal} imply in fact that $\cC^\boxasterisk_{\vec\bullet}$ is a complete $n$-fold Segal object in the $(\infty,1)$-category of $\otimes$-sifted-cocomplete (resp.\ \ensuremath\otimes-GR-cocomplete) categories and $\otimes$-sifted-cocontinuous  (resp.\ \ensuremath\otimes-GR-cocontinuous) functors.
\end{remark}

The functoriality results from \cref{CS and Haugseng theorem} then say the following: if $\cC^\boxasterisk$ is $\otimes$-sifted-cocomplete and ``$\Alg_d$'' is interpreted in the sense of \cite{ClaudiaDamien2}, then $(\vec k,\vec l)\mapsto\Alg_d(\cC^\boxasterisk_{\vec l})_{\vec k}$ is a $(d+n)$-fold simplicial space; if $\cC^\boxasterisk$ is $\otimes$-GR-cocomplete and ``$\Alg_d$'' is interpreted in the sense of \cite{HaugsengEn}, then $(\vec k,\vec l)\mapsto\Alg_d(\cC^\boxasterisk_{\vec l})_{\vec k}$ is a $(d+n)$-fold simplicial space.
Our main result of this section is that this $(d+n)$-uple simplicial space is an $(\infty,d)$-by-$(\infty,n)$ double category:

\begin{theorem}\label{thm.existence of even higher morita category}
  Let $\cC$ be a symmetric monoidal $(\infty,n)$-category and $\ast = $ ``lax,'' ``oplax,'' or ``strong.'' 
  \begin{enumerate}
    \item Suppose that $\cC^\boxasterisk$ is \ensuremath\otimes-sifted-cocomplete and that ``$\Alg_d$'' is interpreted in the sense of \cite{ClaudiaDamien2}.  Then the $(d+n)$-uple simplicial space $\Alg_d(\cC^\boxasterisk_{\vec\bullet})_{\vec\bullet}$ is a complete $d$-fold Segal object internal to complete $n$-fold Segal spaces.
    \item Suppose that $\cC^\boxasterisk$ is \ensuremath\otimes-GR-cocomplete and that ``$\Alg_d$'' is interpreted in the sense of \cite{HaugsengEn}.  Then the $(d+n)$-uple simplicial space $\Alg_d(\cC^\boxasterisk_{\vec\bullet})_{\vec\bullet}$ is a  $d$-fold Segal object internal to complete $n$-fold Segal spaces.
  \end{enumerate}
\end{theorem}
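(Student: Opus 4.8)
The plan is to build the $(d+n)$-uple simplicial space by applying the construction $\Alg_d(-)$ of \cref{CS and Haugseng theorem} levelwise in $\vec l$ to the $(\infty,n)$-categories $\cC^\boxasterisk_{\vec l}$, and then to verify two families of axioms separately: the $d$-fold Segal (and, in case~(1), completeness) conditions in the ``Morita'' directions $\vec k$, and the complete $n$-fold Segal conditions in the ``(op)lax square'' directions $\vec l$. The starting point is \cref{complete n-fold segality among sifted-cocomplete functors}, which says that $\vec l \mapsto \cC^\boxasterisk_{\vec l}$ is a complete $n$-fold Segal object \emph{in the $(\infty,1)$-category of $\otimes$-sifted-cocomplete} (resp.\ $\otimes$-GR-cocomplete) \emph{categories and cocontinuous functors}. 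In particular every face and degeneracy functor in the $\vec l$ directions is $\otimes$-sifted-cocontinuous (resp.\ $\otimes$-GR-cocontinuous), so the functoriality clause of \cref{CS and Haugseng theorem} applies and shows that $(\vec k;\vec l) \mapsto \Alg_d(\cC^\boxasterisk_{\vec l})_{\vec k}$ is indeed a $(d+n)$-fold simplicial space.

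For the $\vec k$ directions, fix $\vec l$. By hypothesis $\cC^\boxasterisk_{\vec l}$ is $\otimes$-sifted-cocomplete (resp.\ $\otimes$-GR-cocomplete), so \cref{CS and Haugseng theorem} already produces a complete (case~(1)) or merely $d$-fold (case~(2)) Segal space $\vec k \mapsto \Alg_d(\cC^\boxasterisk_{\vec l})_{\vec k}$. To promote these levelwise-in-$\vec l$ statements to conditions \emph{internal to complete $n$-fold Segal spaces}, I would use that homotopy fibre products of complete $n$-fold Segal spaces are computed levelwise (\cref{lemma.basic facts about localizations}): an equivalence of complete $n$-fold Segal spaces is exactly a levelwise equivalence, and the homotopy limits on the right-hand sides of the $\vec k$-direction Segal and completeness conditions are formed levelwise in $\vec l$. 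Hence the internal $\vec k$-direction axioms reduce term-by-term in $\vec l$ to the axioms already supplied by \cref{CS and Haugseng theorem}, and nothing further is needed here.

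For the $\vec l$ directions, fix $\vec k$ and show that $\vec l \mapsto \Alg_d(\cC^\boxasterisk_{\vec l})_{\vec k}$ is a complete $n$-fold Segal space. Since $\vec l \mapsto \cC^\boxasterisk_{\vec l}$ already satisfies the $n$-fold Segal, essential-constancy, and completeness conditions as a diagram of cocomplete categories, it suffices that the space-valued functor $\Alg_d(-)_{\vec k}$ (i)~sends equivalences to equivalences and (ii)~preserves the homotopy fibre products expressing these conditions. Point~(i) is immediate from functoriality and yields essential constancy at once, as that condition only asserts that certain degeneracy maps are equivalences. Completeness is then also formal: by \cref{lemma.alternate description of completeness} the invertible subspace $(-)_{\inv}$ in each $\vec l$-direction is itself one of the homotopy fibre products in question, so once (ii) is known $\Alg_d(-)_{\vec k}$ preserves $(-)_{\inv}$ and hence carries the completeness equivalence of $\cC^\boxasterisk$ to a completeness equivalence. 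Thus everything comes down to the Segal condition, i.e.\ to the identity
\[ \Alg_d\bigl(A \times^h_C B\bigr)_{\vec k} \;\simeq\; \Alg_d(A)_{\vec k} \underset{\Alg_d(C)_{\vec k}}{\overset h \times} \Alg_d(B)_{\vec k} \]
for the cocontinuous spans $A \to C \leftarrow B$ of cocomplete categories occurring in \cref{complete n-fold segality among sifted-cocomplete functors}.

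Establishing that fibre-product identity is the main obstacle, and it is the one place where one must peer inside the constructions of \cite{HaugsengEn,ClaudiaDamien2} rather than use them as black boxes. An element of $\Alg_d(-)_{\vec k}$ is an $E_d$-algebra together with an iterated tower of (pointed) bimodule data, which amounts to an algebra for a coloured operad (resp.\ a factorization-algebra datum on a stratified interval); forming operad-algebras commutes with homotopy limits of symmetric monoidal categories, so on the underlying algebra/module data the fibre product is preserved termwise. The only delicate point is that the relative tensor products encoding composability and the Segal maps are geometric realizations, which must match across the fibre product; this holds precisely because $A \times^h_C B$ is formed in cocomplete categories along cocontinuous functors, so by \cite[Lemma~5.4.5.5]{LurieHTT} its projections are cocontinuous and geometric realizations in it are computed factorwise. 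I would therefore isolate ``$\Alg_d(-)_{\vec k}$ preserves the relevant cocontinuous homotopy fibre products'' as the single lemma to be verified within each of the two models, after which the paragraphs above assemble into the theorem, with case~(1) giving the complete and case~(2) the not-necessarily-complete internal $d$-fold Segal object.
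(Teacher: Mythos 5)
Your proposal follows the paper's proof essentially step for step: the paper likewise disposes of the $\vec k$-directions by citing \cref{CS and Haugseng theorem} for each fixed $\vec l$, and reduces the $\vec l$-directions (Segal, essential constancy, and completeness alike) to the single claim that $\Alg_d(-)_{\vec k}$ preserves equivalences and homotopy fibered products, which it then verifies separately in the two models in \cref{end of the proof CS version} and \cref{end of the proof Haugseng version}. Your sketch of that key lemma matches the paper's argument for the \cite{ClaudiaDamien2} model (prefactorization algebras are operad algebras, hence limit-preserving, and the gluing/local-constancy colimit conditions are detected factorwise via \cite[Lemma 5.4.5.5]{LurieHTT}); the only divergence is that for the \cite{HaugsengEn} model the paper argues more formally, using the Segal condition to write $\Alg_d(\cC)_{\vec k}$ as a limit of the spaces $\Alg_d(\cC)_{(i)} = \maps^h_{\text{$\Delta^{\times d}$-$\infty$-\cat{Op}}}(\Delta^{\times d,\op}_{/(i)},\operatorname{Und}(\cC))$ and the fact that $\operatorname{Und}$ is a right adjoint, thereby sidestepping the composite-algebra/geometric-realization issue your sketch addresses directly.
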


The complete proof of \cref{thm.existence of even higher morita category} will require reviewing a bit about the constructions from \cite{HaugsengEn,ClaudiaDamien2}, which we defer to the end of this section.  But most of the proof follows formally from the theorems already established:

\begin{proof}
  For fixed $\vec l$, \cref{CS and Haugseng theorem} implies that $\Alg_d(\cC^\boxasterisk_{\vec l })_{\vec\bullet}$ is a (complete, in the case of \cite{ClaudiaDamien2}) $d$-fold Segal space.  It thus suffices to prove that for fixed $\vec k$, $\Alg_d(\cC^\boxasterisk_{\vec\bullet})_{\vec k}$ is a complete $n$-fold Segal space.  Since $\cC^\boxasterisk_{\vec\bullet}$ is a complete $n$-fold Segal object by \cref{complete n-fold segality among sifted-cocomplete functors}, it suffices to prove that the functor $\Alg_d(-)_{\vec k}$ preserves fibered products.  For the version of $\Alg_d$ from \cite{ClaudiaDamien2}, we will prove this in \cref{end of the proof CS version}.  For the version of $\Alg_d$ from \cite{HaugsengEn}, we will prove this in \cref{end of the proof Haugseng version}.
\end{proof}

The theorem allows us to give our main definition of this section.
\begin{definition}\label{defn.even higher Morita}
  Let $\ast = $ ``lax,'' ``oplax,'' or ``strong'', and let $\cC$ be as in \cref{thm.existence of even higher morita category}.  The \define{even higher Morita category}  $\Alg_d^\ast(\cC)$ of $E_d$-algebras and $\ast$-morphisms in $\cC$ is the underlying $(\infty,d+n)$-category of the $(\infty,d)$-by-$(\infty,n)$ double category $ (\vec k;\vec l) \mapsto \Alg_d(\cC^\boxasterisk_{\vec l})_{\vec k} $.  
\end{definition}

\begin{remark}
  \Cref{defn.even higher Morita} is a slight abuse of notation when ``$\Alg_d$'' is interpreted in the sense of \cite{HaugsengEn}, as then for fixed $\vec l$ the $d$-fold Segal space $\Alg_d(\cC^\boxasterisk_{\vec l})_{\vec\bullet}$ is generally not complete.  To define the {even higher Morita category}  $\Alg_d^\ast(\cC)$ for the \cite{HaugsengEn}-version of $\Alg_d$, one must first complete $\Alg_d(\cC^\boxasterisk_{\vec\bullet})_{\vec\bullet}$ before taking its underlying $(\infty,d+n)$-category.
\end{remark}

\begin{example}\label{ex.Morita category}
To justify \cref{defn.even higher Morita}, assume that $\cC$ is an $(\infty,2)$-category such that $\cC^\boxasterisk$ is $\otimes$-sifted-cocomplete, where $\ast$ is one of ``lax,'' ``oplax,'' or ``strong.''  We will work out the $(\infty,1)$-by-$(\infty,1)$-category underlying the $(\infty,1)$-by-$(\infty,2)$-category $\Alg^\ast_1(\cC)$, where ``$\Alg_1$'' is understood in the sense of \cite{ClaudiaDamien2}. Note that if $\cC^\boxasterisk$ is $\otimes$-GR-cocomplete and ``$\Alg_1$'' is understood in the sense of \cite{HaugsengEn}, the same interpretations without the pointings hold.

Since $\cC_{\vec\bullet} = \cC^\Box_{(0); \vec\bullet} = \cC^\Box_{\vec\bullet; (0)} = [\Theta^0, \cC]_{\vec\bullet}$,
the objects of $\Alg_{1}^{\ast}(\cC)$  are elements in $\left(\Alg^\ast_1(\cC)\right)_{(0)} = \Alg_1(\cC)_0$, which are $E_1$-algebras in $\Cc$. For the same reason, the 1-morphisms in $\Alg_{1}^{\ast}(\cC)$ are elements in $\left(\Alg^\ast_1(\cC)\right)_{1,(0)} = \Alg_1(\cC)_{1}$, which are (pointed, in the case of \cite{ClaudiaDamien2}) bimodules in $\cC$.

Setting $\vec l = (1)$, the $(0,1,0)$-morphisms in the 3-uple Segal space are
$$\left(\Alg^\ast_1(\cC)\right)_{0,(1)} = \Alg_1\left(\cC^\boxasterisk_{(1)}\right)_0 = \begin{cases} \Alg_1(\cC^\downarrow)_0, &\ast =\mbox{lax}\\
\Alg_1(\cC^\to)_0, &\ast =\mbox{oplax}\\
\Alg_1([\Theta^{(1)},\cC])_0, &\ast =\mbox{strong}.
\end{cases}$$
An element of this space is an $E_1$-algebra object in $\cC^{\boxasterisk}_{(1)}$, i.e.\ an arrow $A \to B$ in $\cC$, thought of as an object in $\cC^{\boxasterisk}_{(1)}$, along with a coherently homotopy-associative homotopy-unital map $(A\to B) \otimes(A\to B) \to (A\to B)$ in $\cC^{\boxasterisk}_{(1)}$.  In the lax, oplax, and strong cases, such a map unpacks to a diagram of the form
$$
\begin{tikzpicture}[baseline=(label.base)]
  \path (0,2) node (AA) {$A \otimes A$} (2,2) node (BB) {$B \otimes B$} (0,0) node (A) {$A$} (2,0) node (B) {$B$};
  \draw[arrow] (AA) -- (A);
  \draw[arrow] (BB) -- (B);
  \draw[arrow] (AA) -- (BB);
  \draw[arrow] (A) -- (B);
  \path (1,-1) node[anchor=base] (label) {lax};
  \draw[twoarrow] (A) -- (BB);
\end{tikzpicture}
\quad\quad
\begin{tikzpicture}[baseline=(label.base)]
  \path (0,2) node (AA) {$A \otimes A$} (2,2) node (BB) {$B \otimes B$} (0,0) node (A) {$A$} (2,0) node (B) {$B$};
  \draw[arrow] (AA) -- (A);
  \draw[arrow] (BB) -- (B);
  \draw[arrow] (AA) -- (BB);
  \draw[arrow] (A) -- (B);
  \path (1,-1) node[anchor=base] (label) {oplax};
  \draw[twoarrow] (BB) -- (A);
\end{tikzpicture}
\quad\quad
\begin{tikzpicture}[baseline=(label.base)]
  \path (0,2) node (AA) {$A \otimes A$} (2,2) node (BB) {$B \otimes B$} (0,0) node (A) {$A$} (2,0) node (B) {$B$};
  \draw[arrow] (AA) -- (A);
  \draw[arrow] (BB) -- (B);
  \draw[arrow] (AA) -- (BB);
  \draw[arrow] (A) -- (B);
  \path (1,-1) node[anchor=base] (label) {strong};
  \draw[twoarrow] (A) -- coordinate (midpoint) (BB); \path (midpoint) ++(-5pt,5pt) node[rotate=45,anchor=mid]{$\sim$};
  \path[decoration={markings,mark=at position -8pt with {\arrow[scale=1.75]{>}}},preaction={decorate}] (BB) -- (A);
\end{tikzpicture}
$$
in $\cC$.  Said another way, $A$ and $B$ are $E_1$-algebra objects in $\cC$ and the arrow $A \to B$ is a lax/oplax/strong algebra morphism.

The 2-morphisms in $\Alg_{1}^{\ast}(\cC)$ are elements of
$$\left(\Alg^\ast_1(\cC)\right)_{1,(1)} = \Alg_1\left(\cC^\boxasterisk_{(1)}\right)_1 = \begin{cases} \Alg_1(\cC^\downarrow)_1, &\ast =\mbox{lax}\\
\Alg_1(\cC^\to)_1, &\ast =\mbox{oplax}\\
\Alg_1([\Theta^{(1)},\cC])_1, &\ast =\mbox{strong}
\end{cases}$$
We will spell this out explicitly for $\ast=\mbox{lax}$. An element in $\Alg_1(\cC^\downarrow)_1$ is a pointed bimodule in $\cC^\downarrow$. This data consists of:
\begin{itemize}
  \item an object in $\cC^{\downarrow}$, i.e.\ an arrow $M \to N$ in $\cC$;
  \item a pointing in $\cC^{\downarrow}$, i.e.\ a morphism $(\unit \to \unit) \to (M\to N)$, i.e.\ a diagram in $\cC$ of shape
$$
\begin{tikzpicture}[baseline=(id.base)]
  \path (0,2) node (unit) {$\unit$} (2,2) node (M) {$M$} (0,0) node (unit2) {$\unit$} (2,0) node (N) {$N$} (1,1) coordinate (center);
  \draw[arrow] (unit) -- node[auto,swap] (id) {$\scriptstyle \id$} (unit2);
  \draw[arrow] (M) -- (N);
  \draw[arrow] (unit) -- (M);
  \draw[arrow] (unit2) -- (N);
  \draw[twoarrow] (unit2) -- (M);
\end{tikzpicture}
\quad \text{or equivalently} \quad
\begin{tikzpicture}[baseline=(unit.base)]
  \path (0,1) node (unit) {$\unit$} (2,2) node (M) {$M$}  (2,0) node (N) {$N$} (1,1) coordinate (center);
  \draw[arrow] (M) -- (N);
  \draw[arrow] (unit) -- (M);
  \draw[arrow] (unit) -- coordinate (unit2) (N);
  \draw[twoarrow] (unit2) -- (M);
\end{tikzpicture}
;$$
  \item a bimodule structure on $(M \to N)$ between algebra objects $(A \to B)$ and $(C \to D)$.  The actions in such a bimodule are diagrams in $\cC$ of the form
  $$
\begin{tikzpicture}[baseline=(center)]
  \path (0,2) node (AA) {$A \otimes M$} (2,2) node (BB) {$B \otimes N$} (0,0) node (A) {$M$} (2,0) node (B) {$N$} (0,1) coordinate (center);
  \draw[arrow] (AA) -- (A);
  \draw[arrow] (BB) -- (B);
  \draw[arrow] (AA) -- (BB);
  \draw[arrow] (A) -- (B);
  \draw[twoarrow] (A) -- (BB);
\end{tikzpicture}
\quad\text{and}\quad
\begin{tikzpicture}[baseline=(center)]
  \path (0,2) node (AA) {$M \otimes C$} (2,2) node (BB) {$N \otimes D$} (0,0) node (A) {$M$} (2,0) node (B) {$N$} (0,1) coordinate (center);
  \draw[arrow] (AA) -- (A);
  \draw[arrow] (BB) -- (B);
  \draw[arrow] (AA) -- (BB);
  \draw[arrow] (A) -- (B);
  \draw[twoarrow] (A) -- (BB);
\end{tikzpicture}
  .$$
\end{itemize}
Repackaging this data, we see that $M$ is a pointed $(A,C)$-bimodules, $N$ is a pointed $(B,D)$-bimodule, and the the map $M\to N$ is a lax morphism of the pointed bimodules, intertwining with the lax morphisms $A\to B$ and $C\to D$.

Similarly, if we choose $\ast=\mbox{oplax}$ or $\ast=\mbox{strong}$, we have two pointed bimodules $M$ and $N$ together with an oplax or strong morphism $M \to N$ of bimodules.
\end{example}

\begin{example} \label{eg.Alg2Rex}
  Fix a ground commutative commutative ring $\bK$.  Let $\cat{Rex}_\bK$ denote the bicategory, thought of as an $(\infty,2)$-category, of small finitely-cocomplete $\bK$-linear  categories, finitely-cocontinuous $\bK$-linear functors, and natural transformations.  As observed in \cite[Proposition 3.10]{BZBJ}, $\cat{Rex}_\bK$ is $\otimes$-sifted-cocomplete.
  Using the well-developed theory of 2-colimits in a bicategory (e.g.\ \cite{MR0401868,Street1980,Kelly1989}), one can show moreover that $\cat{Rex}_\bK^{\smallbox{strong}}$ is $\otimes$-sifted-cocomplete.  (We will not spell out the argument; it uses the same ideas as in \cref{remark.strong cocomplete}, which in the case of 2-limits in bicategories are well established.)
  Using the unpointed version of ``$\Alg_2$'' from \cite{HaugsengEn}, one can therefore build the $(\infty,4)$-category $\Alg_2^{\mathrm{strong}}(\cat{Rex}_\bK)$ predicted in \cite[Conjecture 6.5]{BZBJ}.
\end{example}

\begin{example}\label{eg.Alg1Rex}
By the same argument as in the example above, we can use the unpointed version of ``$\Alg_1$'' from \cite{HaugsengEn} to build the $(\infty,3)$-category $\Alg_1^{\mathrm{strong}}(\cat{Rex}_\bK)$. The $(\infty,3)$-category $\cat{TC}$ from \cite{DSPS1,DSPS2} is expected to be a subcategory of this.
\end{example}

\begin{example}\label{multiple examples}
  Fix a ground commutative ring $\bK$.  Let $\cat{Pres}_\bK$ denote the bicategory, thought of as an $(\infty,2)$-category, of  locally presentable $\bK$-linear categories categories,   cocontinuous $\bK$-linear functors, and natural transformations.
  It contains all small limits and colimits~\cite{BirdThesis} and 
   is closed symmetric monoidal with the Kelly tensor product ``$\boxtimes_{\bK}$'' of linear cocomplete categories~ \cite{MR2177301}, and hence is $\otimes$-sifted-cocomplete.
  Again using the well-developed theory of 2-colimits, one can show directly that $\cat{Pres}_\bK^{\smallbox{strong}}$ is $\otimes$-sifted-cocomplete.
  
We will prove in this example, however, that $\cat{Pres}_{\bZ}^{\smallbox{oplax}}$ fails to be $\otimes$-GR-cocomplete.  Suppose that it were.  Then in particular $\cat{Pres}_{\bZ}^{\to}$ would be $\otimes$-GR-cocomplete, and the source and target functors $s,t: \cat{Pres}_{\bZ}^{\to} \to \cat{Pres}_{\bZ}$ would preserve geometric realizations.  Consider then the following geometric realization in $\cat{Pres}_{\bZ}^{\to}$ (we draw only the 2-truncation of the corresponding $\Delta^\op$-indexed diagram, since $\cat{Pres}_{\bZ}^{\to}$ is a bicategory):
$$ \operatorname{hocolim} \left(
\begin{tikzpicture}[baseline=(ML.base)]
  \path (0,0) node (LL) {$\cat{Mod}_\bZ$} +(6,0) node (LR) {$\cat{Mod}_\bZ$}
        (0,2) node (ML) {$\cat{Mod}_\bZ$} +(6,0) node (MR) {$\cat{Mod}_\bZ$}
        (0,4) node (UL) {$\cat{Mod}_\bZ$} +(6,0) node (UR) {$\cat{Mod}_\bZ$};
  \draw[arrow] (LL.north) -- node[fill=white,inner sep=1pt] {$\scriptstyle \bZ$} (ML.south); 
  \draw[arrow] (ML.south -| .25,0) -- node[fill=white,inner sep=1pt] {$\scriptstyle \bZ$} (LL.north -| .25,0); 
  \draw[arrow] (ML.south -| -.25,0) -- node[fill=white,inner sep=1pt] {$\scriptstyle \bZ$} (LL.north -| -.25,0); 
  \draw[arrow] (LR.north) -- node[fill=white,inner sep=1pt] {$\scriptstyle \bZ$} (MR.south); 
  \draw[arrow] (MR.south -| 6.25,0) -- node[fill=white,inner sep=1pt] {$\scriptstyle \bZ$} (LR.north -| 6.25,0); 
  \draw[arrow] (MR.south -| 5.75,0) -- node[fill=white,inner sep=1pt] {$\scriptstyle \bZ$} (LR.north -| 5.75,0); 
  \draw[arrow] (ML.north -| .25,0) -- node[fill=white,inner sep=1pt] {$\scriptstyle \bZ$} (UL.south -| .25,0);
  \draw[arrow] (ML.north -| -.25,0) -- node[fill=white,inner sep=1pt] {$\scriptstyle \bZ$} (UL.south -| -.25,0);
  \draw[arrow] (UL.south) -- node[fill=white,inner sep=1pt] {$\scriptstyle \bZ$} (ML.north);
  \draw[arrow] (UL.south -| -.5,0) -- node[fill=white,inner sep=1pt] {$\scriptstyle \bZ$} (ML.north -| -.5,0);
  \draw[arrow] (UL.south -| .5,0) -- node[fill=white,inner sep=1pt] {$\scriptstyle \bZ$} (ML.north -| .5,0);
  \draw[arrow] (MR.north -| 6.25,0) -- node[fill=white,inner sep=1pt] {$\scriptstyle \bZ$} (UR.south -| 6.25,0);
  \draw[arrow] (MR.north -| 5.75,0) -- node[fill=white,inner sep=1pt] {$\scriptstyle \bZ$} (UR.south -| 5.75,0);
  \draw[arrow] (UR.south) -- node[fill=white,inner sep=1pt] {$\scriptstyle \bZ$} (MR.north);
  \draw[arrow] (UR.south -| 5.5,0) -- node[fill=white,inner sep=1pt] {$\scriptstyle \bZ$} (MR.north -| 5.5,0);
  \draw[arrow] (UR.south -| 6.5,0) -- node[fill=white,inner sep=1pt] {$\scriptstyle \bZ$} (MR.north -| 6.5,0);
  \draw[arrow] (LL) -- node[fill=white,inner sep=1pt] {$\scriptstyle \bZ/4$} (LR);
  \draw[arrow] (ML) -- node[fill=white,inner sep=1pt] {$\scriptstyle \bZ/4 \oplus \bZ/2$} (MR);
  \draw[arrow] (UL) -- node[fill=white,inner sep=1pt] {$\scriptstyle (\bZ/4 \oplus \bZ/2) \oplus (\bZ/4 \oplus \bZ/2)$} (UR);
  \draw[twoarrowlonger] (LL.north -| 1,0) -- node[fill=white,inner sep=1pt] {$\scriptstyle (1,0)$} (MR.south -| 3,0);
  \draw[twoarrowlonger] (MR.south -| 4,0) -- node[fill=white,inner sep=1pt] {$\scriptstyle p_1$} (LL.north -| 2,0);
  \draw[twoarrowlonger] (LL.north -| 3,0) -- node[fill=white,inner sep=1pt] {$\scriptstyle (1,1)$} (MR.south -| 5,0);
  \draw[twoarrowlonger] (ML.north -| 1,0) -- node[fill=white,inner sep=1pt] {$\scriptstyle (\id,0)$} (UR.south -| 1.8,0);
  \draw[twoarrowlonger] (UR.south -| 2.6,0) -- node[fill=white,inner sep=1pt] {$\scriptstyle p_1$} (ML.north -| 1.8,0);
  \draw[twoarrowlonger] (ML.north -| 2.6,0) -- node[fill=white,inner sep=1pt] {$\scriptstyle (\id,\id)$} (UR.south -| 3.4,0);
  \draw[twoarrowlonger] (UR.south -| 4.2,0) -- node[fill=white,inner sep=1pt] {$\scriptstyle p_2$} (ML.north -| 3.4,0);
  \draw[twoarrowlonger] (ML.north -| 4.2,0) -- node[fill=white,inner sep=1pt] {$\scriptstyle (0,\id)$} (UR.south -| 5,0);
\end{tikzpicture}
  \right)
  $$
Here $\cat{Mod}_{\bZ}$ denotes the category of all abelian groups (which is indeed additive and locally presentable), and we have used the Eilenberg--Watts theorem to identitify the category of additive cocontinuous endofunctors of $\cat{Mod}_{\bZ}$ with $\cat{Mod}_{\bZ}$ itself (via $M \leftrightarrow (\otimes M)$).  By $p_i$ we mean projection onto the $i$th factor.  A map like $(1,1) : \bZ/4 \to \bZ/4 \oplus \bZ/2$ is the identity in the first factor and the quotient map $\bZ/4 \to \bZ/2$ in the second.  Of course, the arrows indexed by degeneracy maps in $\Delta$ do not effect the colimit, but their existence witnesses that in fact this is a geometric realization.

By assumption, the horizontal source and target maps $\cat{Pres}_\bZ^\to \rightrightarrows \cat{Pres}_\bZ$ preserve geometric realizations (in fact, one can prove that they preserve all colimits), and 
$$ \operatorname{hocolim} \left(
\begin{tikzpicture}[baseline=(ML.base)]
  \path (0,0) node (LL) {$\cat{Mod}_\bZ$} 
        (0,2) node (ML) {$\cat{Mod}_\bZ$} 
        (0,4) node (UL) {$\cat{Mod}_\bZ$}; 
  \draw[arrow] (LL.north) -- node[fill=white,inner sep=1pt] {$\scriptstyle \bZ$} (ML.south); 
  \draw[arrow] (ML.south -| .25,0) -- node[fill=white,inner sep=1pt] {$\scriptstyle \bZ$} (LL.north -| .25,0); 
  \draw[arrow] (ML.south -| -.25,0) -- node[fill=white,inner sep=1pt] {$\scriptstyle \bZ$} (LL.north -| -.25,0); 
  \draw[arrow] (ML.north -| .25,0) -- node[fill=white,inner sep=1pt] {$\scriptstyle \bZ$} (UL.south -| .25,0);
  \draw[arrow] (ML.north -| -.25,0) -- node[fill=white,inner sep=1pt] {$\scriptstyle \bZ$} (UL.south -| -.25,0);
  \draw[arrow] (UL.south) -- node[fill=white,inner sep=1pt] {$\scriptstyle \bZ$} (ML.north);
  \draw[arrow] (UL.south -| -.5,0) -- node[fill=white,inner sep=1pt] {$\scriptstyle \bZ$} (ML.north -| -.5,0);
  \draw[arrow] (UL.south -| .5,0) -- node[fill=white,inner sep=1pt] {$\scriptstyle \bZ$} (ML.north -| .5,0);
\end{tikzpicture}
  \right)
  \quad\simeq\quad \cat{Mod}_\bZ. $$
Thus the colimit in $\cat{Pres}_{\bZ}^{\to}$ is an object of the form $(\cat{Mod}_{\bZ} \overset M \to \cat{Mod}_{\bZ}) \in \cat{Pres}_{\bZ}^{\to}$, and it suffices to compute~$M$.  Unpacking the universal property verifies that
$$ M = \lim \left( \bZ/4 \overset{(1,0)}{\underset{(1,1)}\rightrightarrows} \bZ/4 \oplus \bZ/2 \right) \cong \bZ/2. $$
The colimit turned into a limit because the 2-cell involved in a 1-morphism $\cC^\to$ points ``the wrong way.''

Now, continuing to suppose that $\cat{Pres}_{\bZ}^{\to}$ is $\otimes$-GR-cocomplete, this colimit should commute with tensoring by arbitrary objects of  $\cat{Pres}_{\bZ}^{\to}$.  Consider tensoring with the object $(\cat{Mod}_{\bZ/2} \overset \id \to \cat{Mod}_{\bZ/2})$.  The same  calculations as above then show that
\begin{multline*} \operatorname{hocolim} \left(
\begin{tikzpicture}[baseline=(ML.base)]
  \path (0,0) node (LL) {$\cat{Mod}_{\bZ/2}$} +(6,0) node (LR) {$\cat{Mod}_{\bZ/2}$}
        (0,2) node (ML) {$\cat{Mod}_{\bZ/2}$} +(6,0) node (MR) {$\cat{Mod}_{\bZ/2}$}
        (0,4) node (UL) {$\cat{Mod}_{\bZ/2}$} +(6,0) node (UR) {$\cat{Mod}_{\bZ/2}$};
  \draw[arrow] (LL.north) --  (ML.south); 
  \draw[arrow] (ML.south -| .25,0) --  (LL.north -| .25,0); 
  \draw[arrow] (ML.south -| -.25,0) --  (LL.north -| -.25,0); 
  \draw[arrow] (LR.north) --  (MR.south); 
  \draw[arrow] (MR.south -| 6.25,0) --  (LR.north -| 6.25,0); 
  \draw[arrow] (MR.south -| 5.75,0) --  (LR.north -| 5.75,0); 
  \draw[arrow] (ML.north -| .25,0) --  (UL.south -| .25,0);
  \draw[arrow] (ML.north -| -.25,0) --  (UL.south -| -.25,0);
  \draw[arrow] (UL.south) --  (ML.north);
  \draw[arrow] (UL.south -| -.5,0) --  (ML.north -| -.5,0);
  \draw[arrow] (UL.south -| .5,0) --  (ML.north -| .5,0);
  \draw[arrow] (MR.north -| 6.25,0) --  (UR.south -| 6.25,0);
  \draw[arrow] (MR.north -| 5.75,0) --  (UR.south -| 5.75,0);
  \draw[arrow] (UR.south) --  (MR.north);
  \draw[arrow] (UR.south -| 5.5,0) --  (MR.north -| 5.5,0);
  \draw[arrow] (UR.south -| 6.5,0) --  (MR.north -| 6.5,0);
  \draw[arrow] (LL) -- node[fill=white,inner sep=1pt] {$\scriptstyle \bZ/2$} (LR);
  \draw[arrow] (ML) -- node[fill=white,inner sep=1pt] {$\scriptstyle \bZ/2 \oplus \bZ/2$} (MR);
  \draw[arrow] (UL) -- node[fill=white,inner sep=1pt] {$\scriptstyle (\bZ/2 \oplus \bZ/2) \oplus (\bZ/2 \oplus \bZ/2)$} (UR);
  \draw[twoarrowlonger] (LL.north -| 1,0) -- node[fill=white,inner sep=1pt] {$\scriptstyle (1,0)$} (MR.south -| 3,0);
  \draw[twoarrowlonger] (MR.south -| 4,0) -- node[fill=white,inner sep=1pt] {$\scriptstyle \pi_1$} (LL.north -| 2,0);
  \draw[twoarrowlonger] (LL.north -| 3,0) -- node[fill=white,inner sep=1pt] {$\scriptstyle (1,1)$} (MR.south -| 5,0);
  \draw[twoarrowlonger] (ML.north -| 1,0) -- node[fill=white,inner sep=1pt] {$\scriptstyle (\id,0)$} (UR.south -| 1.8,0);
  \draw[twoarrowlonger] (UR.south -| 2.6,0) -- node[fill=white,inner sep=1pt] {$\scriptstyle \pi_1$} (ML.north -| 1.8,0);
  \draw[twoarrowlonger] (ML.north -| 2.6,0) -- node[fill=white,inner sep=1pt] {$\scriptstyle (\id,\id)$} (UR.south -| 3.4,0);
  \draw[twoarrowlonger] (UR.south -| 4.2,0) -- node[fill=white,inner sep=1pt] {$\scriptstyle \pi_2$} (ML.north -| 3.4,0);
  \draw[twoarrowlonger] (ML.north -| 4.2,0) -- node[fill=white,inner sep=1pt] {$\scriptstyle (0,\id)$} (UR.south -| 5,0);
\end{tikzpicture}
  \right) \\
  \simeq \quad
\bigg(
  \begin{tikzpicture}[baseline=(L.base)]
  \path (0,0) node (L) {$\cat{Mod}_{\bZ/2}$} +(6,0) node (R) {$\cat{Mod}_{\bZ/2}$} ;
  \draw[arrow] (L) -- node[auto]  {$\lim \left( \bZ/2 \overset{(1,0)}{\underset{(1,1)}\rightrightarrows} \bZ/2 \oplus \bZ/2 \right)$} (R);
  \end{tikzpicture}
\bigg)
  \quad\simeq \quad
\bigg(
  \begin{tikzpicture}[baseline=(L.base)]
  \path (0,0) node (L) {$\cat{Mod}_{\bZ/2}$} +(3,0) node (R) {$\cat{Mod}_{\bZ/2}$} ;
  \draw[arrow] (L) -- node[auto]  {$0$} (R);
  \end{tikzpicture}\bigg).
  \end{multline*}
  But $(\cat{Mod}_\bZ \overset {\bZ/2} \to \cat{Mod}_\bZ) \boxtimes (\cat{Mod}_{\bZ/2} \overset {\id} \to \cat{Mod}_{\bZ/2}) \simeq (\cat{Mod}_{\bZ/2} \overset {\id} \to \cat{Mod}_{\bZ/2})$, not $(\cat{Mod}_{\bZ/2} \overset {0} \to \cat{Mod}_{\bZ/2})$.  Thus the symmetric monoidal structure in $\cat{Pres}_\bZ^\rightarrow$ does not distribute over geometric realizations, and so $\cat{Pres}_\bZ^\rightarrow$ is not $\otimes$-GR-cocomplete.
\end{example}

\begin{remark}\label{remark.strong cocomplete}
  Recall from \cref{defn.loop space} that $(\infty,n)$-categories are ``$(\infty,1)$-categories enriched in $(\infty,n-1)$-categories.''  Although much work on enriched $(\infty,1)$-categories exists (see for example \cite{MR3109865,BergnerRezk2014}, where it is shown that ``category enriched in $(\infty,n-1)$-categories'' can be taken as a definition for ``$(\infty,n)$-category'', \cite{MR3345192,MR3402334} for the general theory of ``enriched $(\infty,1)$-categories'', and \cite{GepnerHaugsengNikolaus} for weighted (co)limits), it seems that a complete accounting of limits and colimits in enriched $(\infty,1)$-categories does not yet exist.
  
  What should be true is the following.  Suppose that $\cC$ is a $\cV$-enriched $(\infty,1)$-category with underlying $(\infty,1)$-category $\underline \cC$.  Given a diagram in $\cC$, one can study its colimit in $\underline \cC$: the universal property of the colimit asserts an equivalence of spaces.  For example, \cref{defn.sifted} only uses such ``unenriched colimits.''  One can also study its ``enriched colimit'' in which the universal property holds not just at the level of spaces but also at the level of $\cV$-objects.  If an enriched colimit exists, then it is also the unenriched colimit, but it is possible that unenriched colimits  exist while their enriched versions  fail to exist.  For example, consider the bicategory $\rB^2\bN$ with one object $x$, one 1-morphism $\id_x$, and $\bN$-many 2-morphisms $\id_x \Rightarrow \id_x$ as an $(\infty,1)$-category enriched in $(\infty,1)$-categories.  Then $\underline{\rB^2\bN} = \{x\}$ is the trivial $(\infty,1)$-category with one object and so admits all limits and colimits (all of them being just $x$ itself), whereas $\rB^2\bN$ admits very few limits and colimits in the enriched sense.
  
  It is a standard fact that if $\cC$ is a strict 1-category which admits colimits of some shape $K$, then any functor category $[\Theta,\cC]$, for $\Theta$ an arbitrary strict 1-category, also admits colimits of shape $K$, and moreover the pullback functors $f^* : [\Theta',\cC] \to [\Theta,\cC]$ for functors $f : \Theta \to \Theta'$ preserve colimits of shape $K$.  
  The $(\infty,1)$-version of this fact is described in \cref{complete n-fold segality among sifted-cocomplete functors}.
  However, the example $\cC = \rB^2\bN$ and $\Theta = \Theta^{(2)}$ shows that this is false when extended to $(\infty,n)$-categories if colimits are taken in the unenriched sense.  But if an $(\infty,n)$-category $\cC$ admits colimits of shape $K$ in the sense of categories enriched in $(\infty,n-1)$-categories, then it should be true that all functor categories $[\Theta,\cC]$, for $\Theta$ an arbitrary $(\infty,n)$-category, also admit enriched $K$-shaped colimits, and that these are preserved by pullbacks.

  It would follow that if $\cC$ is $\otimes$-GR-cocomplete (resp.\ \ensuremath\otimes-sifted-cocomplete) in the enriched sense, then $\cC^{\smallbox{strong}}$ is automatically $\otimes$-GR-cocomplete (resp.\ \ensuremath\otimes-sifted-cocomplete).  In particular, $\Alg_d^{\mathrm{strong}}(\cC)$ would exists,  where ``$\Alg_d$'' is interpreted in the sense of \cite{HaugsengEn} (resp.\ \cite{ClaudiaDamien2}).  
\end{remark}

We turn now to proving \cref{end of the proof Haugseng version} and \cref{end of the proof CS version}, thereby completing the proof of \cref{thm.existence of even higher morita category}.  This requires recalling  the main ingredients of the two constructions of the higher Morita category $\Alg_d(\cC)$ from \cite{HaugsengEn, ClaudiaDamien2}.  We discuss the construction from \cite{HaugsengEn} first, and then turn to the construction from \cite{ClaudiaDamien2}.

\Subsection{The unpointed version from \cite{HaugsengEn}}

The paper \cite{HaugsengEn} uses ``generalized $\Delta^{\times d}$-$\infty$-operads'' and algebras over them, which are an extension of Lurie's machinery for algebras over $\infty$-operads developed in \cite{LurieHA}.  By design, (generalized) $\Delta^{\times d}$-$\infty$-operads are the appropriate notion of ``(generalized) operads'' for which one can form algebras over them  in $E_d$-monoidal categories.

By \cite[Definition 4.29]{HaugsengEn}, there is an adjunction between generalized $\Delta^{\times d}$-$\infty$-operads and (symmetric) $\infty$-operads given by the Segal functor. Moreover, by \cite[Section 2.2.4]{LurieHA}, every symmetric monoidal $(\infty,1)$-category has an underlying $\infty$-operad, and this forgetful map admits a left adjoint, the symmetric monoidal envelope. Piecing this together, we have an adjunction
$$\operatorname{Und} : \{\text{symmetric monoidal $(\infty,1)$-categories}\} \rightleftarrows \{\text{generalized $\Delta^{\times d}$-$\infty$-operad}\} : \operatorname{Env}.$$ 

The main step of the construction of $\cat{Alg}_d(-)$ in \cite{HaugsengEn} is a combinatorial definition of generalized $\Delta^{\times d}$-$\infty$-operads $\Delta^{\times d,\op}_{/(i)}$ for each $i\leq d$.  Given a symmetric monoidal $(\infty, 1)$-category $\cC$, the space of $i$-morphisms in $\cat{Alg}_d(\cC)$ is then defined to be
$$ \Alg_d(\cC)_{(i)} := \Alg^d_{\Delta^{\times d,\op}_{/(i)}}(\cC) := \maps^h_{\text{$\Delta^{\times d}$-$\infty$-\cat{Op}}}(\Delta^{\times d,\op}_{/(i)},\operatorname{Und}(\cC)).$$

\begin{remark}
  The space $\Alg_d(\cC)_{\vec k}$ of $\vec k$-tuples of composable morphisms given in \cite{HaugsengEn} is a bit more complicated, requiring the notion of ``composite $\Delta^{\times d,\op}_{/\vec k}$-algebra''; see Section 6.1 of  \cite{HaugsengEn} for details.
\end{remark}

\begin{proposition}\label{end of the proof Haugseng version}
For fixed $\vec k\in \mathbb{N}^d$, the functor $\cat{Alg}_d(-)_{\vec k}$ from $\cat{Cat}^\otimes$ to spaces as defined by \cite{HaugsengEn} preserves limits, and thus in particular fibered products.
\end{proposition}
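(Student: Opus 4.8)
The plan is to exhibit $\Alg_d(-)_{\vec k}$ as a composite of limit-preserving functors, so that the claim becomes formal. First I would treat the case $\vec k = (i)$ of a single string of $1$s, where by the definition recalled above $$\Alg_d(\cC)_{(i)} = \maps^h_{\Delta^{\times d}\text{-}\infty\text{-}\cat{Op}}\bigl(\Delta^{\times d,\op}_{/(i)}, \operatorname{Und}(\cC)\bigr)$$ is the derived mapping space out of a \emph{fixed} generalized $\Delta^{\times d}$-$\infty$-operad into $\operatorname{Und}(\cC)$. Thus $\Alg_d(-)_{(i)}$ factors as $\operatorname{Und}$ followed by the corepresentable functor $\maps^h(\Delta^{\times d,\op}_{/(i)}, -)$ to spaces, and it suffices to see that each of these two functors preserves limits.

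The corepresentable functor $\maps^h(P,-)$ preserves arbitrary limits (homotopy limits) in its target: this is the standard continuity of derived mapping spaces in the second variable, of the same flavor as \cref{lemma.mappingspace}. And $\operatorname{Und}$ preserves limits because it is a right adjoint, the left adjoint being the symmetric monoidal envelope $\operatorname{Env}$ exhibited above. (Alternatively, and more robustly, limits of symmetric monoidal $\infty$-categories and of operads are created by the forgetful functors to the underlying objects, which is the same principle underlying \cite[Lemma 5.4.5.5]{LurieHTT}; this also guarantees that the fibered products we ultimately care about may be computed in $\cat{Cat}^\otimes$ and remain inside the world of $\otimes$-GR-cocomplete categories, as recorded in \cref{complete n-fold segality among sifted-cocomplete functors}.) Composing the two facts gives $\Alg_d(\lim_\alpha \cC_\alpha)_{(i)} \simeq \lim_\alpha \Alg_d(\cC_\alpha)_{(i)}$.

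It then remains to pass from single strings $(i)$ to arbitrary $\vec k \in \NN^d$, and I expect this to be the genuinely load-bearing step. Here one must unwind the notion of \emph{composite} $\Delta^{\times d,\op}_{/\vec k}$-algebra from \cite[Section 6.1]{HaugsengEn}: the space $\Alg_d(\cC)_{\vec k}$ is not a single corepresentable mapping space, but a limit, over the appropriate indexing diagram, of algebra spaces over the constituent operads, cut out by the composition (Segal-type) conditions. Once this space is correctly identified as a limit of corepresentable mapping spaces into $\operatorname{Und}(\cC)$, functorially and naturally in $\cC$, the conclusion is formal: since limits commute with limits, $\Alg_d(-)_{\vec k}$ is a limit of functors each preserving limits, hence preserves limits, and the statement for fibered products is the special case of a pullback diagram. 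The two preservation facts for $\maps^h(P,-)$ and for $\operatorname{Und}$ are immediate; it is only the bookkeeping of the composite-algebra construction that requires care.
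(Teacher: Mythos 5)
Your treatment of the single-string case $\vec k = (i)$ is exactly the paper's argument: $\Alg_d(-)_{(i)}$ is the composite of $\operatorname{Und}$ (a right adjoint to $\operatorname{Env}$, hence limit-preserving) with the corepresentable functor $\maps^h_{\text{$\Delta^{\times d}$-$\infty$-\cat{Op}}}(\Delta^{\times d,\op}_{/(i)},-)$ (limit-preserving), together with the observation via \cite[Lemma 5.4.5.5]{LurieHTT} and \cite{RV2014} that limits of $\otimes$-GR-cocomplete categories along $\otimes$-GR-cocontinuous functors may be computed in $\cat{Cat}^\otimes$. That part is fine.

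The gap is in your passage to general $\vec k$, which you yourself flag as the load-bearing step but then defer to ``bookkeeping.'' By definition, $\Alg_d(\cC)_{\vec k}$ is the \emph{subspace} of $\maps^h(\Delta^{\times d,\op}_{/\vec k},\operatorname{Und}(\cC))$ consisting of the \emph{composite} algebras, and compositeness is a colimit condition (certain canonical diagrams built from relative tensor products must be colimits). A union of connected components cut out by colimit conditions is not ``a limit of corepresentable mapping spaces'' by any unwinding of definitions, so the formal commutation-of-limits argument does not apply as you state it. There are two ways to close the gap. Either (a) invoke the main theorem of \cite{HaugsengEn}: for $\otimes$-GR-cocomplete $\cC$, the spaces $\Alg_d(\cC)_{\vec\bullet}$ form a $d$-fold Segal space, so $\Alg_d(\cC)_{\vec k}$ \emph{is} a homotopy limit of the corepresentable spaces $\Alg_d(\cC)_{(i)}$ --- this is what the paper does, and note that to apply this theorem to the limit category one needs that category to again be $\otimes$-GR-cocomplete with $\otimes$-GR-cocontinuous projections, so the Lemma 5.4.5.5/\cite{RV2014} point you relegate to a parenthesis is essential rather than optional. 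Or (b) work with the composite-algebra definition directly and prove that compositeness is detected levelwise in a limit along $\otimes$-GR-cocontinuous functors, i.e., that a diagram in the limit category is a colimit if and only if its projections are --- precisely the argument the paper uses for the factorization-algebra version in \cref{end of the proof CS version}. Either ingredient is a genuine theorem, not bookkeeping, and your proposal supplies neither.
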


\begin{proof}
  The main result of \cite{HaugsengEn} is that when $\cC$ is $\otimes$-GR-cocomplete, $\cat{Alg}_d(\cC)_{\vec \bullet}$ is an $n$-fold Segal space.  In particular, $\cat{Alg}_d(\cC)_{\vec k}$ is a limit of $\cat{Alg}_d(\cC)_{(i)}$s for various $i$s.  Since limits commute \cite[Lemma 5.5.2.3]{LurieHTT}, it suffices to check that $\cC \mapsto \Alg_d(\cC)_{(i)}$ preserves limits along $\otimes$-GR-cocontinuous functors.  

  By \cite[Lemma 5.4.5.5]{LurieHTT}, the inclusion of $\otimes$-GR-cocomplete $(\infty,1)$-categories among all symmetric monoidal $(\infty,1)$-categories preserves fibered products.  In fact, it preserves all limits \cite{RV2014}.  Thus it suffices to check that the functor $\Alg_d(-)_{(i)}$ from symmetric monoidal $(\infty,1)$-categories to spaces preserves limits.
  
As we saw above, the functor $\operatorname{Und}$ is a right adjoint to the envelope $\operatorname{Env}$. Being a right adjoint, it preserves $\infty$-categorical limits \cite[Proposition 5.2.3.5]{LurieHTT}.
  Theorem 4.2.4.1 from \cite{LurieHTT} identifies $\infty$-categorical limits with homotopy limits, which are precisely the limits tested by the representable functors $\hom(X,-)$ \cite[Remark A.3.3.13]{LurieHTT}. In particular, the functor $$\maps^h_{\text{$\Delta^{\times d}$-$\infty$-\cat{Op}}}(\Delta^{\times d,\op}_{/(i)},-) : \{\text{$\Delta^{\times d}$-$\infty$-operads}\} \to \cat{Spaces}$$ preserves ($\infty$-categorical limits aka homotopy) limits.  Thus $\Alg_d(-)_{(i)}$, being a composition of limit-preserving functors, preserves limits.
\end{proof}

\begin{remark}
For the purposes of this paper, we care only about the algebras for an operad among symmetric monoidal $(\infty,1)$-categories, but the results  generalize if one works with $E_d$-monoidal categories $\cC$ using the more general machinery of generalized $\Delta^{\times d}$-operads. 
\end{remark}

\Subsection{The pointed version from \cite{ClaudiaDamien2}}

We turn now to the construction of $\Alg_d$ from \cite{ClaudiaDamien2}.
The basic geometric input used 
 are stratifications of $\bR^d$ of very simple types, namely, stratifications whose components are finite unions of complements of finite intersections of hyperplanes given by the equations $x_i = s^i_j$ for some $s^i_j\in\bR$.  Some examples when $d = 2$ and $3$ are:
\begin{center}
\begin{tikzpicture}
\draw (0.8, -0.5) node [anchor=north] {$s^1_1$} -- (0.8,3);
\draw (1.8, -0.5) node [anchor=north] {$s^1_2$} -- (1.8, 3);
\draw (2.2, -0.5) node [anchor=north] {$s^1_3$} -- (2.2, 3);
\draw (2.9, -0.5) node [anchor=north] {$s^1_4$} -- (2.9, 3);
\draw (3.2, -0.5) node [anchor=north] {$s^1_5$} -- (3.2, 3);

\draw[dotted, very thin] (0,0) node [anchor=east] {$s^2_1$} -- (4,0);
\draw[dotted, very thin] (0,0.7) node [anchor=east] {$s^2_2$} -- (4,0.7);
\draw[dotted, very thin] (0,1.9) node [anchor=east] {$s^2_3$} -- (4,1.9);
\draw[dotted, very thin] (0,2.5) node [anchor=east] {$s^2_4$} -- (4,2.5);

\fill (0.8,0) circle (0.1em);
\fill (1.8,0) circle (0.1em);
\fill (2.2,0) circle (0.1em);
\fill (2.9,0) circle (0.1em);
\fill (3.2,0) circle (0.1em);

\fill (0.8,0.7) circle (0.1em);
\fill (1.8,0.7) circle (0.1em);
\fill (2.2,0.7) circle (0.1em);
\fill (2.9,0.7) circle (0.1em);
\fill (3.2,0.7) circle (0.1em);

\fill (0.8,1.9) circle (0.1em);
\fill (1.8,1.9) circle (0.1em);
\fill (2.2,1.9) circle (0.1em);
\fill (2.9,1.9) circle (0.1em);
\fill (3.2,1.9) circle (0.1em);

\fill (0.8,2.5) circle (0.1em);
\fill (1.8,2.5) circle (0.1em);
\fill (2.2,2.5) circle (0.1em);
\fill (2.9,2.5) circle (0.1em);
\fill (3.2,2.5) circle (0.1em);
\end{tikzpicture}
\hspace{1cm}
\begin{tikzpicture}[scale=0.6]
\fill[shading=axis,shading angle=90, ultra nearly transparent] (0,0) -- (5,5) -- (5,9)  -- (0,4);
\draw (5,9) node[anchor=south] {$s^1_1$};
\fill[shading=axis,shading angle=90, ultra nearly transparent] (3,0) -- (8,5) -- (8,9) -- (3,4);
\draw (8,9) node[anchor=south] {$s^1_2$};
\fill[shading=axis,shading angle=90, ultra nearly transparent] (5.3,0) -- (10.3,5) -- (10.3,9) -- (5.3,4);
\draw (10.3,9) node[anchor=south] {$s^1_2$};

\draw (3.5, 3.5) -- (3.5, 7.5);
\draw (6.5, 3.5) -- (6.5, 7.5);
\draw (8.8, 3.5) -- (8.8, 7.5);
\draw (3.5, 7.5) node[anchor=south east] {$s^2_1$};

\draw (1.5, 1.5) -- (1.5, 5.5);
\draw (4.5, 1.5) -- (4.5, 5.5);
\draw (6.8, 1.5) -- (6.8, 5.5);
\draw (1.5, 5.5) node[anchor=south east] {$s^2_2$};

\draw (0, 1.5) node[anchor=east] {$s^3_1$};
\draw[dotted, very thin] (0, 1.5) -- (5, 6.5);
\fill (1.5, 3) circle (0.2em);
\fill (3.5, 5) circle (0.2em);

\draw[dotted, very thin] (3, 1.5) -- (8, 6.5);
\fill (4.5, 3) circle (0.2em);
\fill (6.5, 5) circle (0.2em);

\draw[dotted, very thin] (5.3, 1.5) -- (10.3, 6.5);
\fill (6.8, 3) circle (0.2em);
\fill (8.8, 5) circle (0.2em);

\draw[dotted, very thin] (1.5, 3) -- (6.8, 3);
\draw[dotted, very thin] (3.5, 5) -- (8.8, 5);
\end{tikzpicture}
\end{center}

For each $\vec k\in \bN^d$, there is a contractible space of possible stratifications with at most $k_i$ hyperplanes in the $i$th direction of $\bR^d$. To organize this data into an $(\infty,n)$-category, \cite{ClaudiaDamien2} defines a contractible complete $d$-fold Segal space $\Covers^d_{\vec\bullet}$ such that any element in $\Covers^d_{\vec k}$ gives a stratification of $\bR^d$. To simplify notation for the purposes here, we will denote by $(\bR^d)^\stratified_{\vec I}$ the manifold $\bR^d$ together with the stratification coming from an element $\vec I \in \Covers^d_{\vec k}$.

Given any stratified manifold $M$ and any symmetric monoidal $(\infty,1)$-category $\cC$, there is a space $\Fact^{lc}(M,\cC)$ of $\cC$-valued factorization algebras on $M$ which are locally constant with respect to the stratification.  We refer the reader to \cite{ClaudiaDamien2} or \cite{Ginot} for the precise definition of ``factorization algebra locally constant with respect to a stratification,'' which also might be called ``constructible.''  

  We will use only the following property of the definition: there is a (strict) colored operad $\cat{Opens}(M)$ whose algebras in $\cC$ are \define{$\cC$-valued prefactorization algebras on $M$}; a prefactorization algebra $F : \cat{Opens}(M) \to \cC$ is \define{factorization algebra locally constant with respect to the stratification} if and only if certain diagrams in $\cC$ determined by $F$ are colimits (some coming from a gluing condition, others from the local constancy condition). Since these diagrams are all sifted, the assignment $\cC \mapsto \Fact^{lc}(M,\cC)$ is functorial for $\otimes$-sifted-cocontinuous functors between $\otimes$-sifted-cocomplete $(\infty,1)$-categories.
  
\begin{definition}
For $\vec k\in \bN^d$, the space $ \Alg_d(\cC)_{\vec k}$ is the total space of the bundle over $\Covers^d_{\vec k}$ whose fiber over some element $\vec I \in \Covers^d_{\vec k}$ is $\Fact^{lc}\bigl((\bR^d)^\stratified_{\vec I},\cC\bigr)$.  (For details on the structure of $ \Alg_d(\cC)_{\vec k}$ as a topological space, see \cite{ClaudiaDamien2}.)
\end{definition}

Among the points $\vec I \in \Covers^d_{\vec k}$ are some for which the corresponding stratification is ``maximal'' --- in which there are exactly $k_i$ hypersurfaces in the $i$th direction, and not fewer.  For any such $\vec I$, the inclusion of the fiber over $\vec I$ 
  $$ \Fact^{lc}\bigl((\bR^d)^\stratified_{\vec I},\cC\bigr) \hookrightarrow \Alg_d(\cC)_{\vec k} $$
  is an equivalence. This is because there is a deformation retract rescaling the stratifications appearing on the right hand side to the fixed stratification $(\bR^d)^\stratified_{\vec I}$ in a compatible way.

\begin{proposition} \label{end of the proof CS version}
For fixed $\vec k\in \mathbb{N}^d$, the functor $\Alg_d(-)_{\vec k}$ from $\otimes$-sifted-cocomplete symmetric monoidal $(\infty,1)$-categories and $\otimes$-sifted-cocontinuous functors to spaces as defined by \cite{ClaudiaDamien2} preserves limits, and thus in particular fibered products.
\end{proposition}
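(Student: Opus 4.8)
The plan is to mirror the structure of the proof of \cref{end of the proof Haugseng version}, reducing the statement to the preservation of limits by the representable functor $\Fact^{lc}(M,-)$ for a fixed stratified manifold $M$. First I would fix a ``maximal'' stratification $\vec I \in \Covers^d_{\vec k}$, i.e.\ one in which all $k_i$ hyperplanes in each direction are genuinely present. As recalled immediately above, the inclusion of the fiber
$$ \Fact^{lc}\bigl((\bR^d)^\stratified_{\vec I},\cC\bigr) \hookrightarrow \Alg_d(\cC)_{\vec k} $$
is an equivalence, compatible with the functoriality in $\cC$. It therefore suffices to show that $\cC \mapsto \Fact^{lc}\bigl((\bR^d)^\stratified_{\vec I},\cC\bigr)$ preserves limits along $\otimes$-sifted-cocontinuous functors between $\otimes$-sifted-cocomplete categories. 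As in the previous proof, by \cite[Lemma 5.4.5.5]{LurieHTT} and \cite{RV2014} the inclusion of $\otimes$-sifted-cocomplete categories among all symmetric monoidal $(\infty,1)$-categories creates the relevant limits, so I may test the limit-preservation after forgetting down to symmetric monoidal $(\infty,1)$-categories.

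Next I would unwind the definition of $\Fact^{lc}(M,\cC)$. By the property recalled above, a locally constant factorization algebra is a prefactorization algebra $F : \cat{Opens}(M) \to \cC$ — i.e.\ an algebra over the strict colored operad $\cat{Opens}(M)$ — satisfying the conditions that certain diagrams built from $F$ (the gluing/descent diagrams and the local-constancy diagrams) are colimits in $\cC$. The space of \emph{pre}factorization algebras is a derived mapping space of algebras over an operad, so by the same argument as in \cref{end of the proof Haugseng version} (using that the underlying-operad functor is a right adjoint, hence preserves $\infty$-categorical limits by \cite[Proposition 5.2.3.5]{LurieHTT}, and that the representable $\maps^h(\cat{Opens}(M),-)$ preserves homotopy limits by \cite[Theorem 4.2.4.1, Remark A.3.3.13]{LurieHTT}) the assignment $\cC \mapsto \{\text{prefactorization algebras}\}$ preserves limits. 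The space $\Fact^{lc}(M,\cC)$ is carved out of this by requiring the distinguished diagrams to be colimits.

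The main obstacle, and the only step not immediately parallel to the Haugseng case, is verifying that this local-constancy condition is preserved under passing to a limit of categories. Here is where the hypotheses are used: all the distinguished diagrams are \emph{sifted} (as noted in the recollection), the categories in the limit diagram are $\otimes$-sifted-cocomplete, and the transition functors are $\otimes$-sifted-cocontinuous. I would argue that if $\cC = \lim_\alpha \cC_\alpha$ is a (homotopy) limit in the $(\infty,1)$-category of $\otimes$-sifted-cocomplete categories and sifted-cocontinuous functors, then a prefactorization algebra $F$ valued in $\cC$ is locally constant if and only if each of its projections $F_\alpha$ to $\cC_\alpha$ is. The ``only if'' direction is clear since the projection functors are sifted-cocontinuous and hence preserve the relevant colimits. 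For the ``if'' direction, I would use that sifted colimits (in particular the sifted diagrams witnessing local constancy) in the limit category $\cC$ are computed componentwise — this is exactly the content of \cref{complete n-fold segality among sifted-cocomplete functors} and the underlying \cite[Lemma 5.4.5.5]{LurieHTT}: a cone in $\cC$ is a colimit cone precisely when each of its images in the $\cC_\alpha$ is. Thus the subspace cut out by the local-constancy conditions is itself a homotopy limit of the corresponding subspaces, and $\cC \mapsto \Fact^{lc}(M,\cC)$ preserves limits. The delicate point to get right is that ``limit in the category of sifted-cocomplete categories and sifted-cocontinuous functors'' is the correct ambient category in which to test, so that the transition maps are guaranteed to respect the colimits used to define local constancy; once that bookkeeping is pinned down, the argument is formal.
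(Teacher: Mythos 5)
Your proposal is correct and follows essentially the same route as the paper's proof: reduce via the natural equivalence to $\Fact^{lc}$ of a maximal stratification, handle prefactorization algebras by the operad-algebra argument from the Haugseng case, and then use \cite[Lemma 5.4.5.5]{LurieHTT} together with \cite{RV2014} to see that the sifted colimit conditions defining local constancy are detected componentwise in the limit. The only (immaterial) difference is ordering: the paper first establishes that limits of $\otimes$-sifted-cocomplete categories along $\otimes$-sifted-cocontinuous functors remain $\otimes$-sifted-cocomplete with $\otimes$-sifted-cocontinuous projections, and then reduces to the maximal stratification.
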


\begin{proof}
First, consider the $(\infty,1)$-category $\cat{Cat}^\otimes$ of (not necessarily cocomplete) symmetric monoidal $(\infty,1)$-categories and (not necessarily cocontinuous) symmetric monoidal functors. By Lemma 5.4.5.5 in \cite{LurieHTT} and the fact that fibered products of symmetric monoidal $(\infty,1)$-categories are computed levelwise, the fibered product computed in $\cat{Cat}^\otimes$ of $\otimes$-sifted-cocomplete $(\infty,1)$-categories along $\otimes$-sifted-cocontinuous functors again is an $\otimes$-sifted-cocomplete $(\infty,1)$-category. In fact, by \cite{RV2014}, the same is true for any limits. Moreover, the projection maps are $\otimes$-sifted-cocontinuous. 

Now let $\vec I \in \Covers^d_{\vec k}$ determine a ``maximal'' stratification. The equivalence $\Fact^{lc}\bigl((\bR^d)^\stratified_{\vec I},\cC\bigr) \simeq \Alg_d(\cC)_{\vec k}$ is natural in $\cC$, and so it suffices to prove that $\Fact^{lc}\bigl((\bR^d)^\stratified_{\vec I},-\bigr)$ preserves limits.  We will prove more generally that for any stratified manifold $M$, the functor $\Fact^{lc}(M,-)$ from $\otimes$-sifted-cocomplete categories and $\otimes$-sifted-cocontinuous functors to spaces preserves limits.

For a symmetric monoidal $(\infty,1)$-category $\cC$, let $\operatorname{PreFact}(M,\cC)$ denote the space of $\cat{Open}(M)$-algebras in $\cC$.  Then $\operatorname{PreFact}(M,-) = \Alg_{\cat{Open}(M)}(-)$ preserves limits by the same argument as in the proof of \cref{end of the proof Haugseng version}. 

It remains to see that a (family of) prefactorization algebra(s) valued in a fibered product is a (family of) locally constant factorization algebra(s) if and only if its images in each component are locally constant factorization algebras.
Again we use Lemma 5.4.5.5 in \cite{LurieHTT} and \cite{RV2014}: a diagram in the limit is a colimit of fixed shape if and only if its projections are. Applying this lemma to the diagrams governing the gluing condition and the local constancy proves the result.
\end{proof}

\appendix

\section{Some model categories}

Model categories model homotopy theories; the objects of the modeled theory are the fibrant-cofibrant objects of the model.  Consistent with this philosophy, we will continue to use the word \emph{space} to mean Kan complex, but switch attention now to the \define{model category of spaces}, which we take to be Quillen's model structure on the category $\cat{sSet}$ of simplicial sets; we will henceforth let $\cS = \cat{sSet}_{\mathrm{Quillen}}$ denote this model category.  

There are many competing model-categorical models for the homotopy theory of $(\infty,n)$-categories, some of which are reviewed in~\cite{BSP2011}.  The goal of this appendix is to recall the necessary theory of localizations needed to state the ``complete $n$-fold Segal space'' model.  Following~\cite{Horel2014}, we will focus on the projective version of that model, although this makes no substantive difference (see \cref{lemma.equivalence of inj and proj}); note that \cite{Rezk} uses instead the injective or Reedy model.
We make no claims as to the novelty of any result in this appendix; our goal is only to provide a crimp sheet for those who do not know everything that is ``known to experts.''
The standard reference on localizations of model categories is~\cite{MR1944041}, which consistently uses the projective model on presheaf categories.

We recall first some standard definitions.

\begin{definition}
  A \define{simplicial model category} is a model category $\cM$ whose underlying strict category is enriched, powered, and tensored over $\cS$ such that for every cofibration $i : A \to B$ and every fibration $p : X \to Y$, the morphism $i^* \times p_* : \maps(B,X) \to \maps(A,X) \times_{\maps(A,Y)} \maps(B,Y)$ in $\cS$ is a fibration, and is moreover a weak equivalence if either $i$ or $p$ is.  Here $\maps(-,-)$ denotes the $\cS$-valued hom object determined by the enrichment.
\end{definition}

One can give a general definition of ``{derived mapping spaces}'' in any model category.  The main advantage of simplicial model categories is that the enriched mapping space already furnishes such a notion.  Indeed, if $\cM$ is a simplicial model category, $A\in \cM$ is cofibrant, and $X\in \cM$ is fibrant, then $\maps(A,X) \in \cS$ is automatically a Kan complex satisfying all necessarily conditions.  For general $A, X\in \cM$, the \define{derived mapping space} $\maps^h(A,X)$ is the space $\maps(\widetilde A ,\widehat X)$, where $\widetilde A \to A$ is a cofibrant replacement of $A$ and $X \to \widehat X$ is a fibrant replacement of $X$.  Different choices for the replacements lead to equivalent derived mapping spaces.

That $\cM$ is enriched in $\cS$ assures that $\maps(A,X)$ is an object of $\cS$ when $A,X\in \cM$.  That $\cM$ is \define{powered} in $\cS$ means for any $A\in \cS$ and $X\in \cM$, there is a \define{mapping object} $\maps(A,X)\in \cM$ depending functorially on $A$ and $X$ and with all the usual properties.  The \define{derived} mapping object $\maps^{h}(A,X)$ is defined in terms of fibrant and cofibrant replacements as in the derived mapping space.

\begin{definition}\label{defn.global model structure}
  Given a small category $\Phi$ and any category $\cM$, we will write $\cM^{\Phi}$ for the category of $\cM$-valued presheaves on $\Phi$.  (We will not use the category of covariant functors $\Phi \to \cM$, which we would write as $^{\Phi}\cM$ if we needed it; the notation is consistent the slogan that covariant functors are like left modules whereas contravariant functors are like right modules.)  Given $X\in \cM^{\Phi}$ and $\phi \in \Phi$, we will write $X_{\phi}$ for the value of the presheaf $X$ at $\phi$.
  
  Let $\cM$ be a model category.  If it exists, the \define{projective} model structure $\cM^{\Phi}_{\proj}$ is determined by declaring that a natural transformation $f : A \to X$ is a fibration (resp.\ weak equivalence) if for each $\phi\in \Phi$, the map $f_{\phi} : A_{\phi} \to X_{\phi}$ is a fibration (resp.\ weak equivalence).  If it exists, the \define{injective} model structure $\cM^{\Phi}_{\inj}$ is determined by declaring that a natural transformation $f : A \to X$ is a cofibration (resp.\ weak equivalence) if for each $\phi\in \Phi$, the map $f_{\phi} : A_{\phi} \to X_{\phi}$ is a cofibration (resp.\ weak equivalence).
\end{definition}

We will care most about the case $\cM = \cS$, in which case both the projective and injective model structures exist for any small category $\Phi$ (see~\cite[Proposition~A.2.8.2]{LurieHTT} for a recent treatment). 
We recall a few more existence results in \cref{lemma.basic facts about localizations} and \cref{remark.combinatorial}.    

Suppose that $\cM$ is simplicial.  Then the powering of $\cM$ over $\cS$ implies further that for any $A \in \cS^{\Phi}$ and $X \in \cM^{\Phi}$, there is a mapping object $\maps(A,X) \in \cM$.  The corresponding \define{derived mapping object} is $\maps^{h}(A,X) = \maps(\widetilde A,\widehat X) \in \cM$, where $\widetilde A \to A$ is a cofibrant replacement in $\cS^{\Phi}$ and $X \to \widehat X$ is a fibrant replacement in $\cM^{\Phi}$, assuming at least one of the projective or injective model structures on $\cM^{\Phi}$ exists.  Note that if both model structures exist, then the derived mapping object is independent (up to canonical equivalence) of the choice.  Indeed, projective cofibrancy implies injective cofibrancy and injective fibrancy implies projective fibrancy, so one may assume $\widetilde A$ to be projective-cofibrant and $\widehat X$ to be injective-fibrant.

\begin{definition}
  A \define{presentation} consists of a small category $\Phi$ and a set of maps $\cE$ in $\cS^{\Phi}$.  Let $\cM$ be a simplicial model category such that at least one of 
  $\cM^\Phi_\proj$ and $\cM^\Phi_\inj$ exists.  An object $X\in \cM^{\Phi}$ is \define{$\cE$-local} if for each element $(A\overset e \to B) \in \cE$, the induced map $e^{*} : \maps^{h}(B,X) \to \maps^{h}(A,X)$ is a weak equivalence in $\cM$.
  We will refer to $\cE$-local objects in $\cM^{\Phi}$ as \define{$(\Phi,\cE)$-objects in $\cM$}.
\end{definition}

Let us call a presentation $(\Phi,\cE)$ \define{discrete} if every morphism in $\cE$ is from the full subcategory $\cat{Set}^{\Phi} \hookrightarrow \cS^{\Phi}$.  Given a complete strict category $\cM$, a \define{strict $(\Phi,\cE)$-object of $\cM$} is an object of $\cM^{\Phi}$ which is \define{$\cE$-local} in the sense that each $(A\overset e \to B) \in \cE$ induces an isomorphism $\hom(B,X) \overset{e^{*}}\to \hom(A,X)$ --- these objects make sense since $\cM$ is complete.  Let $\yo : \Phi \to \cat{Set}^{\Phi}$ denote the Yoneda embedding (\yo\ being the first letter of ``Yoneda'' in Hiragana).  
 Every presheaf is a colimit of representable presheaves, and so asking for a presheaf to be $\cE$-local is the same as asking that certain limits built from its values be isomorphic.
  
\begin{example} \label{eg.Gamma}
  Recall Segal's category $\Gamma = \Fin_{*}^{\mathrm{op}}$ from \cref{defn.Gamma} and \cref{remark.Gamma}.  We make it into a discrete presentation by declaring the distinguished class of maps $\cE$ to consist of the inclusions $\yo([1])^{\sqcup k} \hookrightarrow \yo([k])$, where  the $i$th map $\yo([1]) \to \yo([k])$ is the $i$th Segal morphism~$\gamma_{i}$.  The strict $(\Gamma,\cE)$ objects in a complete category $\cM$ are precisely the commutative monoids in~$\cM$.
\end{example}

As \cref{eg.Gamma} illustrates, presentations provide a framework for universal algebra, generalizing Lawvere algebraic theories.  The $(\Phi,\cE)$-objects in a model category $\cM$ are a homotopic avatar of strict $(\Phi,\cE)$-objects.  Indeed, the homotopy theory of $(\Phi,\cE)$-objects can be readily modeled by a special case of left Bousfield localization:

\begin{definition}
  Let $\cM$ be a simplicial model category.  If it exists, the \define{projective  (resp.\ injective) model category of  $(\Phi,\cE)$-objects in $\cM$}, denoted $\rL_{\cE}\cM^{\Phi}_{\proj}$ (resp.\ $\rL_{\cE}\cM^{\Phi}_{\inj}$), is the model category structure on $\cM^{\Phi}$ whose cofibrations are the same as those in $\cM^{\Phi}_{\proj}$ (resp.\ $\cM^{\Phi}_{\inj}$) and for which a morphism $f:A \to B$ is a weak equivalence if the induced map of spaces $f^{*} : \maps^{h}(B,X) \to \maps^{h}(A,X)$ is a weak equivalence for every $\cE$-local object in $X \in \cM^{\Phi}_{\proj}$ (resp.\ $X \in \cM^{\Phi}_{\inj}$).
\end{definition}

\begin{lemma}\label{lemma.basic facts about localizations}
  Suppose that $\cM$ is a left proper cellular simplicial model category.  
  Then $\rL_\cE\cM^\Phi_\proj$ exists and is also left proper cellular simplicial.
  The fibrant objects of $\rL_{\cE}\cM^{\Phi}_{\proj}$ are precisely the  $\cE$-local objects in $\cM^{\Phi}$ that are fibrant in $\cM^{\Phi}_{\proj}$.  Weak equivalences in $\rL_{\cE}\cM^{\Phi}_{\proj}$ between fibrant objects are levelwise weak equivalences.  
  The simplicial structure on $\rL_\cE\cM^\Phi_\proj$ is the simplicial enrichment of $\cM^\Phi_{\proj}$, so that derived mapping spaces in $\rL_\cE\cM^\Phi_\proj$ can be computed as mapping spaces in $\cM^\Phi$ between cofibrant and fibrant replacements.
\end{lemma}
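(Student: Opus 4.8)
The entire lemma is a repackaging of Hirschhorn's theory of left Bousfield localization \cite{MR1944041}, and the plan is to reduce each assertion to one of his theorems applied to the model category $\cN := \cM^{\Phi}_{\proj}$. The first step is therefore to check that $\cN$ itself is left proper, cellular, and simplicial. Existence of the projective model structure on the presheaf category $\cM^{\Phi}$ holds because $\cM$, being cellular, is in particular cofibrantly generated \cite[Chapter 11]{MR1944041}. Since projective fibrations, weak equivalences, and hence pushouts along cofibrations are all detected levelwise, left properness of $\cM$ transfers levelwise to $\cN$, and cellularity of $\cN$ follows from the corresponding transfer results for projective diagram categories \cite[Chapter 12]{MR1944041}. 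The simplicial enrichment, tensoring, and powering of $\cM$ induce, levelwise, a simplicial model structure on $\cN$ whose mapping spaces are the enriched mapping spaces of $\cM^{\Phi}$; this fixes the ambient simplicial structure once and for all.

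With $\cN$ in hand, I would invoke Hirschhorn's existence theorem for left Bousfield localizations \cite[Theorem 4.1.1]{MR1944041}: for any set $S$ of maps in a left proper cellular simplicial model category $\cN$, the localization $\rL_{S}\cN$ exists, is again left proper cellular simplicial with the same underlying simplicial enrichment, and has exactly the same cofibrations as $\cN$. Taking $S = \cE$ yields existence of $\rL_{\cE}\cM^{\Phi}_{\proj}$ together with the ``left proper cellular simplicial'' and ``same cofibrations'' claims. The description of the fibrant objects is the standard structural fact \cite[\S 3.4]{MR1944041} that they are precisely the $S$-local objects already fibrant in $\cN$; here one need only observe that Hirschhorn's notion of ``$S$-local object'' agrees, for objects fibrant in $\cN$, with the paper's definition of ``$\cE$-local'', because in the simplicial model category $\cN$ the derived mapping space $\maps^{h}(-,X)$ computes the homotopy function complex. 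The statement that weak equivalences between fibrant objects of $\rL_{\cE}\cN$ are exactly the weak equivalences of $\cN$, i.e.\ the levelwise weak equivalences, is the companion structural fact \cite[\S 3.2]{MR1944041}.

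Finally, for the computation of derived mapping spaces: since $\rL_{\cE}\cN$ is simplicial with the same enrichment as $\cM^{\Phi}_{\proj}$, a derived mapping space in $\rL_{\cE}\cN$ is the enriched mapping object $\maps(\widetilde A,\widehat X)$ for a cofibrant replacement $\widetilde A$ and a fibrant replacement $\widehat X$ computed in $\rL_{\cE}\cN$. Because the cofibrations, and hence the cofibrant objects, coincide with those of $\cM^{\Phi}_{\proj}$, any $\cM^{\Phi}_{\proj}$-cofibrant replacement is again a cofibrant replacement in the localization (a levelwise weak equivalence is a fortiori an $\cE$-local equivalence), while $\widehat X$ is an $\cE$-local projective-fibrant replacement, which is exactly the recipe asserted. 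The only genuinely substantive points, as opposed to verbatim citations, are the transfer of cellularity and left properness to the projective diagram category and the matching of the two notions of locality; these are where I expect the main effort to lie, while everything else is a direct appeal to \cite{MR1944041}.
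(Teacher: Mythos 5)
Your proof is correct and takes essentially the same route as the paper: the paper's entire proof is the one-line citation that these claims are special cases of Theorem 4.1.1 and Proposition 4.1.7 of \cite{MR1944041}. Your additional verifications --- that $\cM^{\Phi}_{\proj}$ inherits left properness, cellularity, and the simplicial structure, and that Hirschhorn's notion of local object matches the paper's $\cE$-locality --- are precisely the details the paper leaves implicit in the phrase ``special cases of.''
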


For the technical notion of ``left proper cellular'' we refer the reader to~\cite{MR1944041}, which uses always the projective model structure on presheaf categories.  Note in particular that $\cS$ itself is left proper cellular simplicial~\cite[Proposition 4.1.4]{MR1944041}.

\begin{proof}
  These claims are special cases of \cite[Theorem 4.1.1 and Proposition 4.1.7]{MR1944041}.
\end{proof}

\begin{remark} \label{remark.combinatorial}
One can replace ``cellular'' by the related notion of ``combinatorial,'' in which case all the same statements  hold for both $\rL_{\cE}\cM^{\Phi}_\proj$ and $\rL_{\cE}\cM^{\Phi}_\inj$~\cite[Propositions~A.2.8.2 and~A.3.7.3]{LurieHTT}. 
\end{remark}

We can talk about ``the homotopy theory of $(\Phi,\cE)$-objects in $\cM$'' without deciding whether to work with the projective or injective models:

\begin{lemma}\label{lemma.equivalence of inj and proj}
  Let $\cM$ be a model category and $(\Phi,\cE)$ a presentation such that the model categories $\rL_{\cE}\cM^{\Phi}_{\proj}$ and $\rL_{\cE}\cM^{\Phi}_{\inj}$ exist.  The identity functor is a left Quillen equivalence $\id : \rL_{\cE}\cM^{\Phi}_{\proj} \to \rL_{\cE}\cM^{\Phi}_{\inj}$.
\end{lemma}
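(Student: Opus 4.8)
The plan is to show that the two localized model structures have \emph{identical} weak equivalences, so that the identity adjunction $\id \dashv \id$ is a Quillen equivalence essentially by inspection. The key input, already recorded in the remarks following \cref{defn.global model structure}, is that the derived mapping space $\maps^{h}(A,X)$ may be computed using a projective-cofibrant replacement of $A$ together with an injective-fibrant replacement of $X$; in particular $\maps^{h}$, and hence the property of an object being $\cE$-local, does not depend on whether one works projectively or injectively. First I would make this precise: an object $X \in \cM^{\Phi}$ is $\cE$-local in the sense of the definition regardless of the model structure, and $\cE$-locality is invariant under levelwise weak equivalence, since such maps induce equivalences on all derived mapping spaces.

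Next I would identify the weak equivalences of the two localizations. By \cref{lemma.basic facts about localizations} the fibrant objects of $\rL_{\cE}\cM^{\Phi}_{\proj}$ (resp.\ of $\rL_{\cE}\cM^{\Phi}_{\inj}$) are exactly the $\cE$-local objects that are projective- (resp.\ injective-) fibrant, and a map $f\colon A \to B$ is a weak equivalence in the localization iff $f^{*}\colon \maps^{h}(B,X) \to \maps^{h}(A,X)$ is a weak equivalence for all such $X$. Although the two collections of fibrant objects genuinely differ, every $\cE$-local projective-fibrant object admits an injective-fibrant replacement that remains $\cE$-local, and conversely; since $\maps^{h}(-,X)$ depends on $X$ only up to levelwise weak equivalence, testing against either collection yields the same class of maps. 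Hence $\rL_{\cE}\cM^{\Phi}_{\proj}$ and $\rL_{\cE}\cM^{\Phi}_{\inj}$ share one and the same class of $\cE$-local equivalences. This verification is the only step carrying genuine content, and I expect it to be the main obstacle, precisely because the fibrant objects of the two localizations are not the same; I would lean on \cite{MR1944041} for the standard characterization of the local equivalences of a left Bousfield localization.

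With that in hand the rest is formal. Localization does not alter cofibrations (\cref{lemma.basic facts about localizations}), so the cofibrations of $\rL_{\cE}\cM^{\Phi}_{\proj}$ are the projective cofibrations, which are in particular levelwise cofibrations and hence cofibrations of $\cM^{\Phi}_{\inj}$, i.e.\ of $\rL_{\cE}\cM^{\Phi}_{\inj}$. A trivial cofibration of $\rL_{\cE}\cM^{\Phi}_{\proj}$ is a projective cofibration that is an $\cE$-local equivalence, hence an injective cofibration that is an $\cE$-local equivalence, i.e.\ a trivial cofibration of $\rL_{\cE}\cM^{\Phi}_{\inj}$. Thus $\id\colon \rL_{\cE}\cM^{\Phi}_{\proj} \to \rL_{\cE}\cM^{\Phi}_{\inj}$, with right adjoint $\id$, is a Quillen adjunction.

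Finally I would verify the Quillen equivalence criterion. Given $A$ cofibrant in $\rL_{\cE}\cM^{\Phi}_{\proj}$ and $B$ fibrant in $\rL_{\cE}\cM^{\Phi}_{\inj}$, a map $A \to B$ and its adjoint under $\id \dashv \id$ are literally the same morphism; it is a weak equivalence in $\rL_{\cE}\cM^{\Phi}_{\inj}$ iff it is one in $\rL_{\cE}\cM^{\Phi}_{\proj}$, because the two localizations have the same weak equivalences. The criterion is therefore satisfied trivially, and $\id$ is a left Quillen equivalence. Equivalently, both localizations present the localization of the common homotopy theory underlying $\cM^{\Phi}$ at $\cE$, with the total derived functor of $\id$ being the identity equivalence of homotopy categories.
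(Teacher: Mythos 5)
Your proposal is correct, but it differs in execution from the paper's proof, which is a two-line citation: the paper simply invokes \cite[Theorem~3.3.20]{MR1944041} together with the observation that $\cM^{\Phi}_{\proj}$ and $\cM^{\Phi}_{\inj}$ have the same (levelwise) weak equivalences. What you have done is unpack the content of that cited theorem in the special case at hand, where the Quillen equivalence being localized is the identity functor: you show the two localizations have literally the same class of weak equivalences (via the model-structure-independence of derived mapping spaces and hence of $\cE$-locality, which the paper records in the discussion following \cref{defn.global model structure}), that the identity is left Quillen, and that the equivalence criterion then holds for free. The trade-off is the usual one: the paper's citation is shorter and rests on a more general theorem (which handles arbitrary Quillen equivalences between localizations, with hypotheses one must in principle verify), whereas your argument is self-contained and makes transparent that nothing beyond the existence of the two localizations is needed. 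One small point: your assertion that projective cofibrations are levelwise, hence injective, cofibrations is stated without justification; it is a standard fact (provable by adjointing lifting problems along the right adjoint to evaluation at each object of $\Phi$, and implicitly recorded in the paper's remark that projective cofibrancy implies injective cofibrancy), and note that the paper's own proof needs exactly the same fact in order for the identity to be a left Quillen functor as required by Hirschhorn's theorem, so this is not a gap in your argument relative to the paper's.
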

\begin{proof}
This follows from \cite[Theorem~3.3.20]{MR1944041} and the fact $\cM^{\Phi}_{\inj}$ and $\cM^{\Phi}_{\proj}$ have the same weak equivalences.
\end{proof}

When $\cM$ is itself of the form $\rL_{\cF}\cS^{\Psi}_{\proj}$ for some presentation $(\Psi,\cF)$, then $\rL_{\cE}\cM^{\Phi}_{\proj}$ is as well:

\begin{proposition}\label{prop.AinB}
  Let $(\Psi,\cF)$ and $(\Phi,\cE)$ be presentations.  
  Define a new presentation $(\Psi,\cF)\boxtimes (\Phi,\cE)$ whose underlying category is the product of categories $\Psi \times \Phi$ and whose set of distinguished maps in $\cS^{\Psi \times\Phi}$ is the set
  $$\{f\boxtimes\id_{\yo(\phi)}\}_{(f,\phi)\in \cF\times \Phi} \cup \{\id_{\yo(\psi)}\boxtimes e \}_{( \psi ,e) \in \Psi \times\cE},$$
  where $\yo$ denotes the Yoneda embedding and $\boxtimes : \cS^{\Psi}\times \cS^{\Phi} \to \cS^{\Psi \times \Phi}$ denotes the functor sending the pair of presheafs $(A,B)$ to the presheaf $(A\boxtimes B):(\psi,\phi) \mapsto A(\psi)\times B(\phi)$.
  Under the canonical equivalence of categories $(\cS^{\Phi})^{\Psi} \cong \cS^{\Phi\times\Psi}$, the model structures defining $\rL_{\cF}(\rL_{\cE}\cS^{\Phi}_{\proj})^{\Psi}_{\proj}$ and $\rL_{\cE\boxtimes\cF}\cS^{\Phi \times \Psi}_{\proj}$ agree.
\end{proposition}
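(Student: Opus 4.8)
The plan is to show that the two model structures on the category $\cS^{\Phi\times\Psi}$ --- namely $\rL_{\cF}(\rL_{\cE}\cS^{\Phi}_{\proj})^{\Psi}_{\proj}$ on one hand and $\rL_{\cE\boxtimes\cF}\cS^{\Phi\times\Psi}_{\proj}$ on the other --- coincide by matching their defining data. Since two model structures on the same underlying category agree as soon as they share the same cofibrations and weak equivalences, it suffices to verify these two matches; the fibrations are then determined. First I would establish the cofibration match. The cofibrations of $\rL_{\cE}\cS^{\Phi}_{\proj}$ are by definition the projective cofibrations in $\cS^{\Phi}$, and localizing does not change cofibrations. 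Iterating the projective construction and using the canonical identification $(\cS^{\Phi})^{\Psi} \cong \cS^{\Phi\times\Psi}$, the cofibrations of $\rL_{\cF}(\rL_{\cE}\cS^{\Phi}_{\proj})^{\Psi}_{\proj}$ are the projective cofibrations of the iterated presheaf category, which agree with the projective cofibrations of $\cS^{\Phi\times\Psi}$. These are in turn the cofibrations of $\rL_{\cE\boxtimes\cF}\cS^{\Phi\times\Psi}_{\proj}$. The cleanest way to see this is to compare generating cofibrations: the iterated projective model structure has generators of the form $\yo(\psi)\boxtimes(\text{generating cofibrations of }\rL_\cE\cS^\Phi_\proj)$, which unwind to $\yo(\psi)\boxtimes\yo(\phi)\boxtimes(\partial\Delta^m\hookrightarrow\Delta^m)$, precisely the generating projective cofibrations of $\cS^{\Phi\times\Psi}$.

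Next I would establish the weak-equivalence match, which I expect to be the main obstacle. By \cref{lemma.basic facts about localizations}, both model structures exist and are left proper cellular simplicial, and in a localized projective model structure the fibrant objects are exactly the local objects that are projectively fibrant. The key observation is that an object $X\in\cS^{\Phi\times\Psi}$, viewed as a $\cS^\Phi$-valued presheaf on $\Psi$, is fibrant in $\rL_{\cF}(\rL_{\cE}\cS^{\Phi}_{\proj})^{\Psi}_{\proj}$ if and only if it is levelwise $\cE$-local (in the $\Phi$-direction) and $\cF$-local (in the $\Psi$-direction), which is exactly the condition of being $(\cE\boxtimes\cF)$-local and projectively fibrant in $\cS^{\Phi\times\Psi}$. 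This is where the definition of the box product $(\Psi,\cF)\boxtimes(\Phi,\cE)$ does its work: the distinguished maps $\{f\boxtimes\id_{\yo(\phi)}\}$ test $\cF$-locality in each $\Phi$-slot and $\{\id_{\yo(\psi)}\boxtimes e\}$ test $\cE$-locality in each $\Psi$-slot. Since a left Bousfield localization is determined by its class of local (equivalently, fibrant) objects, and a map is a weak equivalence precisely when it induces equivalences on derived mapping spaces into all fibrant objects, coincidence of the fibrant objects forces coincidence of the weak equivalences.

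The technical care needed is in matching the derived mapping spaces that define the two classes of weak equivalences. For the iterated localization, a weak equivalence is a map inducing equivalences $\maps^h_{(\cS^\Phi)^\Psi}(-,X)$ for all objects $X$ fibrant in $\rL_{\cF}(\rL_{\cE}\cS^{\Phi}_{\proj})^{\Psi}_{\proj}$; the last sentence of \cref{lemma.basic facts about localizations} guarantees that these derived mapping spaces are computed as enriched mapping spaces between projective-cofibrant and appropriately-fibrant replacements, and that the simplicial enrichment is inherited unchanged from $\cS^\Phi$ and hence from $\cS^{\Phi\times\Psi}$. Thus the mapping spaces agree with those computed directly in $\cS^{\Phi\times\Psi}$. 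Combined with the identification of fibrant objects above, this shows the two weak-equivalence classes coincide, completing the proof. I would flag that the one point requiring genuine verification, rather than bookkeeping, is the iterated-fibrancy characterization: that projective fibrancy together with levelwise $\cE$-locality and $\Psi$-direction $\cF$-locality is equivalent to $(\cE\boxtimes\cF)$-locality together with projective fibrancy in the product presheaf category. This follows from the adjunction $\maps^h(\yo(\psi)\boxtimes\yo(\phi)\boxtimes A,X)\simeq \maps^h(A,X_{\psi,\phi})$ and the fact that the two families of test maps in $\cE\boxtimes\cF$ decompose exactly along the two directions.
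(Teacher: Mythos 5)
Your overall strategy coincides with the paper's: both proofs reduce the proposition to checking that the two model structures have the same cofibrations (your generating-cofibration computation is fine, and matches the paper's observation that both classes are inherited from projective structures whose fibrations and weak equivalences are levelwise) and then the same fibrant objects. The paper closes the argument by citing a criterion of Joyal (a model structure is determined by its cofibrations together with its fibrant objects), whereas you argue directly that in a left Bousfield localization the weak equivalences are detected by derived mapping spaces into fibrant objects; that substitution is legitimate, given the care you take that the simplicial enrichments and cofibrant replacements agree.

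The gap is in the step you yourself flag as ``the one point requiring genuine verification,'' and the justification you offer for it does not suffice. An object $X$ is fibrant in $\rL_{\cF}(\rL_{\cE}\cS^{\Phi}_{\proj})^{\Psi}_{\proj}$ iff it is fibrant in $(\rL_{\cE}\cS^{\Phi}_{\proj})^{\Psi}_{\proj}$ and $\cF$-local, where $\cF$-locality here means that each $f \in \cF$ induces a weak equivalence of derived mapping objects \emph{in the localized model category} $\rL_{\cE}\cS^{\Phi}_{\proj}$. By contrast, locality for $\{f\boxtimes\id_{\yo(\phi)}\}_{(f,\phi)}$ in $\cS^{\Phi\times\Psi}_{\proj}$ asks that these same maps of mapping objects be \emph{levelwise} weak equivalences. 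Since every levelwise weak equivalence is an $\rL_{\cE}$-weak equivalence, one inclusion of fibrant-object classes is free; but the other inclusion --- that a fibrant object of the iterated localization is local for the maps $f\boxtimes\id_{\yo(\phi)}$ --- needs the converse for these particular maps, and your adjunction $\maps^h(\yo(\psi)\boxtimes\yo(\phi)\boxtimes A,X)\simeq \maps^h(A,X_{\psi,\phi})$ only performs the bookkeeping identifying which mapping spaces are compared; it says nothing about the discrepancy between the two notions of weak equivalence. The missing ingredient, which is exactly where the paper's proof does its work, is that the relevant derived mapping objects $\maps^h(A,X)$ and $\maps^h(B,X)$ are themselves fibrant in $\rL_{\cE}\cS^{\Phi}_{\proj}$ (they are values of a right Quillen bifunctor on a cofibrant and a fibrant argument, and in particular are $\cE$-local), and weak equivalences between fibrant objects of a left Bousfield localization are weak equivalences of the underlying projective structure, i.e.\ levelwise ones, by \cref{lemma.basic facts about localizations}. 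With that observation inserted at the flagged step, your proof goes through and is essentially the paper's.
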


\begin{proof}
  The model categories $\rL_{\cF}(\rL_{\cE}\cS^{\Phi}_{\proj})^{\Psi}_{\proj}$ and $\rL_{\cE\boxtimes\cF}\cS^{\Phi \times \Psi}_{\proj}$ have  cofibrations inherited from $(\cS^{\Phi}_{\proj})^{\Psi}_{\proj}$ and $\cS^{\Phi \times \Psi}_{\proj}$ respectively; but these are the same as model categories, since in both categories the weak equivalences and fibrations are computed levelwise.  It thus suffices to show~\cite[Proposition E.1.10]{Joyal2008} that $\rL_{\cF}(\rL_{\cE}\cS^{\Phi}_{\proj})^{\Psi}_{\proj}$ and $\rL_{\cE\boxtimes\cF}\cS^{\Phi \times \Psi}_{\proj}$ have the same fibrant objects.
  
  Consider first a fibrant object of $\rL_{\cF}(\rL_{\cE}\cS^{\Phi}_{\proj})^{\Psi}_{\proj}$.  It is just a fibrant object of $(\rL_{\cE}\cS^{\Phi}_{\proj})^{\Psi}_{\proj}$ which is $\cF$-local in the sense that each $f\in \cF$ induces a weak equivalence of $\rL_{\cE}\cS^{\Phi}_{\proj}$-valued derived mapping spaces.  Derived mapping spaces are always fibrant-cofibrant, and so the asserted weak equivalences are also weak equivalences in $\cS^{\Phi}_{\proj}$, which is to say levelwise weak equivalences.  Thus $\cF$-locality in $(\rL_{\cE}\cS^{\Phi}_{\proj})^{\Psi}_{\proj}$ unpacks to locality in $\cS^{\Phi \times \Psi}_{\proj}$ for the set $\{f\boxtimes\id_{\yo(\phi)}\}_{(f,\phi)\in \cF\times \Phi} $.  On the other hand, the fibrant objects in $(\rL_{\cE}\cS^{\Phi}_{\proj})^{\Psi}_{\proj}$ are the objects that are levelwise-in-$\Psi$ fibrant-in-$\rL_{\cE}\cS^{\Phi}_{\proj}$.  But fibrancy in $\rL_{\cE}\cS^{\Phi}_{\proj}$ is the same as fibrancy in $\cS^{\Phi}_{\proj}$ and $\cE$-locality.  Together, we see that the fibrant objects of $(\rL_{\cE}\cS^{\Phi}_{\proj})^{\Psi}_{\proj}$ are the levelwise fibrant objects of $\cS^{\Phi \times \Psi}_{\proj}$ that are local for the set $\{\id_{\yo(\psi)}\boxtimes e \}_{( \psi ,e) \in \Psi \times\cE}$.  
\end{proof}

An immediate corollary of \cref{prop.AinB} is that (via the canonical equivalence $\cS^{\Phi\times\Psi} \cong \cS^{\Psi\times\Phi}$) the model categories $\rL_{\cF}(\rL_{\cE}\cS^{\Phi}_{\proj})^{\Psi}_{\proj}$ and $\rL_{\cE}(\rL_{\cF}\cS^{\Psi}_{\proj})^{\Phi}_{\proj}$ agree.
This can be summarized by the following slogan: $(\Psi,\cF)$-objects among $(\Phi,\cE)$-objects are the same as $(\Phi,\cE)$-objects among $(\Psi,\cF)$-objects, as both are the same as simultaneously-$(\Phi,\cE)$-and-$(\Psi,\cF)$-objects.

We conclude with our main examples, which are the presentations presenting complete $n$-fold Segal spaces and symmetric monoidal complete $n$-fold Segal spaces:

\begin{example}\label{eg.Rezk}
  Let $\Delta$ denote the usual simplex category with objects $[k] = \{0,1,\dots,k\}$ for $k\in \bN$ and nondecreasing maps.
  We will define a discrete presentation $(\Delta,\cF)$ that we will call the \define{Rezk presentation}.  The set $\cF\subseteq \cat{Set}^\Delta$ consists of the following maps:
  \begin{description}
    \item[Segal]   For each $k$ and each $0 < i \leq k$, the \define{Segal morphism} $\gamma_i : [1] \to [k]$ is the inclusion along the edge connecting vertex $i-1$ to vertex $i$.  The set $\cF$ includes the following map for each~$k$:
    $$  \ourunderbrace{\yo([1]) \underset{\yo([0])}\cup \yo([1]) \underset{\yo([0])}\cup \dots \underset{\yo([0])}\cup\yo([1])}{k\text{ times}} \overset{\bigcup_{i=1}^k \yo(\gamma_i) }\longto \yo([k]) $$
    The gluings of edges $\yo([1])$ along vertices $\yo([0])$ always identify the second vertex of the left copy with the first vertex of the right copy.  We will call this map $\operatorname{Segal}(k)$.
    \item[Completeness]  The set $\cF$ includes one final map:
    $$  \yo([0]) \underset{\yo([1])}\cup \yo([3]) \underset{\yo([1])}\cup  \yo([0]) \longto \yo([0]), $$
    where the first inclusion $\yo([1]) \to \yo([3])$ sends $0\mapsto 0$ and $1\mapsto 2$, and the second inclusion  sends $0\mapsto 1$ and $1\mapsto 3$.
     (The maps to $\yo([0])$ are all forced, since $\yo([0])$ is the terminal object in $\cat{Set}^\Delta$.)
     Following~\cite{BSP2011}, we let $K$ denote the simplicial set on the left hand side.
  \end{description}
  
  To compute the  $\cF$-local objects in $\cS$ (in the projective model structure), one must compute projective-cofibrant resolutions of the left hand sides; note that the representable presheaf $\yo(\phi)$ is  cofibrant in $\cS^\Phi_\proj$ for any $\Phi$ and any object $\phi$ therein.  One finds precisely the complete Segal spaces in the sense of \cref{defn.segalspace,defn.1fold complete}.  The main result of~\cite{Rezk} is that $\rL_\cF\cS^\Delta$ models the homotopy theory of $(\infty,1)$-categories.  (Rezk uses the Reedy model structure, which for $\Delta$ is equal to the injective model structure, but that choice is Quillen-equivalent to the projective choice by \cref{lemma.equivalence of inj and proj}.)
  
  By applying \cref{prop.AinB}, we can write down a model for complete $n$-uple Segal spaces as the model category of $(\Delta,\cF)^{\boxtimes n}$-spaces.  Explicitly, it is the localization of $\cS^{\Delta^{\times n}}$ at the following maps:
  \begin{description}
    \item[Segal$^{\boxtimes n}$]  For each $k\in \bN$, each $i \in \{1,\dots,n\}$, and each $m_1,\dots,m_{i-1},m_{i+1},\dots,m_{n} \in \bN$, we localize at the map
    $$ \yo([m_1]) \boxtimes \dots \boxtimes \yo([m_{i-1}]) \boxtimes \operatorname{Segal}(k) \boxtimes \yo([m_{i+1}]) \boxtimes \dots \boxtimes \yo([m_n]), $$
    where $\operatorname{Segal}(k) =  \bigcup_{i=1}^k \yo(\gamma_i) \in \cF$ is as above.
    \item[Completeness$^{\boxtimes n}$]  For each $i \in \{1,\dots,n\}$ and each $m_1,\dots,m_{i-1},m_{i+1},\dots,m_{n} \in \bN$, we localize at the map
    $$ \yo([m_1]) \boxtimes \dots \boxtimes \yo([m_{i-1}]) \boxtimes \bigl\{K \to \yo([0])\bigr\} \boxtimes \yo([m_{i+1}]) \boxtimes \dots \boxtimes \yo([m_n]), $$
    where the simplicial set $K$ is as above.
  \end{description}
  To get a model of complete $n$-fold Segal spaces as defined in \cref{defn.complete}, we further localize at:
  \begin{description}
    \item[Essential constancy] For each $i \in \{1,\dots,n\}$ and each $m_1,\dots,m_{i-1},m_{i+1},\dots,m_{n} \in \bN$, we localize at the map
    \begin{multline*} \yo([m_1]) \boxtimes \dots \boxtimes \yo([m_{i-1}]) \boxtimes \yo([0]) \boxtimes \yo([m_{i+1}]) \boxtimes \dots \boxtimes \yo([m_{n}]) \\ \longto \yo([m_1]) \boxtimes \dots \boxtimes \yo([m_{i-1}]) \boxtimes \yo([0]) \boxtimes \yo([0]) \boxtimes \dots \boxtimes \yo([0]). \end{multline*}
  \end{description}
  We will call this the \define{Lurie presentation} of complete $n$-fold Segal spaces $(\Delta^{\times n},\cF_n)$.  
  
  In \cite{BSP2011}, yet another presentation with underlying category $\Delta^{\times n}$ is used to model $(\infty,n)$-categories (see Remark 12.4 therein).  This \define{Barwick presentation} consists of the sets \textbf{Segal$^{\boxtimes n}$} and \textbf{Essential constancy} as above, but replaces the set \textbf{Completeness$^{\boxtimes n}$} by the subset consisting of maps
  $$ \yo([m_1]) \boxtimes \dots \boxtimes \yo([m_{i-1}]) \boxtimes \bigl\{K \to \yo([0])\bigr\} \boxtimes \yo([0]) \boxtimes \dots \boxtimes \yo([0])$$
  for $i \in \{1,\dots,n\}$ and $\vec m \in \bN^{i-1}$.  But this smaller Barwick presentation is equivalent to the larger Lurie presentation by \cref{lemma.weaker completeness condition}.
  One of the main theorems of~\cite{BSP2011} is that the Barwick presentation presents a model of $(\infty,n)$-categories.
\end{example}

\begin{example}\label{eg.model category of symmetric monoidal $n$-fold complete Segal spaces}
One can always use \cref{prop.AinB} to combine presentations.  The model category of \define{complete $n$-fold Segal spaces internal to complete $m$-fold Segal spaces} is
the category of local objects in $\cS$ for the presentation $(\Delta^{\times n},\cF_n) \boxtimes (\Delta^{\times m},\cF_m)$.  The model category of \define{symmetric monoidal complete $n$-fold Segal spaces} corresponds to the presentation $(\Gamma,\cE) \boxtimes (\Delta^{\times n},\cF_n)$, where $(\Gamma,\cE)$ is as in \cref{eg.Gamma}.
\end{example}


\end{document}